\theoremstyle{plain}
\newtheorem{theorem}{Theorem}[section]
\newtheorem*{theorem*}{Theorem}
\newtheorem*{maintheorem*}{Main Theorem}
\newtheorem{lemma}[theorem]{Lemma}
\newtheorem{proposition}[theorem]{Proposition}
\newtheorem{corollary}[theorem]{Corollary}
\theoremstyle{definition}
\newtheorem{definition}[theorem]{Definition}
\newtheorem{remark}[theorem]{Remark}
\newtheorem{example}[theorem]{Example}
\numberwithin{equation}{section}
\newcommand{\GAP}{\texttt{GAP}}
\newcommand{\gfan}{\texttt{gfan}}
\newcommand{\Macaulay}{\texttt{Macaulay2}}
\newcommand{\MATLAB}{\texttt{MATLAB}}
\newcommand{\Octave}{\texttt{Octave}}\newcommand{\QEPCAD}{\texttt{QEPCAD}}
\newcommand{\SageMath}{\texttt{SageMath}}
\newcommand{\face}{\operatorname{face}}
\newcommand{\cond}{\operatorname{cond}}
\newcommand{\Chow}{\operatorname{Chow}}
\newcommand{\Gr}{\operatorname{Gr}}
\newcommand{\GL}{\operatorname{GL}}
\newcommand{\Sym}{\operatorname{Sym}}
\newcommand{\Span}{\operatorname{Span}}
\newcommand{\lch}{\operatorname{lch}}
\newcommand{\NN}{\mathbb{N}}
\newcommand{\RR}{\mathbb{R}}
\newcommand{\del}{\delta}
\newcommand{\Del}{D}
\newcommand{\condIndex}{\operatorname{c.i.}}
\newcommand{\simp}{\operatorname{simp}}
\newcommand{\prox}{\operatorname{prox}}
\newcommand{\fieldk}{\mathbf{k}}
\newcommand{\aug}{\operatorname{aug}}
\newcommand{\red}{\operatorname{red}}
\newcommand{\Chi}{\mathrm{X}}
\definecolor{green}{rgb}{0,0.5,0}
\theoremstyle{definition}
\numberwithin{equation}{section}
\newcommand{\PP}{\mathbb{P}}
\begin{document}

\title[The worst 1-PS for toric rational
  curves with one unibranch singularity]{The worst destabilizing 1-parameter subgroup for toric rational
  curves with one unibranch singularity}


\author{Joshua Jackson}
\author{David Swinarski}


\date{}

\begin{abstract}
Kempf proved that when a point is unstable in the sense of Geometric
Invariant Theory, there is a ``worst'' destabilizing 1-parameter
subgroup $\lambda^{*}$.  It is natural to ask: what are the worst 1-PS for the unstable points in the GIT
problems used to construct the moduli space of curves
$\overline{M}_g$? Here we consider Chow points of toric rational curves with one
unibranch singular point. We translate the problem as an
explicit problem in convex geometry (finding the closest point on a
polyhedral cone to a point outside it). We prove that the worst 1-PS has a combinatorial description that
persists once the embedding dimension is sufficiently large, and present some
examples.
\end{abstract}

\maketitle

\section{Introduction}

There are several GIT results concerning the Hilbert or Chow stability of
embedded singular curves.  When the dimension of the linear system is sufficiently small compared to the
arithmetic genus, many singularities are GIT stable. For example, cusps are
semistable in several GIT problems, including plane quartics and 2-, 3-, or
4-canonical curves \cites{MFK, HassettHyeon, Schubert, HyeonMorrison, BFMV}. In
contrast, when the dimension of the linear system is sufficiently large compared to the genus,
the only singularities that are GIT semistable are nodes; see \cite{Gieseker} for asymptotic Hilbert semistability or \cite{Mumford} for Chow semistability.

Kempf showed in \cite{Kempf} that when $x\in X$ is GIT unstable, there
is a ``worst'' destabilizing 1-parameter subgroup $\lambda^{*}$ associated to
$x$. It is the worst in the sense that it maximizes
$\mu(x,\lambda)/\|\lambda\|$, where $\mu(x,\lambda)$ is the
Hilbert-Mumford function. Thus, it is natural to ask: when a point
$[C]$ parametrising a singular
curve is GIT unstable, what is the worst 1-PS $\lambda^{*}$?


Knowledge of worst 1-PS has important applications in moduli
theory. Hesselink and Kempf-Ness describe a 
locally closed stratification that is invariant under the group action
and arises from indexing each point by its worst 1-PS; Kirwan uses this stratification to compute the cohomology of the GIT quotient \cites{Hesselink, Ness, Kirwan}. More recently, this whole picture has been greatly generalised by Halpern-Leistern's
Beyond GIT program, in which the HN filtration of an unstable point is a
generalization of the worst 1-PS \cite{Halpern-Leistner}. On the moduli side, it has been shown that these unstable Hesselink-Kempf-Kirwan-Ness strata can themselves be quotiented using results from Non-Reductive GIT \cite{HoskinsJackson}, allowing one to construct moduli spaces of \emph{unstable} objects. Although we perform no such quotients in this paper, a principal motivation for our work here is the construction of new moduli spaces of unstable (i.e.\ singular) curves.

We study the problem of Chow stability for a toric rational curve $C$ with one unibranch
singularity. There are two reasons we begin our study with these
examples. The first reason is that we know where to look: when $X$
has an automorphism group whose action is multiplicity-free, the
Kempf-Morrison Lemma \cite{MS}*{Prop.~4.7} guarantees that the worst 1-PS will appear in the
maximal torus diagonalizing this action. The second reason is that when $X$ is toric, the vertices of the Chow polytope
can be identified with coherent triangulations of the polytope of $X$. This allows us to interpret the value
of the Hilbert-Mumford function $\mu([X],\lambda)$ as the volume of a convex
region. We can go even further when $X$ is a rational curve with one
unibranch singularity and restate the
problem of finding the worst 1-PS as finding the closest point on a polyhedral cone $W$ to a vector $a$ that lies
outside $W$. The cone $W$ and vector $a$ are completely explicit once
the singularity and embedding dimension are given.

Using these ideas, we wrote software that uses \GAP, \Macaulay, and \MATLAB~or
\Octave~\cites{GAP, Macaulay, MATLAB, Octave} to compute many
examples. We observed that, for a fixed singularity, the worst 1-PS
behaves in a predictable way once the embedding dimension $N$ is
sufficiently large. There exists an integer $\ell$ such that the first
$\ell$ weights in the worst 1-PS are
constant with respect to $N$, and the remaining weights are given by
an explicit formula that arises from a least squares linear regression
calculation. We call this phenomenon the \emph{persistence of the
  worst 1-PS for all $N$
sufficiently large}.

We state this more precisely as follows. Let $C$ be a rational curve with one unibranch singularity at
  $p$. Let $\Gamma = \{\gamma_0,\gamma_1,\ldots\}$ be the semigroup of the singularity.  For all $N$, consider the map $C \rightarrow \mathbb{P}^N$ given by 
\[
t \mapsto (1, t^{\gamma_1}, t^{\gamma_2}, \ldots, t^{\gamma_N})  
\]
Let $C_{\Gamma}$ be the image of $C$ under this map, and let $w^{*}$
be the weights of the worst 1-PS.

\begin{maintheorem*}[Persistence of the worst 1-PS]
  There exist an integer $N_0$ and an integer $\ell$ (both depending
  on $\Gamma$) such that for all $N \geq N_0$, the coordinates $w_0^*,\ldots,w_{\ell-1}^{*}$ are
constant with respect to $N$, and the coordinates
$w_{\ell}^{*},\ldots,w_{N}^{*}$ are given by the formula 
\[
w_{i}^{*} = mi + b
\]
where
\begin{align*}
m &= -\frac{6}{(N-\ell+1)(N-\ell+2)} \\
b &= 2+\frac{2(N+2\ell-1)}{(N-\ell+1)(N-\ell+2)}
\end{align*}
\end{maintheorem*}

To prove this theorem, we first prove a similar statement for a
closely related optimisation problem that we call the Simplified
Problem. For each face $F$ of $W$, we use the Karush-Kuhn-Tucker (KKT) 
conditions in nonlinear optimisation to study the proximum
$\operatorname{Prox}(a,\operatorname{Span}(F))$, and show that the
face containing the global optimum has the same combinatorial description in all sufficiently large
degrees. We then use the solution to the Simplified Problem to
construct the solution to the original problem.

\subsection{Outline of the paper} In Section \ref{sec:
  Chow polytopes via triangulations}, we use the identification of the
vertices of the Chow polytope with coherent triangulations to
translate the problem of finding the worst 1-PS 
into an explicit convex optimisation problem. In Section \ref{sec: structure theorem}, we recall the KKT conditions in
convex optimisation and apply them to describe how the stationary
points on each face of the cone $W$ vary with the embedding dimension 
$N$. In Section \ref{sec: corners below the conductor}, we describe
the behavior of the stationary points when all the corners are at or below
the conductor. In Section \ref{sec: corners above the conductor}, we
describe the behavior of the stationary points when there is at least
one corner above the conductor. In Section \ref{sec: conclusion}, we prove the main theorem. Finally, in Section \ref{sec: examples}, we present several examples.

To keep the main body of the paper short, we moved some technical proofs to two appendices. Appendix A gives a detailed proof of Lemmas \ref{lem:
  polynomiality of Q Pk chi psi} and  \ref{lem: formulas for chi and psi}. Appendix B
gives a detailed proof of Propositon \ref{prop: worst 1ps for
    cusps} describing the worst 1-PS for cusps.

\subsection*{Acknowledgements} Our collaboration grew out of a
conversation between the authors,  Jarod Alper, Daniel
Halpern-Leistner, and Frances Kirwan following the February 2021 AIM
workshop ``Moduli problems beyond geometric invariant theory''. The
authors met again at the February 2023 AIM workshop ``Developments in
moduli problems.'' We are grateful to AIM and the workshop
organizers. We also thank Trevor Jones for discussing his preprint
\cite{Jones} with us.

\subsection*{Software} We used several mathematical software
packages for experimentation related to this project, including \GAP, \gfan, \Macaulay,
\MATLAB, \Octave, \QEPCAD, and \SageMath ~\cites{GAP, gfan, Macaulay, MATLAB,
  Octave, QEPCAD, SageMath}. This experimentation was essential to the project
in that it led us to conjecture the main result, though ultimately,
the proof of the main result is a ``pencil-and-paper'' proof that does
not rely on any computer calculations. The examples listed in Table
\ref{tab: Matlab/Octave results} were computed using software. We have
posted our code and some demonstrations for the interested reader on
our website: see \cite{Code}.

\section{The worst 1-PS for Chow points of toric curves as a convex optimisation problem} \label{sec: Chow polytopes via triangulations}

In this section, we translate the problem of finding the worst 1-PS
for Chow points of certain toric curves into an explicit convex
optimisation problem. See Theorem \ref{thm: optimisation problem}.

First, we recall Kempf's description of the worst 1-PS. We follow the
conventions of \cite{HarrisMorrison}*{Ch.~4A} and \cite{Kempf}. 

\subsection{The worst 1-PS} \label{subsec: worst 1-PS}
Let $G$ be a connected reductive algebraic group over a field
$\fieldk$, and let $W$ be a finite-dimensional $k$-vector space. Suppose that $G$ acts on $X \subset \mathbb{P}(W)$ by a
representation $G \rightarrow \GL(W)$. 

Let $\lambda: \mathbb{G}_m \rightarrow G$ be a 1-parameter
subgroup. Write $W_i$ for the
weight $i$ subspace, and write $S_{\lambda}(W)$ for the set of weights
$\{i : W_i \neq 0\}$. Then
\[
W = \bigoplus_{i \in S_{\lambda}(W)} W_i
\]

\begin{definition}
  The Hilbert-Mumford function $\mu(x,\lambda)$ is given by
  \[
\mu(x,\lambda) = \min \{ i \mid x_i \neq 0\}
  \]
\end{definition}

\begin{definition}
We call $\lambda^{*}$ a \emph{worst 1-PS} for $x$ if 
\[
\frac{\mu(x,\lambda^{*})}{\|\lambda^{*}\|}  = \sup_{\lambda} \frac{\mu(x,\lambda)}{\|\lambda\|}
\]
\end{definition}

\textit{A priori} it is not clear that this supremum is finite, or
that it is achieved, but Mumford showed that both statements are
true. Kempf proved that when $x$ is unstable, the supremum is achieved
on a unique parabolic conjugacy class of 1-PS.

\begin{theorem}[\cite{MFK}*{Prop.~2.17}, \cite{Kempf}*{Theorem 3.4}]
Suppose that $x \in X$ is unstable for the action of $G$. Then there
exists a worst 1-PS for $x$.
\end{theorem}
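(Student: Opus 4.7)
The plan is to realise $\mu(x, \cdot)/\|\cdot\|$ as a continuous function on a compact set after suitable extension, solve the optimisation on a fixed maximal torus via a compactness-plus-polyhedral argument, and then use $G$-equivariance to handle 1-PS not lying in that torus.

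First I would fix a maximal torus $T \subset G$ equipped with a $G$-invariant Euclidean norm on $V := X_{*}(T) \otimes_{\mathbb{Z}} \mathbb{R}$. Let $\chi_{1}, \ldots, \chi_{k}$ be the $T$-weights on $W$ and, for $y \in \mathbb{P}(W)$, let $S(y) \subseteq \{\chi_{1}, \ldots, \chi_{k}\}$ be the set of weights appearing nontrivially in a lift of $y$. Then for $\lambda \in X_{*}(T)$, $\mu(y, \lambda) = \min_{\chi \in S(y)} \langle \chi, \lambda \rangle$, and this extends to a concave, positively homogeneous, piecewise $\mathbb{Q}$-linear function $\tilde\mu(y, \cdot)$ on $V$. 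The ratio $\tilde\mu(y, \cdot)/\|\cdot\|$ descends to a continuous function on the compact unit sphere of $V$, so attains its supremum at some $\lambda_{0}$. To upgrade $\lambda_{0}$ to an integral cocharacter, decompose $V$ into the finitely many closed polyhedral chambers on which $\tilde\mu(y, \cdot)$ is a single rational linear functional; on each chamber the maximiser of a rational linear functional subject to a Euclidean-norm constraint lies in a $\mathbb{Q}$-subspace, so clearing denominators gives a genuine 1-PS in $T$ attaining the maximum over $T$.

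Conjugation-invariance of $\mu$ and the norm then gives $\sup_{\lambda} \mu(x, \lambda)/\|\lambda\| = \sup_{g \in G} \sup_{\lambda \in X_{*}(T)} \mu(g^{-1}x, \lambda)/\|\lambda\|$, so the problem reduces to maximising the inner torus-quantity over $G$-translates of $x$. The inner supremum depends on $g$ only through the finite combinatorial datum $S(g^{-1}x)$, and since there are only finitely many subsets of $\{\chi_{1}, \ldots, \chi_{k}\}$ this outer supremum is a finite maximum, attained on a nonempty constructible subset of $G$; the hypothesis that $x$ is unstable guarantees that this maximum is positive, hence realised by some $g$. Pairing such a $g$ with the torus maximiser from the previous step yields a worst 1-PS for $x$.

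The main obstacle I expect is this final step: verifying rigorously that the inner supremum really is constant on each combinatorial stratum $\{g : S(g^{-1}x) = S_{0}\}$, and locating a $g$ achieving the globally optimal stratum. This is where Kempf's geometric input enters in his sharper form of the theorem: he identifies the parabolic subgroup stabilising the optimum and uses it to show that the worst 1-PS is unique up to conjugation within this parabolic. For the bare existence statement, a more elementary route suffices: extract a maximising sequence $\lambda_{n}$, write $\lambda_{n} = g_{n} \lambda_{n}' g_{n}^{-1}$ with $\lambda_{n}' \in T$ of unit norm, pass to subsequences where $g_{n}^{-1} x$ converges in the (projective, hence compact) $X$ and $\lambda_{n}'$ converges on the unit sphere of $V$, and use upper semicontinuity of $y \mapsto S(y)$ together with continuity of $\tilde\mu$ to conclude that the limiting pair attains the supremum.
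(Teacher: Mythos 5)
The paper does not give a proof of this theorem; it is stated purely as a citation to \cite{MFK}*{Prop.~2.17} and \cite{Kempf}*{Thm.~3.4}, so there is no internal argument to compare against. That said, the core of your proposal --- reduce by conjugacy to a fixed maximal torus $T$, note that $\mu(g^{-1}x,\lambda)=\min_{\chi\in S(g^{-1}x)}\langle\chi,\lambda\rangle$ so the inner supremum over $X_*(T)$ depends on $g$ only through the finite combinatorial datum $S(g^{-1}x)$, and conclude the outer supremum over $g$ is a maximum over finitely many achievable strata --- is correct, and is essentially the standard existence argument. The concern you raise in your last paragraph is unfounded: since $\mu(g^{-1}x,\cdot)$ is literally a function of $S(g^{-1}x)$, the inner supremum is constant on each stratum by definition, and a $g$ realising the optimal stratum exists because the maximum is taken over the (finite, nonempty) set of achievable strata. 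No parabolic machinery is needed for bare existence; Kempf's harder theorem, which the paper alludes to but does not state here, is \emph{uniqueness} of the optimum up to conjugation in its associated parabolic. The only looseness in your torus step is the rationality claim, which is cleaner phrased as projecting the origin onto the rational polyhedron $\{\lambda : \langle\chi,\lambda\rangle\ge 1 \text{ for all } \chi\in S\}$ rather than chamber-by-chamber.

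Your ``more elementary'' compactness fallback, however, has a genuine gap. After writing $\lambda_n=g_n\lambda_n'g_n^{-1}$ and extracting subsequences with $g_n^{-1}x\to y$ and $\lambda_n'\to\lambda^\infty$, the semicontinuity you invoke (namely $S(y)\subseteq S(g_n^{-1}x)$ for large $n$) does yield $\mu(y,\lambda^\infty)\ge M:=\sup_\lambda\mu(x,\lambda)/\|\lambda\|$. But $G$ is not compact, so $y$ lies only in the closure of the orbit $G\cdot x$ and need not be of the form $g^{-1}x$ for any $g\in G$. You therefore obtain a destabilising direction for $y$, not for $x$, and there is nothing that transports $\lambda^\infty$ back to a cocharacter destabilising $x$ itself: boundary points of the orbit can be strictly ``more unstable'' than $x$. (You also lose rationality of $\lambda^\infty$ in the limit.) This non-closedness of the orbit is precisely what makes the theorem non-trivial and is why Kempf, Rousseau and Hesselink had to work for it. Your finiteness/stratification argument avoids the issue cleanly; the compactness detour does not.
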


In the following subsections, we recall the definitions of Chow points,
with the goal of understanding their worst one-parameter subgroups.

\subsection{Chow forms and Chow polytopes} \label{subsec: Chow forms}

Given a curve $C \in \PP^N$ of degree $d$, there is a corresponding
point $[C]$ in the Chow variety $\text{Chow}_d(\PP^N)$. We briefly
recall the relevant definitions; see \cite{GKZ} Chapters
3 and 4 for more details. 

Let $X \subset \mathbb{P}^{n-1}$ be an irreducible subscheme of dimension $k-1$ and degree
$d$.  A generic $(n-k-1)$-dimensional projective subspace $L \subset \mathbb{P}^{n-1}$ will miss $X$. Let
\[ \mathcal{Z}(X) = \{L | \dim(L)=n-k-1, X \cap L \neq \emptyset \}
\]

  \begin{theorem}{\cite{GKZ}*{Ch.~3,Prop.~2.1 and 2.2}}
    \begin{enumerate}
\item $\mathcal{Z}(X)$ is an irreducible hypersurface of degree $d$ in $\Gr(n-k,n)$.      
\item Up to a scalar, $\mathcal{Z}(X)$ is defined by a polynomial $R_X$ called the
  \emph{Chow form} of $X$.
   \end{enumerate}
\end{theorem}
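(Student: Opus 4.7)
The plan is to introduce the standard incidence correspondence, use it to control both the dimension and the irreducibility of $\mathcal{Z}(X)$, then extract the degree via a one-parameter family, and finally appeal to the Picard group of the Grassmannian to produce $R_X$.

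First I would set up the incidence variety
\[
I = \{(x,L) \in X \times \Gr(n-k,n) : x \in L \}
\]
and study its two projections. The first projection $\pi_1 : I \to X$ is surjective, and the fiber over a point $x$ is the set of $(n-k)$-dimensional subspaces containing the line $\hat{x}$; this is a Grassmannian $\Gr(n-k-1,n-1)$, which is smooth and irreducible of dimension $(n-k-1)k$. Hence $I$ is a Grassmann bundle over the irreducible variety $X$, so $I$ is irreducible of dimension
\[
\dim I = (k-1) + (n-k-1)k = (n-k)k - 1 = \dim \Gr(n-k,n) - 1.
\]
The image of the second projection $\pi_2 : I \to \Gr(n-k,n)$ is exactly $\mathcal{Z}(X)$, so $\mathcal{Z}(X)$ is irreducible of dimension at most $\dim \Gr(n-k,n)-1$.

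Next I would show $\pi_2$ is generically finite, indeed birational, by a transversality argument. Since $\dim X + \dim L = (k-1) + (n-k-1) = n-2$, the expected dimension of $X \cap L$ is $-1$; but when $L \in \mathcal{Z}(X)$ the intersection is non-empty, and for a generic such $L$ Bertini-type transversality gives exactly one simple point of intersection. Hence $\pi_2$ is generically one-to-one, so $\mathcal{Z}(X)$ has the same dimension as $I$, making it an irreducible hypersurface.

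For the degree, I would work in Plücker coordinates and intersect $\mathcal{Z}(X)$ with a generic line in $\mathbb{P}(\wedge^{n-k}k^n)$ that lies on $\Gr(n-k,n)$. Such a line parametrises the $(n-k)$-dimensional subspaces that contain a fixed $(n-k-1)$-plane $L_0$ and are contained in a fixed $(n-k+1)$-plane $M$, so projectively it is a pencil of $(n-k-1)$-planes through a common $(n-k-2)$-plane $L_0'$ sweeping out an $(n-k)$-plane. Choosing $L_0'$ generic so that $L_0' \cap X = \emptyset$ and the $(n-k)$-plane meets $X$ transversely in $d$ points, each such point determines exactly one member of the pencil meeting $X$. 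Hence $\mathcal{Z}(X)$ cuts the pencil in $d$ points, so $\deg \mathcal{Z}(X) = d$. Finally, because $\Pic(\Gr(n-k,n)) = \mathbb{Z}\cdot [\mathcal{O}(1)]$, every irreducible hypersurface of degree $d$ is the zero locus of a section of $\mathcal{O}(d)$, i.e.\ of a degree-$d$ polynomial $R_X$ in Plücker coordinates, unique up to scalar.

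The main obstacle I anticipate is justifying generic transversality of $L$ with $X$ cleanly enough to give the birationality of $\pi_2$ and the correct count in the pencil; for the degree, one has to check carefully that the generic $(n-k)$-plane through $L_0'$ really does meet $X$ in $d$ reduced points (this is essentially the defining property of the degree of $X$) and that none of those intersections is absorbed into $L_0'$. Once transversality is handled, the dimension, irreducibility, degree, and existence of $R_X$ all follow formally.
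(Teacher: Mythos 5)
This theorem is not proved in the paper; it is cited verbatim from \cite{GKZ}*{Ch.~3, Prop.~2.1 and 2.2}, so there is no in-paper proof to compare against. Your argument---the incidence variety $I$ with its two projections, the Grassmann-bundle description of $\pi_1$ giving irreducibility and $\dim I = \dim \Gr(n-k,n)-1$, the pencil computation yielding $\deg = d$, and $\Pic(\Gr(n-k,n)) \cong \mathbb{Z}$ producing $R_X$---is precisely the standard proof in that source, and it is correct.

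The one place your sketch is thinner than it should be is the claim that $\pi_2$ is generically one-to-one. Invoking ``Bertini-type transversality'' doesn't quite do the job: the expected-dimension count tells you a \emph{generic} $L$ misses $X$, but it does not by itself say that a generic $L$ in the image $\mathcal{Z}(X)$ meets $X$ at a single point. The clean way to see it is a second incidence count: let
\[
J = \{(x,x',L) : x,x' \in X,\ x \ne x',\ x,x' \in L\}.
\]
The space of $(n-k)$-dimensional linear subspaces containing two distinct lines $\hat x, \hat x'$ is $\Gr(n-k-2,n-2)$ of dimension $(n-k-2)k$, so
\[
\dim J \le 2(k-1) + (n-k-2)k = k(n-k) - 2 = \dim I - 1.
\]
Hence the image of $J\to I$ is a proper closed subset, so a generic $(x,L)\in I$ has $L\cap X = \{x\}$ set-theoretically, giving generic injectivity of $\pi_2$ and therefore $\dim \mathcal{Z}(X) = \dim I$. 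A similar caveat applies to the pencil argument for the degree: in addition to choosing $L_0'$ disjoint from $X$ and the $(n-k)$-plane meeting $X$ in $d$ reduced points, one should note that the $d$ resulting points of the Plücker line lie in the smooth locus of $\mathcal{Z}(X)$ with the line not tangent there, so each counts with multiplicity one; this follows from genericity but is worth flagging as the point where transversality is actually used.
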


Thus, $X \subset \mathbb{P}^{n-1}$ corresponds to the point $[R_X]
\in \mathbb{P}(\Sym^d \Lambda^{n-k} \, \fieldk^{n})$. We will write $[X]$ for
$[R_X]$.

The closure of the set of points of the form $[X]$ has the structure of an algebraic
variety called the \emph{Chow variety}.

\subsection{Numerical semigroups and monomial curves} \label{subsec: Semigroups}

We seek the worst 1-PS for the Chow points of curves $[C] \in
\text{Chow}_d(\mathbb{P}^N)$. In general, this is difficult, so in
this work we only consider the special case where $C$ is a toric rational curve with a single unibranch singularity at $p \in C$. One can associate to such a curve its semigroup of values.
\begin{definition} The semigroup of values of $C$ is the numerical semigroup
\[\Gamma_C:= \{n\in \NN \mid \exists f\in \mathscr{O}_{C,p} \text{ with  } \nu_p(f) = n \} \subset \NN \] where  $\nu_p$ is the valuation at the singular point $p\in C$.
\end{definition}

\begin{example}
The simplest example is a cuspidal curve, that analytically looks like
$y^2 = x^3$ and has semigroup of values $\Gamma  = \langle 2,3\rangle
= \{0,2,3,4,\ldots\}$. More generally we may consider higher order
cusps $y^2 = x^{2r+1}$, and get $\Gamma  = \langle 2,2r+1\rangle =
\{0,2,4,\ldots,2r,2r+1,2r+2,\ldots\}$
\end{example} 

For each numerical semigroup $\Gamma$, there exists a positive integer
$\cond(\Gamma)$ called the \emph{conductor} of $\Gamma$ such that
$\NN_{\geq \cond(\Gamma)} \subseteq \Gamma$. Thus one has
$\mid\NN\setminus \Gamma\mid < \infty$, and the size of that set,
i.e.~the number of gaps, is equal to the arithmetic genus of the curve
$C$ (or equivalently, the $\delta$-invariant of the singularity). We list the elements of $\Gamma $ as
$\gamma_0,\gamma_1,\gamma_2,\ldots$ with $\gamma_0 = 0$, and write
$\condIndex(\Gamma)$ for the index of the conductor. That
is, $\gamma_{\condIndex(\Gamma)} = \operatorname{cond}(\Gamma)$.

Conversely, given a numerical semigroup $\Gamma\subset \NN$,  we can
write down a curve $C_\Gamma \subset \PP^N$ of degree $d$ that has
semigroup of values $\Gamma$.

\begin{definition} \label{def: monomial curve}
  Let $\Gamma\subset \NN$, and let $d>\cond(\Gamma)$.  The \emph{monomial curve} $C_\Gamma$ is the closure of the image of the parametrisation
\[t \longmapsto (1,t^{\gamma_1},t^{\gamma_2},\dots,t^d)\]
where $\gamma_i$ are the elements of $\Gamma_{\leq d}$. 
\end{definition}

\subsection{The $\mathbb{G}_m$-action on $C_\Gamma$}

Let $\Gamma$ be a numerical semigroup, and let $N > \condIndex(\Gamma)$. Let $C_\Gamma \subset
\PP^N$ be the monomial curve as in Definition \ref{def: monomial curve}. Our goal is to find
the worst 1-PS for this action of $\GL_{N+1}$ on the Chow point
$[C_\Gamma]$.

Hyeon and Park have shown that for an unstable
point $x$ in a GIT problem, a generic maximal torus will not contain a
destabilizing 1-PS \cite{HP}. Fortunately, we have a  clue where
to look for the worst 1-PS:  $C_\Gamma$ has a
$\mathbb{G}_m$-automorphism scaling the coordinate $t$, and it acts
with distinct weights on $\PP^N$. This allows us to apply the
Kempf-Morrison Lemma \cite{MS}*{Prop.~4.7} and conclude that if $C_\Gamma$ is
unstable for the $\GL_{N+1}$, then a worst 1-PS will appear in the
maximal torus diagonalizing the $\mathbb{G}_m$-action. (Note: the statement in
the cited work is for a finite automorphism group, but the proof works
for $\mathbb{G}_m$ as well.)


$C_\Gamma$ is a non-normal toric variety. Its associated polytope is just the interval $[0,d]$. Because
the variety is not normal, some of the interior lattice points of the
polytope are missing---these are exactly the gaps in the semigroup.

This observation is useful because the Chow polytope of a toric variety
has a second description due to Kapranov, Sturmfels, and Zelevinsky: it is the \emph{secondary polytope} of the
polytope of $X$ (\cite{GKZ}*{Ch.~8, Thm.~3.1}). In the next section,
we briefly review part of this theory.

\subsection{The Hilbert-Mumford function for Chow points of toric varieties}
Consider a polytope $Q = \text{conv}(A)$, where $A = \{A_0,\dots , A_n\} \subset \RR^{k-1}$ is a finite set of vectors.

\begin{definition} The \emph{secondary polytope} $\Sigma(A)$ of $Q$ is the
  convex hull of the vectors $\phi_T$ as $T$ runs over all
  triangulations of $Q$.
\end{definition}

Let $\RR^{A}$ be the set of functions $A \rightarrow \mathbb{R}$. Given a triangulation $T$ of $Q$, one can associate a vector  $\phi_T \in \RR^{A}$, whose $i$th entry is the real number \[\phi_T(i) = \sum\limits_{\sigma:a_i \in \text{Vert}(\sigma)} \text{Vol}(\sigma)  \] where $\text{Vol}$ is a translation-invariant volume form and the sum is over all maximal simplices of $T$ for which $A_i$ is a vertex. If $A_i$ is not a vertex of any maximal simplex of $T$, the entry is zero. 

The vertices of the
secondary polytope are those vectors $\phi_T$ corresponding to
coherent triangulations of $(Q,A)$ \cite{GKZ}{Ch.~7, Thm.~1.7} (We omit the
definition of coherence, because we won't need it: in dimension one,
all triangulations are coherent.)

The following definitions and lemma are adapted
from \cite{GKZ}*{Ch.~7 Lem.~1.9(c)}. (The cited result gives a formula
for the maximum, but the minimum appears in our GIT calculations.)

\begin{definition}
Given a function $f \in \RR^A$, let $H_f$ be the convex
hull of the vertical half-lines \[\{(x,y) \mid y \geq f(x), x\in A, y
  \in \RR\}.\]

Let the \emph{lower convex hull of $f$}, denoted $\lch(f): Q \rightarrow R$, be the piecewise linear function
\[\lch(f)(x) = \min \{y \mid (x,y) \in H_f \}.\]  
\end{definition}

\begin{lemma} \cite{GKZ}*{Ch.~7 Lem.~1.9(c)} \label{lem min pairing is
    integral} Given a polytope $Q = \text{conv}(A)$ and a vector $w
  \in \RR^{n+1}$ we have \[\min_{\phi \in \Sigma(A)} \langle w,\phi
    \rangle  = k \int_Q \lch(f_w) (x) dx,\]
  where $f_w: A \rightarrow \RR$ is the function $f_w(A_i) = w_i$. 
\end{lemma}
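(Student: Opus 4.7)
The plan is to reduce the minimum over the secondary polytope $\Sigma(A)$ to a minimum over its vertices (the coherent triangulations), rewrite each vertex pairing as an integral of a piecewise linear function, and then recognize the minimizing piecewise linear function as the lower convex hull.

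First I would use the fact that $\Sigma(A)$ is the convex hull of the vectors $\phi_T$, so the linear functional $\phi \mapsto \langle w, \phi \rangle$ attains its minimum at some vertex $\phi_{T^*}$. For any triangulation $T$, I would unfold the pairing as
\[
\langle w, \phi_T \rangle = \sum_{i} w_i \!\!\!\sum_{\sigma : A_i \in \mathrm{Vert}(\sigma)} \!\!\!\mathrm{Vol}(\sigma) = \sum_{\sigma \in T} \mathrm{Vol}(\sigma) \sum_{A_i \in \mathrm{Vert}(\sigma)} w_i.
\]
On each $(k-1)$-simplex $\sigma$ with vertices $A_{i_0},\ldots,A_{i_{k-1}}$, there is a unique affine function $g_\sigma$ with $g_\sigma(A_{i_j}) = w_{i_j}$, and its integral is $\int_\sigma g_\sigma = \frac{\mathrm{Vol}(\sigma)}{k}\sum_j w_{i_j}$. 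Gluing the $g_\sigma$ together gives a continuous piecewise linear function $g_T$ on $Q$ (the linear interpolant of $f_w$ along $T$), and the calculation yields
\[
\langle w, \phi_T \rangle = k \int_Q g_T(x)\, dx.
\]

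Next I would identify the optimal $T^*$. By definition, $\lch(f_w)$ is the largest convex function on $Q$ satisfying $\lch(f_w)(A_i) \leq w_i$ for all $i$. It is piecewise linear, with linearity domains whose vertices form a subset of $A$; in our one-dimensional setting $Q = [0,d]$, this determines a triangulation $T^*$ for which $g_{T^*} = \lch(f_w)$ (and coherence is automatic in dimension one). Thus $\langle w, \phi_{T^*}\rangle = k \int_Q \lch(f_w)\, dx$, giving the upper bound on the minimum.

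For the matching lower bound, I would show that $g_T \geq \lch(f_w)$ pointwise on $Q$ for every triangulation $T$. This follows from the convexity of $\lch(f_w)$: on each simplex $\sigma \in T$ with vertices $A_{i_0},\ldots,A_{i_{k-1}}$, any point $x \in \sigma$ is a convex combination $x = \sum_j t_j A_{i_j}$, and
\[
\lch(f_w)(x) \leq \sum_j t_j \lch(f_w)(A_{i_j}) \leq \sum_j t_j w_{i_j} = g_T(x).
\]
Integrating over $Q$ gives $\langle w, \phi_T\rangle \geq k\int_Q \lch(f_w)\, dx$, completing the proof. The main conceptual step is the passage from the combinatorial sum defining $\phi_T$ to the integral of the interpolant $g_T$; once this is in place, everything reduces to the standard fact that a piecewise linear interpolant of $f_w$ lies above its lower convex envelope. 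No serious obstacle is expected since in dimension one the coherence of triangulations is free.
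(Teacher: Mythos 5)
The paper does not give its own proof of this lemma; it only cites GKZ (Ch.~7, Lem.~1.9(c)), which states the dual version with the maximum and the upper convex hull. Your blind proof is correct and is, in substance, the standard GKZ-style argument. The two key steps are both sound: the identity $\langle w,\phi_T\rangle = k\int_Q g_T$, obtained by swapping the order of summation and using that the integral of an affine function over a $(k-1)$-simplex $\sigma$ equals $\mathrm{Vol}(\sigma)$ times its value at the centroid (i.e. $\frac{1}{k}\sum_j w_{i_j}$); and the pointwise comparison $g_T \geq \lch(f_w)$ via convexity. One small remark: in arbitrary dimension the subdivision induced by $\lch(f_w)$ is always coherent but may not be simplicial; this is immaterial here since any triangular refinement of a non-simplicial cell yields the same interpolant $g_T$ and hence the same pairing, and in the paper's one-dimensional setting every subdivision of $[0,d]$ by points of $A$ is already a triangulation, so the issue disappears entirely -- as you noted.
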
 

In summary: to compute the miminal pairing of a vertex of the secondary polytope with a vector $w \in \RR^{n+1}$, we first take the lower convex hull, and then integrate it over the polytope.

The formula in Lemma \ref{lem min pairing is integral} leads to the
following expression for Hilbert-Mumford function. To our knowledge,
this has not appeared in the literature before.

\begin{proposition}\label{prop HM is lch}
The value of the Hilbert-Mumford function for the point $[X] \in
\Chow_d(\PP^N)$ at a 1-PS $ \lambda : \mathbb{G}_m \rightarrow T$ with
weight vector $w$ is \[\mu(\lambda,[X]) = k \int_Q\lch(f_w)\]

\end{proposition}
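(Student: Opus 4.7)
The plan is to combine the Kapranov--Sturmfels--Zelevinsky identification of the Chow polytope of a toric variety with its secondary polytope (already cited above as \cite{GKZ}*{Ch.~8, Thm.~3.1}) with the integral formula of Lemma \ref{lem min pairing is integral}. There is no analytic difficulty; the only work is to unwind the definitions carefully.

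First, I would expand the definition of $\mu$ in terms of the weight decomposition of the Chow form. Fix a maximal torus $T$ containing $\lambda$, and decompose $\Sym^d \Lambda^{n-k}\fieldk^n$ into $T$-weight spaces indexed by a lattice $M$. Write $R_X = \sum_{\alpha \in M} c_\alpha e_\alpha$. If $w \in M^\vee$ is the weight vector of $\lambda$, then $\lambda(t)$ acts on $e_\alpha$ by $t^{\langle w,\alpha\rangle}$, so
\[
\mu([X],\lambda) \;=\; \min\bigl\{\langle w,\alpha\rangle : c_\alpha \neq 0\bigr\}.
\]

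Next I would invoke the KSZ theorem: when $X$ is the toric variety attached to $(Q,A)$, the Newton polytope of the Chow form $R_X$---that is, the convex hull of $\{\alpha : c_\alpha \neq 0\}$---is exactly the secondary polytope $\Sigma(A)$, whose vertices are the $\phi_T$ for (coherent) triangulations $T$ of $Q$. Since a linear functional on a polytope attains its minimum at a vertex, and every vertex of the Newton polytope appears with nonzero coefficient by definition, the minimum above can be taken over all of $\Sigma(A)$:
\[
\mu([X],\lambda) \;=\; \min_{\phi \in \Sigma(A)} \langle w,\phi\rangle.
\]

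Finally, I would apply Lemma \ref{lem min pairing is integral} to rewrite the right-hand side as $k\int_Q \lch(f_w)(x)\,dx$, which is the claimed identity. The only step that requires any care is the passage from ``minimum over $\alpha$ with $c_\alpha \neq 0$'' to ``minimum over $\Sigma(A)$,'' but this is routine once one notes that vertices of a Newton polytope always correspond to nonzero monomial coefficients. Everything else is bookkeeping, so the proposition follows almost immediately from the two prior results.
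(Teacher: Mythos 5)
Your proposal is correct and takes essentially the same route the paper intends: the paper does not actually print a proof of Proposition \ref{prop HM is lch}, but the surrounding discussion (the KSZ identification of the Chow polytope with the secondary polytope $\Sigma(A)$, followed by Lemma \ref{lem min pairing is integral}) amounts to exactly the argument you spell out, with the only real content being the passage from the minimum over the support of $R_X$ to the minimum over the Newton polytope $\Sigma(A)$, which you handle correctly.
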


\subsection{Chow polytopes of monomial curves $C_\Gamma$}

Now we apply these results to monomial curves.

Let $\Gamma$ be a numerical semigroup, and let $N \geq \condIndex
\Gamma$, and let $C_\Gamma \subset \PP^N$ be the corresponding a monomial curve

Its corresponding polytope is $Q =\operatorname{conv}(A) = [0,d]$, where $A =
(\gamma_0,\ldots,\gamma_N)$.
The Chow polytope of $C_\Gamma$ is the same as the secondary polytope $\Sigma(A)$, and hence its vertices correspond to triangulations of $(Q,A)$. A 1-dimensional triangulation is just a decomposition into intervals, and it determined by the placement of the vertices; hence, such triangulations are in bijection with subsets of the finite set $A \setminus \{0,\gamma_N\}$.

\begin{example} 
Consider a cuspidal cubic $X^2Z-Y^3 \subset \PP^2$. The secondary polytope/Chow polytope has one vertex for each integral subdivision of $[0,3]$, where we are not allowed to place a vertex at $1$, because that value is missing from the semigroup of values. Thus there are only two possible sudivisions: $[0,2,3]$ and $[0,3]$, i.e.~we either place a vertex at $2$ or not. These two subdivisions correspond to the two vertices of the weight polytope, and hence to the two possible initial ideals: one is $X^2Z$ and the other is $Y^3$.

\end{example}

Thus for curves, Proposition \ref{prop HM is lch} says that the value of the Hilbert-Mumford function is the area under the graph of the lower convex hull of the piecewise linear function determined by the weight vector $w$.

\begin{definition} We call a weight vector \emph{convex} if it is equal to its lower
  convex hull.
\end{definition}

Equivalently, a weight vector is convex if and only if the slopes of the corresponding piecewise linear function are increasing.

\begin{lemma} \label{lem: worst 1ps is convex}
Let $w^{*}$ be the weight vector of the worst 1-PS for $C_{\Gamma}$. Then
$w^{*}$ is nonnegative and convex.
\end{lemma}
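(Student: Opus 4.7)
The plan is to prove each property by a modification argument: construct a candidate modification of a worst 1-PS $w^*$ that can only improve the Kempf ratio $\mu(w)/\|w\|$, so that Kempf's uniqueness of the worst 1-PS on the (abelian) maximal torus forces $w^*$ itself to already have the property.

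For nonnegativity, I would invoke the closest-point reformulation foreshadowed in the introduction: up to positive scaling (and a choice of representative modulo the central 1-PS), the worst 1-PS $w^*$ is the nearest point of the Chow polytope $\Sigma(A)$ to the origin. Each vertex $\phi_T$ of $\Sigma(A)$ has entries that are sums of simplex volumes, hence nonnegative, so $\Sigma(A) \subset \RR^{N+1}_{\geq 0}$, and the projection of the origin stays in this orthant.

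For convexity, set $\tilde{w}_i := \lch(f_{w^*})(A_i)$; the piecewise-linear interpolation of $\tilde{w}$ is then $\lch(f_{w^*})$, which is convex by construction, so $\tilde{w}$ is a convex weight vector. Two verifications complete the argument. First, $\mu(\tilde{w}) = \mu(w^*)$: by Proposition \ref{prop HM is lch} this reduces to $\lch(f_{\tilde{w}}) = \lch(f_{w^*})$, shown by a two-sided bound. One inequality follows from $f_{\tilde{w}} \leq f_{w^*}$ on $A$; the reverse uses that $\lch(f_{w^*})$ is itself a convex function on $Q$ whose values at the sample points equal $f_{\tilde{w}}(A_i)$, so it qualifies as a convex function weakly below $f_{\tilde{w}}$. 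Second, $\|\tilde{w}\| \leq \|w^*\|$: since $w^* \geq 0$, the constant function $0$ is a convex function $\leq f_{w^*}$ on $A$, forcing $\lch(f_{w^*}) \geq 0$ on $Q$; hence $0 \leq \tilde{w}_i \leq w^*_i$, from which the norm inequality follows termwise. Together these give $\mu(\tilde{w})/\|\tilde{w}\| \geq \mu(w^*)/\|w^*\|$, and Kempf's uniqueness then forces $\tilde{w}$ and $w^*$ to be positively proportional, so convexity of $\tilde{w}$ transfers to $w^*$.

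The main obstacle I expect is a normalization issue rather than a substantive one: ``worst 1-PS'' is really a ray in weight space, and the nonnegativity and convexity arguments each produce a representative of the same ray, so one must check that a single representative can be chosen that is simultaneously nonnegative and convex. Modulo this bookkeeping, the heart of the proof is the idempotence identity $\lch(f_{\tilde{w}}) = \lch(f_{w^*})$, which is what makes the sampling operation preserve $\mu$.
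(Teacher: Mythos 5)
Your argument is correct in substance but, for nonnegativity, takes a genuinely different route than the paper and carries a hidden dependency that should be made explicit. The paper's proof is entirely elementary: if some coordinate of $w$ is negative, replace $w$ by $w'$ with $w'_i=\max\{0,w_i\}$; since every vertex $\phi$ of the Chow polytope lies in the nonnegative orthant, $\langle w',\phi\rangle\geq\langle w,\phi\rangle$ for all $\phi$, so $\mu$ does not drop while $\|w\|$ strictly decreases, improving the ratio. You instead invoke the fact that the worst 1-PS direction is the nearest point of the Chow polytope $\Sigma(A)$ to the origin. That fact is true (it is the standard Kempf--Hesselink picture, and follows by convex duality from $\mu([X],w)=\min_{\phi\in\Sigma(A)}\langle w,\phi\rangle$), but it is neither stated nor proved in this paper, and it is \emph{not} the closest-point reformulation ``foreshadowed in the introduction'': the reformulation the paper develops is $\prox(a,W)$ onto the cone $W$ of convex weight vectors, which is Theorem~\ref{thm: optimisation problem} and whose proof \emph{uses} the present lemma --- so that version would be circular here, and yours, while non-circular, needs a separate justification or citation that the paper's two-line modification argument avoids. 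For convexity your approach is essentially the paper's: define $\tilde w_i=\lch(f_{w^*})(A_i)$ and compare ratios. The $\lch$-idempotence identity $\lch(f_{\tilde w})=\lch(f_{w^*})$ is exactly the right technical point and you argue it correctly, but the detour through Kempf's uniqueness (and the ray-normalization bookkeeping you flag as ``the main obstacle'') is unnecessary. As the paper does, one can simply observe that if $w^*_i>\lch(f_{w^*})(A_i)$ for some $i$, lowering that coordinate leaves the lower convex hull, hence $\mu$, unchanged and strictly decreases $\|w\|$, contradicting optimality of $w^*$; equivalently, in your notation, $\mu(\tilde w)=\mu(w^*)$ and $0\le\tilde w_i\le w^*_i$ give $\|\tilde w\|\le\|w^*\|$, strict inequality contradicts optimality, and then $\|\tilde w\|=\|w^*\|$ together with the coordinatewise bound forces $\tilde w=w^*$ outright --- no proportionality, no appeal to uniqueness of the Kempf 1-PS.
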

\begin{proof}
First, suppose that $w$ has at least one negative coordinate.   
The vertices of the secondary polytope all lie in the positive
orthant, and the Chow polytope coincides with the secondary
polytope. Let $w'$ be defined by $w'_i = \max\{0,w_i\}$. Then $\langle
w',\phi \rangle \geq \langle w,\phi \rangle $ for all $\phi \in
\Sigma(A)$. Hence $\mu([C_{\Gamma}],w') \geq \mu([C_{\Gamma}],w)$. 
Also $\| w' \| < \| w \|$. Thus
\[
\frac{\mu([C_{\Gamma}],w')}{\| w' \|} \geq \frac{\mu([C_{\Gamma}],w)}{\| w \|}. 
\]
Thus, $w^{*}$ is nonnegative.

Now suppose $w$ satisfies $w_i \neq 0$ for all $i$, and for at least
one index $i$, we have $w_i > \lch(f_w)(A_i)$. Then define $w'$ by
lowering the $i^{th}$ coordinate to the lower convex hull. We have
$\mu([C_{\Gamma}],w') =  \mu([C_{\Gamma}],w)$ since they have the same
lower convex hull, but $\| w' \| < \| w \|$. Thus 
\[
\frac{\mu([C_{\Gamma}],w')}{\| w' \|} \geq \frac{\mu([C_{\Gamma}],w)}{\| w \|}. 
\]
Thus, $w^{*}$ is convex.
\end{proof}

\subsection{The optimisation problem} \label{subsec: optimisation problem}

In this section we present an explicit optimisation problem whose
solution is a worst 1-PS for $[C_{\Gamma}]$. We need to introduce a
little more notation first.

\subsubsection{The cone $W$.} 
By Lemma \ref{lem: worst 1ps is convex}, in finding the worst 1-PS
we are justified in restricting our attention to one-parameter subgroups whose weight vectors $w$ are convex.

\begin{definition}
  Let $W\subset \RR^{N+1}$ be the set of convex weight vectors. 
\end{definition}

Since a weight vector is convex if and only if the slopes of the
corresponding piecewise linear function are increasing, $W$ is a
  polyhedral cone generated by the inequalaties that the slope of the
  $i$th line less than or equal to the slope of the $(i+1)$th line. For more
  details about $W$, see Section \ref{subsubsec: W} below.

\subsubsection{The vector $a$.} 

\begin{lemma}  \label{lem: area is dotting with a}
Let $C_\Gamma$ be a monomial curve, let $A =
(\gamma_0,\ldots,\gamma_N)$, $Q = \operatorname{conv}(A)$, and
suppose that $w$ is nonnegative and convex. Then
\[
2 \int_Q \lch(f_w) (x) dx = a \cdot w
\]
where
    \[
      a_i = \left\{
\begin{array}{ll}
\gamma_1 & \text{if $i=1$} \\
  \gamma_{i} - \gamma_{i-2} & \text{if $2 \leq i \leq N$} \\
  1 & \text{if $i=N+1$} 
\end{array}
    \right.
  \]
(Note: in this formula, we have indexed $w = (w_0,\ldots,w_N)$ and $A
= (\gamma_0,\ldots,\gamma_N)$ starting at 0,  and indexed $a =
(a_1,\ldots,a_{N+1})$ starting at 1. This is because in the sequel, $a$ will appear in a matrix equation where we index the rows beginning
at 1.)  
\end{lemma}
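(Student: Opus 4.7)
The plan is to use the convexity hypothesis to reduce the lower convex hull to something elementary, and then to evaluate the integral as a telescoping sum of trapezoid areas.

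First, since $w$ is convex in the sense of the paper, the piecewise linear function on $Q$ whose graph interpolates the points $(\gamma_i, w_i)$ for $i=0,\ldots,N$ is already its own lower convex hull, so $\lch(f_w)$ coincides with this interpolant on $[0,d]$. I would therefore write
\[
\int_Q \lch(f_w)(x)\,dx \;=\; \sum_{i=0}^{N-1} \int_{\gamma_i}^{\gamma_{i+1}} \ell_i(x)\,dx,
\]
where $\ell_i$ is the linear function through $(\gamma_i, w_i)$ and $(\gamma_{i+1}, w_{i+1})$. Each summand is the area of a trapezoid under a line over a horizontal interval, which equals $\tfrac{1}{2}(\gamma_{i+1}-\gamma_i)(w_i + w_{i+1})$. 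Doubling and summing gives
\[
2\int_Q \lch(f_w)(x)\,dx \;=\; \sum_{i=0}^{N-1} (\gamma_{i+1}-\gamma_i)(w_i+w_{i+1}).
\]

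Next I would collect the coefficient of each $w_j$ on the right-hand side. The variable $w_0$ appears only in the $i=0$ summand, contributing $\gamma_1 - \gamma_0 = \gamma_1$; similarly, $w_N$ appears only in the $i=N-1$ summand, contributing $\gamma_N - \gamma_{N-1}$. For $1 \leq j \leq N-1$, the variable $w_j$ appears in the $i=j-1$ summand (as $w_{i+1}$) and in the $i=j$ summand (as $w_i$), giving a combined coefficient of $(\gamma_j-\gamma_{j-1}) + (\gamma_{j+1}-\gamma_j) = \gamma_{j+1}-\gamma_{j-1}$. Under the paper's indexing convention in which $a_i$ pairs with $w_{i-1}$, these match the stated formulas $a_1 = \gamma_1$ and $a_i = \gamma_i - \gamma_{i-2}$ for $2 \leq i \leq N$.

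The only remaining step is to verify the endpoint value $a_{N+1}=1$, i.e.\ that $\gamma_N - \gamma_{N-1} = 1$. This is where the assumption that $d = \gamma_N > \cond(\Gamma)$ in Definition~\ref{def: monomial curve} (equivalently $N > \condIndex(\Gamma)$) enters: since $\gamma_N - 1 \geq \cond(\Gamma)$, the integer $\gamma_N - 1$ lies in $\Gamma$, and being the largest element of $\Gamma$ strictly smaller than $\gamma_N$ it must equal $\gamma_{N-1}$. Hence the coefficient of $w_N$ is $1$, completing the identification $2\int_Q \lch(f_w) = a\cdot w$. I expect no genuine obstacle: the only subtlety is keeping track of the two different indexings for $a$ and $w$, and remembering that the convexity hypothesis is what allows us to replace the a priori nonlinear operation $\lch$ by straight-line interpolation.
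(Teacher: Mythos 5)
Your proof is correct and follows essentially the same route as the paper: both use convexity to identify $\lch(f_w)$ with the piecewise linear interpolant, decompose the integral into trapezoid areas, and regroup the sum by coefficients of each $w_j$. You are slightly more careful than the paper at the final step, correctly noting that the strict inequality $d > \cond(\Gamma)$ from Definition~\ref{def: monomial curve} (equivalently $N > \condIndex(\Gamma)$) is what guarantees $\gamma_N - \gamma_{N-1} = 1$; the paper's cited condition $N \geq \condIndex(\Gamma)$ is not quite sufficient on its own, since at $N = \condIndex(\Gamma)$ one has $\gamma_N = \cond(\Gamma)$ and $\gamma_{N-1} < \cond(\Gamma)-1$.
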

\begin{proof}
Since $w$ is convex, we have $\lch(f_w) (\gamma_i) = w_i$. Since $w$ is
nonnegative, the integral $2\int_{\gamma_{i-1}}^{\gamma_{i}} \lch(f_w)
(x) dx$ is twice the area of the trapezium with heights $w_{i-1}$ and
$w_{i}$ and width $(\gamma_{i} - \gamma_{i-1})$. Then 
\begin{align*}
2 \int_{0}^{\gamma_N} \lch(f_w) dx &= \sum_{i=1}^{N-1} (w_{i-1}+w_{i})
                                     (\gamma_{i} - \gamma_{i-1})\\
  &= w_0 (\gamma_1-\gamma_0) + \sum_{i=2}^{N} w_{i-1}
    (\gamma_{i}-\gamma_{i-2}) + w_N (\gamma_N - \gamma_{N-1})\\
  &=w_0 (\gamma_1) + \sum_{i=2}^{N} w_{i-1}
    (\gamma_{i}-\gamma_{i-2}) + w_N (1).
\end{align*}

Here the last line follows because $\gamma_0 = 0$ and $(\gamma_N -
\gamma_{N-1})=1$, since $N \geq \condIndex(\Gamma)$.

\end{proof}

\begin{remark}
	Once we are above the index of the conductor, all gaps between consecutive semigroup elements are $1$, and hence, if the degree of the curve is large enough, the tail of $a$ looks like $(2,2,\dots,2,1)$. We can think of increasing the degree as merely adding an extra $2$. This gives the moral reason why we might expect to see persistent optima, and motivates the Simplified Problem, discussed in the next subsection.
\end{remark}

We are now ready for the main result of this section.

\begin{theorem} \label{thm: optimisation problem}
Let $W$ and $a$ be the cone and vector defined above for the monomial
curve $C_\Gamma$. 
  
Let $\prox(a,W)$ be the nearest point on
the cone $W$ to the vector $a$ outside it.
  
Then $\prox(a,W)$ is the weight vector of a worst 1-PS for $[C_{\Gamma}]$. 
\end{theorem}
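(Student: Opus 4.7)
The plan is to reduce the Kempf maximization to the standard fact that Euclidean projection of $a$ onto a closed convex cone $W$ maximizes the ratio $(a \cdot w)/\|w\|$ over $W$.

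First I would combine Proposition \ref{prop HM is lch}, specialized to curves where $k = 2$, with Lemma \ref{lem: area is dotting with a}. For any $w \in W$, convexity means the piecewise linear interpolant $f_w$ equals its own lower convex hull, so
\[
\mu([C_\Gamma], w) \;=\; 2 \int_Q \lch(f_w)(x)\, dx \;=\; a \cdot w.
\]
By Lemma \ref{lem: worst 1ps is convex}, the weight vector of any worst 1-PS is nonnegative and convex, hence in particular lies in $W$. Combining these, finding a worst 1-PS reduces to maximizing $(a \cdot w)/\|w\|$ over nonzero $w \in W$.

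The key step is the projection characterization. Set $p = \prox(a, W)$, which is unique since $W$ is closed and convex. The standard variational inequality for projection onto a convex set reads $(a - p) \cdot (w - p) \leq 0$ for every $w \in W$. Specializing to $w = 0$ and $w = 2p$ (both in $W$ because $W$ is a cone) gives $(a - p)\cdot p = 0$, which simultaneously yields $a \cdot p = \|p\|^2$ and $(a - p) \cdot w \leq 0$ for all $w \in W$. For any nonzero $w \in W$, this gives
\[
\frac{a \cdot w}{\|w\|} \;\leq\; \frac{p \cdot w}{\|w\|} \;\leq\; \|p\| \;=\; \frac{a \cdot p}{\|p\|},
\]
with the middle step being Cauchy-Schwarz. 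So $p$ attains the supremum of $(a \cdot w)/\|w\|$ on $W$, and hence is the weight vector of a worst 1-PS.

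The core argument is a short variational identity, so the proof is not long. The only delicate point is that $W$ is not pointed: its lineality space, consisting of the affine vectors $w_i = m\gamma_i + c$, is two-dimensional, reflecting that worst 1-PS are only determined up to parabolic conjugacy. A minor secondary point is verifying $p \geq 0$ so that it matches Lemma \ref{lem: worst 1ps is convex}; this follows by contradiction, since if $p$ had a negative entry, the truncation-followed-by-convexification construction used in the proof of that lemma would produce a vector in $W$ with strictly larger ratio, contradicting maximality of $p$ established above. The genuine work in the paper lies not in proving this theorem but in actually computing $\prox(a,W)$, which is the task taken up in the subsequent sections via the Simplified Problem and the KKT analysis.
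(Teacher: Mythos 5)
Your proof is correct and follows essentially the same route as the paper: reduce via Lemma \ref{lem: worst 1ps is convex}, Proposition \ref{prop HM is lch}, and Lemma \ref{lem: area is dotting with a} to maximizing $(a\cdot w)/\|w\|$ over $W$, then observe that $\prox(a,W)$ achieves this maximum. The paper asserts the last step tersely (``$\prox(a,W)$ minimizes $\theta(a,w)$''); you supply the standard justification via the projection variational inequality and Cauchy--Schwarz, which is a welcome elaboration rather than a different method.
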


\begin{proof}
By definition, the worst 1-PS satisfies
\[
\frac{\mu(x,\lambda^{*})}{\|\lambda^{*}\|}  = \sup_{\lambda} \frac{\mu(x,\lambda)}{\|\lambda\|}
\]
By applying Lemma \ref{lem: worst 1ps is convex}, then Proposition
\ref{prop HM is lch}, and then Lemma \ref{lem: area is dotting with
  a}, we have
   \begin{align*}
\frac{\mu([C_\Gamma],w^{*})}{\|w^{*}\|}  &= \max_{w \in W}
     \frac{\mu([C_\Gamma],w)}{\|w\|} \\
& = \max_{w \in W} \frac{2 \int_Q \lch(f_w) (x) dx}{\|w\|} \\
& = \max_{w \in W} \frac{a \cdot w}{\|w\|} \\
&= \max_{w \in W} \|a \| \cos(\theta(a,w))     
\end{align*}

Since $\prox(a,W)$ minimizes $\theta(a,w)$, the result follows.

\end{proof}



\subsubsection{The Simplified Problem}
In the sequel, it will be convenient to solve the optimisation problem
for the vector $a$ whose last coordinate is 2 rather than 1. We call this the Simplified
Problem. We then use the solution to the Simplified Problem to
construct a solution to the original problem, which we henceforth
refer to as the Unsimplified Problem.

We illustrate an example in Figure \ref{fig:graphic for simple cusp}. Let $\Gamma = \langle 2, 3 \rangle$, and
consider the Simplified Problem when $N = 10$. Then we have $a = (2,
3, 2, 2, 2, 2, 2, 2, 2, 2, 2)$, and the worst 1-PS has weights \\$w =
\left(\frac{33}{14}, \frac{157}{70}, \frac{153}{70}, \frac{149}{70}, \frac{29}{14}, \frac{141}{70}, 2, 2, 2, 2, 2\right)$. We
plot the points $(\gamma_i,a_i)$ in green, and plot a blue line graph
through the points 
$(\gamma_i, w_i)$.

\begin{figure}[H]
\centering
\begin{tikzpicture}[scale=1.0] 
  \draw[->] (0,0) -- (12,0);
  \draw[->] (0,0) -- (0,4);
  \draw (-0.1,2)--(0.1,2);
  \draw (-0.1,2) node[anchor=east] {$2$};
  \foreach \x in {2,4,6,8,10}
    \draw (\x,-0.1)--(\x,0.1);
  \foreach \x in {2,4,6,8,10,12}
    \draw (\x,-0.1) node[anchor=north] {$\x$};  
  \filldraw[green] (0,2) circle (0.05);
  \filldraw[green] (2,3) circle (0.05);
  \foreach \x in {3,4,5,6,7,8,9,10,11} 
    \filldraw[green] (\x,2) circle (0.05);
  \draw[blue] (0, 2.357)--(6,2.014)--(7,2)--(11,2);
\end{tikzpicture}
  \caption{The vector $a$ and the optimal weights $w$ for the
    Simplified Problem for $ \Gamma = \langle 2, 3 \rangle$ and
    $N=10$}
    \label{fig:graphic for simple cusp}
\end{figure}
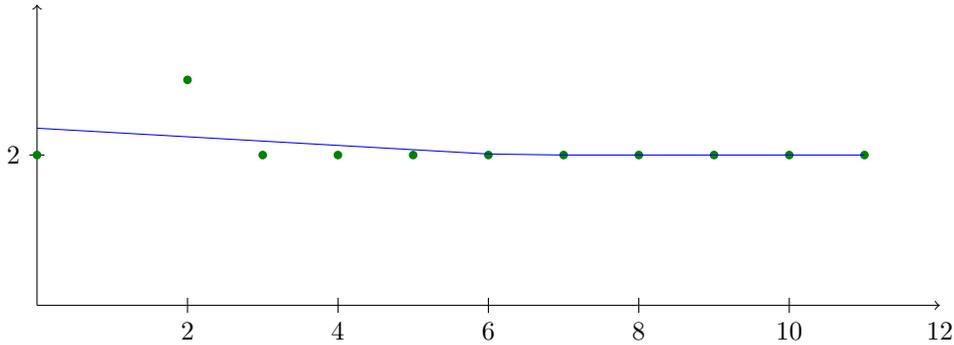

\section{The KKT matrix equation and its solutions} \label{sec:
  structure theorem}

To solve the optimisation problem described in the previous section, we will use the Karush-Kuhn-Tucker (KKT) conditions in nonlinear optimisation to study the closest point on the span of each face of $W$ to the vector $a$.

Recall that we write $\operatorname{cond}(\Gamma)$ for the
conductor of $\Gamma$, and $\condIndex(\Gamma)$ for its index. That
is, $\gamma_{\condIndex(\Gamma)} = \operatorname{cond}(\Gamma)$. When we say there is a corner above the conductor, we mean that the set of corner indices $I$ contains an element $\ell$ greater than $\condIndex(\Gamma)$, so that the point $(\gamma_\ell,w_\ell)$ is  to the right of the line $x = \operatorname{cond}(\Gamma)$.

\subsection{The KKT conditions for convex optimisation} \label{subsec: KKT}

The KKT conditions are necessary conditions satisfied
by an optimal point for a broad class of
nonlinear optimisation problems. See for instance
\cite{BoydVandenberghe}*{Section 5.5.3}.  In many cases, including the problem
considered here, they are also sufficient.

Suppose we want to minimize a function $f(w)$ subject to the
constraints $g_i(w) \leq 0$ and $h_j(w) = 0$. The KKT conditions for
the optimal point $w^*$ are as follows. 

\begin{enumerate}
\item Stationarity: $\nabla f(w^{*}) + \sum \lambda_j \nabla h_j(w^*) + \sum \mu_i \nabla g_i(w^*) = 0$
\item Primal feasibility: $g_i(w^*) \leq 0$ and $h_j(w^{*})=0$ for all $i,j$
\item Dual feasibility: $\mu_i \geq 0$ for all $i$
\item Complementary slackness: $\mu_i g_i(w^{*}) = 0$ for all $i$
\end{enumerate}

In our case (computing the nearest
point on a polyhedron to a point outside it), the
objective function is a strongly convex quadratic function, and the
constraints are given by affine inequalities. Therefore, the KKT
conditions are also sufficient for obtaining an optimal $w^*$ (see for
instance 
\cite{BoydVandenberghe}*{Section 5.5.3}). Finally, the optimal $w^*$ is unique,
since the objective function is strongly convex.

\subsubsection{The polyhedral cone $W$ associated to
  $\gamma$} \label{subsubsec: W}

For any $\gamma$, the cone $W$ is defined by the conditions that the slopes of the line
segments connecting the points $(\gamma_i,w_i)$ are increasing. This
yields $N-1$ halfspaces in $\RR^{N+1}$. The polyhedral cone $W$ therefore has
a two-dimensional lineality space. The faces of $W$ are easy to
describe. A vector $w$ lies on the facet $m_{i} = m_{i+1}$ if and only
there is no corner in the graph  at $(\gamma_{i},w_{i})$.

The inequality
\[
\frac{w_{i}-w_{i-1}}{\gamma_{i}-\gamma_{i-1}} \leq \frac{w_{i+1}-w_{i}}{\gamma_{i+1}-\gamma_{i}}
\]
rearranges to
\[
-(\gamma_{i+1}-\gamma_{i})w_{i-1}
+(\gamma_{i+1}-\gamma_{i-1})w_{i}-(\gamma_{i}-\gamma_{i-1})w_{i+1}
\leq 0.
\]

The following piecewise linear functions are useful when working with $W$.

\begin{definition}
  \begin{align*}
    F_{k}(x) & := \max\{-x+k,0\}\\
    L_{\gamma_N}(x) &:= \gamma_N-x \\
    L_{1}(x) &:=1
  \end{align*}
 For a vector $\gamma$, we abuse notation and write $F_{k}(\gamma)$
 to denote the vector with coordinates $F_{k}(\gamma_i)$ for all
 $i$. (We do this also for $L_{\gamma_N}(x)$ and $L_{1}(x)$.) 
\end{definition}

\begin{proposition} \label{prop:rays of W}
  \begin{enumerate}
\item Any piecewise linear function $F(x)$ on $[0,\gamma_N]$ with corners lying over integers $0 \leq
  n \leq d$ is a linear combination of the functions $\{F_{k}(x): 1
  \leq k\leq \gamma_n-1\}$, $L_{\gamma_N}(x)$, and $L_{1}(x)$.
\item The lineality space of $W$ is spanned by $L_{\gamma_N}(\gamma)$ and
  $L_{1}(\gamma)$.
\item For each $1 \leq i \leq N-1$, the vector $F_{\gamma_i}(\gamma)$ spans a ray of $W$.
\item Let $F(x)$ be a piecewise linear function on $[0,\gamma_N]$ with
  corners lying over integers $n \in \gamma$. Suppose that the slopes of the line segments in the graph
  of $F$ are
  negative and increasing, and $F(x) \geq 0$. Then $F(x)$ is a
  nonnegative linear combination of the rays spanned by 
  $\{F_{\gamma_i}(\gamma) : 1 \leq i \leq N-1\}$ and $L_{\gamma_N}(\gamma)$
  and $L_{1}(\gamma)$.
\end{enumerate}
\end{proposition}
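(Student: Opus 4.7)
I would prove the four parts in order, using one unifying observation: a piecewise linear (PL) function on $[0,\gamma_N]$ is uniquely determined by the jumps in slope at its corners together with an affine background, and the hinge functions $F_k$ together with $L_{\gamma_N}$ and $L_1$ are adapted to exactly this decomposition. For part (1), I would argue directly. Given a PL function $F$ with corners at integers $k_1<k_2<\cdots<k_r$ in $[0,\gamma_N]$, let $s_0$ be the slope on $[0,k_1]$ and $s_j$ the slope on the segment immediately to the right of $k_j$. Since $F_{k_j}$ has slope $-1$ on $[0,k_j]$ and slope $0$ on $[k_j,\gamma_N]$, adding a multiple $c_j F_{k_j}$ increases the slope by $c_j$ precisely at the corner $k_j$. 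Setting $c_j := s_j-s_{j-1}$ makes $g := F - \sum_j c_j F_{k_j}$ affine on $[0,\gamma_N]$, and since $L_{\gamma_N}$ and $L_1$ are linearly independent affine functions on $[0,\gamma_N]$, they span the two-dimensional space of affine functions, completing the decomposition.

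For part (2), the lineality space of $W$ equals $W\cap(-W)$, i.e., those $w\in\RR^{N+1}$ whose PL interpolation through $(\gamma_i,w_i)$ is both convex and concave, hence affine. The space of affine interpolations on the nodes $\gamma_0,\ldots,\gamma_N$ is two-dimensional, and $L_{\gamma_N}(\gamma)$ and $L_1(\gamma)$ are linearly independent elements of it (compare values at $x=0$ and $x=\gamma_N$), so they span it. For part (3), the function $F_{\gamma_i}$ is convex (its slopes are $-1$ then $0$), so $F_{\gamma_i}(\gamma)\in W$. Moreover, the slope of $F_{\gamma_i}$ changes only at $\gamma_i$, so $F_{\gamma_i}(\gamma)$ saturates every facet equality $m_j=m_{j+1}$ except at $j=i$, that is $N-2$ of the $N-1$ facet hyperplanes. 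Since $F_{\gamma_i}$ is genuinely non-affine for $1\leq i\leq N-1$, its class modulo the lineality space is nonzero, so together with the lineality it spans a $3$-dimensional face of $W$, which is a ray.

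For part (4), suppose $F$ has corners among $\gamma_1,\ldots,\gamma_{N-1}$, say at $\gamma_{i_1}<\cdots<\gamma_{i_r}$, with slopes $s_0\leq s_1\leq\cdots\leq s_r\leq 0$. Applying the decomposition of (1) with the $k_j$ chosen to be the $\gamma_{i_j}$'s yields
\[
F = \sum_{j=1}^{r} c_j\, F_{\gamma_{i_j}} + g, \qquad c_j = s_j-s_{j-1}\geq 0,
\]
with $g$ affine. Since each $F_{\gamma_{i_j}}$ vanishes at $\gamma_N$, we get $g(\gamma_N) = F(\gamma_N)\geq 0$, and since each $F_{\gamma_{i_j}}$ is flat on the rightmost segment, the slope of $g$ equals $s_r\leq 0$. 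Writing $g(x)=\alpha L_{\gamma_N}(x)+\beta L_1(x)=-\alpha x+(\alpha\gamma_N+\beta)$ gives $\alpha=-s_r\geq 0$ and $\beta=g(\gamma_N)\geq 0$, yielding the desired nonnegative combination. I expect the only substantive step to be this last paragraph: everything before it is essentially linear algebra on PL functions, but here one must correctly track that both the ``negative, increasing slopes'' hypothesis and the ``$F\geq 0$'' hypothesis are needed, the former to make $\alpha\geq 0$ and the latter to make $\beta\geq 0$. Once this bookkeeping is done, the four parts chain together without obstacle.
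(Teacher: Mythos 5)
The paper states Proposition \ref{prop:rays of W} without proof, so there is no argument of the authors' to compare against; your proposal supplies a complete proof, and it is correct. The decomposition in part (1) — subtracting hinge terms $c_j F_{k_j}$ with $c_j$ equal to the jump in slope at $k_j$, so that what remains is affine and hence in $\Span\{L_{\gamma_N},L_1\}$ — is the natural one, and parts (2)–(4) follow from it exactly as you outline: the lineality is the set of simultaneously convex-and-concave interpolations, $F_{\gamma_i}(\gamma)$ saturates all facet inequalities except the $i$th, and in part (4) the increasing-slope hypothesis makes each $c_j\geq 0$ while nonnegativity of $F$ and nonpositivity of the terminal slope control the affine part. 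Two small points worth tightening. In part (3), what you actually show is that $F_{\gamma_i}(\gamma)$ lies in a minimal face of $W$ properly containing the lineality space; that face is $3$-dimensional, and $F_{\gamma_i}(\gamma)$ spans a ray of the pointed quotient cone $W/L$ (equivalently of $W\cap L^\perp$), which is clearly the intended reading of the proposition — but ``a 3-dimensional face, which is a ray'' conflates the face with its image modulo $L$ and would be worth rephrasing. In part (4), the hypothesis says the slopes are negative, so $s_r<0$ and hence $\alpha=-s_r>0$; your $\alpha\geq 0$ is correct but slightly weaker than what the hypothesis gives, and you should also note explicitly (which your bookkeeping already implies) that the first and last breakpoints $\gamma_0$ and $\gamma_N$ are excluded from the corner set, so all $i_j$ satisfy $1\leq i_j\leq N-1$ and the hinge functions used are exactly those allowed.
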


\subsection{The KKT matrix equation}

\begin{definition} \label{def: KKT matrix equation}
We define the \emph{KKT matrix equation for $\face_{\gamma(I)}$} as follows.

First, we define an $(N+1) \times (N+1)$ matrix $A$ as follows. For $1
\leq j \leq N-1$, if $j
\in I$, then column $j$ of $A$ is given by $2F_{\gamma_j}(\gamma)$. If
$j \not\in I$, then column $j$ of $A$ is given by the coefficient
vector of the inequalities $g_j(w) \leq 0$. The last two columns are given by $2L_{\gamma_N}(\gamma)$ and
$2L_{1}(\gamma)$. 

Next, we define a vector $x \in \mathbb{R}^{N+1}$ as follows. For $1
\leq i \leq N-1$, if $i
\in I$, then $x_i$ is one of the parameters $t_i$ used to parametrize
$\Span \face_{\gamma(I)}$. If $i \not\in I$, then $x_i$ is one of the KKT
multipliers $\mu_i$. The last two coordinates $x_{N}$ and $x_{N+1}$
are the parameters used for the lineality space. 

Then the KKT matrix equation for $\face_{\gamma(I)}$ is 
\[ Ax = 2a.
\]
\end{definition}

We have two versions of the vector $a$: one for the Simplified
Problem, and one for the Unsimplified Problem. Most of our discussion
will be dedicated to the Simplified Problem. Then, at the end, we
will show how the result for the Simplified Problem implies the result
for the Unsimplified Problem.

The matrices $A$, $x$, and $a$ all depend on $N$, $\Gamma$, and $I$. However, since $\Gamma$ and $I$ are typically clear from context, we do not show them in our notation.

We record explicit formulas for the matrix $A$ and the vector $a$.

\begin{proposition} \label{prop: KKT matrix entries}
    The entries of $A$ are given by the following formula.
    \[
      A_{i,j} = \left\{
\begin{array}{ll}
\gamma_j - \gamma_{j+1} & \text{if $j\leq N-1$, $j \not\in I$, and $i=j$} \\
  \gamma_{j+1} - \gamma_{j-1} & \text{if $j\leq N-1$, $j \not\in I$, and $i=j+1$} \\
  \gamma_{j-1}- \gamma_{j} & \text{if $j\leq N-1$, $j \not\in I$, and $i=j+2$} \\
  2(\gamma_j - \gamma_{i-1}) & \text{if $j\leq N-1$, $j \in I$, and $i \leq j$} \\
  2(\gamma_N - \gamma_{i-1}) & \text{if $j= N$} \\
  2 & \text{if $j= N+1$} \\
  0 & \text{otherwise}
\end{array}
    \right.
    \]
    The vector $a$ on the right hand side of the KKT matrix equation is given by the following formula.
    \[
      a_i = \left\{
\begin{array}{ll}
\gamma_1 & \text{if $i=1$} \\
  \gamma_{i} - \gamma_{i-2} & \text{if $2 \leq i \leq N$} \\
  2 & \text{if $i=N+1$ (Simplified Problem)} \\
  1 & \text{if $i=N+1$ (Unsimplified Problem)} 
\end{array}
    \right.
    \]
\end{proposition}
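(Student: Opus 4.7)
The plan is to verify both formulas by a direct unpacking of Definition \ref{def: KKT matrix equation}, computing each column of $A$ and each entry of $a$ case by case. The proof is really a bookkeeping exercise; the only tricky point is that the weights $w=(w_0,\dots,w_N)$ are indexed from $0$, whereas the matrix rows and the vector $a$ are indexed from $1$, so the row in $A$ housing the coefficient of $w_k$ is row $k+1$.

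First I would handle the four types of columns separately. For $j \le N-1$ with $j \in I$, Definition \ref{def: KKT matrix equation} prescribes the column as $2F_{\gamma_j}(\gamma)$, whose $i$-th entry is $2\max\{\gamma_j-\gamma_{i-1},0\}$; this is nonzero precisely when $\gamma_{i-1}<\gamma_j$, i.e.\ when $i \le j$, giving the stated value $2(\gamma_j-\gamma_{i-1})$ and zero otherwise. For $j \le N-1$ with $j \not\in I$, the column is the gradient of the constraint $g_j$; rewriting the inequality from Section \ref{subsec: KKT} as
\[
-(\gamma_{j+1}-\gamma_j)w_{j-1} + (\gamma_{j+1}-\gamma_{j-1})w_j - (\gamma_j-\gamma_{j-1})w_{j+1} \le 0,
\]
one sees that the coefficient vector is supported on rows $j$, $j+1$, $j+2$ (housing $w_{j-1}, w_j, w_{j+1}$), with entries $\gamma_j-\gamma_{j+1}$, $\gamma_{j+1}-\gamma_{j-1}$, and $\gamma_{j-1}-\gamma_j$ after rewriting. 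Columns $N$ and $N+1$ are by definition $2L_{\gamma_N}(\gamma)$ and $2L_1(\gamma)$, whose $i$-th entries are $2(\gamma_N-\gamma_{i-1})$ and $2$ respectively, matching the formulas. All remaining entries are zero by construction.

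For the right-hand side: the stationarity condition $\nabla f(w) + \sum \mu_i \nabla g_i(w) = 0$ applied to the squared-distance objective $f(w)=\|w-a\|^2$ produces exactly $Ax = 2a$ after substituting the span parametrisation $w = \sum_{j\in I} t_j F_{\gamma_j}(\gamma) + t_N L_{\gamma_N}(\gamma) + t_{N+1} L_1(\gamma)$, which explains the factor of $2$ appearing on both the columns indexed by $I$ and on the RHS. The explicit form of $a$ for the Unsimplified Problem is then quoted directly from Lemma \ref{lem: area is dotting with a}, and the Simplified Problem's formula is its defining modification (the final entry changed from $1$ to $2$).

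Because every piece of the claim is an immediate reading of a definition or a restatement of a prior lemma, no step is an obstacle in the mathematical sense; the only risk is an indexing slip, particularly around the $\pm 1$ shifts between the $w$-index and the matrix-row index, and around the boundary cases $i=j$ versus $i=j+1$ for the $F_{\gamma_j}$ columns. I would double-check these by evaluating a small explicit case (say $\Gamma = \langle 2,3\rangle$, $N=4$, $I=\{2\}$) and confirming that the constructed matrix equation indeed encodes the orthogonal projection of $a$ onto $\Span \face_{\gamma(I)}$.
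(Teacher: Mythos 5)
Your proposal is correct and matches the paper's intent: the paper states this proposition without an explicit proof, treating it as a direct unpacking of Definition \ref{def: KKT matrix equation}, which is exactly what you do. Your case-by-case reading of the columns, the observation that the coefficient of $w_k$ lives in row $k+1$, and the explanation of the factor of $2$ coming from $\nabla\|w-a\|^2 = 2(w-a)$ all check out, and the final-entry modification for the Simplified Problem is indeed just the defining change from $1$ to $2$.
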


The application to our problem is as follows.

\begin{proposition} \label{prop: verify proximum}
Let $x$ be the solution
of the KKT matrix equation for $\face_{\gamma(I)}$.  If $x_i \geq 0$
for all $1 \leq i \leq N-1$, then
$w(t_1,\ldots,t_{k+2})$ is the closest point on $W$ to $a$.

Moreover, if $x_i > 0$ for all $i \in I$, then $\face_{\gamma(I)}$ is
the smallest face of $W$ containing this point.
\end{proposition}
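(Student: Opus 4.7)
The plan is to verify the KKT conditions of Section \ref{subsec: KKT} directly. The proximum problem is the convex QP of minimizing $f(w) = \tfrac{1}{2}\|w-a\|^2$ subject to $g_j(w) \leq 0$ for $1 \leq j \leq N-1$, where the $g_j$ are the slope-increase inequalities of Section \ref{subsubsec: W} defining $W$. Because $f$ is strongly convex and the $g_j$ are affine, the KKT conditions are both necessary and sufficient, and the minimizer is unique. Given a solution $x$ of the KKT matrix equation, I would set
\[
w := \sum_{j \in I} x_j F_{\gamma_j}(\gamma) + x_N L_{\gamma_N}(\gamma) + x_{N+1} L_1(\gamma),
\]
together with $\mu_j := x_j/2$ for $j \notin I$ (with $1 \leq j \leq N-1$) and $\mu_j := 0$ for $j \in I$, and verify that $(w,\mu)$ satisfies all four KKT conditions.

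I would then check each condition in turn. For \emph{stationarity}, the explicit formulas of Proposition \ref{prop: KKT matrix entries} show that column $j \in I$ of $A$ equals $2F_{\gamma_j}(\gamma)$, column $j \notin I$ equals the coefficient vector $\nabla g_j$, and columns $N,N+1$ equal $2L_{\gamma_N}(\gamma)$ and $2L_1(\gamma)$; the equation $Ax = 2a$ then rearranges (after dividing by $2$) to $(w-a) + \sum_j \mu_j \nabla g_j = 0$. For \emph{primal feasibility}, $w$ is a nonnegative combination of rays of $W$ (using $x_j \geq 0$ for $j \in I$) plus a lineality element, so $w \in W$ by Proposition \ref{prop:rays of W}; moreover, for $j \notin I$ the point $w$ has no corner at $\gamma_j$, so the constraint $g_j(w) \leq 0$ is automatically active with $g_j(w)=0$. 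For \emph{dual feasibility}, $\mu_j \geq 0$ uses $x_i \geq 0$ for $i \notin I$. For \emph{complementary slackness}, for each $j$ either $g_j(w) = 0$ (when $j \notin I$) or $\mu_j = 0$ (when $j \in I$) holds. By KKT sufficiency, $w = \prox(a,W)$.

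For the second claim, I would observe that under the strict positivity hypothesis $x_i > 0$ for every $i \in I$, the point $w$ is a strictly positive combination of the ray generators $F_{\gamma_j}(\gamma)$ of the cone $\face_{\gamma(I)}$ (for $j \in I$) plus a lineality element. By the standard characterization of the relative interior of a polyhedral cone, $w$ lies in the relative interior of $\face_{\gamma(I)}$; equivalently, $g_j(w) < 0$ for every $j \in I$, so $w$ cannot lie in any proper subface $\face_{\gamma(I')}$ with $I' \subsetneq I$.

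The main obstacle is the bookkeeping step in the stationarity verification. One must carefully reconcile the offset between weight indices $0,\ldots,N$ and matrix rows $1,\ldots,N+1$, absorb the factor of $2$ that distinguishes the KKT variable $x_j$ from the multiplier $\mu_j$, and confirm entry-by-entry that the three-term column of $A$ for $j \notin I$ agrees with $\nabla g_j$ as read off the inequality in Section \ref{subsubsec: W}. Once the KKT matrix equation has been correctly reinterpreted as the stationarity identity, the remaining primal and dual feasibility, complementary slackness, and relative interior arguments are essentially definitional.
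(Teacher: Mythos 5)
Your proposal is correct and follows the same route the paper intends: the paper states the proposition without a detailed proof, relying on the preceding discussion in Section~\ref{subsec: KKT} that the KKT conditions are sufficient for this strongly convex quadratic program with affine constraints, and your argument fills in precisely that verification (with the correct handling of the factor of $2$ separating $x_j$ from $\mu_j$, the row-index offset, and the relative-interior characterization of the smallest containing face).
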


\subsection{Persistence}

We begin with the following simple observation.
\begin{lemma}
Fix $\Gamma$, $I$, and $N$. Suppose that $x$ is a solution to the KKT matrix equation for the
Simplified Problem with $x_{N} = 0$ and $x_{N+1} = 2$.
Define $x'$ by 
\[
  x_i' = \left\{
\begin{array}{ll}
  x_i, & i \leq N \\
  0, & i = N+1 \\
  2, & i = N+2
\end{array}
\right.
\]
Then $x'$ is a solution to the KKT matrix equation for $\Gamma$ and
$I$ with embedding dimension $N+1$.
\end{lemma}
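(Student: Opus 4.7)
The plan is to verify the identity $A'x' = 2a'$ (for embedding dimension $N+1$) directly from the given identity $Ax = 2a$ (for embedding dimension $N$), by matching the two matrix equations column by column and then checking each row. Implicit in the hypothesis is that $N$ is large enough that $\gamma_{N-1}, \gamma_N, \gamma_{N+1}$ are three consecutive integers (that is, $N \geq \condIndex(\Gamma)+1$); this is what makes the newly-appearing constraint and lineality columns of $A'$ take the expected shape.

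First I would identify the structural relationship between $A$ and $A'$. Using the formulas of Proposition \ref{prop: KKT matrix entries}, each column $j$ with $1 \leq j \leq N-1$ depends only on $\gamma_{j-1},\gamma_j,\gamma_{j+1}$ (for a constraint column) or on $F_{\gamma_j}(\gamma)$ (for a corner column), so the first $N-1$ columns of $A'$ agree with those of $A$ in rows $1,\ldots,N+1$, with a zero appended in the new row $N+2$. Column $N+2$ of $A'$ is $2L_1(\gamma')$, which is just the old $L_1$ column extended by a single entry $2$. Column $N+1$ of $A'$ is $2L_{\gamma_{N+1}}(\gamma')$, but it will play no role in $A'x'$ because $x'_{N+1}=0$. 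The newly introduced column $N$ of $A'$ is the constraint vector at index $N$ (using $N \notin I$); by the consecutive-integer hypothesis, its only nonzero entries are $(-1,2,-1)$ in rows $N,N+1,N+2$.

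Second, I would verify each row of $A'x' = 2a'$. For rows $1 \leq i \leq N+1$, the contributions of columns $N$ and $N+1$ of $A'$ vanish (the first because $x'_N = x_N = 0$, the second because $x'_{N+1} = 0$), and the contribution of column $N+2$ equals $2 \cdot 2 = 4$, matching the contribution $2 \cdot x_{N+1} = 4$ from column $N+1$ of $A$ in the old equation. Thus each such row reduces to $\sum_{j=1}^{N-1} A_{i,j}\,x_j + 4 = 2a'_i$, which follows from the old equation because $a'_i = a_i$ for $i \leq N$ and $a'_{N+1} = \gamma_{N+1}-\gamma_{N-1} = 2 = a_{N+1}$ in the Simplified Problem. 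For the genuinely new row $N+2$, every term vanishes except the $L_1$ contribution $2 \cdot x'_{N+2} = 4$, which matches $2a'_{N+2} = 2 \cdot 2 = 4$.

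The main obstacle is purely notational: one must keep the column reindexing straight, since the old lineality column $L_{\gamma_N}$ in position $N$ is effectively replaced by a constraint column at position $N$ and a new lineality column $L_{\gamma_{N+1}}$ at position $N+1$, and one has to check that both shifted columns are killed by $x'_N = 0$ and $x'_{N+1} = 0$. Conceptually, though, the content of the lemma is transparent: since the optimum in dimension $N$ has the form $w^{*} = \sum_{j \in I} t_j F_{\gamma_j}(\gamma) + 2$, with no $L_{\gamma_N}$-component, extending by one evaluation point above the conductor where the new value is still $2$ leaves the KKT conditions intact.
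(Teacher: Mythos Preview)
Your argument is correct and complete. The paper itself does not supply a proof of this lemma: it is introduced with the phrase ``We begin with the following simple observation'' and then stated without justification. Your row-by-row verification via Proposition~\ref{prop: KKT matrix entries}, together with the observation that $x'_N = x'_{N+1} = 0$ annihilates the two columns whose structure changes nontrivially in passing from $A$ to $A'$, is exactly the natural way to fill this in, and there is nothing to compare against.

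One small remark on exposition: you note that column $N$ of $A'$ has the specific shape $(-1,2,-1)$ in rows $N, N+1, N+2$, but in fact this is irrelevant to the argument since $x'_N = 0$ kills that column regardless of its entries. The only place the consecutive-integer hypothesis $N \geq \condIndex(\Gamma)+1$ is genuinely needed is in matching $a'_{N+1} = \gamma_{N+1} - \gamma_{N-1} = 2$ with $a_{N+1} = 2$. Everything else goes through formally.
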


This motivates the following definition.

\begin{definition}
We call $x$ a \emph{persistent solution to the Simplified Problem} for
the face associated to $I$ if
$x_N = 0$ and $x_{N+1} = 2$. 
\end{definition}

We aim to prove the following result.

\begin{theorem*}[Persistence of the global optimum for the Simplified Problem]
  Let $\Gamma$ be a numerical semigroup. There exists an integer $N_0^{\simp}$ and a
set of corner indices $I^{\simp}$ (both depending on $\Gamma$) such that for all $N \geq N_0^{\simp}$, the global
optimum for the Simplified Problem for $\Gamma$ is the persistent
solution for the face of $W$ corresponding to $I^{\simp}$.
\end{theorem*}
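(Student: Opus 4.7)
The plan is to use the KKT sufficiency statement (Proposition \ref{prop: verify proximum}) to reduce the global optimisation to a finite comparison among candidate faces of $W$, and then show that the winning face stabilises with $N$. Write $d = \condIndex(\Gamma)$. I would partition the candidate index sets $I \subset \{1, \ldots, N-1\}$ into \emph{low-corner} faces (those with $\max(I) \leq d$) and \emph{high-corner} faces (those with some $\ell \in I$ satisfying $\ell > d$), and handle the two classes using Sections \ref{sec: corners below the conductor} and \ref{sec: corners above the conductor}, respectively.

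For a low-corner $I$ and any $N > \max(I)$, the KKT matrix equation (Definition \ref{def: KKT matrix equation}) decouples into a fixed ``low'' block governed by $(\gamma_0, \ldots, \gamma_{\max(I)})$ and a ``high'' block indexed by $j > \max(I)$, together with the two lineality columns. In the high block, both the entries of $a$ (all equal to $2$ above $d$) and the consecutive differences $\gamma_{i} - \gamma_{i-2}$ are constant, and the ansatz $x_{N} = 0$, $x_{N+1} = 2$, $\mu_{j} = 0$ for $j > \max(I)$ solves this block exactly, forcing persistent boundary data for the low block. Invoking the structural results of Section \ref{sec: corners below the conductor}, the low-block solution is itself $N$-independent once $N > \max(I)$, producing a persistent candidate $x$ with an explicit limiting ratio $\mu/\|w\|$.

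The crux is to eliminate high-corner faces. For each such $I$, I would apply Section \ref{sec: corners above the conductor} to the ``highest'' corner index $\ell > d$ of $I$ to show that, for all sufficiently large $N$, the KKT solution either violates dual feasibility (some multiplier $\mu_{j}$ with $j \notin I$ becomes negative) or fails strict complementarity at $\ell$ (so that $\face_{\gamma(I)}$ is not the minimal face containing the stationary point and $I$ can be replaced by a strictly smaller set). Geometrically, above the conductor $a$ is parallel to the lineality generator $L_{1}(\gamma)$, so any corner placed above $d$ is superfluous and drives the stationary point off $W$.

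After these two steps, only finitely many persistent candidates remain, indexed by subsets of $\{1, \ldots, d\}$, each with a stabilising limit of $\mu/\|w\|$. I would then define $I^{\simp}$ as the unique maximiser and $N_{0}^{\simp}$ as the largest of the finitely many thresholds from the previous steps, together with the threshold past which the maximum becomes strict; uniqueness of the projection then guarantees that this winner is the true proximum for $N \geq N_{0}^{\simp}$. The hard part will be the elimination of high-corner faces: a priori an optimal projection could be realised by a face with corners spread across the whole index range, and ruling this out requires careful sign tracking of KKT multipliers as $N$ grows, which is the substantive content of Section \ref{sec: corners above the conductor}.
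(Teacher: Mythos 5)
Your proposal has the central dichotomy exactly backwards: you propose to \emph{eliminate} high-corner faces (those with $\max(I) > \condIndex(\Gamma)$) and to find the persistent winner among low-corner faces (those with $\max(I) \leq \condIndex(\Gamma)$, ``subsets of $\{1,\ldots,d\}$''), but the paper's argument runs in the opposite direction. Proposition \ref{prop: persistence and optimality 1} and Corollary \ref{cor: N1} show that for $N$ sufficiently large the global optimum \emph{cannot} lie on any low-corner face; the mechanism is that if all corners are at or below the conductor, the final line segment of the optimal graph must cross the line $y=2$ past the conductor, which contradicts optimality by Lemma \ref{lem: crossing implies not globally optimal}. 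Conversely, Proposition \ref{prop: persistence and optimality 2} shows that a high-corner face carrying the optimum is either persistent or immediately heralds a persistent face. The persistent $I^{\simp}$ lives \emph{above} the conductor. This is visible in the examples (for $\Gamma = \langle 2,3\rangle$ with $\condIndex(\Gamma)=2$ the persistent corner set is $\{5,6\}$), so a scheme that searches only subsets of $\{1,\ldots,d\}$ cannot produce the right answer.

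Your geometric intuition (``any corner placed above $d$ is superfluous'') and your low-block argument are also not quite right. For a low-corner face $I$ with $\max(I) < \condIndex(\Gamma)$, the ansatz $x_N = 0$, $x_{N+1} = 2$, $\mu_j = 0$ for $j > \max(I)$ does \emph{not} solve the stationarity equations: in rows $i$ with $\max(I) + 2 \leq i \leq \condIndex(\Gamma)+1$, the right-hand side $2a_i = 2(\gamma_i - \gamma_{i-2})$ exceeds $4$, while the ansatz forces the left-hand side to be $4$. The multipliers $\mu_j$ above $\max(I)$ therefore cannot all vanish, and (as the Case 5 analysis in the proof of Proposition \ref{prop: persistence and optimality 1} shows) when they are forced to cancel the excess, one of them goes negative, violating dual feasibility. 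So the low-corner faces are precisely the ones that get eliminated; it is the corner(s) above the conductor that cap off the flat tail of the graph and make the KKT system consistent with $a$. You will need to invert both halves of the case analysis to recover the paper's proof.
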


For small values of $N$, there can be non-persistent optima. However, our strategy is to show that such non-persistent optima obey bounds on $N$. Thus, for large $N$, the only remaining possibility is that there is a persistent optimum.

\subsection{How the stationary points vary with $N$} 
Let $I$ be a set of corners. For each $N$, let $x(N)$ be the solution to the KKT
matrix equation for the corresponding face of $W$.

By Cramer's Rule, we have a formula for each coordinate in the
solution $x(N)$ in terms of determinants.
\[
x_j(N) = \frac{\det A(N,j)}{\det A(N) }.
\]

The last three coordinates $x_{N-1}$, $x_N$, and $x_{N+1}$ play a
special role in the discussion, and so we have special notation for
their numerators. 

Recall that under our notation conventions, $x_N$ and $x_{N+1}$ are
parameters. $x_{N+1} = w_N$ is the $y$-value of the last point on the
graph of $w$, and $-x_N$ is the slope of the last line segment in the
graph of $w$. Neither of these two quantities is required to be
nonnegative---they parametrize the lineality space of the polyhedron $W$.

\begin{definition}
  Define functions
  \begin{align*}
    Q(N) &= (-1)^{N+1} \det A(N) \\
    P_j(N) &= (-1)^{N+1} \det A(N,j) \\
    \chi(N) &= (-1)^{N+1} \det A(N,N-1) \\
    \psi(N) &= (-1)^{N+1} \det A(N,N) \\
    \omega(N) &= (-1)^{N+1} \det A(N,N+1) \\
  \end{align*}
\end{definition}

  (This notation is chosen as a mnemonic device. Recall that in the Greek alphabet, the last three letters in order
  are $\chi$, $\psi$, and $\omega$.)

  Then we have
  \begin{displaymath}
    \begin{array}{ccccccc}
   x_j &=& \displaystyle \frac{\det A(N,j)}{\det A } &=& \displaystyle \frac{(-1)^{N+1}\det A(N,j)}{(-1)^{N+1}\det A } &=& \displaystyle \frac{P_j}{Q}. \\
    x_{N-1} &=& \displaystyle \frac{\det A(N,N-1)}{\det A } &=& \displaystyle \frac{(-1)^{N+1}\det A(N,N-1)}{(-1)^{N+1}\det A } &= &\displaystyle \frac{\chi}{Q}. \\
    x_{N} &=& \displaystyle \frac{\det A(N,N)}{\det A } &=& \displaystyle \frac{(-1)^{N+1}\det A(N,N)}{(-1)^{N+1}\det A } &= & \displaystyle \frac{\psi}{Q}. \\
    x_{N+1} &=& \displaystyle \frac{\det A(N,N+1)}{\det A } &=& \displaystyle \frac{(-1)^{N+1}\det A(N,N+1)}{(-1)^{N+1}\det A } &=& \displaystyle \frac{\omega}{Q}. 
\end{array}
\end{displaymath}
  
Finally, for $j \leq N$, we define $A'(N,j)$ as the matrix obtained by replacing column $j$ in $A(N)$ by $2a-\underline{2}$. Since the last column of $A(N,j)$ is $\underline{2}$, the matrices $A(N,j)$ and $A'(N,j)$ are column equivalent, and thus $\det A(N,j) = \det A'(N,j)$.

\begin{lemma} \label{lem: polynomiality of Q Pk chi psi}
  \begin{enumerate}
 Suppose    $N \geq \max \{ \condIndex(\Gamma)+2,\max(I) +2 \}$.
\item  The functions $Q(N)$, $\chi(N)$, $\psi(N)$, and $\omega(N)$ are polynomials with the following degree bounds. 
  \begin{enumerate}
  \item $\deg Q = 4$
  \item $\deg \chi \leq 3$
  \item $\deg \psi \leq 2$
  \item $\deg \omega = 4$.
  \end{enumerate}
\item  If  $N \geq \max \{ \condIndex(\Gamma)+2,\max(I) +2 \}$ and $N \geq j+1$, the function $P_j(N)$ is a polynomial of degree at most 4.
  \end{enumerate}
\end{lemma}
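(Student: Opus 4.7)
The plan is to exploit the sparsity and self-similar structure that the KKT matrix $A(N)$ acquires once $N \geq \max\{\condIndex(\Gamma)+2,\max(I)+2\}$. Writing $k = \max\{\condIndex(\Gamma),\max(I)\}$, in this regime the matrix decomposes naturally into four pieces: a fixed $(k+2)\times(k+2)$ top-left block whose entries are independent of $N$; a growing band of tridiagonal columns $(-1,2,-1)^{T}$ at positions $k+1 \leq j \leq N-1$ (one for each such $j \notin I$); a distinguished column $N$ whose entries $2(\gamma_N-\gamma_{i-1})$ depend linearly on $\gamma_N$; and a final column of $2$'s. The hypothesis on $N$ guarantees that the tridiagonal band is nonempty and that all of its columns correspond to non-corner indices above the conductor.

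The first step is to apply the determinant-preserving column operation $C_N \leftarrow C_N - \gamma_N C_{N+1}$, which replaces column $N$ by $-2\gamma = -2(\gamma_0,\gamma_1,\ldots,\gamma_N)^{T}$. For any row $i$ in the range $\condIndex(\Gamma)+1 \leq i \leq N$, the modified entry in column $N$ becomes $-2(i-1+c)$, where $c = \operatorname{cond}(\Gamma)-\condIndex(\Gamma)$; this depends on $i$ but not directly on $N$. Only the entry at row $N+1$, namely $-2\gamma_N$, retains explicit $N$-dependence. This localization is the key simplification: every matrix entry is now either a constant, a function of its row index alone, or (for one distinguished entry) linear in $N$.

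Next, $\det A(N)$ would be computed by iterated cofactor expansion. The last row of the modified matrix has only three nonzero entries, at columns $N-1,\,N,\,N+1$, with values $-1,\,-2\gamma_N,\,2$ respectively; this produces a three-term expression for $\det A(N)$ as a linear combination of three $N \times N$ minors, with exactly one term carrying an additional factor of $\gamma_N$. Each of these minors retains the same structural template (a stable top-left block, a tridiagonal band, and at most two distinguished columns), so the expansion can be continued along the tridiagonal band. The classical recursion $D_m = 2 D_{m-1} - D_{m-2}$ for the path-graph Laplacian, whose solutions are linear in $m$, governs each such reduction; after finitely many iterations one arrives at a determinant of $N$-independent size whose entries come from the stable top-left block. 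Thus the $N$-dependence of $\det A(N)$ enters only through polynomial factors from the tridiagonal recursion together with the single linear factor $\gamma_N$, bounding $\deg Q \leq 4$. The arguments for $\chi(N), \psi(N), \omega(N)$ and $P_j(N)$ are parallel, obtained by substituting the appropriate column with $2a$ (whose tail stabilizes at the constant value $4$ above the conductor); the degree drops for $\chi$ and $\psi$ because their column substitutions eliminate one or both of the growth mechanisms that yield $\deg Q = 4$.

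The main obstacle will be the meticulous bookkeeping required to confirm the stated bounds precisely: that $\deg Q = 4$ and $\deg \omega = 4$ are attained rather than collapsing due to cancellations, and that the sharper bounds $\deg \chi \leq 3$ and $\deg \psi \leq 2$ genuinely hold. Subtle interactions between the stable top-left block and the tridiagonal recursion could in principle lower these degrees, so the leading coefficients must be computed explicitly rather than merely bounded; this careful verification is the content of Appendix A.
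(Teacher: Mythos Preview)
Your approach is sound in outline but genuinely different from the paper's. The paper does not use your column operation $C_N \leftarrow C_N - \gamma_N C_{N+1}$ or iterated cofactor expansion down the tridiagonal band. Instead, it defines explicit recurrence operators $\Phi_n, \Chi_n, \Psi_n$ describing how the matrices $A(N)$, $A'(N,N-1)$, $A'(N,N)$ grow as $N$ increases, and then proves polynomiality by showing that the appropriate forward finite differences vanish: e.g.\ $\Delta^5 Q \equiv 0$ gives $\deg Q \le 4$, $\Delta^4 \chi \equiv 0$ gives $\deg \chi \le 3$, and $\Delta^3 \psi \equiv 0$ gives $\deg \psi \le 2$. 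Each vanishing is checked by expanding $\det M(k+i)$ for several consecutive $i$ in terms of minors of a single large matrix $M(k+6)$ (or $M(k+5)$, etc.) and watching everything cancel. The leading coefficients are then extracted from lower-order differences, and $\deg Q = 4$ is shown to be exact because its leading coefficient is a determinant of a nondegenerate auxiliary KKT matrix; $\deg \omega = 4$ follows from the identity $-\chi + 2\omega = 4Q$.

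What each approach buys: the paper's difference-equation method makes the degree bounds completely mechanical---a function is polynomial of degree $\le d$ iff its $(d{+}1)$-st difference vanishes---at the cost of lengthy but routine minor-chasing. Your route via the column operation and the path-Laplacian recursion is more structural and could in principle be shorter, but the sketch understates the bookkeeping: once you expand along the bottom row you get three $N\times N$ cofactors, each still carrying the two special columns, and the recursions you obtain are coupled and inhomogeneous rather than the pure $D_m = 2D_{m-1} - D_{m-2}$. Extracting the precise bounds $4,3,2,4$ from that coupled system requires essentially the same care as the paper's cancellations, so your caveat that ``the leading coefficients must be computed explicitly'' is exactly right---just be aware that this is where most of the work lives, not an afterthought.
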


We give a full proof in Appendix A. Here we only sketch the proof. As $N$ grows, the matrices $A(N)$ and $A'(N,j)$ grow in a way that is easy to describe. We can write recurrences for the matrices, then show that
the appropriate difference equations vanish to establish that each of
these functions is a polynomial in $N$ with the degree bounds claimed.  (This is a paper-and-pencil proof checked by computer---the proof does not rely on computer calculations.)

\begin{corollary}
Let $x$ be the solution of the KKT matrix equation for face $I$. Then each coordinate of $x$ is a rational function in $N$.
\end{corollary}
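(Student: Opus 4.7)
The plan is to obtain this corollary as an immediate consequence of Lemma \ref{lem: polynomiality of Q Pk chi psi} combined with Cramer's rule. First, since the KKT system $A(N)\,x = 2a$ has a unique solution (the objective of the underlying convex optimisation problem is strongly convex and the constraints are linear, as recorded in Section \ref{subsec: KKT}, so $A(N)$ is invertible for the range of $N$ under consideration), Cramer's rule expresses every coordinate as
\[
x_j(N) \;=\; \frac{\det A(N,j)}{\det A(N)}.
\]
After multiplying numerator and denominator by the common factor $(-1)^{N+1}$, these quotients become $P_j(N)/Q(N)$ in general, and $\chi(N)/Q(N)$, $\psi(N)/Q(N)$, $\omega(N)/Q(N)$ for the final three coordinates, in the notation fixed just before Lemma \ref{lem: polynomiality of Q Pk chi psi}.

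Second, I would invoke the lemma itself: for all $N$ satisfying the stated bound $N \geq \max\{\condIndex(\Gamma)+2,\max(I)+2\}$ (and, where applicable, $N \geq j+1$), each of $Q$, $P_j$, $\chi$, $\psi$, and $\omega$ agrees with a genuine polynomial in $N$. The quotient of two polynomials in $N$ is, by definition, a rational function of $N$. This finishes the argument.

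There is no real obstacle specific to this corollary; the substantive content is entirely absorbed into Lemma \ref{lem: polynomiality of Q Pk chi psi}, whose proof (carried out in Appendix A) exhibits explicit recurrences for the matrices $A(N)$ and $A'(N,j)$ as $N$ grows and verifies that the appropriate difference equations vanish, thereby producing the polynomial expressions together with their degree bounds. What this corollary packages up for later use is the fact that the Cramer's rule expressions for the coordinates of $x$ are all ratios of polynomials in $N$, a property that will be central to the persistence arguments in the subsequent sections.
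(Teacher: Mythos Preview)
Your proposal is correct and follows essentially the same approach as the paper: the corollary is stated as an immediate consequence of Lemma \ref{lem: polynomiality of Q Pk chi psi} together with the Cramer's rule expressions $x_j(N)=\det A(N,j)/\det A(N)$ recorded just before it, and the paper likewise derives it (in Appendix A, Proposition \ref{prop: KKT solutions are rational functions}) directly from that lemma without further argument.
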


  Fix a positive integer $k$. We use the following notation for the Taylor expansions of these polynomials centered at $k$.
  \begin{eqnarray*}
    \chi(N) &=& \chi_3 (N-k)^3 + \chi_2 (N-k)^2 +  \chi_1 (N-k) + \chi_0 \\
    \psi(N) &=& \psi_2 (N-k)^2 +\psi_1 (N-k)+ \psi_0
  \end{eqnarray*}

When $k$ is sufficiently large, we have explicit formulas for these coefficients in terms of minors of the matrices $A'$.
\begin{lemma} \label{lem: formulas for chi and psi}
  Suppose that $k \geq \max \{ \condIndex(\Gamma)+2,\max(I) +2 \}$.
\begin{enumerate}
\item
  \begin{align*}
    \chi_3 & = \frac{1}{3}(-1)^{k+1} \left(2 \del_{k,k+1}^{k,k+1} A'(k,k-1) + 2 \del_{k-1,k+1}^{k,k+1} A'(k,k-1) \right) \\
    \chi_2 & = (-1)^{k+1} \left(4 \del_{k,k+1}^{k,k+1} A'(k,k-1) + 2 \del_{k-1,k+1}^{k,k+1} A'(k,k-1) \right) \\
    \chi_1 &= \frac{1}{3}(-1)^{k+1} \left( -14 \del_{k,k+1}^{k,k+1} A'(k,k-1) -8 \del_{k-1,k+1}^{k,k+1} A'(k,k-1) - 6 \del_{k+2}^{k+1} A'(k+1,k) \right)\\
    \chi_0 &= (-1)^{k+1} \det A'(k,k-1).
  \end{align*}
\item 
  \begin{align*}
    \psi_2 & = (-1)^{k+1} \left(\del_{k+1}^{k+1}  A'(k,k) + \del_{k}^{k+1}  A'(k,k) \right) \\
    \psi_1 & = (-1)^{k+1} \left(3\del_{k+1}^{k+1}  A'(k,k) + \del_{k}^{k+1}  A'(k,k) \right) \\
    \psi_0 &= (-1)^{k+1} \det A'(k,k).
  \end{align*}
\end{enumerate}
\end{lemma}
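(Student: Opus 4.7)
The plan is to use Lemma \ref{lem: polynomiality of Q Pk chi psi}: it tells us $\chi(N)$ has degree at most $3$ and $\psi(N)$ has degree at most $2$ as polynomials in $N$, so each is pinned down by its values at a small number of consecutive integers. Centred at $k$, the Taylor coefficients are recovered from the forward differences via the standard identities
\[
\chi_0 = \chi(k), \qquad \chi_3 = \tfrac{1}{6}\Delta^{3}\chi(k), \qquad \chi_2 = \tfrac{1}{2}\Delta^{2}\chi(k) - 3\chi_3, \qquad \chi_1 = \Delta\chi(k) - \chi_3 - \chi_2,
\]
and the truncated analogues for $\psi$. So it suffices to compute $\chi(k),\chi(k+1),\chi(k+2),\chi(k+3)$ and $\psi(k),\psi(k+1),\psi(k+2)$ in terms of the minors appearing in the statement, then simplify.

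First I would unfold the hypothesis $k \geq \max\{\condIndex(\Gamma)+2,\,\max(I)+2\}$ to make the structure of $A'(N,N-1)$ and $A'(N,N)$ explicit for $N \in \{k,\dots,k+3\}$. Above this threshold, every new row and column introduced as $N$ increases is forced by the ``above the conductor, no new corner'' pattern: the bottom two rows and the two final columns (those coming from $L_{\gamma_N}(\gamma)$ and $L_{1}(\gamma)$) have fully prescribed entries in terms of the consecutive integers $\gamma_{N-1}$ and $\gamma_N$. Thus $A'(N+1,N)$ is obtained from $A'(N,N-1)$ by a predictable bordering, and the minors named in the lemma all live inside the common stabilised submatrix.

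Next I would compute each $\chi(N) = (-1)^{N+1}\det A'(N,N-1)$ by Laplace expansion along the new bordering rows and columns. The base case gives $\chi_0$ outright. For $N=k+1$, expansion along the new row reduces $\det A'(k+1,k)$ to a linear combination of the $2\times 2$ minors $\del_{k,k+1}^{k,k+1}A'(k,k-1)$ and $\del_{k-1,k+1}^{k,k+1}A'(k,k-1)$ drawn from the stabilised part. For $N=k+2$ and $N=k+3$ the same process produces further contributions which telescope into a single additional term involving $\del_{k+2}^{k+1}A'(k+1,k)$, namely the extra summand visible in the $\chi_1$ formula. Substituting the four values into the finite-difference identities and simplifying then produces the claimed coefficients. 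The treatment of $\psi$ is identical but one degree shorter: only three values are needed, the bordering is simpler, and the resulting minors of $A'(k,k)$ are exactly those listed in part (2).

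The principal difficulty, and the reason this is relegated to the appendix, is the combinatorial bookkeeping. The overall sign $(-1)^{N+1}$ combines with cofactor signs from Laplace expansion in a way that must be tracked carefully, and as $N$ grows the index of the ``replaced'' column shifts (from $N-1$ to $N$), so one must verify that minors named at different matrix sizes really refer to the same submatrix of the stabilised block. The bound $k \geq \max\{\condIndex(\Gamma)+2,\,\max(I)+2\}$ is exactly what guarantees this common submatrix is independent of $N$ across the range $\{k,\dots,k+3\}$, making the finite-difference approach valid; the degree bounds from Lemma \ref{lem: polynomiality of Q Pk chi psi} then serve as an internal consistency check on the resulting formulas.
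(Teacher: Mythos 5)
Your proposal matches the paper's approach: the paper likewise takes the degree bounds from Lemma \ref{lem: polynomiality of Q Pk chi psi} as given, recovers $\chi_3$ and $\psi_2$ via a finite-difference identity, obtains the lower coefficients by interpolating at $N=k, k+1, k+2$, and does all the work through repeated bottom-row Laplace expansions that identify minors of $A'(N,N-1)$ (resp. $A'(N,N)$) at different sizes with a common stabilised block. One small slip: the minors $\del_{k,k+1}^{k,k+1}A'(k,k-1)$ etc.\ are not $2\times 2$; in the paper's notation $\del_I^J$ deletes rows $I$ and columns $J$, so these are $(k-1)\times(k-1)$ determinants, but this does not affect the argument.
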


  We give a full proof in Appendix A. Here we only sketch the
  proof. We can get the leading coefficients $\chi_3$ and $\psi_2$
  using difference equations via a procedure similar to the proof of  \ref{lem: polynomiality of Q Pk chi psi}. We can solve for the remaining coefficients by interpolating these polynomials using their values when $N=k$, $N=k+1$, $N=k+2$.

Next, we give a formula for $x_j$.
\begin{lemma} \label{lem: Pj}

  For each $j$ satisfying $j \geq \max \{ \condIndex(\Gamma)+2,I+2 \} $ and $j< N$, we have
  \[
  x_j = (N-j)(N-j+1) \left(  \frac{1}{2}x_{N-1} + \frac{1}{3}(N-j-1)x_N \right)
  \]
  Hence, after clearing the common denominator $Q$, we have
    \begin{equation} \label{eqn: Pj}
  P_j = (N-j)(N-j+1) \left(  \frac{1}{2} \chi + \frac{1}{3}(N-j-1)\psi \right)
  \end{equation}
\end{lemma}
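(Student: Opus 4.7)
The plan is to extract an explicit second-order linear recurrence from the ``upper'' rows of the KKT matrix equation $Ax = 2a$, then solve it subject to boundary data read off from the last two rows. Using Proposition~\ref{prop: KKT matrix entries}, and the fact that the hypothesis $j \geq \max\{\condIndex(\Gamma)+2,\max(I)+2\}$ together with $j<N$ forces $N \geq \condIndex(\Gamma)+3$ and $N-2,N-1 \notin I$, row $N+1$ collapses to $-x_{N-1} + 2x_{N+1} = 4$, giving $x_{N+1} = 2 + \tfrac{1}{2}x_{N-1}$ in the Simplified Problem. Substituting this into row $N$, which reads $-x_{N-2} + 2x_{N-1} + 2x_N + 2x_{N+1} = 4$, yields $x_{N-2} = 3x_{N-1} + 2x_N$. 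These identities match the proposed formula at $m=1$ and $m=2$, with $m := N-j$.

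Next, for row index $i$ in the range $\max\{\condIndex(\Gamma)+3,\max(I)+3\} \leq i \leq N-1$, each of $i-2$, $i-1$, $i$ strictly exceeds both $\max(I)$ and $\condIndex(\Gamma)$: the three corresponding columns are thus constraint columns rather than corner columns, every $\gamma$-difference that appears simplifies to $1$, and $a_i = 2$. Reading off the nonzero entries of row $i$ from Proposition~\ref{prop: KKT matrix entries} and eliminating $x_{N+1}$ via the first step produces the second-order recurrence
\[
x_{i-2} - 2x_{i-1} + x_i = x_{N-1} + 2(N-i+1)x_N.
\]

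It then remains to check that the ansatz $x_j = \tfrac{1}{2}m(m+1)x_{N-1} + \tfrac{1}{3}(m-1)m(m+1)x_N$ satisfies this recurrence. This is a direct computation using the identities $\Delta^2[m(m+1)] = 2$ and $\Delta^2[(m-1)m(m+1)] = 6(m+1)$ (forward differences in $m$), which reproduce the right-hand side exactly. Since the recurrence is second order and the ansatz agrees with the actual solution at $m=1$ and $m=2$, induction on $m$ propagates the formula throughout the range in which the generic recurrence holds, in particular covering all $j$ with $j \geq \max\{\condIndex(\Gamma)+2,\max(I)+2\}$ and $j < N$. Multiplying $x_j = P_j/Q$, $x_{N-1} = \chi/Q$, $x_N = \psi/Q$ through by $Q$ then gives the stated formula for $P_j$.

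The main obstacle I anticipate is the careful bookkeeping of when the ``generic'' form of row $i$ is in force. One must verify that every column indexed by $j' \in I$ contributes $0$ to row $i$ (because $i > \max(I) \geq j'$ drops us into the ``otherwise'' branch of Proposition~\ref{prop: KKT matrix entries}), and that the above-conductor simplifications $\gamma_k - \gamma_{k-1} = 1$ apply to all of the $\gamma$-differences appearing in the nonzero entries. Once this setup is in place, the algebraic verification of the recurrence against the ansatz is a routine polynomial identity.
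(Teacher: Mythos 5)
Your proof is correct and takes essentially the same approach as the paper: you both read off the lower rows of the KKT matrix from Proposition~\ref{prop: KKT matrix entries}, exploit the fact that above the conductor the constraint columns reduce to a $(-1,2,-1)$ stencil while the right-hand side stabilises at $2a_i=4$, eliminate $x_{N+1}$ via row $N+1$, and propagate upward. The paper phrases the propagation as a ``recursive row reduction'' yielding a single closed-form equation $-x_{N-j}+\tfrac{1}{3}(j^3-j)x_N+(j^2+j)x_{N+1}=2(j^2+j)$, whereas you set up the equivalent second-order recurrence and confirm the ansatz by checking initial data at $m=1,2$ and the second differences; the two presentations are interchangeable, with yours making explicit the telescoping and base cases that the paper leaves implicit.
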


\begin{proof} For sufficiently large $N$, the lower right portion of the KKT matrix is
\[ \begin{smallmatrix}
\ddots&  \\ 
&-1&0&0&0&0&0&0&0&18&2\\
&2&-1&0&0&0&0&0&0&16&2\\
&-1&2&-1&0&0&0&0&0&14&2\\
&0&-1&2&-1&0&0&0&0&12&2\\
&0&0&-1&2&-1&0&0&0&10&2\\
&0&0&0&-1&2&-1&0&0&8&2\\
&0&0&0&0&-1&2&-1&0&6&2\\
&0&0&0&0&0&-1&2&-1&4&2\\
&0&0&0&0&0&0&-1&2&2&2\\
&0&0&0&0&0&0&0&-1&0&2
\end{smallmatrix} 
\]

Recursively row reducing from the last row upward yields
\[
-x_{N-j} + \frac{1}{3}(j^3-j)x_N + (j^2+j)x_{N+1} = 2(j^2+j)
\]

We rearrange this equation, reindex, and substitute $x_{N-1} = 2x_{N+1}-4$ to prove the claim.
\end{proof}

This leads to an identity between the coefficients of the polynomials $\chi$ and $\psi$. 
\begin{lemma} \label{lem: chi3 psi2 relation}
$\frac{1}{2}\chi_{3} = -\frac{1}{3} \psi_2$.
  \end{lemma}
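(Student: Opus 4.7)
The plan is to read off the degree-$5$ coefficient from the identity in Lemma \ref{lem: Pj} and observe that it must vanish, since $P_j$ is a polynomial of degree at most $4$ by Lemma \ref{lem: polynomiality of Q Pk chi psi}.

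More concretely, I would first note that although $\chi_3$ and $\psi_2$ are defined as Taylor coefficients of the polynomials $\chi(N)$ and $\psi(N)$ about the point $k$, they equal the \emph{leading} coefficients of these polynomials, because $\chi$ has degree at most $3$ and $\psi$ has degree at most $2$ in $N$ (by Lemma \ref{lem: polynomiality of Q Pk chi psi}). Thus, regarded as polynomials in $N$,
\[
\chi(N) = \chi_3 N^3 + (\text{lower order}), \qquad \psi(N) = \psi_2 N^2 + (\text{lower order}).
\]

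Next, I expand the right-hand side of the formula from Lemma \ref{lem: Pj}, treating $j$ as a fixed integer with $\max\{\condIndex(\Gamma)+2,\max(I)+2\}\leq j<N$. The factor $(N-j)(N-j+1)$ contributes leading term $N^2$ in $N$, while $(N-j)(N-j+1)(N-j-1)$ contributes leading term $N^3$. Therefore the leading $N^5$ coefficient of
\[
(N-j)(N-j+1)\left(\tfrac{1}{2}\chi(N) + \tfrac{1}{3}(N-j-1)\psi(N)\right)
\]
is exactly $\tfrac{1}{2}\chi_3 + \tfrac{1}{3}\psi_2$. Since this expression equals $P_j(N)$, which has degree at most $4$ by Lemma \ref{lem: polynomiality of Q Pk chi psi}(2), the $N^5$ coefficient must vanish. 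This yields $\tfrac{1}{2}\chi_3 + \tfrac{1}{3}\psi_2 = 0$, which is the desired identity.

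There is no real obstacle here; the proof is a one-line degree comparison. The only subtlety worth flagging in the write-up is the justification that the Taylor coefficient $\chi_3$ (respectively $\psi_2$) about $k$ coincides with the top coefficient in $N$, which follows immediately from the degree bounds established earlier.
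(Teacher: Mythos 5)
Your proof is correct and is essentially identical to the paper's argument: both extract the $N^5$ coefficient from the identity $P_j = (N-j)(N-j+1)\left(\tfrac{1}{2}\chi + \tfrac{1}{3}(N-j-1)\psi\right)$ and note it must vanish because $\deg P_j \leq 4$. The paper states this in one line; you have simply spelled out the routine computation, including the (correct) observation that $\chi_3$ and $\psi_2$ are the coefficients of $N^3$ and $N^2$ respectively regardless of the center of the Taylor expansion.
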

  \begin{proof}
For each $j$ with $j \geq \max \{ \condIndex(\Gamma)+2,I+2 \} $ and $j< N$, the polynomial $P_j$ has degree at most 4. Thus the degree 5 coefficient in the expression (\ref{eqn: Pj}) must vanish.
\end{proof}

We will also use the following lemma.

\begin{lemma} \label{lem: crossing implies not globally optimal}
Suppose that the graph of $w$ crosses the line $y=2$ after the
conductor of $\Gamma$. Then $w$ is not globally optimal for the
Simplified Problem.
  
\end{lemma}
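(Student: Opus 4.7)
The plan is a direct exchange argument: given $w \in W$ whose graph crosses $y = 2$ past $\operatorname{cond}(\Gamma)$, I will produce $w' \in W$ satisfying $\|a - w'\| < \|a - w\|$ (where $a$ is the Simplified-Problem vector). By Theorem~\ref{thm: optimisation problem} this forces $w \neq \prox(a,W)$ and excludes $w$ from being globally optimal. The candidate is the pointwise cap
\[
w'_i := \max\{w_i,\, 2\}.
\]
Since the hypothesis forces the piecewise linear graph of $w$ to take a value strictly less than $2$, and such a graph attains its minimum at a vertex $(\gamma_j, w_j)$, there exists at least one index $j$ with $w_j < 2$, so $w' \neq w$.

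To show $w' \in W$, I need the slopes between consecutive $(\gamma_i, w'_i)$ to be non-decreasing. I plan to split consecutive pairs into the four regimes determined by whether $w_i$ and $w_{i+1}$ exceed $2$, and check each feasible transition; in each case, capping brings the slopes of $w'$ closer to zero on the correct side, so the non-decreasing property is inherited from $w$. The only a priori dangerous transition is the triple $w_i < 2$, $w_{i+1} > 2$, $w_{i+2} < 2$, which would produce a positive-then-negative slope in $w'$; but this triple also produces a positive-then-negative slope in $w$, contradicting $w \in W$, so it never occurs. For the strict decrease in distance, the key observation is that every coordinate of $a$ is at least $2$: we have $a_1 = \gamma_1 \geq 2$ since $1 \notin \Gamma$, $a_i = \gamma_i - \gamma_{i-2} \geq 2$ by strict monotonicity of the enumeration of $\Gamma$, and $a_{N+1} = 2$ by definition of the Simplified Problem. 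Consequently, at each index $j$ with $w_j < 2$ one has $a_{j+1} - w_j > a_{j+1} - 2 \geq 0$, giving $(a_{j+1} - w_j)^2 > (a_{j+1} - w'_j)^2$; all other coordinates are unchanged.

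The main obstacle is the convexity check for $w'$; the case analysis is elementary but hinges on the observation that the problematic down-up-down triple is automatically excluded by $w \in W$. I also note that the argument uses $a_{N+1} = 2$ essentially: in the Unsimplified Problem one has $a_{N+1} = 1 < 2$, so raising $w_N$ toward $2$ can strictly \emph{increase} the distance there, and the analogous statement would fail. This is the technical reason the Simplified Problem, in which the whole tail of $a$ sits at height $2$, admits cleaner tail behavior and serves as an intermediate step before transferring back to the original problem.
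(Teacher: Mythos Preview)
Your proposal is correct and takes a genuinely different route from the paper. The paper argues by a three-way case split on whether $w_0<2$ or $w_0>2$ and where the crossings occur, in each case sketching (with pictures) a bespoke competitor: the constant line $y=2$, or $w$ with a new corner inserted near a crossing and the tail flattened to height~$2$. Your argument replaces all of this by the single uniform competitor $w'_i=\max\{w_i,2\}$ and a direct squared-distance comparison.

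What your approach buys is uniformity and a cleaner verification. The convexity of $w'$ can in fact be obtained in one line rather than by your case split: the function $G(x)=\max\{F(x),2\}$ is convex (as a max of convex functions), and since $w'_i=G(\gamma_i)$, the secant inequality for $G$ at the triple $\gamma_{i-1}<\gamma_i<\gamma_{i+1}$ is exactly the convexity condition for $w'$. Either way, your identification of the excluded ``down--up--down'' pattern is the right obstruction. Your coordinatewise inequality $(a_{j+1}-w_j)^2>(a_{j+1}-2)^2$ is exactly what is needed and uses only $a_i\ge 2$, which you verify correctly (here the Simplified hypothesis $a_{N+1}=2$ is essential, as you note). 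A small bonus: your argument never uses that the crossing is \emph{after the conductor}; it shows more generally that any $w\in W$ with $w_j<2$ for some $j$ is suboptimal for the Simplified Problem. The paper's extra hypothesis is there only because that is the form in which the lemma is invoked in Propositions~\ref{prop: persistence and optimality 1} and~\ref{prop: persistence and optimality 2}.
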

\begin{proof}  
  If $w_0 < 2$, then the graph of $w$ can only cross the line $y=2$
  once.  Such a $w$ is not globally optimal because the line $y=2$
  performs better. In pictures:
\begin{figure}[H]
\begin{center}
\begin{tikzpicture}[scale=0.35] 
  \draw[->] (0,0) -- (15,0);
  \draw[->] (0,0) -- (0,5);
  \draw[thin] (0,2)--(15,2);
  \draw (-0.1,2)--(0.1,2);
  \draw (-0.1,2) node[anchor=east] {$2$};
  \draw (6,-0.1)--(6,0.1);
  \draw (6,-0.1) node[anchor=north] {$c$};  
  \filldraw[green] (0,2) circle (0.05);
  \filldraw[green] (2,4) circle (0.05);
  \filldraw[green] (4,4) circle (0.05);
  \filldraw[green] (6,3) circle (0.05);
  \foreach \x in {7,8,9,10,11,12,13,14,15}
    \filldraw[green] (\x,2) circle (0.05);
   \draw[blue] (0,1)--(9,2)--(15,4);
   \filldraw[red] (9,2) circle (0.05);
\end{tikzpicture}
\qquad is not as good as \qquad
\begin{tikzpicture}[scale=0.35] 
  \draw[->] (0,0) -- (15,0);
  \draw[->] (0,0) -- (0,5);
  \draw[thin] (0,2)--(15,2);
  \draw (-0.1,2)--(0.1,2);
  \draw (-0.1,2) node[anchor=east] {$2$};
  \draw (6,-0.1)--(6,0.1);
  \draw (6,-0.1) node[anchor=north] {$c$};  
  \filldraw[green] (0,2) circle (0.05);
  \filldraw[green] (2,4) circle (0.05);
  \filldraw[green] (4,4) circle (0.05);
  \filldraw[green] (6,3) circle (0.05);
  \foreach \x in {7,8,9,10,11,12,13,14,15}
    \filldraw[green] (\x,2) circle (0.05);
  \draw[blue] (0,2)--(15,2);
\end{tikzpicture}
\end{center}
\end{figure}

 If $w_0 > 2$, the graph of $w$ can cross the line $y=2$ one or two times.

 If the first crossing occurs after the conductor, $w$ is not globally
 optimal because we do better by inserting a corner near the crossing
 and replacing the graph to the right by a horizontal line at height
 2. In pictures:

\begin{figure}[H] 
\begin{center}
\begin{tikzpicture}[scale=0.35] 
  \draw[->] (0,0) -- (15,0);
  \draw[->] (0,0) -- (0,5);
  \draw[thin] (0,2)--(15,2);
  \draw (-0.1,2)--(0.1,2);
  \draw (-0.1,2) node[anchor=east] {$2$};
  \draw (6,-0.1)--(6,0.1);
  \draw (6,-0.1) node[anchor=north] {$c$};  
  \filldraw[green] (0,2) circle (0.05);
  \filldraw[green] (2,4) circle (0.05);
  \filldraw[green] (4,4) circle (0.05);
  \filldraw[green] (6,3) circle (0.05);
  \foreach \x in {7,8,9,10,11,12,13,14,15}
    \filldraw[green] (\x,2) circle (0.05);
  \draw[blue] (0,3.5)--(6,2.5)--(15,1.6);
  \filldraw[red] (6,2.5) circle (0.05);
\end{tikzpicture}
\qquad is not as good as \qquad
\begin{tikzpicture}[scale=0.35] 
  \draw[->] (0,0) -- (15,0);
  \draw[->] (0,0) -- (0,5);
  \draw[thin] (0,2)--(15,2);
  \draw (-0.1,2)--(0.1,2);
  \draw (-0.1,2) node[anchor=east] {$2$};
  \draw (6,-0.1)--(6,0.1);
  \draw (6,-0.1) node[anchor=north] {$c$};  
  \filldraw[green] (0,2) circle (0.05);
  \filldraw[green] (2,4) circle (0.05);
  \filldraw[green] (4,4) circle (0.05);
  \filldraw[green] (6,3) circle (0.05);
  \foreach \x in {7,8,9,10,11,12,13,14,15}
    \filldraw[green] (\x,2) circle (0.05);
  \draw[blue] (0,3.5)--(6,2.5)--(11,2)--(15,2);
  \filldraw[red] (6,2.5) circle (0.05);
  \filldraw[red] (11,2) circle (0.05);
\end{tikzpicture}
\end{center}
\end{figure} 

 If the first crossing occurs before the conductor, and the second
 crossing after the conductor, $w$ is not globally optimal because we
 do better by inserting a corner near the first crossing and replacing
 the graph to the right by a horizontal line at height 2. In pictures:

\begin{figure}[H]
 \begin{center}
\begin{tikzpicture}[scale=0.35] 
  \draw[->] (0,0) -- (15,0);
  \draw[->] (0,0) -- (0,5);
  \draw[thin] (0,2)--(15,2);
  \draw (-0.1,2)--(0.1,2);
  \draw (-0.1,2) node[anchor=east] {$2$};
  \draw (6,-0.1)--(6,0.1);
  \draw (6,-0.1) node[anchor=north] {$c$};  
  \filldraw[green] (0,2) circle (0.05);
  \filldraw[green] (2,4) circle (0.05);
  \filldraw[green] (4,4) circle (0.05);
  \filldraw[green] (6,3) circle (0.05);
  \foreach \x in {7,8,9,10,11,12,13,14,15}
    \filldraw[green] (\x,2) circle (0.05);
  \draw[blue] (0,4)--(2,2.5)--(4,1.5)--(11,1.5)--(15,3);
  \filldraw[red] (2,2.5) circle (0.05);
  \filldraw[red] (4,1.5) circle (0.05);
  \filldraw[red] (11,1.5) circle (0.05);
\end{tikzpicture}
\qquad is not as good as \qquad
\begin{tikzpicture}[scale=0.35] 
  \draw[->] (0,0) -- (15,0);
  \draw[->] (0,0) -- (0,5);
  \draw[thin] (0,2)--(15,2);
  \draw (-0.1,2)--(0.1,2);
  \draw (-0.1,2) node[anchor=east] {$2$};
  \draw (6,-0.1)--(6,0.1);
  \draw (6,-0.1) node[anchor=north] {$c$};  
  \filldraw[green] (0,2) circle (0.05);
  \filldraw[green] (2,4) circle (0.05);
  \filldraw[green] (4,4) circle (0.05);
  \filldraw[green] (6,3) circle (0.05);
  \foreach \x in {7,8,9,10,11,12,13,14,15}
    \filldraw[green] (\x,2) circle (0.05);
  \draw[blue] (0,4)--(2,2.5)--(3,2)--(15,2);
  \filldraw[red] (2,2.5) circle (0.05);
  \filldraw[red] (3,2) circle (0.05);
\end{tikzpicture}
\end{center}
\end{figure}
\end{proof}

\section{If all the corners are at or below the conductor} \label{sec: corners below the conductor}

By Lemma \ref{lem: polynomiality of Q Pk chi psi}, the polynomial
$\chi$ has degree at most 3. From a computer search of numerical
semigroups of genus $1 \leq g \leq 14$, we see that in general, $\chi$ has degree 3, but we also found examples where the degree drops. The next lemma describes what happens when the degree of $\chi$ drops.

\begin{lemma} \label{lem: chi3 chi2 vanishing}
  Suppose that $\max(I) \leq \condIndex(\Gamma)$. Suppose $k\geq \condIndex(\Gamma)+2$, and consider the Taylor expansions of $\chi$ and $\psi$ centered at $k$.
\begin{enumerate}
\item If $\chi_3 = 0$, then $\psi_2=0$, and $\chi_2 = - \psi_1$. 
\item If $\chi_3 = \chi_2 = 0$, then $\chi_1 = 0$, and $\psi_2 = \psi_1 = \psi_0=0$. 
\end{enumerate}
\end{lemma}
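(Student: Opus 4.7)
The first conclusion of part (1), that $\psi_2 = 0$, is immediate from Lemma \ref{lem: chi3 psi2 relation}: since $\tfrac{1}{2}\chi_3 = -\tfrac{1}{3}\psi_2$, the vanishing of $\chi_3$ forces the vanishing of $\psi_2$. The substantive content of the lemma therefore lies in the further identity $\chi_2 = -\psi_1$ in part (1) and the additional vanishings $\chi_1 = 0$ and $\psi_0 = 0$ in part (2).

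My plan for these identities is to work directly from the explicit minor formulas of Lemma \ref{lem: formulas for chi and psi}. Those formulas express $\chi_3$ and $\chi_2$ as two linearly independent combinations of a pair of minors of $A'(k,k-1)$, and $\psi_2$ and $\psi_1$ as two linearly independent combinations of a pair of minors of $A'(k,k)$; a fifth minor (involving the larger matrix $A'(k+1,k)$) appears in the formula for $\chi_1$, while $\psi_0$ is, up to sign, $\det A'(k,k)$. The relation supplied by Lemma \ref{lem: chi3 psi2 relation} is equivalent to a single linear dependence among the first four of these minors, and the hypothesis $\chi_3 = 0$ provides a second. Together they determine two of the four minors in terms of the other two, and the identity $\chi_2 + \psi_1 = 0$ then reduces to a single residual identity relating one minor of $A'(k,k-1)$ to one minor of $A'(k,k)$.

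I plan to prove this residual identity by expanding the two minors along the columns in which $A'(k,k-1)$ and $A'(k,k)$ differ (namely columns $k-1$ and $k$) and matching terms. Under the hypotheses $\max(I) \leq \condIndex(\Gamma)$ and $k \geq \condIndex(\Gamma) + 2$, every row of $A(k)$ from row $\condIndex(\Gamma)+2$ through row $k-1$ is tridiagonal with entries $-1, 2, -1$ (with the two rightmost columns contributing the constants $2(N-i+1)$ and $2$), which should reduce the remaining calculation to an elementary determinantal step. Part (2) is then handled analogously: $\chi_3 = \chi_2 = 0$ together with the resulting $\psi_2 = \psi_1 = 0$ forces each of the four minors above to vanish individually (by the linear independence), and the remaining claims $\chi_1 = 0$ and $\psi_0 = 0$ become vanishings of the fifth minor and of $\det A'(k,k)$. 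Both should follow from Laplace expansion along an appropriate final column, again exploiting the tridiagonal pattern so that the cofactors appearing are identifiable with minors already forced to vanish.

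The main technical obstacle is the bookkeeping between the three closely related matrices $A'(k,k-1)$, $A'(k,k)$, and $A'(k+1,k)$, but the tridiagonal $-1, 2, -1$ pattern in the tail rows, which is exactly what our hypotheses on $I$ and $k$ provide, should reduce every required identity to a short row/column manipulation.
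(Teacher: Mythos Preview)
Your plan is essentially the paper's own approach: both arguments rest on the minor formulas of Lemma~\ref{lem: formulas for chi and psi}, use $\chi_3=0$ and $\psi_2=0$ to solve for one minor in each pair, reduce $\chi_2+\psi_1=0$ to the identification of a single minor of $A'(k,k-1)$ with one of $A'(k,k)$, and then in part~(2) use linear independence to force all four basic minors to vanish before attacking $\psi_0$ and $\chi_1$.

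There is one tactical point where your outline is too optimistic. For $\psi_0=0$ you propose to Laplace-expand $\det A'(k,k)$ directly and match the cofactors to minors already known to vanish. If you try this, expanding along the bottom row gives three terms, only one of which is $\del_{k+1}^{k+1}A'(k,k)$; the other two cofactors $\del_{k+1}^{k-1}A'(k,k)$ and $\del_{k+1}^{k}A'(k,k)$ are not among your four vanishing minors and do not reduce to them by a single further expansion. Expansion along column $k+1$ is worse: only two of the $k{+}1$ terms are known to vanish. The paper sidesteps this by first observing that $\psi_2=\psi_1=0$ makes $\psi$ constant, so $\psi_0=\psi(k{+}1)=(-1)^{k+2}\det A'(k{+}1,k{+}1)$; it then expands $\det A'(k{+}1,k{+}1)$ along its bottom row. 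The extra row and column give exactly enough room for two further bottom-row expansions to collapse everything onto $\del_{k+1}^{k+1}A'(k,k)$, $\del_{k}^{k+1}A'(k,k)$, and $\del_{k,k+1}^{k-1,k+1}A'(k,k)$, all of which are already zero. Once $\psi_0=0$ is in hand, $\chi_1=0$ is immediate: the first two terms in the $\chi_1$ formula are the two vanishing $A'(k,k-1)$ minors, and the third term $\del_{k+2}^{k+1}A'(k{+}1,k)$ is literally $\det A'(k,k)$ after the deletion, hence $\pm\psi_0=0$. So your plan is right, but be prepared to pass to $A'(k{+}1,k{+}1)$ (equivalently, evaluate $\psi$ at $k{+}1$) rather than attacking $\det A'(k,k)$ head-on, and note that $\chi_1=0$ logically depends on $\psi_0=0$.
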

\begin{proof}
  Proof of Part (1): By Lemma \ref{lem: chi3 psi2 relation}, we have $\psi_2 = -\frac{3}{2} \chi_3$. For the second equation, we use the formulas from Lemma \ref{lem: formulas for chi and psi}. Since $\chi_3 = 0$, we have
\[ \del_{k,k+1}^{k,k+1} A'(k,k-1) = - \del_{k-1,k+1}^{k,k+1} A'(k,k-1),
\]
and hence
\[
\chi_2 = (-1)^{k+1} 2 \del_{k,k+1}^{k,k+1} A'(k,k-1).
\]
Since $\psi_2 = 0$, we have
\[
\del_{k+1}^{k+1}  A'(k,k) = - \del_{k}^{k+1}  A'(k,k),
\]
and hence
\[
\psi_1 = (-1)^{k+1} (-2) \del_{k}^{k+1}  A'(k,k).
\]

Expanding $d_{k}^{k+1}  A'(k,k)$ along its bottom row yields
\[
\del_{k}^{k+1}  A'(k,k) = \del_{k,k+1}^{k-1,k+1}  A'(k,k)
\]
and the minors $d_{k,k+1}^{k-1,k+1}  A'(k,k) $ and $ d_{k,k+1}^{k,k+1} A'(k,k-1)$ are the same. 

Hence, we obtain $\chi_2 = -\psi_1$.

Proof of Part (2):  By Part (1), we have $\psi_2 = \psi_1 = 0$. First, we prove that $\psi_0 = 0$.

Similarly, from the formulas for $\psi_2$ and $\psi_1$ given above, we see that when both $\psi_2$ and $\psi_1$ vanish, we must have 
\begin{align*}
  \del_{k+1}^{k+1}  A'(k,k) &= 0\\
  \del_{k}^{k+1}  A'(k,k) &= 0.
\end{align*}

By expanding $\del_{k}^{k+1}  A'(k,k) $ along its bottom row, we obtain
\begin{align}
\del_{k,k+1}^{k-1,k+1}  A'(k,k) &= 0. \label{eqn: vanishing minor}
\end{align}

Now we study $A'(k+1,k+1)$. By expanding along its bottom row, we obtain
\begin{equation}
\det A'(k+1,k+1) = - \del_{k+2}^{k} A'(k+1,k+1) + 2 \del_{k+2}^{k+2} A'(k+1,k+1).  \label{eqn: det A'(k+1,k+1)}
\end{equation}

Expanding $\del_{k+2}^{k} A'(k+1,k+1) $ along its bottom row yields
\[
\del_{k+2}^{k} A'(k+1,k+1) = \del_{k,k+2}^{k,k+2} A'(k+1,k+1)
\]
Expanding this along its bottom row yields 
\[
\del_{k,k+2}^{k,k+2} A'(k+1,k+1) = \del_{k,k+1,k+2}^{k-1,k,k+2} A'(k+1,k+1)
\]
But we have
\[
\Del_{k,k+1,k+2}^{k-1,k,k+2} A'(k+1,k+1) = \Del_{k,k+1}^{k-1,k+1} A'(k,k)
\]
and by (\ref{eqn: vanishing minor}), we have $\del_{k,k+1}^{k-1,k+1} A'(k,k)=0$. Thus, the first term on the right hand side of equation (\ref{eqn: det A'(k+1,k+1)}) vanishes.

Now consider the second term in equation (\ref{eqn: det A'(k+1,k+1)}). Expanding along its bottom row, we have
\[
\del_{k+2}^{k+2} A'(k+1,k+1) = -\del_{k+1,k+2}^{k-1,k+2} A'(k+1,k+1) - 2 \del_{k+1,k+2}^{k,k+2} A'(k+1,k+1) 
\]
Both of these terms vanish. Indeed,
\begin{align*}
  \del_{k+1,k+2}^{k-1,k+2} A'(k+1,k+1) &= \del_{k,k+1,k+2}^{k-1,k+1,k+2} A'(k+1,k+1) \\
&= \del_{k,k+1}^{k-1,k+1} A'(k,k)\\
  &= 0,
\end{align*}
and
\begin{align*}
  \del_{k+1,k+2}^{k,k+2} A'(k+1,k+1) &= \del_{k+1}^{k+1} A'(k,k)\\
  &= 0.
\end{align*}
This completes the proof that $\psi_0 = 0$. 

Finally, we prove that $\chi_1 = 0$. From the formulas for $\chi_3$ and $\chi_2$ given in Part (1) above, we see that when both $\chi_3$ and $\chi_2$ vanish, we must have 
\begin{align*}
  \del_{k,k+1}^{k,k+1} A'(k,k-1) &= 0 \\
  \del_{k-1,k+1}^{k,k+1} A'(k,k-1) &= 0.
\end{align*}
These are the first two terms in the formula for $\chi_1$ from Lemma \ref{lem: formulas for chi and psi}. The remaining term is a multiple of $\del_{k+2}^{k+1} A'(k+1,k).$ But we have
\[
\Del_{k+2}^{k+1} A'(k+1,k) = A'(N,N)
\]
and $\det A'(N,N)=0$, since $\psi_0 = 0$. This shows that $\chi_1 =0$.

\end{proof}

\textit{Remark: } A computer search shows that for all numerical semigroups of genus $g\leq 14$, if $\chi_3 = \chi_2 = 0$, then $\chi_0 = 0$, too, so that $\chi(N) \equiv 0$ and $\psi(N) \equiv 0$. However, we do not know whether this always holds.

\begin{definition}
We write $\nu(\Gamma,I)$ for the smallest positive integer that is
greater than the real roots of the polynomials $\chi$ and $(\chi+2(N-\condIndex(\Gamma)-2) \psi)$, and greater than $\condIndex(\Gamma)+2$ and $\max(I)+2$. We abbreviate this as $\nu$ when $\Gamma$ and $I$ are clear from context. 
\end{definition}

\begin{proposition} \label{prop: persistence and optimality 1}
  Suppose that $\max(I) \leq \condIndex(\Gamma)$. If $N > \nu$, then the global optimum for the Simplified Problem does not lie on face $I$.
\end{proposition}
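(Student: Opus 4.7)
The plan is to exhibit two necessary conditions for the KKT solution on face $I$ to be globally optimal and show that these two conditions are incompatible once $N > \nu$. Let $x$ be the KKT solution on face $I$ and $w$ the corresponding weight vector. Since $\max(I) \leq \condIndex(\Gamma) < N-1$, the index $N-1$ lies outside $I$, so $x_{N-1} = \chi(N)/Q(N)$ is a KKT multiplier; moreover, past $\gamma_{\max(I)}$ the graph of $w$ is a single line segment terminating at $(\gamma_N, w_N) = (\gamma_N, \omega(N)/Q(N))$ with slope $-\psi(N)/Q(N)$.

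Specialising the recursion in the proof of Lemma~\ref{lem: Pj} to $j=1$ gives $x_{N-1} = 2(x_{N+1}-2)$, equivalently $\chi = 2(\omega-2Q)$; in particular $w_N - 2 = \chi(N)/(2Q(N))$. Condition (a), dual feasibility $x_{N-1} \geq 0$, is then $\chi(N)\cdot Q(N) \geq 0$. Condition (b), that $w$ not cross $y=2$ past the conductor (forced by Lemma~\ref{lem: crossing implies not globally optimal} together with $w_N \geq 2$), demands $w_{\condIndex(\Gamma)+2} \geq 2$; a direct computation using the identity above yields
\[
w_{\condIndex(\Gamma)+2} - 2 \;=\; \frac{\chi(N) + 2(N-\condIndex(\Gamma)-2)\,\psi(N)}{2\,Q(N)},
\]
so (b) amounts to $(\chi(N) + 2(N-\condIndex(\Gamma)-2)\,\psi(N)) \cdot Q(N) \geq 0$. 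Hence for $w$ to be globally optimal, both $\chi$ and $\chi + 2(N-\condIndex(\Gamma)-2)\psi$ must share the sign of $Q$.

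The remaining step is to show that these two polynomials have opposite signs for $N > \nu$. By Lemma~\ref{lem: chi3 psi2 relation}, $\chi_3 = -\tfrac{2}{3}\psi_2$, so the degree-$3$ leading coefficients are $-\tfrac{2}{3}\psi_2$ and $\chi_3 + 2\psi_2 = \tfrac{4}{3}\psi_2$, which are opposite in sign whenever $\psi_2 \neq 0$. When $\psi_2 = 0$ but $\chi_2 \neq 0$, Lemma~\ref{lem: chi3 chi2 vanishing}(1) gives $\chi_2 = -\psi_1$, so the next-order leading coefficients $\chi_2$ and $\chi_2 + 2\psi_1 = -\chi_2$ are again opposite. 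When $\chi_2 = 0$ too, Lemma~\ref{lem: chi3 chi2 vanishing}(2) forces $\chi_1 = 0$ and $\psi \equiv 0$; then $\chi$ reduces to the constant $\chi_0$, so a nonzero $\chi_0$ of the wrong sign breaks (a) outright, while $\chi_0 = 0$ collapses $x$ to the persistent solution, whereupon face $I$ ceases to be the minimal face containing it. Since $\nu$ is chosen to exceed the real roots of both polynomials, each retains the sign of its (possibly lower-degree) leading term for $N > \nu$, and the sign mismatch above contradicts the joint requirement.

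I expect the main obstacle to be the degenerate subcase where $\chi \equiv 0$ and $\psi \equiv 0$ simultaneously: here the leading-coefficient dichotomy collapses, so one cannot conclude directly from the sign analysis. Proving rigorously that face $I$ still fails to be the minimal face of the global optimum in this scenario—rather than merely invoking the computer-verified observation that $\chi_0 = 0$ in small genus—will require either a direct structural argument on the KKT matrix when these determinants vanish, or a comparison with the persistent solution attached to some face $I'$ with $\max(I') > \condIndex(\Gamma)$.
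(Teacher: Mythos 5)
Your strategy for the degree-3 and degree-2 cases matches the paper's (Cases 1--3 of its proof): use $\chi_3 = -\tfrac{2}{3}\psi_2$ and Lemma~\ref{lem: chi3 chi2 vanishing} to show that the leading coefficients of $\chi$ and $\chi + 2(N-\condIndex(\Gamma)-2)\psi$ are opposite, so for $N > \nu$ the final line segment crosses $y=2$ past the conductor, and invoke Lemma~\ref{lem: crossing implies not globally optimal}. The gap you did not anticipate is the case $\deg\chi = 0$ with $\chi_0 \neq 0$. By Lemma~\ref{lem: chi3 chi2 vanishing}(2) one then has $\psi \equiv 0$, so the two polynomials you compare both reduce to the constant $\chi_0$: there is no sign mismatch to exploit. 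Your argument disposes only of the sub-case where $\chi_0$ and $Q$ have opposite signs; when $\chi_0 \cdot Q > 0$ both of your conditions (a) and (b) hold and the crossing argument yields nothing. The paper's Case~4 addresses this by a separate computation: summing the KKT rows from $\ell+2$ onward (using $x_N = 0$) gives $(\gamma_{\ell+2}-\gamma_{\ell+1})\, x_{\ell+1} = (N-\ell)(-\chi_0)/Q(N)$, so $x_{\ell+1}$ and $x_{N-1}$ are always opposite in sign and one KKT multiplier is negative for every $N$.

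You correctly flag the case $\chi \equiv 0$ (hence $\psi \equiv 0$) as a genuine obstacle, but your interim sketch (``collapses $x$ to the persistent solution, whereupon face $I$ ceases to be the minimal face containing it'') does not follow: $x_N = 0$ and $x_{N+1} = 2$ concern the lineality parameters, not the vanishing of the $t_i$ for $i \in I$, so no smaller face is implicated. The paper's Case~5 is a substantive argument: a downward induction on rows $\condIndex+2,\ldots,N+1$ shows $x_i = 0$ for $\condIndex \leq i \leq N-1$; then row $\condIndex+1$, whose right-hand side strictly exceeds $4$ because $\gamma_{\condIndex}-\gamma_{\condIndex-1} > 1$, is analyzed in four sub-cases according to the position of $\ell = \max I$ relative to $\condIndex$, yielding in each either a negative KKT multiplier or a numerical contradiction. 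Your guess that a direct structural argument on the KKT matrix is required here is exactly right, but the proposal does not supply it.
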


\begin{proof}

We will break into cases according to the degree of $\chi(N)$. We have $\deg \chi(N) \leq 3$. In the general case, when $\deg \chi(N) = 3$, we will show that for sufficiently large $N$, the last line segment in the graph corresponding to the stationary point on face $I$ crosses the line $y=2$ past the conductor; hence, by Lemma \ref{lem: crossing implies not globally optimal}, it will not be the global optimum. We will treat the cases where $\deg \chi(N) < 3$ separately.

Write $\ell = \max(I)$. Consider the last line segment $L$ in the graph. It connects the points $(\gamma_{\ell},w_{\ell})$ and $(\gamma_N, w_N)$. Let $k = \condIndex(\Gamma)+2$. Since $\ell \leq \condIndex(\Gamma)$, the point $(\gamma_k,w_k)$ lies on the line segment $L$. We will show that if $\deg \chi(N) \geq 2$, the leading terms of $2-w_{k}$ and $2-w_{N}$ have opposite signs.

We have
\begin{align*}
  2-w_N &= 2-x_{N+1} \\
        &= -\frac{1}{2}x_{N-1} \\
        &=- \frac{\chi}{2Q}.
\end{align*}
 By our notation conventions, this line segment has slope $-x_N$ and goes through the point $(\gamma_N, w_N)$. Hence, it has equation $y - x_{N+1} = -x_N( x - \gamma_N)$. We have $w_k -x_{N+1} = -x_{N}(\gamma_k-\gamma_N)$. Since $k$ and $N$ are both larger than $\condIndex(\Gamma)$, we have $\gamma_k-\gamma_N = k-N$. Thus $w_k = x_{N+1}-x_{N}(k-N)$. Then
\begin{align*}
  2-w_k & = 2 - x_{N+1} - (N-k) x_N\\
        &= -\frac{1}{2} x_{N-1} - (N-k) x_N \\
        &= -\frac{1}{2}( x_{N-1} + 2(N-k)x_N)\\
        &= -\frac{(\chi+2(N-k) \psi )}{2Q}.
\end{align*}

Choose $k$ as the center for the Taylor expansions of $\chi$ and $\psi$. Then we have
\[
2-w_k  = -\frac{\left( (\chi_3 + 2 \psi_2) (N-k)^3 + (\chi_2 + 2 \psi_1) (N-k)^2 + (\chi_1 + 2 \psi_0)(N-k) + \chi_0 \right) }{2Q}
\]

We now split into cases according to the degree of $\chi(N)$.

\textit{Case 1:} $\deg \chi = 3$. Thus, $\chi_3 \neq 0$. Then by Lemma \ref{lem: chi3 psi2 relation}, we have $\psi_2 = -\frac{3}{2} \chi_3$. Thus, the leading coefficient of the numerator of $2-w_k$ is $\chi_3 + 2 \psi_2 = -2 \chi_3$, while he leading coefficient of the numerator of $2-w_N$ is $\chi_3$. When $N > \nu$, the signs of these polynomials are determined by the signs of their leading coefficients. Hence, for all $N>\nu$, the line segment $L$ crosses the line $y=2$ after the conductor of $\Gamma$. By Lemma \ref{lem: crossing implies not globally optimal} it follows that the stationary point on face $I$ will not be the global optimum for the Simplified Problem for $N$ in this range.

\textit{Case 2:} $\deg \chi = 2$. Thus, $\chi_3 = 0$, but $\chi_2 \neq 0.$ By Lemma \ref{lem: chi3 chi2 vanishing}, we have $\psi_2 = 0$ and $\chi_2 = -\psi_1$.  

The leading coefficient of the numerator of $2-w_N$ is $\chi_2$. The leading coefficient of the numerator of $2-w_k$ is $\chi_2 + 2 \psi_1 = -\chi_2$. Since these have opposite signs, we may argue as we did in Case 1.  When $N > \nu$, the signs of these polynomials are determined by the signs of their leading coefficients. Hence, for all $N>\nu$, the line segment $L$ crosses the line $y=2$ after the conductor of $\Gamma$. By Lemma \ref{lem: crossing implies not globally optimal} it follows that the stationary point on face $I$ will not be the global optimum for the Simplified Problem for $N$ in this range.

\textit{Case 3:} $\deg \chi = 1$. By Lemma \ref{lem: chi3 chi2 vanishing}, this does not occur.

\textit{Case 4:} $\deg \chi= 0$, but $\chi_0 \neq 0$. 

By Lemma \ref{lem: chi3 chi2 vanishing}, we have $\psi(N) \equiv 0$, and $\chi(N) = \chi_0$.

Sum the KKT equations starting in row $\ell+2$, where $\ell = \max I$. We omit $x_N$, since $x_N = 0$. We obtain
\[
(\gamma_{\ell+2}-\gamma_{\ell+1}) x_{\ell+1} + 2(N-\ell)x_{N+1} = 4(N-\ell)
\]
Thus
\begin{align*}
  (\gamma_{\ell+2}-\gamma_{\ell+1}) x_{\ell+1} &= (N-\ell)(4-2x_{N+1})\\
  & = (N-\ell)(-x_{N-1}) \\
  &= \frac{(N-\ell)(-\chi_0)}{Q(N)}.
\end{align*}

Thus, this stationary point cannot be the global optimum for any $N$, since one of the KKT multipliers $x_{\ell+1}$ or $x_{N-1}$ is negative.

\textit{Case 5:} $ \chi(N) \equiv 0$. 

We will study rows $\condIndex+1,\ldots,N-1$ of the KKT matrix equation, and eventually split into four further cases.

Since $\chi\equiv 0$ and $\psi \equiv 0$, we have $x_{N-1} = x_N = 0$. The last row of the KKT matrix equation is $2x_{N+1} = 4$, so $x_{N+1} = 2$.

The penultimate row of the KKT matrix equation is $-x_{N-2} + 2x_{N-1} + 2x_{N} + 2x_{N+1} = 4$. With $x_{N-1} = x_N = 0$ and $x_{N+1} = 2$, we have $x_{N-2} = 0$.

By induction, using rows $\condIndex+2,\ldots,N+1$ of the KKT matrix equation, we may show that $x_{i}  = 0$ for $\condIndex \leq i \leq N-1$. In all these rows, the right hand side is $4$.

In row $\condIndex + 1$, on the right hand side, we have $2a_{\condIndex+1} = 2(\gamma_{\condIndex+1} - \gamma_{\condIndex-1})$. This is  because $\gamma_{\condIndex+1} - \gamma_{\condIndex-1} = (\gamma_{\condIndex+1} - \gamma_{\condIndex}) + (\gamma_{\condIndex+1} - \gamma_{\condIndex-1})$, and $\gamma_{\condIndex+1} - \gamma_{\condIndex}=1$ while $\gamma_{\condIndex} - \gamma_{\condIndex-1} >1$ by the definition of the conductor.

We split into cases that determine the left hand side of row $\condIndex + 1$ of the KKT matrix equation. 

\textit{Case 5a:} $ \chi(N) \equiv 0$ and $\ell < \condIndex-1$.

Then row $\condIndex + 1$ says
\[
 (\gamma_{\condIndex-2} - \gamma_{\condIndex-1})x_{\condIndex-1} + 4 = 2a_{\condIndex+1}. 
\]
Since  $(\gamma_{\condIndex-2} - \gamma_{\condIndex-1})<0$ and  $2a_{\condIndex+1}>4$, this implies $x_{\condIndex-1} < 0$. Thus, this stationary point cannot be the global optimum for any $N$.

\textit{Case 5b:} $ \chi(N) \equiv 0$ and $\ell = \condIndex -1$.  In this case, row $\condIndex+1$ says $4 = 2a_{\condIndex+1}>4$, a contradiction. 

\textit{Case 5c:} $ \chi(N) \equiv 0$, $\ell = \condIndex$, and $\ell-1 \not\in I$.  This case is similar to Case 5a. In this case, row $\condIndex+1$ says
\[
 (\gamma_{\condIndex-2} - \gamma_{\condIndex-1})x_{\condIndex-1} + 4 = 2a_{\condIndex+1}. 
\]
Since  $(\gamma_{\condIndex-2} - \gamma_{\condIndex-1})<0$ and  $2a_{\condIndex+1}>4$, this implies $x_{\condIndex-1} < 0$. Thus, this stationary point cannot be the global optimum for any $N$.

\textit{Case 5d:} $ \chi(N) \equiv 0$, $\ell = \condIndex$, and $\ell-1 \in I$. This case is imilar to Case 5b.  In this case, row $\condIndex+1$ says $4 = 2a_{\condIndex+1}>4$, a contradiction. 
\end{proof}

\begin{corollary} \label{cor: N1}
Let $N_1 = \max_{I : \max(I) \leq \condIndex(\Gamma)} \{ \nu(\Gamma,I) \}$. If
$N > N_1$, then the global optimum for the Simplified Problem has at least one corner  above the conductor.  
\end{corollary}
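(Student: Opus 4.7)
The plan is to reduce this directly to Proposition \ref{prop: persistence and optimality 1} by a finiteness argument together with the observation that the global optimum must lie on some face of $W$.

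First I would observe that the collection
\[
\mathcal{I} = \{ I \subseteq \mathbb{N} : \max(I) \leq \condIndex(\Gamma) \}
\]
is finite: any such $I$ is a subset of $\{1, 2, \ldots, \condIndex(\Gamma)\}$, so $|\mathcal{I}| \leq 2^{\condIndex(\Gamma)}$ (the empty set, interpreted as the face with no corners, is included vacuously; if it causes concern, one can handle it by inspection since the corresponding face is just the lineality space and yields the constant weight $w \equiv 2$, whose distance to $a$ is bounded below by a positive constant depending only on $\Gamma$, hence for large $N$ this cannot be globally optimal). Consequently
\[
N_1 = \max_{I \in \mathcal{I}} \nu(\Gamma, I)
\]
is a well-defined finite integer depending only on $\Gamma$.

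Next, suppose $N > N_1$. Let $w^{*}$ be the global optimum for the Simplified Problem. By Proposition \ref{prop: verify proximum}, $w^{*}$ lies in the relative interior of some unique face $\face_{\gamma(I^{*})}$ of $W$, indexed by a set of corners $I^{*} \subseteq \{1, \ldots, N-1\}$. I would then argue by contradiction: suppose that $\max(I^{*}) \leq \condIndex(\Gamma)$, so that $I^{*} \in \mathcal{I}$. Since $N > N_1 \geq \nu(\Gamma, I^{*})$, Proposition \ref{prop: persistence and optimality 1} applies and tells us that the global optimum does not lie on face $I^{*}$, contradicting the choice of $I^{*}$. Therefore $\max(I^{*}) > \condIndex(\Gamma)$, which is exactly the statement that $w^{*}$ has at least one corner above the conductor.

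The main obstacle, such as it is, has already been overcome in Proposition \ref{prop: persistence and optimality 1}; the corollary is just the bookkeeping step that assembles the face-by-face exclusions into a single uniform bound. The only mild care needed is to confirm that $\mathcal{I}$ does not grow with $N$, which is immediate from the condition $\max(I) \leq \condIndex(\Gamma)$, and to ensure the empty face (lineality space) is not globally optimal for $N$ large, which follows because the corresponding constant weight vector $w \equiv 2$ keeps a fixed positive distance from the first two coordinates of $a$.
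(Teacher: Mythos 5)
Your proof is correct and is exactly the intended immediate consequence of Proposition \ref{prop: persistence and optimality 1}: finiteness of the set of corner sets $I$ with $\max(I) \leq \condIndex(\Gamma)$ makes $N_1$ a well-defined integer, and for $N > N_1$ every such face is ruled out, so the face carrying the optimum must have a corner above the conductor. The only small slip is in your aside: the proximum of $a$ onto the lineality space is the least-squares regression line through the points $(\gamma_i, a_i)$, not the constant $w \equiv 2$---but that aside is superfluous anyway, since Proposition \ref{prop: persistence and optimality 1} and the definition of $\nu$ already cover $I = \emptyset$ (reading $\max(\emptyset) = 0$, so $\ell = 0$).
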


\section{If there is at least one corner above the conductor} \label{sec: corners above the conductor}

In this section, we consider the case where there is at least one
corner above the conductor.

Let $\ell = \max(I)$. We must have $N \geq \ell+1$. We will split into two cases: $N = \ell+1$, and $N > \ell+1$. There are two reasons for this case split. First, the behavior of the stationary points is different in these two cases. Second, Lemmas \ref{lem: polynomiality of Q Pk chi psi} and \ref{lem: formulas for chi and psi} require $N \geq \ell+2$.

\subsection{If $N = \ell+1$}
\begin{proposition} \label{prop: when N=ell + one}
  Let $N'$ be an integer with $N' \geq \condIndex(\Gamma)+1$. Suppose that the global optimum for the Simplified Problem for $N  = N'$ occurs on a face $I$ with $\condIndex(\Gamma) < \ell=  N'-1$, where $\ell = \max(I)$.  Then either the stationary point on face $I$ is persistent, or the stationary point on face $I' = I \cup \{\ell+1\}$ is persistent.
\end{proposition}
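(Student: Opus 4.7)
The plan is to reduce the proposition to a direct row-by-row computation with the KKT matrix equation, exploiting the extreme position $\ell = N'-1$ of the last corner, which makes the last two rows very sparse. The key observation is that the second disjunct always holds: I claim the stationary point on $I' = I \cup \{\ell+1\}$ at embedding dimension $N'+1$ is automatically persistent. Note first that $\ell + 1 = N'$ is a valid corner index at dimension $N' + 1$, since $N' \leq (N'+1) - 1$. Using Proposition~\ref{prop: KKT matrix entries}, I would show that every entry in the last row of the KKT system at dimension $N' + 1$ on $I'$ vanishes except the one in the final lineality column: corner columns $j \leq N'$ vanish because they require $i \leq j$ while $i = N' + 2 > j$, and the only multiplier candidate (at $j = N'$, obtained by intersecting $\{N', N'+1, N'+2\}$ with the multiplier range $\{1, \ldots, N'\} \setminus I'$) is suppressed by $N' \in I'$. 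This gives $x'_{N'+2} = 2$. An analogous analysis of row $N' + 1$ shows that the candidate columns $j = N' - 1 \in I \subset I'$ and $j = N' \in I'$ both give zero entries, so only the two lineality columns survive, yielding $2 x'_{N'+1} + 2 x'_{N'+2} = 2 a_{N'+1} = 4$ (using $a_{N'+1} = \gamma_{N'+1} - \gamma_{N'-1} = 2$ past the conductor). Combined with $x'_{N'+2} = 2$, this forces $x'_{N'+1} = 0$, so the stationary point on $I'$ is persistent.

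For completeness I would also characterise when the first disjunct already holds, i.e., when the stationary point on $I$ at dimension $N'$ is itself persistent. The same sparsity argument at dimension $N'$ on $I$ gives $x_{N'+1} = 2$ unconditionally from the last row. Row $N'$ then involves the two lineality columns and, when $N' - 2 \notin I$, the multiplier column at $j = N' - 2$ (entry $-1$ past the conductor); the column-$(N'-1)$ contribution vanishes because $\ell = N' - 1 \in I$. This yields either $x_{N'} = 0$ outright (when $N' - 2 \in I$) or the relation $x_{N'} = \tfrac{1}{2} x_{N'-2}$ (when $N' - 2 \notin I$), where the global optimality premise forces the multiplier $x_{N'-2} \geq 0$. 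Thus the stationary point on $I$ is persistent precisely when $x_{N'-2}$ vanishes or is absent from the system; otherwise persistence is provided by $I'$ as above, and in either case the proposition holds.

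The main obstacle is the careful matrix bookkeeping required to verify that the claimed sparsity of the last two rows is correct: one must enumerate which columns can possibly contribute, distinguish the corner ($j \in I$) from multiplier ($j \notin I$) cases of Proposition~\ref{prop: KKT matrix entries}, and apply the gap simplifications $\gamma_{i+1} - \gamma_i = 1$ that hold past the conductor. Since every surviving index in the analysis lies at or beyond $\condIndex(\Gamma)$ (using $\ell = N' - 1 > \condIndex(\Gamma)$), these simplifications apply uniformly to all relevant entries. Existence and uniqueness of the stationary points themselves come from the strong convexity of the projection problem as recalled in Subsection~\ref{subsec: KKT}.
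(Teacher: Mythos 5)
Your argument is correct and matches the paper's proof: both read off the last two rows of the KKT system for face $I'$ at embedding dimension $N'+1$, which reduce to $2x_{N+1}=4$ and $2x_N + 2x_{N+1}=4$, forcing $x_N = 0$ and $x_{N+1}=2$. You additionally observe that the second disjunct always holds, so the either/or is automatic --- the paper frames this as a case split on whether $x_N = 0$ at $N'$, but its second-case computation likewise does not use that hypothesis, so the two proofs are essentially identical. One small caveat in your optional ``for completeness'' paragraph: the assertion that the entry $A_{N',N'-2}$ equals $-1$ requires $\gamma_{N'-2}-\gamma_{N'-3}=1$, i.e.\ $N'-3 \geq \condIndex(\Gamma)$, equivalently $\ell \geq \condIndex(\Gamma)+2$; the hypothesis only gives $\ell > \condIndex(\Gamma)$, so when $\ell = \condIndex(\Gamma)+1$ that entry can be $\leq -2$, and the derived relation $x_{N'} = \tfrac12 x_{N'-2}$ is then off by a factor. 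This is harmless for the proposition itself, since your main argument shows the second disjunct always holds.
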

\begin{proof} 

  Consider the KKT matrix equation when $N = N'$. The last row corresponds to the equation $2x_{N+1} = 4$, so we have $x_{N+1}=2$. If this solution also has $x_N = 0$, then it is persistent.

  So suppose that $x_N \neq 0$. Let $I' = I \cup \{\ell+1\}$, and
  consider the KKT matrix equation when $N = N'+1 = \ell+2.$ The last
  two rows correspond to the equations $2x_N + 2x_{N+1}=4$ and
  $2x_{N+1}=4$. Hence $x_N=0$ and $x_{N+1}=2$, so we have a persistent solution. 
\end{proof}

\begin{definition}
We say that a set of corners $I$ \emph{heralds persistence} if the stationary point on face $I$ is not persistent, but the stationary point on face $I' = I \cup \{\ell+1\}$ is persistent, where $\ell = \max I$.
\end{definition}

\subsection{If $N > \ell+1$}
When there is at least one corner above the conductor, and $N >
\ell+1$, we will show that the polynomials $\chi$ and $\psi$ factor in
a specific form, and their signs are determined by their leading coefficients. It is thus easier to relate optimality and persistence in this case than it is when all the corners are below the conductor.

\begin{lemma} \label{lem: chi psi relation when corner above conductor}
  Write $\ell = \max I$. Suppose $\ell \geq \condIndex(\Gamma)$, and  $N > \ell+1$. Then
\[
\chi = - \frac{2}{3}(N-\ell-1) \psi.
\]
\end{lemma}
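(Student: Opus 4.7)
The plan is to derive a single scalar identity between $x_{N-1}$ and $x_N$ (whose numerators are $\chi$ and $\psi$) by pushing the recursion from the proof of Lemma \ref{lem: Pj} one index further than stated and combining it with the KKT equation coming from row $\ell + 2$ -- the first row below the corner at $\ell$ whose left-hand side is unaffected by the corner column.

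First I would observe that, since $\max I = \ell$ and $\ell > \condIndex(\Gamma)$, every row $\ell + 3 \leq i \leq N + 1$ of the KKT matrix has the standard past-conductor pattern $-1, 2, -1$ in columns $i-2, i-1, i$, no corner-column contributions, and only the standard lineality entries in columns $N, N+1$. Recursively row-reducing from row $N+1$ upward exactly as in the proof of Lemma \ref{lem: Pj} then yields, for every $i$ with $\ell + 1 \leq i \leq N - 1$,
\[
x_i = (N-i)(N-i+1)\left(\tfrac{1}{2}x_{N-1} + \tfrac{1}{3}(N-i-1)x_N\right).
\]
(The hypothesis $j \geq \max(I) + 2$ in Lemma \ref{lem: Pj} can be relaxed here to $j \geq \ell + 1$ because no corner column is touched by the rows involved in this particular reduction.)

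Next I would feed the specializations $i = \ell + 1$ and $i = \ell + 2$ (write $m := N - \ell - 1$) into the KKT equation from row $\ell + 2$. The corner column $\ell \in I$ contributes $0$ to this row, since its nonzero entries $2(\gamma_\ell - \gamma_{i-1})$ live in rows $i \leq \ell$; past the conductor the remaining entries are standard, so the row reads $2x_{\ell+1} - x_{\ell+2} + 2m\, x_N + 2x_{N+1} = 4$. After substituting and collecting, the coefficient of $x_{N-1}$ collapses to $\tfrac{m(m+3)}{2}$ and the coefficient of $x_N$ to $\tfrac{m(m+1)(m+2)}{3}$. Using the row-$(N+1)$ identity $x_{N-1} = 2(x_{N+1} - 2)$ to eliminate $x_{N-1}$, a common factor $(m+1)(m+2)$ cancels and we are left with $x_{N+1} + \tfrac{m}{3}x_N = 2$, equivalently $x_{N-1} = -\tfrac{2m}{3}x_N$. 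Clearing by the common denominator $Q(N)$ gives $\chi = -\tfrac{2(N-\ell-1)}{3}\psi$.

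The only potential obstacle is the algebraic collapse in the last step: the two apparently unrelated polynomial expressions $\tfrac{m(m+3)}{2}$ and $\tfrac{m(m+1)(m+2)}{3}$ must combine with the surviving lineality terms to factor a clean $(m+1)(m+2)$ -- this is a direct computation using $m^2 + 3m + 2 = (m+1)(m+2)$. I would also briefly handle the boundary case $N = \ell + 2$, where the recursion is trivial (it gives $x_{\ell+1} = x_{N-1}$ tautologically) and $x_{\ell+2} = x_N$ is a parameter rather than a derived quantity; here rows $N$ and $N+1$ of the KKT equation by themselves give $3x_{N+1} + x_N = 6$, producing the same conclusion.
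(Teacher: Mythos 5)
Your proof is correct and close in spirit to the paper's: both rest on the $x_j$-recursion of Lemma \ref{lem: Pj} (applied one index beyond its stated hypothesis, which you correctly note is legitimate and which the paper also uses implicitly) and on the row-$(N+1)$ identity $x_{N-1}=2x_{N+1}-4$. The difference lies in how the auxiliary linear relation is obtained. The paper telescopes the sum of KKT rows $\ell+2$ through $N+1$ to obtain $x_{\ell+1}+(N-\ell-1)(N-\ell)x_N+2(N-\ell)x_{N+1}=4(N-\ell)$ and then invokes Lemma \ref{lem: Pj} once, for $x_{\ell+1}$; you instead use the single KKT row $\ell+2$ but invoke Lemma \ref{lem: Pj} twice, for $x_{\ell+1}$ and $x_{\ell+2}$. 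Since your row $\ell+2$ is exactly the difference of the paper's telescoping sum over rows $\geq\ell+2$ and the analogous sum over rows $\geq\ell+3$, the two arguments are linearly equivalent, and the $(m+1)(m+2)$ cancellation you flag does go through as you expect. Your separate treatment of the boundary case $N=\ell+2$ is careful but not strictly necessary: in that case the Lemma \ref{lem: Pj} expression for $x_{\ell+1}$ degenerates to the tautology $x_{N-1}=x_{N-1}$, and the paper's remaining relation (or your rows $N$, $N+1$) still yields the claim.
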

\begin{proof}
  Sum the KKT equations starting in row $\ell+1$, where $\ell = \max
  I$. For each $j$ satisfying $\ell+2 \leq j \leq N-2$, the sum in
  column $j$ is 0. Furthermore, since $\ell \geq \condIndex$, in column
  $\ell+1$ we have the entries $-1, 2, -1$ in rows $\ell, \ell+1,
  \ell+2$. 

  We obtain
  \[
  x_{\ell+1}  + \left( \sum_{i=0}^{N-\ell-1} 2i \right) x_N + 2(N-\ell)x_{N+1} = 4 (N-\ell)
  \]
  Rearranging and substituting $-x_{N-1} = 4-2x_{N+1}$ yields 
  \begin{equation} \label{eq: xlp1 1}
  x_{\ell+1} = (N-\ell) (-x_{N-1} - (N-\ell-1)x_N)  
  \end{equation}
  By Lemma \ref{lem: Pj} we also have
  \begin{equation}\label{eq: xlp1 2}
x_{\ell+1} = (N-\ell-1)(N-\ell) \left(  \frac{1}{2}x_{N-1} + \frac{1}{3}(N-\ell-2)x_N \right)
  \end{equation}
  Combining equations (\ref{eq: xlp1 1}) and (\ref{eq: xlp1 2}) yields
\[
(N-\ell) (-x_{N-1} - (N-\ell-1)x_N)   = (N-\ell-1)(N-\ell) \left(  \frac{1}{2}x_{N-1} + \frac{1}{3}(N-\ell-2)x_N \right)
\]
Cancelling the factor $(N-\ell)$ on both sides yields
\[
-x_{N-1}- (N-\ell-1)x_N= (N-\ell-1)  \left(  \frac{1}{2}x_{N-1} + \frac{1}{3}(N-\ell-2)x_N\right).
\]
This rearranges to
\[
  x_{N-1} = - \frac{2}{3} (N-\ell-1) x_N.
\]
Cancelling the common denominator $Q$ yields the desired result.

\end{proof}

\begin{lemma} \label{lem: factoring conditions}
A polynomial $c_2 (x-\ell-1)^2 + c_1(x-\ell-1) +c_0$ factors as $c_2 (x-\ell)(x-\ell+1)$ if and only if $c_0 = 2c_2$ and $c_1 = 3c_2$.
\end{lemma}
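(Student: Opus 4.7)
The plan is to reduce the identity to a direct coefficient comparison after a convenient substitution. Set $y = x - \ell - 1$, so that $x - \ell = y+1$ and $x-\ell+1 = y+2$. Under this substitution the left-hand side is already in the form $c_2 y^2 + c_1 y + c_0$, and the right-hand side becomes
\[
c_2(x-\ell)(x-\ell+1) = c_2(y+1)(y+2) = c_2 y^2 + 3c_2\, y + 2c_2.
\]
Equality of these two quadratic polynomials in $y$ is equivalent, by comparing coefficients of $y^2$, $y$, and $1$, to the simultaneous conditions $c_1 = 3c_2$ and $c_0 = 2c_2$ (the $y^2$-coefficient matches automatically).

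For the converse direction, assuming $c_0 = 2c_2$ and $c_1 = 3c_2$, we substitute these values back into the left-hand side, obtaining $c_2 y^2 + 3c_2 y + 2c_2 = c_2(y+1)(y+2) = c_2(x-\ell)(x-\ell+1)$, which gives the desired factorisation. Since both directions are mere polynomial identities in one variable, no obstacle is expected; the lemma is essentially a book-keeping statement which will be applied in subsequent arguments to recognize when the polynomials $\chi$ and $\psi$ (whose Taylor expansions are centered at $\ell+1$) admit the specific factorisation with roots at $\ell$ and $\ell-1$ that signals persistence.
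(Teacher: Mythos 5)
Your proof is correct, and since the paper leaves this lemma to the reader, your substitution $y = x-\ell-1$ followed by coefficient comparison is exactly the natural argument the authors had in mind.
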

\begin{proof}
Left to the reader.
\end{proof}

\begin{proposition} \label{prop: factoring chi and psi}
  Write $\ell = \max I$. Suppose $\ell \geq \condIndex(\Gamma)$, and  $N > \ell+1$. Then the polynomials $\chi$ and $\psi$ factor as follows.
  \begin{eqnarray*}
   \chi&=& (N-\ell-1)(N-\ell)(N-\ell+1)\chi_{3} \\
    \psi &=& (N-\ell)(N-\ell+1)\psi_{2}
  \end{eqnarray*}
\end{proposition}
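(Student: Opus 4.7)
The plan is a three–step reduction ending in a pair of determinantal identities. First, I would combine Lemma \ref{lem: chi psi relation when corner above conductor}, which states $\chi = -\tfrac{2}{3}(N-\ell-1)\psi$, with Lemma \ref{lem: chi3 psi2 relation}, which states $\chi_3 = -\tfrac{2}{3}\psi_2$. If the claimed factorisation of $\psi$ holds, substitution produces
\[
\chi \;=\; -\tfrac{2}{3}(N-\ell-1)(N-\ell)(N-\ell+1)\psi_2 \;=\; (N-\ell-1)(N-\ell)(N-\ell+1)\chi_3,
\]
which is exactly the claimed factorisation of $\chi$. So the proposition reduces to the $\psi$ identity.

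For the $\psi$ identity, Lemma \ref{lem: polynomiality of Q Pk chi psi} gives $\deg \psi \leq 2$, so I would write the Taylor expansion centred at $\ell+1$ as $\psi(N) = c_2(N-\ell-1)^2 + c_1(N-\ell-1) + c_0$ with $c_2 = \psi_2$ (the leading coefficient is preserved under shifts). By Lemma \ref{lem: factoring conditions}, the desired factorisation $\psi = \psi_2(N-\ell)(N-\ell+1)$ holds exactly when $c_0 = 2\psi_2$ and $c_1 = 3\psi_2$; evaluating the expansion at $N=\ell+1$ and $N=\ell+2$ translates this into the two numerical identities
\[
\psi(\ell+1) = 2\psi_2 \qquad \text{and} \qquad \psi(\ell+2) = 6\psi_2.
\]

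To check these, I would unfold them using $\psi(N) = (-1)^{N+1}\det A(N,N)$ together with the minor formulas of Lemma \ref{lem: formulas for chi and psi} centred at $k = \ell+2$: the constant term $\psi_0 = \psi(\ell+2)$ is the signed determinant of $A'(\ell+2,\ell+2)$, while $\psi_2$ is an explicit combination of two minors of the same matrix; so $\psi(\ell+2) = 6\psi_2$ becomes a single determinantal identity on $A'(\ell+2,\ell+2)$. For $\psi(\ell+1) = 2\psi_2$, I would exploit the fact that row $\ell+2$ of $A(\ell+1,\ell+1)$ has only two nonzero entries (values $4$ and $2$ in columns $\ell+1$ and $\ell+2$ in the Simplified Problem), so a cofactor expansion along that row writes the determinant as a combination of two minors of a size $\ell+2$ matrix; these I would match column by column with the minors of the size $\ell+3$ matrix $A'(\ell+2,\ell+2)$ appearing in the formula for $\psi_2$. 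Both computations are driven by the $(-1,2,-1)$ banded pattern that the rows and columns of the KKT matrix take past the conductor, a pattern available precisely because $\ell \geq \condIndex(\Gamma)$. I expect the main obstacle to be this last step: aligning determinants of the different-sized matrices $A(\ell+1,\ell+1)$ and $A'(\ell+2,\ell+2)$ while tracking the alternating sign $(-1)^{N+1}$. I would handle this by passing to $A'$ wherever possible via the column equivalence $\det A(N,N) = \det A'(N,N)$, and then repeatedly cofactor-expanding along the band rows and all-$2$ columns common to both matrices, so that the integer multipliers $2$ and $6$ emerge from the banded structure.
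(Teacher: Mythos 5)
Your reduction of the $\chi$-statement to the $\psi$-statement via Lemma~\ref{lem: chi psi relation when corner above conductor} and Lemma~\ref{lem: chi3 psi2 relation} is exactly the paper's argument, and so is the use of Lemma~\ref{lem: factoring conditions} to convert the factorisation of $\psi$ into two linear constraints on the Taylor coefficients. Where you diverge from the paper is in the choice of centre for the Taylor expansion, and this is where a genuine gap opens up.

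The paper centres the expansion at $k=\ell+1$, not $k=\ell+2$. This is the minimal value of $N$ for which the face $I$ is a face of $W$, and at this extremal value the matrix $A'(\ell+1,\ell+1)$ degenerates: its bottom row is supported entirely in the last column, so the cofactor expansion along the bottom row collapses $\det A'(\ell+1,\ell+1)$ to $2\,\del_{\ell+2}^{\ell+2}A'(\ell+1,\ell+1)$, and the other minor $\del_{\ell+1}^{\ell+2}A'(\ell+1,\ell+1)$ vanishes outright because its bottom row is zero. Feeding these two facts into the expressions of Lemma~\ref{lem: formulas for chi and psi} (taken at $k=\ell+1$) gives $\psi_0 = 2\psi_2$ and $\psi_1 = 3\psi_2$ in two lines, which is precisely the pair of constraints Lemma~\ref{lem: factoring conditions} demands. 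The whole verification disappears.

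By centring at $k=\ell+2$ you pass up this degeneracy and are forced into the ``main obstacle'' you yourself flag: verifying two nontrivial determinant identities that you leave unproven. That is the real gap in your proposal --- the outline is sound, but the hard step is neither carried out nor simplified. Two smaller remarks. First, you do not actually need to cross matrix sizes as you feared: after the shift $(N-\ell-2)\mapsto(N-\ell-1)-1$ your two conditions $\psi(\ell+1)=2\psi_2$ and $\psi(\ell+2)=6\psi_2$ become $\psi_0 = 6\psi_2$ and $\psi_1 = 5\psi_2$ in the $k=\ell+2$ expansion, and both are then identities on the single matrix $A'(\ell+2,\ell+2)$; so your anticipated obstacle is partly illusory. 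Second, you were right to notice that the hypothesis of Lemma~\ref{lem: formulas for chi and psi} nominally requires $k\geq\ell+2$; the paper nevertheless applies the formulas at $k=\ell+1$, exploiting that the underlying matrix recurrence already holds one step earlier. Had you checked whether the formulas extend to $k=\ell+1$, you would have found the degenerate structure of $A'(\ell+1,\ell+1)$ that makes the argument short.
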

\begin{proof}
  First, we prove the claim for the polynomial $\psi$.

  Lemma \ref{lem: formulas for chi and psi} gives formulas for the Taylor expansion of $\psi$ with center $k = \ell+1$.
  We have:
  \[
   \psi  = \psi_2 (N-\ell-1)^2 + \psi_2 (N-\ell-1) + \psi_0
  \]
  where
  \begin{align*}
    \psi_2 & = (-1)^{\ell+2} \left(\del_{\ell+2}^{\ell+2}  A'(\ell+1, \ell+1) + \del_{\ell+1}^{\ell+2}  A'(\ell+1, \ell+1) \right) \\
    \psi_1 & = (-1)^{\ell+2} \left(3\del_{\ell+2}^{\ell+2}  A'(\ell+1, \ell+1) + \del_{\ell+1}^{\ell+2}  A'(\ell+1, \ell+1) \right) 
    \psi_0 &= (-1)^{\ell+2} A'(\ell+1, \ell+1).
  \end{align*}

  By Lemma \ref{lem: factoring conditions}, to achieve the desired result, it is enough to check that $\psi_0 = 2 \psi_2$ and $\psi_1 = 3\psi_2$.

  The bottom row of the matrix $A'(\ell+1, \ell+1)$ is 0, except in the last column, where it is 2. Thus expanding along the bottom row yields
  \[
\det A'(\ell+1, \ell+1) = \del_{\ell+2}^{\ell+2}  A'(\ell+1, \ell+1).
  \]
  Also, $\del_{\ell+1}^{\ell+2}  A'(\ell+1, \ell+1) = 0$ because this bottom row of this minor is 0.

  We have $\psi_0 = 2 \psi_2$ and $\psi_1 = 3\psi_2$, and hence the polynomial $\psi$ factors as claimed.

  To prove the desired claim for $\chi$, first we apply Lemma \ref{lem: chi psi relation when corner above conductor} to get
  \[
  \chi = - \frac{2}{3}(N-\ell-1) \psi = - \frac{2}{3}\psi_2(N-\ell-1) (N-\ell)(N-\ell+1).
  \]
Next, by Lemma \ref{lem: chi3 psi2 relation}, we have $\chi_3 = - \frac{2}{3} \psi_2$. This yields the desired result.

\end{proof}

\begin{proposition} \label{prop: persistence and optimality 2}
Write $\ell = \max I$. Suppose $\ell \geq \condIndex(\Gamma)$.  If the global optimum for the Simplified Problem lies on face $I$ for at least one $N$, then either this solution is persistent, or it heralds persistence.
\end{proposition}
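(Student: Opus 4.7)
The plan is to prove the proposition by a case analysis on the embedding dimension $N$. When $N = \ell+1$, the statement matches exactly the hypothesis of Proposition \ref{prop: when N=ell + one}, which delivers the conclusion; the substantive case is therefore $N > \ell+1$. In this range, Proposition \ref{prop: factoring chi and psi} gives the factorizations
\[
\psi(N) = (N-\ell)(N-\ell+1)\psi_2, \qquad \chi(N) = (N-\ell-1)(N-\ell)(N-\ell+1)\chi_3,
\]
where the coefficients $\psi_2$ and $\chi_3$ depend only on $(\Gamma,I)$ and not on $N$. The proof will then bifurcate cleanly according to whether $\psi_2$ vanishes.

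If $\psi_2 = 0$, then by Lemma \ref{lem: chi3 psi2 relation} we also have $\chi_3 = 0$, so $\psi \equiv 0$ and $\chi \equiv 0$ as polynomials in $N$. It follows that $x_N = \psi/Q = 0$ and $x_{N-1} = \chi/Q = 0$ for every $N > \ell+1$, and the last row of the KKT matrix equation then forces $x_{N+1} = 2$. The stationary point on face $I$ is therefore persistent, and the first alternative of the conclusion holds.

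If instead $\psi_2 \neq 0$, then $x_N \neq 0$ for every $N > \ell+1$, so the $I$ stationary point is never persistent, and we must exhibit persistence of the stationary point on $I' = I \cup \{\ell+1\}$. My strategy is first to prove persistence on $I'$ at the minimal embedding dimension $N' = \ell+2$ by direct inspection of the last two rows of the KKT matrix equation for $I'$: since $\ell+1 \in I'$, column $\ell+1$ is $2F_{\gamma_{\ell+1}}(\gamma)$, which vanishes at $\gamma_{\ell+2}$; and since $\ell \geq \condIndex(\Gamma)$, consecutive semigroup gaps above the conductor are all $1$, so $\gamma_{N'}-\gamma_{N'-1} = 1$ and $a_{N'} = a_{N'+1} = 2$. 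The last two rows then collapse to the trivial system $2x_{N'+1} = 4$ and $2x_{N'} + 2x_{N'+1} = 4$, yielding $x_{N'} = 0$ and $x_{N'+1} = 2$ at once. I would then propagate persistence to every $N \geq \ell+2$ by iterating the extension lemma established at the start of Section \ref{sec: structure theorem}, using uniqueness of the KKT solution at each step.

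The main obstacle, and the most instructive point to get right, is the recognition that global optimality of $I$ plays no direct role in the $\psi_2 \neq 0$ subcase: once $\max(I) \geq \condIndex(\Gamma)$, the structural form of the KKT matrix automatically forces $I'$-persistence, regardless of whether $I$ was globally optimal to begin with. The essential content is thus the clean algebraic dichotomy provided by the single coefficient $\psi_2$, whose vanishing versus non-vanishing aligns precisely with the geometric alternatives of persistence versus heralding persistence.
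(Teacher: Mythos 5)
Your proof is correct and follows the same high-level skeleton as the paper's: handle $N=\ell+1$ via Proposition \ref{prop: when N=ell + one}, then for $N>\ell+1$ split on whether $\psi_2$ vanishes. The $\psi_2=0$ branch is identical to the paper's (Proposition \ref{prop: factoring chi and psi} plus Lemma \ref{lem: chi3 psi2 relation} give $\chi\equiv 0\equiv\psi$, hence $x_{N-1}=x_N=0$ and the last row forces $x_{N+1}=2$).

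Where you diverge is the $\psi_2\neq 0$ branch. The paper's Case 1 does \emph{not} establish heralding; instead, it shows this case is vacuous under the proposition's hypothesis. Using the factorization of $\chi$ and $\psi$ together with the crossing lemma (Lemma \ref{lem: crossing implies not globally optimal}), the authors show the last line segment of the stationary graph must cross $y=2$ past the conductor, so it cannot be the global optimum for any $N>\ell+1$ --- a contradiction. You, by contrast, do not derive a contradiction: you observe that $\psi_2\neq 0$ forces $x_N\neq 0$ for all $N>\ell+1$ (so $I$ is never persistent), and then you show $I'=I\cup\{\ell+1\}$ is persistent at $N'=\ell+2$ by reading off the last two rows of its KKT matrix --- precisely the computation underlying Proposition \ref{prop: when N=ell + one} --- and propagating forward by the extension lemma. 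Both routes establish the dichotomy in the proposition. Your route is arguably more economical because it sidesteps the geometric crossing argument entirely, but it yields strictly less information: the paper's contradiction shows that a heralding face can only carry the global optimum at the minimal dimension $N=\ell+1$, whereas your argument is agnostic about whether the $\psi_2\neq 0$, $N>\ell+1$ configuration actually occurs for the optimum. Your closing remark that global optimality plays no role in the $\psi_2\neq 0$ subcase is accurate for your route, but it is worth noting the paper's version uses optimality exactly there to kill the case outright.
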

\begin{proof}
 If $N = \ell+1$, then Proposition \ref{prop: when N=ell + one} gives the desired result.

  So suppose $N > \ell+1$, and suppose that the global optimum lies on face $I$ for this $N$. We split into two cases according to whether $\psi_2$ is 0. 

  \textit{Case 1.}  Suppose that $\psi_2 \neq 0$. We argue as we did in the proof of Proposition \ref{prop: persistence and optimality 1}. Consider the last line segment $L$ in the graph. It connects the points $(\gamma_{\ell},w_{\ell})$ and $(\gamma_N, w_N)$.

We will show that the leading terms of $2-w_{\ell}$ and $2-w_{N}$ have opposite signs. Computing as we did in the proof of Proposition \ref{prop: persistence and optimality 1}, we have
\begin{align*}
  2-w_N =- \frac{\chi}{2Q} = \frac{-\chi_3(N-\ell-1)(N-\ell)(N-\ell+1) }{2Q}
\end{align*}
and
\begin{align*}
  2-w_{\ell}&= -\frac{(\chi+2(N-\ell) \psi )}{2Q}.
\end{align*}

By Lemma 
\begin{align*}
  \chi + 2(N-\ell)\psi &= -\frac{2}{3}(N-\ell-1) \psi + 2(N-\ell)\psi \\
                       &= \frac{4}{3}(N-\ell)\psi + \frac{2}{3}\psi \\
                       &= \frac{4}{3}(N-\ell+\frac{1}{2})\psi \\
  &= \frac{4}{3}\psi_2(N-\ell+\frac{1}{2}) (N-\ell)(N-\ell+1).
\end{align*}
Thus
\[
2-w_{\ell} = \frac{-\frac{4}{3}\psi_2(N-\ell+\frac{1}{2}) (N-\ell)(N-\ell+1)}{2Q}
\]

Since $\chi_3 = - \frac{2}{3} \psi_2$, this implies that $2-w_{\ell}$ and $2-w_N$ have opposite signs.  By Lemma \ref{lem: crossing implies not globally optimal} it follows that the stationary point on face $I$ will not be the global optimum for the Simplified Problem for $N$. This contradicts the hypothesis that $I$ carries the global optimum for this $N$.

\textit{Case 2.} Suppose that $\psi_2 = 0$. Then by Lemma \ref{prop: factoring chi and psi} and Lemma \ref{lem: chi3 psi2 relation}, we have  $\psi \equiv 0$ and $\chi \equiv 0$, hence the solution is persistent.

\end{proof}

\section{Persistence for the Simplified Problem and the worst 1-PS} \label{sec: conclusion}

\subsection{Persistence for the Simplified Problem}

\begin{theorem}[Persistence of the global optimum for the Simplified Problem] \label{thm: persistence for the simplified problem}
  Let $\Gamma$ be a numerical semigroup. There exists an integer $N_0^{\simp}$ and a
set of corner indices $I^{\simp}$ (both depending on $\Gamma$) such that for all $N \geq N_0^{\simp}$, the global
optimum for the Simplified Problem for $\Gamma$ is the persistent
solution for the face of $W$ corresponding to $I^{\simp}$.
\end{theorem}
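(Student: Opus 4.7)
The plan is to reduce the theorem to a finite comparison of rational functions in $N$. By Corollary \ref{cor: N1}, for every $N > N_1$ the face $I(N)$ carrying the global optimum has $\max(I(N)) > \condIndex(\Gamma)$, so Proposition \ref{prop: persistence and optimality 2} applies and lets us associate to each such $N$ a persistent face $J(N)$ that is either $I(N)$ itself or $I(N) \cup \{\max I(N) + 1\}$. The simple lemma preceding the definition of persistence shows that a persistent stationary point on any face $J$ extends automatically to every larger embedding dimension, so each $J(N)$ remains a feasible candidate at all subsequent dimensions, and by Lemmas \ref{lem: polynomiality of Q Pk chi psi} and \ref{lem: Pj} the ratio $\mu(\lambda,[C_\Gamma])/\|\lambda\|$ at its persistent stationary point becomes a rational function of $N$ once $N$ is large enough.

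The main obstacle is to show that the family $\{J(N)\}_{N > N_1}$ is finite, equivalently, to produce a bound $B = B(\Gamma)$ with $\max(J(N)) \leq B$ for all sufficiently large $N$. I expect this to follow from a quantitative refinement of Lemma \ref{lem: crossing implies not globally optimal}: when $\ell = \max(J(N))$ is much larger than $\condIndex(\Gamma)$, the persistent stationary point on $J(N)$ has a horizontal tail at height $2$ from $\gamma_\ell$ onward, and one should be able to show, using the explicit formulas for the coordinates supplied by Proposition \ref{prop: factoring chi and psi} and Lemma \ref{lem: Pj}, that contracting the tail earlier (for example by removing the last corner, i.e.\ replacing $J(N)$ by $J(N) \setminus \{\ell\}$) strictly increases the ratio $a \cdot w/\|w\|$. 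Such an improvement contradicts the optimality of $I(N)$ at embedding dimension $N$ and so yields the uniform bound $B(\Gamma)$.

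With the bound in hand, the set $\mathcal{F} = \{J : J \text{ persistent}, \max(J) \leq B\}$ is finite. For each $J \in \mathcal{F}$ let $R_J(N)$ denote the ratio $\mu/\|w\|$ at the persistent stationary point on $J$; this is a rational function of $N$ defined for all $N$ sufficiently large. Among finitely many rational functions one must strictly dominate for all $N \gg 0$; let $I^{\simp}$ be the face that achieves this maximum. Choosing $N_0^{\simp}$ larger than $N_1$, larger than the threshold beyond which the bound on $\max(J(N))$ takes effect, and larger than the point at which $R_{I^{\simp}}(N) > R_{J}(N)$ for every $J \in \mathcal{F} \setminus \{I^{\simp}\}$, we conclude that for all $N \geq N_0^{\simp}$ the global optimum for the Simplified Problem is the persistent stationary point on $I^{\simp}$, as required.
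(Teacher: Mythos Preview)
Your approach has a genuine gap, and more fundamentally it misses the observation that makes the theorem nearly immediate.

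The gap is in your step 2: you write ``I expect this to follow'' and ``one should be able to show'', but you do not actually prove that $\max(J(N))$ is bounded. The sketch you give (remove the last corner and improve the ratio) is not obviously correct---you would need to show that the modified vector still lies in $W$, is still the stationary point on the smaller face, and genuinely improves the ratio---and in any case this would require an argument comparable in difficulty to the theorem itself.

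More importantly, the comparison of rational functions in step 3 is unnecessary. You invoke the extension lemma to say that each persistent face $J(N)$ ``remains a feasible candidate'' at later dimensions, but you stop short of the crucial consequence: if the persistent stationary point on a face $J$ is the \emph{global optimum} at some $N$ (so that all coordinates $x_1,\dots,x_{N-1}$ are nonnegative), then its extension to $N+1$ still has all $x_1,\dots,x_N$ nonnegative (the only new coordinate in this range is $x_N=0$), and hence by Proposition~\ref{prop: verify proximum} it is again the global optimum at $N+1$. By induction, once the global optimum is persistent at a single $N$, the \emph{same} face carries the global optimum for every larger $N$.

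With this observation the proof collapses to two lines, as in the paper. At $N=N_1+1$ the global optimum has a corner above the conductor (Corollary~\ref{cor: N1}), so Proposition~\ref{prop: persistence and optimality 2} says it is persistent or heralds a persistent solution; either way the global optimum at $N=N_1+2$ is persistent, and then it persists forever. Take $N_0^{\simp}=N_1+2$. There is no need to bound $\max(J(N))$ or to compare any rational functions.
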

\begin{proof}

  Let $N_1 = \max_{I : \max(I) \leq \condIndex(\Gamma)} \{ \nu(\Gamma,I) \}$ as in
  Corollary \ref{cor: N1}. Consider the global optimum for the
  Simplified Problem when $N = N_1 +1$.  By the corollary, this
  solution has at least one corner above the conductor. By Proposition \ref{prop: persistence and optimality 2}, it is either persistent, or it heralds a persistent solution.

  Thus, the global optimum for the Simplified Problem when $N = N_1+2$ must be persistent. Take $N_0^{\simp} = N_1 + 2$ to complete the proof. 
\end{proof}


%
%

\subsection{Persistence for the worst 1-PS}

\begin{lemma} \label{lem: least squares}
  Consider the set of points $\{ (0,0),\ldots,(N-\ell-1,0), (N-\ell,-1)\}$. Then the least squares regression line for this set of points has slope $m$ and $y$-intercept $b'$ given by the following formulas.
  \begin{align*}
    m &= \frac{-6}{(N-\ell+1)(N-\ell+2)} \\
    b' &= \frac{2(N-\ell-1)}{(N-\ell+1)(N-\ell+2)} 
  \end{align*}
  
\end{lemma}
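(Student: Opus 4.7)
The plan is a direct application of the standard least squares formulas. Let $M = N-\ell$, so that we have the $M+1$ data points $(x_i,y_i)$ for $i=0,1,\ldots,M$, with $y_i=0$ for $i<M$ and $y_M=-1$. Writing $n=M+1$, the regression line has slope and intercept
\[
m \;=\; \frac{n\sum x_i y_i - \sum x_i \sum y_i}{n\sum x_i^2 - \bigl(\sum x_i\bigr)^2},
\qquad b' \;=\; \bar{y} - m\,\bar{x},
\]
so the proof reduces to evaluating the four sums $\sum x_i,\ \sum x_i^2,\ \sum y_i,\ \sum x_i y_i$.

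First I would compute the sums. Because only the last data point has a nonzero $y$-coordinate, we immediately get $\sum y_i = -1$ and $\sum x_i y_i = -M$. The remaining sums are the classical identities $\sum_{i=0}^{M} i = M(M+1)/2$ and $\sum_{i=0}^{M} i^2 = M(M+1)(2M+1)/6$. Substituting into the numerator of $m$ yields
\[
(M+1)(-M) - \tfrac{M(M+1)}{2}(-1) \;=\; -\tfrac{M(M+1)}{2},
\]
and, after factoring $\tfrac{M(M+1)^2}{12}$ out of the denominator,
\[
(M+1)\cdot\tfrac{M(M+1)(2M+1)}{6} - \tfrac{M^2(M+1)^2}{4} \;=\; \tfrac{M(M+1)^2(M+2)}{12}.
\]
Dividing gives $m = -6/\bigl((M+1)(M+2)\bigr)$, which is the claimed formula after restoring $M = N-\ell$.

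Finally, I would compute $b'$ from $b' = \bar{y} - m\bar{x}$ using $\bar{x} = M/2$ and $\bar{y} = -1/(M+1)$:
\[
b' \;=\; \frac{-1}{M+1} + \frac{3M}{(M+1)(M+2)} \;=\; \frac{-(M+2) + 3M}{(M+1)(M+2)} \;=\; \frac{2(M-1)}{(M+1)(M+2)},
\]
which becomes the claimed expression $2(N-\ell-1)/\bigl((N-\ell+1)(N-\ell+2)\bigr)$ upon substituting $M = N-\ell$. There is no substantive obstacle here; the only thing to watch is bookkeeping in the algebraic simplification of the denominator of $m$, where the common factor $M(M+1)^2/12$ must be pulled out carefully.
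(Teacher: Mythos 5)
Your proof is correct: all four sums are computed correctly, the numerator and denominator of $m$ simplify as you claim (the common factor $M(M+1)^2/12$ is pulled out correctly, leaving $2(2M+1)-3M = M+2$), and the intercept calculation via $b' = \bar y - m\bar x$ gives the stated formula. The paper states this lemma without proof, so there is no argument to compare against, but your direct application of the standard least squares formulas is exactly the expected route.
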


\begin{theorem}[Persistence of the worst 1-PS] \label{thm: persistence
    of the worst 1-PS}
  Let $\Gamma$ be a numerical semigroup.
\begin{enumerate}
\item 
There exists an integer $N_0$ and a
set of corner indices $I$ (both depending on $\Gamma$) such that for all $N \geq N_0$, the global
optimum for the Unsimplified Problem for $\Gamma$ lies on the face of $W$
corresponding to $I$, and on no smaller face.
\item Write $\ell = \max I$. For all $N \geq N_0$, the coordinates
  $w_0^*,\ldots,w_{\ell-1}^{*}$ are constant with respect to $N$, and the coordinates
  $w_{\ell}^{*},\ldots,w_{N}^{*}$ are given by the formula
\[
w_{i}^{*} = mi + b
\]
where
\begin{align*}
m &= \frac{-6}{(N-\ell+1)(N-\ell+2)} \\
b &= 2+\frac{2(N+2\ell-1)}{(N-\ell+1)(N-\ell+2)}
\end{align*}
\end{enumerate}
\end{theorem}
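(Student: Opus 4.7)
The plan is to adapt the proof strategy of Theorem \ref{thm: persistence for the simplified problem} to the Unsimplified Problem and then apply Lemma \ref{lem: least squares} to extract the explicit tail formula. The key observation is that the Simplified and Unsimplified KKT matrix equations share the same matrix $A$, differing only in the last entry of the right-hand side (which changes from $4$ to $2$).

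First I would establish an analog of Theorem \ref{thm: persistence for the simplified problem} for the Unsimplified Problem, yielding an integer $N_0$ and a persistent face $I$ with $\ell = \max I > \condIndex(\Gamma)$. The structural results in Sections \ref{sec: structure theorem}--\ref{sec: corners above the conductor}---the polynomiality of $Q, \chi, \psi, \omega$ in Lemma \ref{lem: polynomiality of Q Pk chi psi}, the coefficient identities of Lemma \ref{lem: chi3 psi2 relation} and Lemma \ref{lem: Pj}, the factoring identity of Proposition \ref{prop: factoring chi and psi}, and the persistence dichotomy of Proposition \ref{prop: persistence and optimality 2}---depend on the structure of $A$ rather than the specific right-hand side, so direct analogs hold for the Unsimplified Problem. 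The non-optimality argument (Proposition \ref{prop: persistence and optimality 1}, using the crossing criterion of Lemma \ref{lem: crossing implies not globally optimal}) needs adaptation: the target tail for the Unsimplified is a line $y = mj + b$ with $m$ and $b-2$ of order $O(1/N^2)$, rather than the constant $y = 2$, but the underlying geometric principle---that graph crossings past the conductor can be locally improved---carries over because the limiting tail as $N\to\infty$ is still $y=2$.

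Next I would compute the tail formula via Lemma \ref{lem: least squares}. On the persistent face $I$, the tail weights satisfy $w_j = -t_N j + (t_N N + t_{N+1})$ for $j \geq \ell$, i.e., $w_j = mj+b$ with $m = -t_N$ and $b = t_N N + t_{N+1}$, since $F_{\gamma_i}(\gamma_j) = 0$ whenever $j \geq i \geq \ell$ and $\gamma_N - \gamma_j = N - j$ whenever $j \geq \condIndex(\Gamma)$. The tail data for the Unsimplified Problem is $(a_{\ell+1},\ldots,a_{N+1}) = (2,2,\ldots,2,1)$; subtracting the baseline value $2$ and reindexing via $k = j-\ell$ produces the residual vector $(0,0,\ldots,0,-1)$, which is exactly the input of Lemma \ref{lem: least squares}. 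That lemma produces slope $m$ and $k$-intercept $b'$ by the stated formulas; translating back to $j$-coordinates gives $b = 2 + b' - m\ell = 2 + 2(N+2\ell-1)/((N-\ell+1)(N-\ell+2))$, matching the theorem. The head stability (that $w_0^*,\ldots,w_{\ell-1}^*$ are constant in $N$) then follows from the persistence of the corner parameters $t_i$ for $i \in I$, since the first $\ell$ rows of the Unsimplified KKT equation involve only $a_i$ with $i \leq \ell+1$ (values that are fixed once $N \geq \condIndex(\Gamma)+2$), and the lineality contributions beyond position $\ell$ drop out of those rows by the same cancellation mechanism that underlies Proposition \ref{prop: factoring chi and psi}.

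The main obstacle is rigorously adapting the sign analysis in Proposition \ref{prop: persistence and optimality 1}. The Simplified argument exploits the specific form of the graph crossing the horizontal line $y=2$, and the key step compares the signs of $\chi$ and $\chi + 2(N-\condIndex-2)\psi$ (or their analogs) to force the crossing past the conductor. For the Unsimplified, the relevant line $y = mj+b$ itself depends on $N$ at order $O(1/N^2)$, so one must confirm that the leading-order behavior of the analogous polynomials is unchanged by the modified last entry of the right-hand side. Since this entry only affects the last row of the system---and by the Cramer's rule arguments of Lemma \ref{lem: polynomiality of Q Pk chi psi}, it contributes only to lower-order terms in the relevant numerators---the leading-coefficient dichotomy that rules out non-persistent stationary points still applies for all sufficiently large $N$, giving the desired $N_0$.
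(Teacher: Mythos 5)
Your approach diverges fundamentally from the paper's, and it contains a significant gap.

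The paper's proof does \emph{not} re-derive a persistence theorem for the Unsimplified Problem. Instead it writes down an explicit candidate solution $\tilde x$ for the Unsimplified KKT system on the face $I = I^{\simp} \cup \{\ell-1\}$ (where $\ell = \max I^{\simp}$), with the first $\ell-2$ coordinates copied from the Simplified solution, the tail coordinates given by the least-squares line, and the two bridging coordinates $\tilde x_{\ell-1}, \tilde x_\ell$ corrected by explicit $O(1/N^2)$ terms. It then checks directly that (i) $\tilde x$ solves the Unsimplified KKT equation (by row-by-row substitution), (ii) $\tilde x_i>0$ for $i\in I$ (giving the explicit $N_0$ threshold from a quadratic in $N-\ell$), and (iii) $\tilde x_i\ge0$ elsewhere. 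This bypasses the entire polynomiality/sign machinery for the Unsimplified RHS.

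The gap in your approach is the assertion that the structural lemmas ``depend on the structure of $A$ rather than the specific right-hand side, so direct analogs hold for the Unsimplified Problem.'' This is not correct: $\chi,\psi,\omega,P_j$ are determinants of $A$ with a column replaced by $2a$, so they depend on $a$ essentially. Lemma \ref{lem: Pj} and its proof substitute $x_{N-1}=2x_{N+1}-4$, which comes from the last row of the \emph{Simplified} KKT equation ($-x_{N-1}+2x_{N+1}=2a_{N+1}=4$); for the Unsimplified it reads $-x_{N-1}+2x_{N+1}=2$, and the resulting identity $\frac12\chi_3 = -\frac13\psi_2$ (Lemma \ref{lem: chi3 psi2 relation}), the factoring in Proposition \ref{prop: factoring chi and psi}, and in fact the very recurrences $\Phi,\Chi,\Psi$ of Appendix~A all use the Simplified value $2a_{N+1}-2=2$ in the replaced column. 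The change $a_{N+1}:2\to1$ is a change to the last entry of that column, which enters via a minor $\Delta_{N+1}^j A(N)$ whose polynomiality and degree in $N$ are not established and are not obviously ``lower-order.'' You also omit the paper's key structural observation that the Unsimplified corner set is $I = I^{\simp}\cup\{\ell-1\}$ (the Unsimplified tail has a nonzero slope, so the corner at $\ell-1$ may become active even though it was inactive in the Simplified solution); your proposal treats $I$ as whatever the unproved analog theorem hands you and gives no control over it, which makes the tail and head computations that follow unsupported.

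Your reduction of the tail to Lemma \ref{lem: least squares} (subtract the baseline $2$, reindex, apply the regression formulas, translate back) is the same as the paper's and is correct. The head-stability argument by appeal to ``the same cancellation mechanism that underlies Proposition \ref{prop: factoring chi and psi}'' is hand-waved; the paper instead verifies head stability by direct substitution showing the dependence on $N$ cancels exactly in $w_k$ for $k\le\ell-1$. To make your route rigorous you would need to redo essentially all of Appendix~A for the Unsimplified RHS, which is a much longer path than the paper's construct-and-verify argument.
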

\begin{proof}
We modify the solution to the Simplified Problem to obtain a solution to the Unsimplified Problem.

Choose $N_0 \geq N_0^{\simp}$. Let $I^{\simp}$ be the set of corner indices giving the persistent solution to the Simplified Problem. Let $\ell = \max(I^{\simp})$. We take $I = I^{\simp} \cup \{\ell-1\}$. (Note: $\ell-1$ may already be in $I^{\simp}$, in which case $I = I^{\simp}$.) By Corollary \ref{cor: N1} we have $\ell > \condIndex(\Gamma)$.

Let $x$ be the solution to the KKT matrix equation for the Simplified Problem on face $I$. If $I = I^{\simp}$, it is the global optimum to the Simplified Problem. If $I \neq I^{\simp}$, the only difference is that we added an unused corner at index $\ell-1$. So the solution $x$ gives the global optimum to the Simplified Problem, but with the parameter $x_{\ell-1} =0$.

We will define a vector $\tilde{x}$ and then show that it is the solution to the KKT matrix equation for the Unsimplified Problem.

For the first $\ell-2$ coordinates of $\tilde{x}$, we use the solution $x$.

Set $\tilde{x}_N$ and $\tilde{x}_{N+1}$ to give the least squares regression line through the points $\{(\gamma_\ell,2),\ldots,(\gamma_{N-1},2),(\gamma_N,1)\}$. (We can translate the least squares regression line described in Lemma \ref{lem: least squares} by $(\gamma_{\ell},2)$ to obtain the desired line.) Recursively solve rows $\ell+1,\ldots,N+1$ of the KKT matrix for the Unsimplified Problem to obtain  $\tilde{x}_{\ell+1},\ldots,\tilde{x}_{N-1}$. 

Finally, we modify $x_{\ell-1}$ and $x_{\ell}$ to account for the
changes in the slopes of the last two line segments.

Explicitly, we have
\begin{equation} \label{eqn: xtilde formulas}
  \tilde{x}_i = \left\{
    \begin{array}{ll}
x_i, & 1 \leq i \leq \ell-2 \\
x_{\ell-1} + b', & i = \ell-1 \\
x_{\ell} + m - b', & i = \ell  \\
  \frac{2(i-\ell+1)(i-\ell)(N-i)}{(N-\ell+1)(N-\ell+2)}, & 
   \ell+1 \leq i \leq N-1 \\
-m, & i = N\\
m(N-\ell)+2+b', & i = N+1
\end{array}
  \right.
\end{equation}
Here $m$ and $b'$ are the quantities given in Lemma \ref{lem: least squares}.

To finish the proof of the first statement, we need to show that for $N$ sufficiently large, the following three conditions are satisfied.

\begin{enumerate}
\item[(i).] $\tilde{x}$ satisfies the KKT matrix equation for the Unsimplified Problem
\item[(ii).] $\tilde{x}_i > 0$ for $i \in I$
\item[(iii).] $\tilde{x}_i \geq 0$ for $i \not\in I$ and $i\leq N-1$. 
\end{enumerate}

\textit{Proof of (i)}.

The KKT matrix equations for the Simplified Problem and Unsimplified Problem are the same except for the last coordinate of the right hand side. Thus, when $i\leq N$, the equation $A\tilde{x} = 2a$ in row $i$ is equivalent to $A(\tilde{x}-x)=0$ in row $i$.

First, consider rows 1 through $\ell$ of the KKT matrix equation. These rows are zero in columns $\ell+1,\ldots, N-1$.  Furthermore, $(\tilde{x} - x)_i = 0$ in coordinates $1 \leq i \leq \ell-2$.  So it's enough to show that
\[
A_{i,\ell-1} (\tilde{x}_{\ell-1}-x_{\ell-1})+A_{i,\ell} (\tilde{x}_{\ell}-x_{\ell}) + A_{i,N} (\tilde{x}_{N}-x_{N})  + A_{i,N+1} (\tilde{x}_{N+1}-x_{N+1}) = 0.   
\]
When $i \leq \ell-1$, substituting the formulas for $A_{i,j}$ from Proposition \ref{prop: KKT matrix entries} and the formulas for $\tilde{x}$ from (\ref{eqn: xtilde formulas}) yields

\[
2(\gamma_{\ell-1} - \gamma_{i-1}) (b') +2(\gamma_{\ell} - \gamma_{i-1}) (m-b')  +2(\gamma_{N} - \gamma_{i-1}) (-m) + 2 (m(N-\ell)+b') = 0.   
\]

This rearranges to 
\[
[\gamma_{\ell-1} - \gamma_{\ell} + 1](2b') + [(\gamma_{\ell} -  \gamma_{N} + N-\ell](2m) = 0.
\]

But we have $\gamma_{\ell} - \gamma_{\ell-1}= 1$ and $\gamma_N - \gamma_\ell = N-\ell$ because $\ell > \condIndex(\Gamma)$. This gives the desired result.

A similar calculation yields the desired result when $i=\ell$. 

For rows $\ell+1$ through $N+1$, we need to verify the following equations.

For row $\ell+1$: 
\[
-\tilde{x}_{\ell+1} + 2(N-\ell)\tilde{x}_N + 2 \tilde{x}_{N+1} = 4.
\]
For row $\ell+2$:
\[
2\tilde{x}_{\ell+1} -\tilde{x}_{\ell+2} + 2(N-\ell-1)\tilde{x}_N + 2 \tilde{x}_{N+1} = 4.
\]
For $\ell+3 \leq i \leq N-1$:
\[
-\tilde{x}_{i-2}  +2\tilde{x}_{i-1} -\tilde{x}_{i} + 2(N-i+1)\tilde{x}_N + 2 \tilde{x}_{N+1} = 4.
\]
For row $N$:
\[
-\tilde{x}_{N-2}  +2\tilde{x}_{N-1}  + 2\tilde{x}_N + 2 \tilde{x}_{N+1} = 4.
\]
For row $N+1$:
\[
-\tilde{x}_{N-1}  + 2 \tilde{x}_{N+1} = 2.
\]

We substitute the formulas for $\tilde{x}_i$ given in (\ref{eqn: xtilde formulas}) and verify these identities of rational functions.

\textit{Proof of (ii)}. Let $i \in I$.

If $i \leq \ell-2$, then we have $\tilde{x}_i > 0$, since $\tilde{x}_i = x_i$ in this range, and $x$ is the global optimum for the Simplified Problem.

If $i = \ell-1$, we have $\tilde{x}_{\ell-1} = x_{\ell-1} + b' > 0$, since $x_{\ell-1} \geq 0$ and $b'>0$.

If $i = \ell$, we explain how to choose $N_0$ sufficiently large to ensure that $\tilde{x}_{\ell} > 0$. 

We have $\tilde{x}_{\ell} = x_{\ell} + m - b'$. We want to choose $N$ sufficiently large that
\[
x_{\ell} + m - b' > 0.
\]
Substituting the formulas for $m$ and $b'$ yields
\[
x_{\ell}     + \frac{-6}{(N-\ell+1)(N-\ell+2)} - \frac{2(N-\ell-1)}{(N-\ell+1)(N-\ell+2)} >0.
\]
Clearing denominators, we have
\[
x_{\ell} (N-\ell+1)(N-\ell+2)    -6 - 2(N-\ell-1) >0.
\]
This rearranges to
\begin{equation} \label{eqn: N minus ell quadratic}
x_{\ell} (N-\ell)^2 + (3 x_{\ell}-2)(N-\ell) + 2x_{\ell}-4 > 0.  
\end{equation}
The roots of this quadratic polynomial are
\[
N -\ell = \frac{-(3 x_{\ell}-2) \pm \sqrt{ (3 x_{\ell}-2)^2-4 x_{\ell} (2x_{\ell}-4 )} }{2 x_{\ell}}
\]
The square root simplifies as follows.
\[
\sqrt{ (3 x_{\ell}-2)^2-4 x_{\ell} (2x_{\ell}-4 )}  =  x_{\ell}+2.
\]
Thus, the roots of the quadratic polynomial in (\ref{eqn: N minus ell
  quadratic}) are $\frac{-2x_{\ell} + 4}{2x_\ell}$ and $\frac{-4x_{\ell}}{2 x_\ell}$. Since $x_{\ell} \geq 0$, the larger root is given by $-2x_{\ell} +4$.

Thus, to ensure $\tilde{x}_{\ell} > 0$, it suffices to take
\[
N - \ell > \frac{-2x_{\ell} +4}{2x_\ell}.
\]
This rearranges to
\[
N > \ell + \frac{2}{x_{\ell}} - 1.
\]

\textit{Proof of (iii)}. Since $x_i \geq 0$ for all $i$, it follows that $\tilde{x} \geq 0$ for $i=1,\ldots,\ell-2$. We already checked in Part (ii) that $\tilde{x}_{\ell-1}\geq 0$ and $\tilde{x}_{\ell}\geq 0$. For $\ell+1\leq i \leq N+1$ it is clear from the formulas that $\tilde{x} \geq 0$.

To complete the proof of the first statement, take $N_0 = \max\{
N_0^{\simp}, \lfloor \ell + \frac{2}{x_{\ell}} \rfloor  \}.$

For the second statement: by our parametrization of the cone $W$ (see
Definition \ref{def: KKT matrix equation}), a
solution $\tilde{x}$ to the KKT matrix equation corresponds to the point $w
\in W$ given by 
\begin{equation} \label{eqn: w equation}
  w = \left(\sum_{i \in I} \tilde{x}_i F_{\gamma_i}(\gamma) \right)+ \tilde{x}_N L_{\gamma_N}(\gamma) +
  \tilde{x}_{N+1} L_1(\gamma). 
\end{equation}





If $k \geq \ell$, then the $k^{th}$ coordinate of
$F_{\gamma_i}(\gamma)$ is 0 for all $i \in I$, so
\begin{align*}
w_k &= [\tilde{x}_N L_{\gamma_N}(\gamma) + \tilde{x}_{N+1} L_1(\gamma)]_k \\
&= \tilde{x}_N (\gamma_N - \gamma_k) + \tilde{x}_{N+1} \cdot 1 \\
&= -m (N-k) + m(N-\ell) + 2 + b' \\
&= mk + (-m\ell + 2 + b') \\
&= mk +b.
\end{align*}

If $k = \ell-1$, then the  the $k^{th}$ coordinate of
$F_{\gamma_i}(\gamma)$ is 0 for all $i \in I$ except $i = \ell$, so
\begin{align*}
  w_{\ell-1}  &= [\tilde{x}_{\ell} L_{\gamma_\ell}(\gamma)+\tilde{x}_N L_{\gamma_N}(\gamma)+ \tilde{x}_{N+1} L_1(\gamma)]_{\ell-1} \\
  &= \tilde{x}_{\ell}(\gamma_\ell - \gamma_{\ell-1}) + m(\ell-1) + (-m\ell + 2 + b') \\
  &= x_{\ell}+2.
\end{align*}
Then $w_{\ell-1} $ is constant with respect to $N$, since $x_{\ell}$
is constant with respect to $N$.

If $k \leq \ell-2$, then 
\begin{align*}
w_{k}  &= \left[\left(\sum_{i \in I} \tilde{x}_i F_{\gamma_i}(\gamma)\right)+\tilde{x}_N L_{\gamma_N}(\gamma)+ \tilde{x}_{N+1}\right]_k \\
&= \left[\left(\sum_{\substack{i \in I\\i \leq \ell-2}} \tilde{x}_i F_{\gamma_i}(\gamma) \right)+ \tilde{x}_{\ell-1} F_{\gamma_{\ell-1}}(\gamma) + \tilde{x}_{\ell}F_{\gamma_{\ell}}(\gamma) +\tilde{x}_N L_{\gamma_N}(\gamma)+ \tilde{x}_{N+1} L_1(\gamma)\right]_k.
\end{align*}
The sum 
\[
\sum_{\substack{i \in I\\i \leq \ell-2}} \tilde{x}_i F_{\gamma_i}(\gamma) 
\]
is constant with respect to $N$, since $\tilde{x}_i = x_i$ in this range, and both $x_i$ and $F_{\gamma_i}(\gamma) $ are constant with respect to $N$.

It remains to show that the sum of the last four terms is also constant with respect to $N$. We have
\begin{align*}
  \lefteqn{ [\tilde{x}_{\ell-1} F_{\gamma_{\ell-1}}(\gamma) + \tilde{x}_{\ell}F_{\gamma_{\ell}}(\gamma) +\tilde{x}_N L_{\gamma_N}(\gamma)+ \tilde{x}_{N+1} L_1(\gamma)]_k} \\
  &= (x_{\ell-1}+b') (\gamma_{\ell-1} - \gamma_k) +  (x_{\ell}+m-b')  (\gamma_{\ell} - \gamma_k) -m (\gamma_N - \gamma_k) + m(N-\ell) + 2 + b'\\
  &= x_{\ell-1}(\gamma_{\ell-1} - \gamma_k) + x_{\ell}(\gamma_{\ell} - \gamma_k).
\end{align*}
Since $\gamma_k$, $\gamma_\ell$, $x_{\ell-1}$, and $x_{\ell}$ are
constant with respect to $N$, this expression is constant with respect to $N$. This completes the proof of the second statement.
\end{proof}

\textit{Remark.} It is possible to give a more conceptual proof of the second statement in Theorem \ref{thm: persistence
  of the worst 1-PS}.  When the corner set ends in two consecutive corners $\{\ell-1,\ell\}$, to minimize $\| w - a\|^2$, we may minimize $\sum_{i=0}^{\ell-1} (w_i - a_i)^2$ and $\sum_{i=\ell}^{N} (w_i - a_i)^2$ separately. Then the first sum is independent of $N$, and is the same for the Simplified and Unsimplified Problems; and the least squares regression line minimizes the second sum. 

\section{Examples} \label{sec: examples}

In Table \ref{tab: Matlab/Octave results}, for five different numerical semigroups, we describe the corner set $I$ giving the worst 1-PS for each $N$. Here is how we generated this table.
\begin{enumerate}
\item For each $N$ starting at $\condIndex(\Gamma)+2$, we compute the face $I^{\simp}(N)$ giving the optimal solution to the Simplified Problem. We increase $N$ by one until we find the persistent solution to the Simplified Problem. 
\item We use Theorem \ref{thm: persistence
    of the worst 1-PS} to compute the persistent solution to the Unsimplified Problem.
\item For each $N$ in $[\condIndex(\Gamma)+2,N_0]$, we compute the face $I(N)$ giving the optimal solution to the Unsimplified Problem.
\end{enumerate}

\begin{table}[H]
  \caption{The worst 1-PS for five singularities}
  \label{tab: Matlab/Octave results}  
  \begin{center}
    \begin{tabular}{llll}
    Simple cusp & $\langle 2, 3\rangle$ & $I$ & $N$ \\
    && $\emptyset$ & $4 \leq N \leq 15 $\\
    &&$\{4\}$ & $16 \leq N \leq 28 $\\
    &&$\{4,5\}$ & $29 \leq N \leq 74 $\\
    &&$\{5\}$ & $75 \leq N \leq 145$\\
    &&$\{5,6\}$ & $146 \leq N $\\
    Rhamphoid cusp & $\langle 2, 5\rangle$ & $I$ & $N$ \\
    && $\emptyset$ & $4 \leq N \leq 11 $\\
    &&$\{5\}$ & $12 \leq N \leq 18 $\\
    &&$\{5,6\}$ & $19 \leq N \leq 69 $\\
    &&$\{6\}$ & $70 \leq N \leq 117$\\
    &&$\{6,7\}$ & $118 \leq N $\\
    & $\langle 4,9\rangle$ & $I$ & $N$ \\
    && $\emptyset$ & $4 \leq N \leq 13 $\\
    && $\{7\}$ & $14 \leq N \leq 18 $\\
    &&$\{7,8\}$ & $N=19 $\\
    &&$\{7,8,10\}$ & $N=20 $\\
    &&$\{7,10\}$ & $N=21$\\
    &&$\{7,10,14\}$ & $22 \leq N \leq 23 $\\
    &&$\{7,10,14,15\}$ & $24 \leq N \leq 27 $\\
    &&$\{7,10,15\}$ & $28 \leq N \leq 45 $\\
    &&$\{7,8,10,15,16\}$ & $46 \leq N $\\
    &  $\langle 5,7\rangle$ &    $I$ & $N$ \\
    && $\{9\}$ & $13 \leq N \leq 15 $\\
    &&$\{6, 9\}$ & $N=16$\\
    &&$\{9\}$ & $17 \leq N \leq 23 $\\
    &&$\{9,15\}$ & $24 \leq N \leq 34$\\
    &&$\{9,15,16\}$ & $35 \leq N \leq 83 $\\
    &&$\{9,16\}$ & $84 \leq N \leq 127 $\\
    &&$\{9,16,17\}$ & $128 \leq N $\\ 
    &  $\langle 8,13\rangle$ &             $I$ & $N$ \\
    &&$\{6, 11, 19, 26\}$ & $43 \leq N \leq 55 $\\
    &&$\{6, 11, 26\}$ & $56 \leq N \leq 58 $\\
    &&$\{6, 11, 26, 37\}$ & $59 \leq N \leq 60 $\\
    &&$\{6, 11, 26, 38\}$ & $61 \leq N \leq 63 $\\
    &&$\{6, 11, 26, 38, 47\}$ & $64 \leq N \leq 67 $\\
    &&$\{6, 11, 26, 38, 47, 48\}$ & $68 \leq N \leq 70 $\\
    &&$\{6, 11, 26, 38, 48\}$ & $71 \leq N \leq 85 $\\
    &&$\{6, 11, 26, 38, 48, 49\}$ & $86 \leq N \leq 112 $\\
    &&$\{6, 11, 26, 38, 49\}$ & $113 \leq N \leq 139 $\\
    &&$\{6, 11, 26, 38, 49,50\}$ & $140 \leq N $                   
  \end{tabular}
\end{center}
\end{table}

\subsection{The worst 1-PS for cusps} 
We describe the persistent worst 1-PS for a cusp $y^2
= x^{2r+1}$ of order $r$.

\begin{definition}
  For any integer $r \geq 1$, we define the polynomial
  \begin{equation}
  f(r,x):= (4r-2)x^3+(6r-6)x^2-(12r^3+6r^2+4r+4)x-(30r^3+18r^2).   
\end{equation}
\end{definition}

\begin{lemma}
For any fixed value of $r\geq 1$, the polynomial $f(r,x)$ has exactly one
positive real root.
\end{lemma}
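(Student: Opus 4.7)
The plan is to apply Descartes' rule of signs to the cubic $f(r,x)$ with $r$ held fixed. First I would observe that since $r \geq 1$, the leading coefficient $4r-2$ is positive, and the constant term $-(30r^3+18r^2)$ is strictly negative. Thus $f(r,0) < 0$ and $f(r,x) \to +\infty$ as $x \to +\infty$, so by the intermediate value theorem $f(r,x)$ has at least one positive real root.

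Next I would tabulate the signs of the coefficients of $f(r,x)$, ordered from the $x^3$ term down to the constant. The coefficient of $x$ is $-(12r^3+6r^2+4r+4)$, which is negative for every $r \geq 1$. The coefficient of $x^2$ is $6r-6$, which is positive for $r \geq 2$ and zero for $r=1$. Hence for $r \geq 2$ the sign pattern reads $(+,+,-,-)$, which has exactly one sign change, while for $r=1$ the coefficient of $x^2$ vanishes and the nonzero coefficients have signs $(+,-,-)$, again with exactly one sign change.

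By Descartes' rule of signs, the number of positive real roots of $f(r,x)$ is at most one in each case, and differs from one by an even nonnegative integer. Combined with the intermediate-value argument above, this forces the number of positive real roots to be exactly one, proving the lemma.

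I expect no real obstacle; the only mild subtlety is remembering to handle the case $r=1$ separately, where the quadratic coefficient degenerates to zero and one must apply Descartes' rule after discarding the zero coefficient rather than counting it as a sign change.
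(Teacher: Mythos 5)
Your proof is correct and follows essentially the same route as the paper: Descartes' rule of signs to bound the number of positive roots by one, plus the sign change between $f(r,0)<0$ and $f(r,x)\to+\infty$ to guarantee at least one. The only difference is presentational: the paper disposes of $r=1$ by "check directly," whereas you handle it within the Descartes framework by noting the $x^2$ coefficient vanishes and the remaining signs still show one change, which is a slightly cleaner uniform treatment.
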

\begin{proof}
We can check this directly for $r=1$. When $r>1$, the first two coefficients are positive and the last two
coefficients are negative, so by Descartes' Rule of Signs, $f(r,x)$ has
at most one positive real root. Since $f(r,0)<0$ and $\lim_{x
  \rightarrow \infty} f(r,x) > 0$, the polynomial $f(r,x)$ has
exactly one positive real root.
\end{proof}

\begin{definition}
We define $\alpha(r)$ to be the positive real root of $f(r,x)$.
\end{definition}

\begin{proposition} \label{prop: worst 1ps for cusps}
  Let $r$ be a positive integer. Let $j = \lceil \alpha(r) \rceil$,
  and let
\[
N_0 = j + \frac{j^4+4 j^3+(6 r^2+6 r+5) j^2+(12 r^2+12 r+2) j-3 r^4-6 r^3+3 r^2+6 r}{(-2 r+1) j^3+3 r j^2+(6 r^3+3 r^2+2 r-1) j+9 r^3+6 r^2-3 r}
\]
Then for all $N \geq N_0$, the weights of the worst 1-PS for
a cusp of order $r$ lies on the face of $W$
corresponding to $I = \{j,j+1\}$, and on no smaller face.
\end{proposition}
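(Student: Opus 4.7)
The plan is to solve the Simplified Problem explicitly for the cusp semigroup $\Gamma = \langle 2, 2r+1\rangle = \{0, 2, 4, \ldots, 2r, 2r+1, 2r+2, \ldots\}$, whose conductor index is $\condIndex(\Gamma) = r$, and then lift the solution to the Unsimplified Problem via Theorem \ref{thm: persistence of the worst 1-PS}. By Corollary \ref{cor: N1} and Proposition \ref{prop: persistence and optimality 2}, the persistent optimal face $I^{\simp}$ for the Simplified Problem must contain at least one corner strictly above the conductor, and its largest two corners are consecutive integers. My strategy is to restrict attention to two-element faces $I^{\simp} = \{j, j+1\}$ with $j > r$, identify the optimal $j$, and then lift to $I = I^{\simp} \cup \{\ell - 1\} = \{j, j+1\}$ (since $\ell - 1 = j \in I^{\simp}$).

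First, I would argue that the persistent optimum for the cusp lies on a face of exactly this form. Because the vector $a$ for a cusp is sparse---its only irregular entries lie among the first two coordinates, with the tail being $(2, 2, \ldots, 2)$ for the Simplified Problem---the candidate faces with all corners at or below $r$ are ruled out by Proposition \ref{prop: persistence and optimality 1}, and adding any extra corners below the consecutive pair $\{j, j+1\}$ worsens the objective via a crossing argument of the type used in Lemma \ref{lem: crossing implies not globally optimal}. Thus $I^{\simp}$ is parametrised by the single integer $j > r$.

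Second, for each candidate $j$, I would solve the KKT matrix equation on $\face_{\gamma(\{j, j+1\})}$ in closed form. The matrix entries from Proposition \ref{prop: KKT matrix entries} become especially simple for a cusp, and by Proposition \ref{prop: factoring chi and psi} the persistence condition $\psi \equiv 0$ pins down the stationary point as an explicit rational function of $j, r, N$. Comparing the squared distance $\|w - a\|^2$ between the persistent solutions for neighbouring faces $\{j, j+1\}$ and $\{j+1, j+2\}$ should yield an expression that, after clearing denominators, is proportional to $f(r, j)$. Since $f(r, x)$ has a unique positive real root $\alpha(r)$, the strict-monotonicity of this difference forces the optimal index to be $j = \lceil \alpha(r) \rceil$.

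Third, I would determine $N_0$ from the KKT primal/dual feasibility conditions; the binding constraint as $N$ grows is the positivity of the multiplier $\tilde{x}_\ell$ that appears in the proof of Theorem \ref{thm: persistence of the worst 1-PS}(2). For the cusp, this multiplier can be written as an explicit rational function of $N, j, r$, and solving the inequality $\tilde{x}_\ell > 0$ yields precisely the closed-form threshold stated in the proposition. The main obstacle I expect is the second step: the algebraic simplification that identifies $f(r, x)$ as the governing polynomial. This requires careful bookkeeping across the family of faces $\{j, j+1\}$, most efficiently handled by a difference-equation argument in $j$ paralleling the treatment of $\chi$ and $\psi$ in $N$ given in Lemma \ref{lem: formulas for chi and psi}. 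The specific coefficients $4r - 2$, $6r - 6$, etc.~strongly suggest that $f(r, x)$ arises as the leading-order term of such a discrete derivative of the objective with respect to $j$, which should make the identification tractable once the closed-form KKT solution is in hand.
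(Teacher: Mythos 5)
Your high-level outline agrees with the paper in two respects: the proposition is proved by first solving the Simplified Problem for the cusp semigroup $\Gamma = \langle 2, 2r+1\rangle$ and then lifting via Theorem~\ref{thm: persistence of the worst 1-PS}, and the threshold $N_0$ does indeed come from the positivity constraint on the multiplier $\tilde{x}_\ell$ with $\ell = j+1$; one can check that the denominator in the stated formula equals $-\tfrac{1}{2}f(r,j-1)$, the numerator of $x_{j+1}$ in the paper's formulas.

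Where you depart, and where there is a genuine gap, is the middle step. The paper does not compare squared distances $\|w-a\|^2$ across neighbouring faces $\{j,j+1\}$ and $\{j+1,j+2\}$, and $f(r,x)$ is not a discrete derivative of the objective. Rather, Appendix~B exhibits an explicit candidate solution $x$ to the KKT matrix equation on the single face $I^{\simp} = \{j,j+1\}$ --- built from the least-squares slope $m_1$ and intercept $b_1$ of the first segment --- and then verifies the KKT conditions directly: row-by-row for the equation itself, and by sign inspection for the nonnegativity requirements. The polynomial $f(r,j)$ arises as the numerator of $x_j$, and $-f(r,j-1)$ as the numerator of $x_{j+1}$ (both over the common positive denominator $h$). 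The strict positivity of these two coordinates required by complementary slackness is exactly the condition $f(r,j-1)<0<f(r,j)$, which characterises $j=\lceil\alpha(r)\rceil$. Because the KKT conditions are sufficient for this strongly convex problem, once a feasible point is exhibited there is no need to argue a priori that the persistent face has the form $\{j,j+1\}$; the paper never needs your first step. By contrast, your proposal relies on two unestablished claims: that the persistent face for a cusp is necessarily two consecutive corners (true, but not a consequence of ``sparsity of $a$'' --- the examples $\langle 4,9\rangle$ and $\langle 8,13\rangle$ in Table~\ref{tab: Matlab/Octave results} show the persistent corner set can be much larger for other semigroups), and that the distance comparison across faces picks out the global optimum, which would require separately verifying feasibility of each candidate stationary point on the cone $W$ before objective-value comparisons mean anything. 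Both issues are bypassed by the paper's direct verification.
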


  For a full proof, see Appendix B.

The values of $j$ and $N_0$ for some small values of $r$ are given in
the Table \ref{tab: higher order cusps j and N0}.

\begin{table}[H]
  \caption{$j$ and $N_0$ versus $r$}
  \label{tab: higher order cusps j and N0}  
  \begin{center}
  \begin{tabular}{rrr}
    $r$ & $j$ & $N_0$ \\
    1 & 5 & 146 \\
    2 & 6 & 118 \\
    3 & 7 & 30 \\
    4 & 9 & 53 \\
    5 & 11 & 174 \\
    6 & 12 & 37 \\
    7 & 14 & 55 \\
    8 & 16 & 107 \\
    9 & 18 & 8369 \\
    10 & 19 & 58 \\
    11 & 21 & 88 \\    
    12 & 23 & 200 \\
    13 & 24 & 61 \\
    14 & 26 & 82\\
    15 & 28 & 131
  \end{tabular}
\end{center}
\end{table}

The value $N_0=8369$ for $r=9$ appears surprisingly large compared to the
other values in the table. For the
semigroup $\langle 2, 19 \rangle$, the Simplified Problem has 
persistent solution $I^{\simp} = \{18,19\}$ for all $N \geq
N_0^{\simp} = 19$. By the
proof of Theorem \ref{thm: persistence
    of the worst 1-PS}, $N_0$ is the smallest integer strictly larger
  than $\ell +  \frac{2}{x_\ell}-1$. We have $x_{19} = \frac{1}{4175}$ when $N = 19$, and this
small denominator leads to the large $N_0$ observed in the table.

We can nearly give a  closed formula for $\lceil
    \alpha(r) \rceil$, in the sense that we can identify this quantity as one of two consecutive integers.
  
  \begin{proposition} \label{prop: alpha(r) has two possible values}
    For every positive integer $r$, we have
    \[
  \lceil \alpha(r) \rceil \in \left\{  \left\lceil \sqrt{3}r+\frac{\sqrt{3}+1}{2} \right\rceil,  \left\lceil \sqrt{3}r+\frac{\sqrt{3}+1}{2} \right\rceil + 1\right\}
    \]
  \end{proposition}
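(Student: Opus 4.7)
The plan is to pin $\alpha(r)$ between $\beta(r):=\sqrt{3}\,r+\tfrac{\sqrt{3}+1}{2}$ and $\beta(r)+1$ by evaluating the cubic $f(r,x)$ at these two explicit values. Since $(4r-2)>0$ for $r\geq 1$ and $\alpha(r)$ is the unique positive real root of $f(r,x)$, the sign of $f(r,y)$ at a point $y>0$ determines whether $y<\alpha(r)$ or $y>\alpha(r)$.

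First I would compute $f(r,\beta(r))$ as a polynomial in $r$ by expanding $(sr+c)^3$ and $(sr+c)^2$ with $s=\sqrt{3}$ and $c=(\sqrt{3}+1)/2$ and reducing via $s^2=3$. The choice of $c$ is engineered precisely to annihilate the $r^4$ and $r^3$ coefficients, and a direct calculation yields
\[
f(r,\beta(r)) \;=\; (2\sqrt{3}-12)\,r^2 \;-\; (12\sqrt{3}+18)\,r \;-\; \frac{13\sqrt{3}+21}{2}.
\]
All three coefficients are negative, so $f(r,\beta(r))<0$ for every positive $r$, which gives $\alpha(r)>\beta(r)$.

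Next I would compute $f(r,\beta(r)+1)$ via the shift identity $f(r,x+1)-f(r,x)=(12r-6)x^2+(24r-18)x-12r^3-6r^2+6r-12$; substituting $x=\beta(r)$ and adding to Step~1 produces
\[
f(r,\beta(r)+1) \;=\; 24\,r^3 + 38\sqrt{3}\,r^2 - (6+18\sqrt{3})\,r - \frac{75+37\sqrt{3}}{2}.
\]
At $r=2$ this evaluates to $\tfrac{285+195\sqrt{3}}{2}>0$, and its derivative in $r$, namely $72r^2+76\sqrt{3}\,r-(6+18\sqrt{3})$, is positive for all $r\geq 1$; hence $f(r,\beta(r)+1)>0$ for every integer $r\geq 2$, yielding $\alpha(r)<\beta(r)+1$ in that range.

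Combining the two, for all integers $r\geq 2$ we have $\beta(r)<\alpha(r)<\beta(r)+1$. Because $\sqrt{3}$ is irrational, $\beta(r)\notin\mathbb{Z}$, so this open interval is contained in $(\lceil\beta(r)\rceil-1,\lceil\beta(r)\rceil+1)$, which forces $\lceil\alpha(r)\rceil\in\{\lceil\beta(r)\rceil,\lceil\beta(r)\rceil+1\}$ as required. The remaining case $r=1$ is dispatched by hand: $f(1,x)=2x^3-26x-48$ satisfies $f(1,5)=72>0$, and $\alpha(1)>\beta(1)=(3\sqrt{3}+1)/2>3$ together give $\alpha(1)\in(3,5)$; since $\lceil\beta(1)\rceil=4$, the conclusion $\lceil\alpha(1)\rceil\in\{4,5\}$ holds. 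The main obstacle is the bookkeeping in the two polynomial identities---the rest is routine---and the conceptual point is that $c=(\sqrt{3}+1)/2$ is the unique shift killing both the $r^4$ and $r^3$ coefficients of $f(r,\sqrt{3}r+c)$, which is precisely why the answer is pinned down to within an error of at most one integer.
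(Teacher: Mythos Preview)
Your argument is correct. The paper states this proposition without proof, so there is no ``paper's own proof'' to compare against; your direct approach---evaluate $f(r,x)$ at $\beta(r)$ and $\beta(r)+1$ and use that $\alpha(r)$ is the unique positive sign-change of $f(r,\cdot)$---is the natural one, and I have checked your two polynomial identities for $f(r,\beta(r))$ and $f(r,\beta(r)+1)$.

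Two minor remarks. First, the irrationality of $\beta(r)$ is not actually needed: even if $\beta(r)$ were an integer $n$, the inequality $n<\alpha(r)<n+1$ would still give $\lceil\alpha(r)\rceil=n+1\in\{\lceil\beta(r)\rceil,\lceil\beta(r)\rceil+1\}$. Second, for $r=1$ the sign argument still works (the coefficient sequence $2,0,-26,-48$ has one sign change), so the separate treatment of $r=1$ is not strictly necessary either---though it does no harm.
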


Experimentally, it seems that $\lceil \alpha(r) \rceil$ is almost
always the smaller of these two possible values. This is true for all
$0 \leq r\leq 10^{8}$ except $r = 0,1,9$.

\section{Future work}
We conclude by briefly discussing a few possibilities for future work. Perhaps the most natural question is whether one can prove similar persistence results for the worst 1-PS's of other curves: in particular those with more complicated singularities, and those of higher geometric genus. In the latter case, when the singularities are attached to an otherwise stable curve, it seems reasonable to expect that the worst 1-PS will depend only on the singularities, in some appropriate sense. One could also ask about toric surface singularities. This question has yet to be investigated properly, but the preliminary evidence of a few example computations suggests that surfaces will not exhibit persistence in the same way as curves. 

Another direction is the interpretation of the results of this paper:
what does the persistent worst 1-PS \emph{mean}? We have explanations
for why they are as they are, and for the time to persistence, but so far only from a constrained optimisation point of view. An interesting question is whether there is some explanation purely in terms of algebraic geometry. In particular, is there some combinatorial algorithm that computes the persistent worst 1-PS purely from the singularity data?

Finally, as mentioned in the introduction, we intend in future work to combine our results here with tools from Non-reductive GIT, to construct new moduli spaces of singular curves. 

\appendix 

\section{Proof of Lemma \ref{lem: formulas for chi and psi}}
\subsection{Beginning the proof}
In this appendix we give a proof of the following result. 
\begin{proposition} \label{prop: KKT solutions are rational functions}
	Suppose $N \geq \max \{ \condIndex(\Gamma)+2,\max(I) +2 \}$. Let $x$ be the solution of the KKT matrix equation for face $I$. Then each coordinate of $x$ is a rational function in $N$.
\end{proposition}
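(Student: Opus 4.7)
The plan is to reduce the statement directly to Lemma \ref{lem: polynomiality of Q Pk chi psi} via Cramer's rule. Since the Simplified and Unsimplified Problems both have a strongly convex quadratic objective subject to affine constraints, the KKT conditions are sufficient and the unique optimum on the given face corresponds to a unique solution of the KKT system. In particular $A(N)$ is invertible for every $N$ in the stated range, so Cramer's rule gives
\[
x_j(N) = \frac{\det A(N,j)}{\det A(N)} = \frac{(-1)^{N+1}\det A(N,j)}{(-1)^{N+1}\det A(N)} = \frac{P_j(N)}{Q(N)},
\]
and the denominator is nonzero. Once I know from Lemma \ref{lem: polynomiality of Q Pk chi psi} that $Q(N)$ and $P_j(N)$ are polynomials in $N$, I can immediately conclude that each coordinate $x_j(N)$ is a rational function in $N$.

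Thus the real content is the polynomiality claim for the determinants. The key observation is that above the conductor the semigroup increments by $1$, so once $N \geq \max\{\condIndex(\Gamma)+2, \max(I)+2\}$, the matrix $A(N+1)$ is obtained from $A(N)$ by appending one row and one column in a completely standardised way: a tridiagonal $-1, 2, -1$ convexity block contributed by the newly added slope-comparison inequality, together with lineality-column entries $2(\gamma_N - \gamma_{i-1})$ and $2$ that depend linearly on $N$. Laplace-expanding along the appended row produces a recurrence expressing $\det A(N+1)$ in terms of $\det A(N)$, $\det A(N-1)$, and a handful of bounded-size boundary minors, each prefactor being linear (or constant) in $N$.

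The main obstacle is bounding the degree. Here I would use the multilinearity of the determinant to split the two $N$-dependent columns into their constant and $N$-dependent parts, which decomposes $\det A(N)$ as a finite sum of terms, each of which is a product of a quantity linear in $N$ and an $N$-independent minor of a fixed $(N{-}\text{independent})$ submatrix. Together with the recurrence above this forces $Q(N)$ to satisfy a finite-difference identity whose vanishing—verifiable as a pure pencil-and-paper calculation after column operations exploiting the $-1, 2, -1$ pattern—is exactly the statement that $Q$ is polynomial of degree $4$. An identical argument applies to $P_j(N) = (-1)^{N+1}\det A(N,j)$: the replaced column is the vector $2a$, whose tail above the conductor is constant ($2$ in the Simplified Problem, or $2$ with final entry $1$ in the Unsimplified Problem), so the same multilinear splitting plus Laplace recurrence establishes that $P_j(N)$ is polynomial of degree at most $4$. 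Combining the two polynomiality statements with Cramer's rule completes the proof of Proposition \ref{prop: KKT solutions are rational functions}.
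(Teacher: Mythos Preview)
Your reduction is exactly the paper's: the proposition is stated to follow immediately from Lemma~\ref{lem: polynomiality of Q Pk chi psi} (equivalently Lemma~\ref{lem: app polynomiality of Q Pk chi psi}) via Cramer's rule, with nonvanishing of $Q(N)=(-1)^{N+1}\det A(N)$ coming from uniqueness of the KKT solution. Your sketch of the polynomiality lemma is also in the same spirit as the paper's actual proof---recurrence for the growing matrices plus a finite-difference identity established by systematic Laplace expansion---though your phrasing about multilinearity yielding ``an $N$-independent minor of a fixed ($N$-independent) submatrix'' is loose, since the matrices grow with $N$; the paper handles this by defining explicit recurrence operators $\Phi$, $\Chi$, $\Psi$ and then verifying that the appropriate-order difference equation vanishes identically.
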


We also obtain explicit formulas
for the coefficients of two polynomials $\chi$ and $\psi$ that figure prominently in the proof. Together, these results prove Lemma \ref{lem: formulas for chi and psi}.

Proposition \ref{prop: KKT solutions are rational functions} follows from the following lemma.

\begin{lemma} \label{lem: app polynomiality of Q Pk chi psi}
	Suppose    $N \geq \max \{ \condIndex(\Gamma)+2,\max(I) +2 \}$.
	
	\begin{enumerate}
		\item  The functions $Q(N)$, $\chi(N)$, $\psi(N)$, and $\omega(N)$ are polynomials with the following degree bounds. 
		\begin{enumerate}
			\item $\deg Q = 4$
			\item $\deg \chi \leq 3$
			\item $\deg \psi \leq 2$
			\item $\deg \omega = 4$.
		\end{enumerate}
		\item  If  $N \geq \max \{ \condIndex(\Gamma)+2,\max(I) +2 \}$ and $N \geq j+1$, the function $P_j(N)$ is a polynomial of degree at most 4.
	\end{enumerate}
	
\end{lemma}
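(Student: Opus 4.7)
The approach exploits a rigid structural property of $A(N)$ in the stabilized regime $N \geq \max\{\condIndex(\Gamma)+2,\max(I)+2\}$: all semigroup gaps past the conductor equal $1$, so every convexity column indexed by $j \notin I$ with $j$ past both $\max(I)$ and $\condIndex(\Gamma)$ has the uniform pattern of nonzero entries $-1,2,-1$ in rows $j,j+1,j+2$. The parameter $N$ enters the matrix in only three controlled ways: through the penultimate column (whose entries $2(\gamma_N - \gamma_{i-1})$ are linear in $N$), through the constant last column of $2$'s, and through the dimension $(N+1) \times (N+1)$ itself.

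Step one of the plan is cofactor expansion. Once $N \geq \condIndex(\Gamma)+2$, the last row of $A(N)$ takes the form $(0,\ldots,0,-1,0,2)$, so expanding along it yields a two-term decomposition into $N \times N$ minors. Iterating this expansion (together with expansion along the constant last column, whose rank-one contribution is easy to track) produces a fixed-order linear recurrence for $Q(N+1)$ in terms of $Q(N), Q(N-1),\ldots$ with polynomial coefficients in $N$. Step two is applying the forward difference operator $\Delta f(N) = f(N+1) - f(N)$ to this recurrence; induction on the order of $\Delta$ shows $\Delta^{5}Q(N) \equiv 0$, which proves $\deg Q \leq 4$, and a direct leading-term computation (driven by the single linear factor contributed by the penultimate column) shows $\Delta^{4}Q$ is a nonzero constant, hence $\deg Q = 4$ exactly.

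Step three applies the same template to $\chi, \psi, \omega, P_j$, where one column of $A(N)$ is replaced by $2a$ (or by the column-equivalent $2a-\underline{2}$ in the case of $P_j$). The degree bounds emerge from which column is replaced: substituting into the penultimate column (the case of $\psi$) removes the sole source of linear-in-$N$ entries in the matrix, causing a two-degree drop to $\deg \psi \leq 2$; substituting into column $N-1$ (the case of $\chi$) removes one degree of $N$-growth, giving $\deg \chi \leq 3$; substituting into the last column (the case of $\omega$) or into any earlier column (the case of $P_j$) preserves the full degree bound of $4$, and for $\omega$ a further leading-term check confirms $\deg \omega = 4$ exactly.

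The main obstacle is not conceptual but combinatorial bookkeeping: the cofactor expansions and repeated applications of $\Delta$ involve careful sign and index tracking, which is why the paper notes that the computations were verified by computer algebra. A secondary subtlety is establishing that the threshold $N \geq \max\{\condIndex(\Gamma)+2,\max(I)+2\}$ really does capture the regime where all structural simplifications take effect: the bound $\condIndex(\Gamma)+2$ ensures the bottom two rows have the stabilized $-1,2,-1$ form, while $\max(I)+2$ ensures no corner column interferes with the bottom-right block that drives the expansions. Below this threshold the determinants are still well defined, but they need not agree with the polynomial extensions produced by the argument above.
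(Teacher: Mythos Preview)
Your approach is essentially the paper's: define recurrences describing how $A(N)$ and the modified matrices $A'(N,j)$ grow with $N$, expand along the bottom row to turn these recurrences into identities among minors, and verify that the appropriate finite-difference equations $\Delta_5 Q = 0$, $\Delta_4 \chi = 0$, $\Delta_3 \psi = 0$ vanish; then extract leading coefficients by a lower-order difference equation. Two small deviations are worth flagging. First, your heuristic for the degree drop in $\chi$ (``substituting into column $N-1$ removes one degree of $N$-growth'') is not quite the mechanism: column $N-1$ carries no $N$-dependent entries, so nothing is literally removed. The actual reason is that the family $A'(N,N-1)$ obeys a \emph{different} recurrence $\Chi$ than $A(N)$ obeys (because the replaced column shifts with $N$), and it is this altered recurrence that forces $\Delta_4 = 0$ rather than merely $\Delta_5 = 0$; the paper treats each of $Q,\chi,\psi$ by its own recurrence $\Phi,\Chi,\Psi$ and its own difference computation. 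Second, for $\deg\omega = 4$ the paper does not redo a leading-term check but instead uses the last row of the KKT equation, which gives $-\chi + 2\omega = 4Q$; since $\deg Q = 4$ and $\deg\chi \leq 3$, the result is immediate.
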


A full proof is given in the following sections. However, we follow
the same outline to prove each part, and we describe this outline
now. As $N$ grows, the matrices $A(N)$ and $A'(N,j)$ grow in a way that is easy to describe. We can write recurrences for the matrices, then show that
the appropriate difference equations vanish to establish that each of
these functions is a polynomial in $N$ with the degree bounds claimed. Then,
we use a second difference equation to obtain the leading
coefficients.

\subsection{$Q$ and $P_j$ are polynomials}

\begin{definition}
	For each $n \geq 3$, define
	\[
	\Phi_n : \operatorname{Mat}_{n\times n} \rightarrow \operatorname{Mat}_{(n+1)\times (n+1)}
	\]
	as follows.
	\begin{itemize}
		\item Columns 1 through $(n-2)$ in $\Phi_n(M)$ are the same as in $M$, extended by 0 at the bottom.
		\item Column $n-1$ in $\Phi_n(M)$ is 0 except the last three entries,
		which are $-1,2,-1$.
		\item Column $n$ in $\Phi_n(M)$ is the sum of columns $n-1$ and $n$ in $M$, extended by 0 at the bottom.
		\item Column $n+1$ in $\Phi_n(M)$ is column $n$ from $M$, extended by 2 at the bottom.
	\end{itemize}
	
	We write $\Phi$ for the collection of maps $\{ \Phi_n\}$. We refer to
	$\Phi$ as a recurrence, and frequently omit the subscript.
\end{definition}

The application to our problem is as follows.
\begin{lemma}
	Suppose    $N \geq \max \{ \condIndex(\Gamma)+2,\max(I) +2 \}$.
	\begin{enumerate}
		\item The matrices $A(N)$ satisfy the recurrence $\Phi$. That is,
		$A(N+1) = \Phi(A(N))$.
		\item Fix $j$ and suppose $N\geq j+1$. Then the matrices $A'(N,j)$ satisfy the recurrence $\Phi$. That is,
		$A'(N+1,j) = \Phi(A'(N,j))$.
	\end{enumerate}
\end{lemma}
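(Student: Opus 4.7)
The plan is to verify both recurrences by a direct column-by-column comparison, using the explicit entries provided in Proposition \ref{prop: KKT matrix entries}. The hypothesis $N \geq \max\{\condIndex(\Gamma)+2,\max(I)+2\}$ supplies three useful arithmetic facts that drive everything: $N \notin I$ (since $N > \max(I)$), and consecutive semigroup elements in the top range are separated by $1$, so $\gamma_{N+1}-\gamma_N = \gamma_N-\gamma_{N-1}=1$ and hence $\gamma_{N+1}-\gamma_{N-1}=2$. These identities localize all non-trivial checks to a handful of entries near the bottom-right of the matrix.

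For the first claim $A(N+1) = \Phi_{N+1}(A(N))$, I would split on the column index $j' \in \{1,\dots,N+2\}$ of $A(N+1)$. When $j' \leq N-1$, the formula for column $j'$ in Proposition \ref{prop: KKT matrix entries} depends only on $\gamma$, $j'$, and $I$ and its entries vanish in rows $i > j'+2$, so column $j'$ of $A(N+1)$ is column $j'$ of $A(N)$ with a fresh $0$ appended, exactly as $\Phi$ prescribes. When $j' = N$, the hypothesis $N \notin I$ puts us in the normal case, and the entries $\gamma_N-\gamma_{N+1},\;\gamma_{N+1}-\gamma_{N-1},\;\gamma_{N-1}-\gamma_N$ collapse to $-1,2,-1$, matching the ``inserted'' column of $\Phi$. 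When $j' = N+1$, the identity $2(\gamma_N-\gamma_{i-1})+2 = 2(\gamma_{N+1}-\gamma_{i-1})$ shows the new column equals the sum of the last two columns of $A(N)$, with trailing value $2(\gamma_{N+1}-\gamma_{N+1})=0$ at row $N+2$. Finally, column $j'=N+2$ is the all-twos column on both sides.

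For the second claim $A'(N+1,j) = \Phi_{N+1}(A'(N,j))$, every column other than $j$ is handled verbatim as in the $A$ case, since $A$ and $A'$ differ only in column $j$. The real work is column $j$: one must show that the length-$(N+1)$ vector $2a-\underline{2}$ in $A'(N,j)$, augmented by the appropriate bottom entry, reproduces the length-$(N+2)$ column $2a'-\underline{2}$ for embedding dimension $N+1$. Entries at positions $1,\dots,N$ agree automatically because the formula $a_i = \gamma_i-\gamma_{i-2}$ is independent of the ambient dimension. The boundary positions $N+1$ and $N+2$ are the delicate ones: here the identity $\gamma_{N+1}-\gamma_{N-1}=2$ shows that the ``special'' terminal coordinate of $a$ aligns with the interior formula that governs $a'$ at index $N+1$, so the augmentation rule produces the required column.

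The main obstacle, then, is the boundary bookkeeping in the $A'$ case: what was the terminal coordinate of $a$ in dimension $N$ must transition into an interior coordinate of $a'$ in dimension $N+1$, and one must verify that the two conventions mesh. This alignment is precisely the point at which the hypothesis $N \geq \condIndex(\Gamma)+2$ is used in an essential way, since without it the semigroup differences near the top would not yet have stabilized to $1$ and the two vectors would disagree at the transition.
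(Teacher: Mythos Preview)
Your column-by-column verification for part~(1) is sound and would go through exactly as written; the paper offers no proof of this lemma, and your outline is the natural check.

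For part~(2), however, there is a genuine gap. You correctly isolate the only nontrivial issue---column $j$ at the boundary rows $N{+}1$ and $N{+}2$---but you stop at the sentence ``the augmentation rule produces the required column'' without actually carrying out the check at row $N{+}2$. That check fails. Under the Simplified convention $a_{N+1}=2$, your identity $\gamma_{N+1}-\gamma_{N-1}=2$ does make the entries at row $N{+}1$ agree, but at row $N{+}2$ the recurrence $\Phi$ extends column $j$ by $0$, whereas column $j$ of $A'(N{+}1,j)$ is $2a'-\underline{2}$ and has entry $2a'_{N+2}-2 = 2\cdot 2-2 = 2$ there. (Under the Unsimplified convention the mismatch shifts to row $N{+}1$ but does not disappear.) So the matrix identity $A'(N{+}1,j)=\Phi(A'(N,j))$ is literally false.

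What salvages the downstream determinant arguments is that one may pass to the further column-equivalent matrix $A''(N,j)$ obtained by subtracting the last column from column $j$ once more, so that column $j$ becomes $2a-2\,\underline{2}$. For $i\geq \condIndex(\Gamma)+2$ (and at $i=N{+}1$) every entry of this column is $0$, hence extending by $0$ genuinely reproduces the next column and $A''(N{+}1,j)=\Phi(A''(N,j))$ holds exactly; since $\det A''(N,j)=\det A'(N,j)=\det A(N,j)$, all of the determinant identities used in the sequel survive. Your write-up becomes correct if you replace $A'(N,j)$ by $A''(N,j)$ throughout and note this extra column subtraction.
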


\begin{lemma} \label{lem: Q is a polynomial of degree 4}
	Let $\{M(N)\}$ be a sequence of $N \times N$ matrices for $N \geq N_0$
	such that $M(N+1) = \Phi(M(N))$ for all $N \geq N_0$. Then $
	h(N) = (-1)^{N+1} \det M(N+1) $ is given by a polynomial of degree at most 4.
\end{lemma}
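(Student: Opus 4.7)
The plan is to obtain an explicit closed-form for $h(N)$ via a Schur-complement decomposition of $M(N+1)$, followed by a Laplace expansion along its growing tail. I would first fix a starting index $N_0$ and iterate $\Phi$ from $M(N_0)$ to show that, for $N \geq N_0$, $M(N+1)$ admits a block decomposition
\[
M(N+1) = \begin{pmatrix} A & B \\ C & D \end{pmatrix},
\]
where $A$ is the fixed $(N_0 - 2) \times (N_0 - 2)$ ``body'' block (independent of $N$), $D$ is the ``tail'' of size $(k+2) \times (k+2)$ with $k = N - N_0 + 1$, and the off-diagonal blocks $B, C$ are very sparse (each application of $\Phi$ introduces a tri-diagonal column whose nonzero entries lie strictly below the body rows, plus two special last columns). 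Assuming $\det A \neq 0$ (a harmless genericity assumption, since the conclusion is a polynomial identity in the entries of $M(N_0)$), the Schur complement gives $\det M(N+1) = \det A \cdot \det(D - CA^{-1}B)$, and the correction $CA^{-1}B$ affects only the top-left $2 \times 2$ block of the last two columns of $D$.

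Next, after the column operation $\mathrm{col}_{k+1} \leftarrow \mathrm{col}_{k+1} - k \cdot \mathrm{col}_{k+2}$, I expect $V_k := D - CA^{-1}B$ to consist of $k$ tri-diagonal columns followed by two special columns of the shape $(\alpha'_1, \alpha'_2, -2, -4, \ldots, -2k)^{\top}$ and $(\beta'_1, \beta'_2, 2, 2, \ldots, 2)^{\top}$, for constants $\alpha', \beta' \in \RR$ depending on $M(N_0)$ but not on $k$. Laplace expansion along the last two columns then yields
\[
\det V_k = -\sum_{1 \leq a < b \leq k+2}(-1)^{a+b}\, M_{ab}\, N_{ab}(k),
\]
where $M_{ab}$ is the $2 \times 2$ minor of the last two columns at rows $a, b$, and $N_{ab}(k) = \det T(k)^{\hat a, \hat b}$ for $T(k)$ the $(k+2) \times k$ tri-diagonal matrix. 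The $M_{ab}$ are immediate to compute and are at most linear in $a, b$; in particular $M_{ab} = 4(b - a)$ whenever $a, b \geq 3$.

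The computational heart is the identity
\[
N_{ab}(k) = (-1)^{k + a + b - 1}(b - a), \qquad 1 \leq a < b \leq k+2,
\]
which I would prove via Pl\"ucker duality: the image of $T(k)$ is the $k$-dimensional subspace of $\RR^{k+2}$ cut out by the linear forms $v \mapsto \sum_i v_i$ and $v \mapsto \sum_i i\, v_i$, whose $2$-dimensional row space has $2 \times 2$ minor $(b - a)$ on rows $a, b$. Substituting and simplifying gives $(-1)^k \det V_k = \sum_{a<b}(b - a)\, M_{ab}$. Splitting this sum according to whether $a, b \in \{1, 2\}$ or both exceed $2$, and applying the standard closed forms for $\sum j,\ \sum j^2,\ \sum j^3$, each of the four resulting pieces is a polynomial in $k$ of degree at most $4$, with the top-degree contribution coming from $4 \sum_{3 \leq a < b \leq k+2}(b - a)^2 = \tfrac{4}{3} k^2 (k^2 - 1)$. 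Since $k$ is linear in $N$ and $(-1)^{N+1} = (-1)^{N_0}(-1)^k$, we conclude $h(N) = (-1)^{N_0} \det A \cdot \sum_{a<b}(b - a)\, M_{ab}$, a polynomial in $N$ of degree at most $4$. The main obstacle is verifying the sign in the Pl\"ucker-type identity for $N_{ab}(k)$ uniformly in $a, b, k$; a secondary technical point is removing the hypothesis $\det A \neq 0$, which can be done by a density/limiting argument since both sides of the asserted polynomial identity in $N$ depend polynomially on the entries of $M(N_0)$.
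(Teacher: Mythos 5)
Your proof is correct (modulo a small arithmetic slip noted below) and takes a genuinely different route from the paper's. The paper proves the degree bound indirectly: it shows that the fifth finite difference $h(k+5)-5h(k+4)+10h(k+3)-10h(k+2)+5h(k+1)-h(k)$ vanishes, by repeatedly Laplace-expanding along bottom rows, re-identifying the resulting minors as (augmentations of) smaller matrices in the sequence, and simplifying until everything cancels --- a long but entirely elementary computation that produces no closed form for $h$. Your approach instead isolates the growing tail with a Schur complement against the fixed body $A$, Laplace-expands along the two special columns, and evaluates \emph{all} the complementary tridiagonal minors $N_{ab}(k)$ in one stroke from the observation that the column space of $T(k)$ is the rank-$2$ annihilator of $\bigl(\begin{smallmatrix}1&1&\cdots&1\\1&2&\cdots&k+2\end{smallmatrix}\bigr)$. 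This is more conceptual --- it \emph{explains} why the degree is $4$: each of the two annihilating forms contributes one degree in $a$ and in $b$, and summing over $a<b\le k+2$ adds two more --- and it yields a closed form for $h(N)$, whereas the paper needs a second difference equation (its $h_4$ lemma) just to extract the leading coefficient. The price is that you must calibrate the global sign in the Pl\"ucker identity (the case $a=k+1$, $b=k+2$, where the complementary minor is lower triangular with diagonal $-1$'s, pins the constant to $(-1)^k$, and your formula checks against small $k$) and dispose of the genericity $\det A\ne 0$ by a continuity/Zariski-density argument; both are routine but should be written out. You should also note explicitly that $CA^{-1}B$, although $k$-dependent before the column operation (the next-to-last column of $B$ is $(u+kv)_{[1:N_0-2]}$ in your notation), becomes $k$-independent after $\mathrm{col}_{k+1}\leftarrow\mathrm{col}_{k+1}-k\,\mathrm{col}_{k+2}$, which is precisely what makes $\alpha'_i,\beta'_i$ constants. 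One arithmetic slip: $\sum_{3\le a<b\le k+2}(b-a)^2=\sum_{c=1}^{k-1}c^2(k-c)=\tfrac{1}{12}k^2(k^2-1)$, so the top-degree term is $\tfrac{1}{3}k^2(k^2-1)$ rather than $\tfrac{4}{3}k^2(k^2-1)$; this does not affect the degree bound.
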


We need a little more notation for the proof. 

\begin{definition}
	Let $I$ and $J$ be subsets of the row and column indices of a matrix $M$,
	respectively.
	
	We write $\Del_I^J M$ for the matrix obtained by
	deleting the rows whose indices belong to $I$, and deleting the
	columns whose indices belong to $J$.
	
	When $\Del_I^J M$ is a square matrix, we write  $\del_I^J M = \det \Del_I^J M$. 
\end{definition}

\begin{definition}
	Let $M(N)$ be an $N \times N$ matrix.
	
	We write $\aug(c,M(N))$ for
	the matrix obtained by adding $c$ times column $N$ to column $N-1$.
	
	We write $\red M(N) $ for
	the matrix obtained by subtracting column $N$ from column $N-1$.
\end{definition}

\begin{proof}[Proof of Lemma \ref{lem: Q is a polynomial of degree 4}]
	We show that
	the following fifth-order difference equation vanishes for any integer $k \geq
	N_0$.
	\begin{align}
		h(k+5) - 5h(k+4)+10h(k+3) -10h(k+2)+5h(k+1)-h(k) &= 0.
	\end{align}

	We define
	\begin{align*}
		\Delta_5 :=  h(k+5) - 5h(k+4)+10h(k+3) -10h(k+2)+5h(k+1)-h(k)
	\end{align*}
	Then our goal is to show that $\Delta_5=0$.
	
	Substituting $h(N) = (-1)^{N+1} \det M(N+1) $
	yields the following expression.
	\begin{equation} \label{eqn: Delta version 2}
		(-1)^{k} \Delta_5 = \det M(k+6) + 5\det M(k+5)+10\det M(k+4) +10\det M(k+3)+5\det
		M(k+2)+ \det M(k+1)
	\end{equation}
	
	Next, we find identities that will allow us to write each $\det M(k+i)$ in
	terms of the determinants of $M(k+6)$ and its minors.

	We start by relating $\det M(k+5)$ and $\det M(k+6)$. Expanding $\det
	M(k+6)$ along its bottom row yields 
	\[
	\det M(k+6) = -\del_{k+6}^{k+4} M(k+6)  + 2 \del_{k+6}^{k+6} M(k+6).
	\]
	But 
	\[
	\Del_{k+6}^{k+4} M(k+6)=\aug(1,M(k+5))
	\]
	so we obtain 
	\[
	\det M(k+6) = -\det M(k+5)  + 2 \del_{k+6}^{k+6} M(k+6).
	\]
	which rearranges to 
	\begin{equation} \label{eqn: Mkp4}
		\det M(k+5) = -\det M(k+6)  + 2 \del_{k+6}^{k+6} M(k+6).
	\end{equation}

	Next, we relate $\det M(k+4)$ and $\det M(k+6)$. Expanding 
	$\aug(1,M(k+5))$ along its bottom row yields
	\[
	\det(\aug(1,M(k+5))) = -\del_{k+5}^{k+3} \aug(1,M(k+5)) -2
	\del_{k+5}^{k+4} \aug(1,M(k+5))+2 \del_{k+5}^{k+5} \aug(1,M(k+5)).
	\]
	But 
	\begin{align*}
		\Del_{k+5}^{k+3} \aug(1,M(k+5)) &= M(k+4) \\
		\Del_{k+5}^{k+4} \aug(1,M(k+5)) &= \Del_{k+5,k+6}^{k+4,k+5} M(k+6)
		\\
		\Del_{k+5}^{k+5} \aug(1,M(k+5)) &= \Del_{k+5,k+6}^{k+4,k+6} M(k+6) 
	\end{align*}
	Combining these identities and rearranging yields
	\begin{equation} \label{eqn: Mkp3}
		\det M(k+4)) = -\det M(k+5)-2 \del_{k+5,k+6}^{k+4,k+5} M(k+6) +2
		\del_{k+5,k+6}^{k+4,k+6} M(k+6).
	\end{equation}

	In a similar fashion, we obtain the following identities.
	\begin{align}
		\det M(k+3) &= -\det M(k+4)  -4 \del_{k+4,k+5,k+6}^{k+3,k+4,k+5} M(k+6)+2 \del_{k+4,k+5,k+6}^{k+3,k+4,k+6} M(k+6) \label{eqn: Mkp2} \\
		\det M(k+2) &= -\det M(k+3)  -6\del_{k+3,k+4,k+5,k+6}^{k+2,k+3,k+4,k+5} M(k+6) +2 \del_{k+3,k+4,k+5,k+6}^{k+2,k+3,k+4,k+6} M(k+6)  \label{eqn: Mkp1} \\
		\det M(k)  &= -\det M(k+2)  -8\del_{k+2,k+3,k+4,k+5,k+6}^{k+1,k+2,k+3,k+4,k+5} M(k+6)+2\del_{k+2,k+3,k+4,k+5,k+6}^{k+1,k+2,k+3,k+4,k+6} M(k+6).  \label{eqn: Mkp0} 
	\end{align}
	
	Substituting the identities (\ref{eqn: Mkp4}), (\ref{eqn: Mkp3}), (\ref{eqn: Mkp2}), (\ref{eqn: Mkp1}), and (\ref{eqn: Mkp0}) into the expression (\ref{eqn: Delta version 2}) for $ (-1)^{k} \Delta_5$ yields
	\begin{align}
		(-1)^{k} \Delta_5 &= -4 \del_{k+2,k+3,k+4,k+5,k+6}^{k+1,k+2,k+3,k+4,k+5}
		M(k+6))+\del_{k+2,k+3,k+4,k+5,k+6}^{k+1,k+2,k+3,k+4,k+6} M(k+6)
		\nonumber \\
		& -12\del_{k+3,k+4,k+5,k+6}^{k+2,k+3,k+4,k+5} M(k+6) 
		+4 \del_{k+3,k+4,k+5,k+6}^{k+2,k+3,k+4,k+6} M(k+6)-12\del_{k+4,k+5,k+6}^{k+3,k+4,k+5} M(k+6) \nonumber \\
		& +6 \del_{k+4,k+5,k+6}^{k+3,k+4,k+6} M(k+6) 
		-4\del_{k+5,k+6}^{k+4,k+5} M(k+6)+4\del_{k+5,k+6}^{k+4,k+6} M(k+6) + \del_{k+6}^{k+6} M(k+6))
	\end{align}
	
	Note that this expression only involves the determinant of one matrix, $M(k+6)$, and its minors.

	Next, we successively expand the terms with fewer than five deletions.
	
	We begin with the term with one deletion. Expanding $\Del_{k+6}^{k+6} M(k+6)$ along tis bottom row yields
	\[
	\del_{k+6}^{k+6} M(k+6) = - \del_{k+5,k+6}^{k+3,k+6} M(k+6)-2\del_{k+5,k+6}^{k+4,k+6} M(k+6)+2\del_{k+5,k+6}^{k+5,k+6} M(k+6).
	\]
	We substitute this into the previous expression for $ (-1)^{k} \Delta_5$ and simplify to obtain 
	\begin{align}
		(-1)^{k} \Delta_5& =  -4 \del_{k+2,k+3,k+4,k+5,k+6}^{k+1,k+2,k+3,k+4,k+5} M(k+6))+\del_{k+2,k+3,k+4,k+5,k+6}^{k+1,k+2,k+3,k+4,k+6} M(k+6) \nonumber \\
		&-12\del_{k+3,k+4,k+5,k+6}^{k+2,k+3,k+4,k+5} M(k+6) 
		+4 \del_{k+3,k+4,k+5,k+6}^{k+2,k+3,k+4,k+6} M(k+6)-12\del_{k+4,k+5,k+6}^{k+3,k+4,k+5} M(k+6) \nonumber \\
		&+6 \del_{k+4,k+5,k+6}^{k+3,k+4,k+6} M(k+6) 
		-4\del_{k+5,k+6}^{k+4,k+5} M(k+6)+2\del_{k+5,k+6}^{k+4,k+6} M(k+6) \nonumber\\
		&-\del_{k+5,k+6}^{k+3,k+6} M(k+6)+2\del_{k+5,k+6}^{k+5,k+6} M(k+6).
	\end{align}
	
	Next, we expand the terms with two deletions along their bottom rows.
	\begin{align*}
		\del_{k+5,k+6}^{k+4,k+5} M(k+6) &= -\del_{k+4,k+5,k+6}^{k+2,k+4,k+5} M(k+6)-2 \del_{k+4,k+5,k+6}^{k+3,k+4,k+5} M(k+6)+2\del_{k+4,k+5,k+6}^{k+4,k+5,k+6} M(k+6) \\
		\del_{k+5,k+6}^{k+4,k+6} M(k+6) &=-\del_{k+4,k+5,k+6}^{k+2,k+4,k+6} M(k+6)-2\del_{k+4,k+5,k+6}^{k+3,k+4,k+6} M(k+6)+4 \del_{k+4,k+5,k+6}^{k+4,k+5,k+6} M(k+6) \\
		\del_{k+5,k+6}^{k+3,k+6} M(k+6) &=-\del_{k+4,k+5,k+6}^{k+2,k+3,k+6} M(k+6)+\del_{k+4,k+5,k+6}^{k+3,k+4,k+6} M(k+6)+4 \del_{k+4,k+5,k+6}^{k+3,k+5,k+6} M(k+6)
	\end{align*}
	
	We substitute these identities into the previous expression for $\Delta_5$ and simplify to obtain
	\begin{align}
		(-1)^{k} \Delta_5 &=  -4 \del_{k+2,k+3,k+4,k+5,k+6}^{k+1,k+2,k+3,k+4,k+5} M(k+6))+\del_{k+2,k+3,k+4,k+5,k+6}^{k+1,k+2,k+3,k+4,k+6} M(k+6) \nonumber \\
		& -12\del_{k+3,k+4,k+5,k+6}^{k+2,k+3,k+4,k+5} M(k+6) 
		+4 \del_{k+3,k+4,k+5,k+6}^{k+2,k+3,k+4,k+6} M(k+6)-\del_{k+4,k+5,k+6}^{k+2,k+3,k+6} M(k+6) \nonumber \\
		& + 4 \del_{k+4,k+5,k+6}^{k+2,k+4,k+5} M(k+6) 
		-2\del_{k+4,k+5,k+6}^{k+2,k+4,k+6}M(k+6)-4\del_{k+4,k+5,k+6}^{k+3,k+4,k+5} M(k+6) \nonumber \\
		& +\del_{k+4,k+5,k+6}^{k+3,k+4,k+6} M(k+6)
		-4 \del_{k+4,k+5,k+6}^{k+3,k+5,k+6} M(k+6) -2\del_{k+4,k+5,k+6}^{k+4,k+5,k+6} M(k+6).
	\end{align}

	Next, we expand the terms with three deletions along their bottom rows.
	\begin{align*}
		\del_{k+4,k+5,k+6}^{k+2,k+3,k+6} M(k+6) &= -\del_{k+3,k+4,k+5,k+6}^{k+1,k+2,k+3,k+6} M(k+6)+6\del_{k+3,k+4,k+5,k+6}^{k+2,k+3,k+5,k+6} M(k+6) \\
		\del_{k+4,k+5,k+6}^{k+2,k+4,k+5} M(k+6) &=-\del_{k+3,k+4,k+5,k+6}^{k+1,k+2,k+4,k+5} M(k+6)+\del_{k+3,k+4,k+5,k+6}^{k+2,k+3,k+4,k+5} M(k+6) \\
		& +2\del_{k+3,k+4,k+5,k+6}^{k+2,k+4,k+5,k+6}M(k+6) \\
		\del_{k+4,k+5,k+6}^{k+2,k+4,k+6}M(k+6)&=-\del_{k+3,k+4,k+5,k+6}^{k+1,k+2,k+4,k+6} M(k+6)+\del_{k+3,k+4,k+5,k+6}^{k+2,k+3,k+4,k+6} M(k+6) \\
		\del_{k+4,k+5,k+6}^{k+3,k+4,k+5} M(k+6) &= -\del_{k+3,k+4,k+5,k+6}^{k+1,k+3,k+4,k+5} M(k+6)-2\del_{k+3,k+4,k+5,k+6}^{k+2,k+3,k+4,k+5} M(k+6)\\
		& +2 \del_{k+3,k+4,k+5,k+6}^{k+3,k+4,k+5,k+6} M(k+6) \\
		\del_{k+4,k+5,k+6}^{k+3,k+4,k+6} M(k+6) &=-\del_{k+3,k+4,k+5,k+6}^{k+1,k+3,k+4,k+6} M(k+6)-2\del_{k+3,k+4,k+5,k+6}^{k+2,k+3,k+4,k+6} M(k+6)\\
		&+6 \del_{k+3,k+4,k+5,k+6}^{k+3,k+4,k+5,k+6} M(k+6) \\
		\del_{k+4,k+5,k+6}^{k+4,k+5,k+6} M(k+6) &=2 \del_{k+3,k+4,k+5,k+6}^{k+3,k+4,k+5,k+6} M(k+6)
	\end{align*}
	
	Also, we have $\del_{k+4,k+5,k+6}^{k+3,k+5,k+6} M(k+6) =0$ because this minor contains a column of zeroes.
	
	We substitute these identities into the previous expression for $ (-1)^{k} \Delta_5$ and simplify to obtain
	\begin{align}
		(-1)^{k} \Delta_5 &=  -4 \del_{k+2,k+3,k+4,k+5,k+6}^{k+1,k+2,k+3,k+4,k+5} M(k+6))+\del_{k+2,k+3,k+4,k+5,k+6}^{k+1,k+2,k+3,k+4,k+6} M(k+6) \nonumber \\
		&-\del_{k+3,k+4,k+5,k+6}^{k+1,k+2,k+3,k+6} M(k+6) 
		-4\del_{k+3,k+4,k+5,k+6}^{k+1,k+2,k+4,k+5} M(k+6)+2\del_{k+3,k+4,k+5,k+6}^{k+1,k+2,k+4,k+6} M(k+6) \nonumber \\
		& +4 \del_{k+3,k+4,k+5,k+6}^{k+1,k+3,k+4,k+5} M(k+6) 
		-\del_{k+3,k+4,k+5,k+6}^{k+1,k+3,k+4,k+6} M(k+6) +6\del_{k+3,k+4,k+5,k+6}^{k+2,k+3,k+5,k+6} M(k+6) \nonumber \\
		&  +8\del_{k+3,k+4,k+5,k+6}^{k+2,k+4,k+5,k+6} M(k+6).
	\end{align}

	Next, we analyze the terms with four deletions.
	
	The following minors each have a column of zeroes, hence their determinants are zero.
	\begin{align*}
		\del_{k+3,k+4,k+5,k+6}^{k+1,k+2,k+3,k+6} M(k+6) & = 0 \\
		\del_{k+3,k+4,k+5,k+6}^{k+1,k+2,k+4,k+5} M(k+6) &= 0 \\
		\del_{k+3,k+4,k+5,k+6}^{k+1,k+2,k+4,k+6} M(k+6) & = 0 \\
		\del_{k+3,k+4,k+5,k+6}^{k+2,k+3,k+5,k+6} M(k+6) &= 0 \\
		\del_{k+3,k+4,k+5,k+6}^{k+2,k+4,k+5,k+6} M(k+6) &= 0.
	\end{align*}
	
	We expand the following terms along their bottom rows.
	\begin{align*}
		\del_{k+3,k+4,k+5,k+6}^{k+1,k+3,k+4,k+5},M(k+6)) &=-\del_{k+2,k+3,k+4,k+5,k+6}^{k,k+1,k+3,k+4,k+5} M(k+6)+\del_{k+2,k+3,k+4,k+5,k+6}^{k+1,k+2,k+3,k+4,k+5} M(k+6)\nonumber \\
		&+2 \del_{k+2,k+3,k+4,k+5,k+6}^{k+1,k+3,k+4,k+5,k+6} M(k+6) \nonumber \\
		\del_{k+3,k+4,k+5,k+6}^{k+1,k+3,k+4,k+6} M(k+6) &= -\del_{k+2,k+3,k+4,k+5,k+6}^{k,k+1,k+3,k+4,k+6} M(k+6)+\del_{k+2,k+3,k+4,k+5,k+6}^{k+1,k+2,k+3,k+4,k+6} M(k+6)\nonumber \\
		&+8 \del_{k+2,k+3,k+4,k+5,k+6}^{k+1,k+3,k+4,k+5,k+6} M(k+6)
	\end{align*}
	
	We substitute these identities into the previous expression for $\Delta_5$ and simplify to obtain
	\begin{align}
		(-1)^{k} \Delta_5 = -4\del_{k+2,k+3,k+4,k+5,k+6}^{k,k+1,k+3,k+4,k+5} M(k+6)+\del_{k+2,k+3,k+4,k+5,k+6}^{k,k+1,k+3,k+4,k+6} M(k+6).
	\end{align}

	Each of these minors on the right hand side of this equation has a
	column of zeroes, hence their determinants are zero. Thus, $\Delta_5 =
	0$.
\end{proof}

\begin{lemma}
	The coefficient of the degree four term is
	\begin{equation} \label{eqn: h4}
		h_4 =  \frac{1}{3} (-1)^{k} \del_{k-1,k}^{k-1,k} M(k)
	\end{equation}
	for any $k\geq N_0$.
\end{lemma}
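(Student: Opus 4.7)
The key observation is that by Lemma~\ref{lem: Q is a polynomial of degree 4}, $h$ is a polynomial of degree at most $4$ in $N$. Consequently, the fourth finite difference $\Delta^4 h(k)$ is independent of $k$ and equals $4!\, h_4 = 24\, h_4$. Hence the plan is to compute
\[
\Delta^4 h(k) \;=\; h(k{+}4) - 4\, h(k{+}3) + 6\, h(k{+}2) - 4\, h(k{+}1) + h(k)
\]
and divide by $24$ to read off $h_4$.

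Substituting $h(N) = (-1)^{N+1}\det M(N{+}1)$ gives
\[
\Delta^4 h(k) \;=\; (-1)^{k+1}\bigl[\det M(k{+}5) + 4\det M(k{+}4) + 6\det M(k{+}3) + 4\det M(k{+}2) + \det M(k{+}1)\bigr].
\]
From here, I will apply exactly the same expansion strategy as in the proof of Lemma~\ref{lem: Q is a polynomial of degree 4}. Using the identities \eqref{eqn: Mkp4}--\eqref{eqn: Mkp0} (or their direct analogues obtained by cofactor-expanding along the bottom row), I rewrite each $\det M(k{+}i)$ for $i = 1,\dots,4$ in terms of $\det M(k{+}5)$ and its minors, producing a linear combination of minors of a single matrix.

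Next, I will iteratively expand the surviving minors along their bottom rows, exactly as in the $\Delta_5$ computation, tracking signs carefully. The crucial simplification is sparseness: the columns added by iterating $\Phi$ are almost entirely zero, so nearly every minor that appears in the expansion contains a zero column and drops out. The proof of the fifth-difference identity $\Delta_5 = 0$ shows that after the very last round of expansion everything cancels; the fourth difference is ``one expansion short'', so I expect precisely one surviving family of minors, namely the one whose bottom row eventually refers back to the top-left block of $M(k)$ itself. Rewriting that minor as $\Del_{k-1,k}^{k-1,k}\,M(k)$ and collecting the numerical coefficient should yield $\Delta^4 h(k) = 8\,(-1)^{k}\,\del_{k-1,k}^{k-1,k}\,M(k)$, whence dividing by $24$ gives the claimed $h_4 = \tfrac{1}{3}(-1)^k \del_{k-1,k}^{k-1,k}\,M(k)$.

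The main obstacle is the bookkeeping of signs and multiplicities in the iterated expansion, together with confirming that the coefficient on the surviving minor is exactly $8$ and not some other small integer. However, because the $\Delta_5$ calculation has already been carried out in full detail and resulted in complete cancellation, the $\Delta_4$ calculation can essentially be read off from the penultimate stage of that earlier argument: each term that eventually cancelled to produce $\Delta_5 = 0$ reappears here with slightly different binomial weights, and the single non-cancelling residue is the one pinpointed above. This reduces the task to a mechanical (if tedious) verification rather than a new conceptual step.
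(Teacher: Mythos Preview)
Your proposal is correct and follows essentially the same approach as the paper: extract $h_4$ from the fourth finite difference $24h_4=\Delta^4 h$ and evaluate that difference by reusing the cofactor expansions established in the proof of Lemma~\ref{lem: Q is a polynomial of degree 4}. The only deviation is that the paper centers the difference at $l=k+1$ rather than $l=k$, precisely so that the identities \eqref{eqn: Mkp4}--\eqref{eqn: Mkp0} (which are all written in terms of minors of $M(k{+}6)$) can be quoted verbatim; with your shift you would need to reindex those identities by one, arriving at $\del_{k+2,\dots,k+5}^{k+2,\dots,k+5}M(k{+}5)$ before identifying it with $\del_{k-1,k}^{k-1,k}M(k)$, but the outcome is identical.
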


\begin{proof}
	When $h$ is a polynomial of degree at most four, we have
	\[
	h(l)-4 h(l+1)+6 h(l+2)-4 h(l+3)+h(l+4) = 24 h_4
	\]
	where $h_4$ is the coefficient of the degree four term.
	
	It is convenient to take $l = k+1$, as this allows us to use many of the identities
	established in the proof of the previous lemma. 
	\[
	h(k+1)-4 h(k+2)+6 h(k+3)-4 h(k+4)+h(k+5) = 24 h_4
	\]
	
	Substituting $h(N) = (-1)^{N+1} M(N+1)$ yields
	\begin{equation} \label{eqn: diff eq for lc Q}
		\det M(k+2)+4 \det M(k+3)+6 \det M(k+4)+4 \det M(k+5)+\det M(k+6) = (-1)^{k} 24 h_4
	\end{equation}
	
	We apply the identities established in the proof of the previous lemma
	to the left hand side of (\ref{eqn: diff eq for lc Q}) and obtain
	\[
	\det M(k+2)+4 \det M(k+3)+6 \det M(k+4)+4 \det M(k+5)+\det M(k+6) = 8 \del_{k+3,k+4,k+5,k+6}^{k+3,k+4,k+5,k+6} M(k+6).
	\]
	
	Expanding along the last four columns shows that $\del_{k+3,k+4,k+5,k+6}^{k+3,k+4,k+5,k+6} M(k+6) = \del_{k-1,k}^{k-1,k} M(k)$.
	
	We have
	\[
	(-1)^{k} 24 h_4 = 8 \del_{k-1,k}^{k-1,k} M(k),
	\]
	which yields the desired result.
\end{proof}

\begin{lemma}
	$\deg Q = 4$.
\end{lemma}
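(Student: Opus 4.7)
The plan is to show that the leading coefficient $h_4$ established just above, namely $h_4 = \frac{1}{3}(-1)^k \del_{k-1,k}^{k-1,k} A(k-1)$, is nonzero for any $k \geq N_0$. Combined with the degree-at-most-$4$ bound already obtained in Lemma \ref{lem: Q is a polynomial of degree 4}, this will force $\deg Q = 4$.

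First I would unpack the minor: $\del_{k-1,k}^{k-1,k} A(k-1) = \det B_k$, where $B_k := \Del_{k-1,k}^{k-1,k} A(k-1)$ is the upper-left $(k-2) \times (k-2)$ submatrix of $A(k-1)$. By Proposition \ref{prop: KKT matrix entries} the entries of $B_k$ at fixed $(i,j)$ do not depend on $k$, so as $k$ increases $B_k$ simply grows by appending one row and column at the bottom-right. For $k$ sufficiently large, the index $k-2$ is past the conductor and not in $I$, so the new last column of $B_k$ has only one nonzero entry, the diagonal entry $\gamma_{k-2} - \gamma_{k-1} = -1$. Expanding $\det B_k$ along this column gives $\det B_k = -\det B_{k-1}$. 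Iterating this peeling over all indices $j$ with $j \not\in I$ and $j \geq \condIndex(\Gamma)+1$ reduces $\det B_k$ to $\pm \det B^*$, where $B^*$ is a fixed base matrix of bounded size depending only on $\Gamma$ and $I$.

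The main obstacle is the final step: showing $\det B^* \neq 0$. When $I = \emptyset$ the peeling continues all the way down to column $j = 1$, yielding $\det B_k = \prod_{j=1}^{k-2}(\gamma_j - \gamma_{j+1})$, which is nonzero since each factor is strictly negative. When $I \neq \emptyset$ the peeling halts at $j = \max(I) =: \ell$, and $B^*$ is (up to sign) the upper-left $\ell \times \ell$ block of $A(\ell+1)$; its columns mix the face-direction vectors $2F_{\gamma_j}(\gamma)$ for $j \in I$ with truncated constraint-gradient columns for $j \not\in I$, $j \leq \ell$. My proposed route is to interpret $B^*$ as a principal submatrix of the KKT matrix for the minimal-dimension instance $N = \ell+1$ on face $\face_{\gamma(I)}$, which is invertible by strong convexity of the objective; invertibility of $B^*$ then follows from a Schur-complement argument that exploits the explicit form of the two deleted lineality columns $2L_{\gamma_{\ell+1}}(\gamma)$ and $2L_1(\gamma)$ together with their corresponding rows. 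An alternative combinatorial route would induct on $|I|$, treating each added corner as a controlled rank-one modification of the $I = \emptyset$ base case analysed above.
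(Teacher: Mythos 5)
Your reduction to showing $h_4 \neq 0$ is correct, and the peeling argument is a valid observation: for $j > \max(I)$ the last column of $B_k$ has a single nonzero entry $\gamma_{k-2}-\gamma_{k-1}$ after the two bottom rows are deleted, so $\det B_k$ equals $\pm\det B^*$ for a fixed matrix $B^*$. The case $I=\emptyset$ is then cleanly finished as a product of strictly negative diagonal entries.

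The gap is in the case $I \neq \emptyset$. You propose to deduce invertibility of $B^*$ from invertibility of $A(\ell+1)$ via a Schur-complement argument, but this is exactly the wrong direction: if one writes $A(\ell+1)=\bigl(\begin{smallmatrix} B^* & C \\ D & E \end{smallmatrix}\bigr)$ with $E$ the $2\times 2$ lineality block, the Schur identity $\det A(\ell+1) = \det E \cdot \det(B^*-CE^{-1}D)$ gives nonvanishing of the \emph{complement}, not of $B^*$ itself; the factorization $\det A(\ell+1)=\det B^*\cdot\det(E-DB^{*-1}C)$ that would let you isolate $\det B^*$ already presupposes $B^*$ invertible. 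In general a principal block of an invertible matrix can be singular, and nothing in your sketch rules this out. Moreover $A$ is not a symmetric saddle-point KKT matrix of the standard form $\bigl(\begin{smallmatrix} 2H & G^{\top} \\ G & 0 \end{smallmatrix}\bigr)$, so the usual block-structure arguments that do give such one-sided conclusions are not available.

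The paper avoids this by not peeling at all: it observes directly that $\Del_{k-1,k}^{k-1,k} A(k-1)$ is (for any admissible $k$) the KKT matrix for the restricted problem of minimizing $\|w-a\|^2$ over $\Span(\face_{\gamma(I)}) \cap \{x_N = x_{N+1} = 0\}$, i.e.\ with the two lineality parameters frozen. That is again a strongly convex quadratic problem with affine constraints and hence has a unique stationary point, forcing the matrix to be nonsingular. The simplest repair of your argument is to make the same observation about $B^* = \Del_{\ell+1,\ell+2}^{\ell+1,\ell+2} A(\ell+1)$ after peeling, or to drop the peeling entirely and apply the observation to the original minor. Your second suggested route (induction on $|I|$ via rank-one modifications) is also only gestured at and would need to control the interaction between the added face column and the previously removed constraint column, which is not addressed.
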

\begin{proof}
	By the previous lemma, the coefficient of the degree four term in $Q$
	is $\frac{1}{3} (-1)^{k} \del_{k-1,k}^{k-1,k} A(k-1)$. The matrix $\Del_{k-1,k}^{k-1,k} A(k-1)$ is the KKT
	matrix for finding the closest point on the cone $W \cap \{x_N =
	x_{N+1} = 0\}$ to the vector $a$ outside it. This problem has a unique solution,
	so this determinant cannot be zero.
\end{proof}

\subsection{$\chi$ is a polynomial}
\begin{definition}
	
	For each $n \geq 4$, define
	
	\[
	\Chi_n: \operatorname{Mat}_{n\times n} \rightarrow \operatorname{Mat}_{(n+1)\times (n+1)}
	\]
	as follows.
	\begin{itemize}
		\item Columns 1 through $(n-3)$ in $\Chi(M)$ are the same as in $M$, extended by 0 at the bottom.
		\item The last four entries in column $n-2$ in $\Chi(M)$ are $-1,2,-1,0$, and this column is 0 otherwise.    
		\item Column $n-1$ in $\Chi(M)$ is column $n-2$ in $M$, extended by  0 at the bottom.
		\item Column $n$ in $\Chi(M)$ is the sum of columns $n-1$ and $n$ in $M$, extended by 0 at the bottom.
		\item Column $n+1$ in $\Chi(M)$ is column $n$ from $M$, extended by 2 at the bottom.
	\end{itemize}
	
	We write $\Chi$ for the collection of maps $\{ \Chi_n\}$. We refer to
	$\Chi$ as a recurrence, and frequently omit the subscript.
	
\end{definition}

The application to our problem is as follows.
\begin{lemma}
	Suppose    $N \geq \max \{ \condIndex(\Gamma)+2,\max(I) +2 \}$. Then the matrices $A'(N,N-1)$ satisfy the recurrence $\Chi$. That is,
	$A'(N+1,N) = \Chi(A'(N,N-1))$.
\end{lemma}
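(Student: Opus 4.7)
The plan is to verify by direct inspection that $A'(N+1,N)$ and $\Chi(A'(N,N-1))$ coincide column by column. The basic observation is that $A'(N,j)$ differs from $A(N)$ only in column $j$, so for most columns the desired equality is inherited from the recurrence $\Phi$ governing the unmodified matrices. The hypothesis $N\ge\max\{\condIndex(\Gamma)+2,\max(I)+2\}$ guarantees both that $N-1\notin I$ and that $\gamma_{i+1}-\gamma_i=1$ for all $i\ge N-2$, and these two facts are precisely what make the four top-right columns conform to the $\Chi$ prescription.

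Columns $1$ through $N-2$ of $A'(N+1,N)$ agree with the corresponding columns of $A(N+1)=\Phi(A(N))$, and hence with columns $1$ through $N-2$ of $A(N)=A'(N,N-1)$ padded by a single $0$ at the bottom; this is exactly the first clause of the $\Chi$ recipe. Columns $N+1$ and $N+2$ of $A'(N+1,N)$ are the unchanged $2L_{\gamma_{N+1}}(\gamma)$ and $\underline{2}$ columns of $A(N+1)$; they follow from the same telescoping identity $2(\gamma_N-\gamma_{i-1})+2=2(\gamma_{N+1}-\gamma_{i-1})$ (valid because $N\ge\condIndex(\Gamma)$) that underpins the last two clauses of $\Phi$.

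The two columns requiring genuinely new work are $N-1$ and $N$. For column $N-1$ of $A'(N+1,N)$: this is the slope-inequality column at index $j=N-1$ of $A(N+1)$, and applying Proposition \ref{prop: KKT matrix entries} together with the identities $\gamma_N-\gamma_{N-1}=\gamma_{N-1}-\gamma_{N-2}=1$ shows that the only nonzero entries are $-1,2,-1$ at rows $N-1,N,N+1$ with a $0$ in row $N+2$, matching the ``last four entries $-1,2,-1,0$'' clause of $\Chi$. For column $N$ of $A'(N+1,N)$, which equals $2a_{N+1}-\underline{2}$, I would compare entry by entry with column $N-1$ of $A'(N,N-1)=2a_N-\underline{2}$: for $1\le i\le N$ the coordinates agree because the defining formula $a_i=\gamma_i-\gamma_{i-2}$ is independent of the embedding dimension, and at row $N+1$ both agree because above the conductor $\gamma_{N+1}-\gamma_{N-1}=2$ coincides with the Simplified Problem's end-cap value $2$. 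Thus column $N$ of $A'(N+1,N)$ is column $N-1$ of $A'(N,N-1)$ with the prescribed bottom entry appended, exactly as specified by $\Chi$.

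The only delicate point --- really more a bookkeeping exercise than an obstacle --- is keeping track of how the combinatorial shape of the four top-right columns depends on the two structural hypotheses. Both $N-1\notin I$ and the existence of enough consecutive semigroup elements above the conductor follow at once from the standing assumption $N\ge\max\{\condIndex(\Gamma)+2,\max(I)+2\}$, and once these are in hand the verification is essentially mechanical, parallel in spirit to the corresponding argument that $A(N+1)=\Phi(A(N))$.
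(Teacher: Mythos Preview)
The paper states this lemma without proof, so there is no argument to compare against; your column-by-column verification is the natural approach, and your handling of columns $1,\ldots,N-2$, $N-1$, $N+1$, and $N+2$ is correct.

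There is, however, a genuine gap in your treatment of column $N$ (column $n-1$ in the definition of $\Chi$). You verify that rows $1$ through $N+1$ of this column in $A'(N+1,N)$ coincide with column $N-1$ of $A'(N,N-1)$, and then write ``with the prescribed bottom entry appended'' without actually computing that entry. The $\Chi$ recipe prescribes that column $n-1$ be extended by $0$ at the bottom. But the entry at position $(N+2,N)$ of $A'(N+1,N)$ is
\[
(2a-\underline{2})_{N+2} \;=\; 2a_{N+2}-2 \;=\; 2\cdot 2 - 2 \;=\; 2
\]
in the Simplified Problem, not $0$. So the two matrices differ in exactly this one entry, and the lemma as literally stated (with the paper's definition of $\Chi$) does not hold. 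One sees this already in the smallest example: for $\Gamma=\langle 2,3\rangle$, $I=\emptyset$, $N=3$, the bottom row of $A'(4,3)$ is $(0,0,2,0,2)$ while the bottom row of $\Chi(A'(3,2))$ is $(0,0,0,0,2)$.

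This looks like a typo in the paper's definition of $\Chi$: the clause for column $n-1$ should presumably read ``extended by $2$ at the bottom'' rather than $0$. With that correction your argument is complete. Note, though, that the subsequent proof of $\deg\chi\le 3$ in Appendix~A begins with the expansion $\det M(k+5)=2\,\delta_{k+5}^{k+5}M(k+5)$, which relies on the bottom row being $(0,\ldots,0,2)$; so the discrepancy would need to be tracked through that argument as well.
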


\begin{lemma} \label{lem: chi is a polynomial of degree 3}
	Let $\{M(N)\}$ be a sequence of $N \times N$ matrices for $N \geq N_0$
	such that $M(N+1) = \Chi(M(N))$ for all $N \geq N_0$. Then $
	\chi(N) = (-1)^{N+1} \det M(N+1) $ is given by a polynomial of degree at most 3 for all $N \geq N_0+1$.
\end{lemma}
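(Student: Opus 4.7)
The approach is to adapt the proof of Lemma \ref{lem: Q is a polynomial of degree 4} to the recurrence $\Chi$. Since a sequence is a polynomial of degree at most $3$ in $N$ if and only if its fourth finite difference is identically zero, the plan is to show that
\begin{equation*}
\Delta_4 \;:=\; h(k+4) - 4 h(k+3) + 6 h(k+2) - 4 h(k+1) + h(k) \;=\; 0
\end{equation*}
for every $k \geq N_0$, where $h(N) := (-1)^{N+1}\det M(N+1)$. Substituting the definition of $h$ and collecting signs gives
\begin{equation*}
(-1)^{k+1}\Delta_4 \;=\; \det M(k+5) + 4\det M(k+4) + 6\det M(k+3) + 4\det M(k+2) + \det M(k+1),
\end{equation*}
which is the analogue of the expression (\ref{eqn: Delta version 2}) in the proof of the earlier lemma, but shortened by one term to reflect the lower degree bound.

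The next step is to express each $\det M(k+i)$ for $1 \leq i \leq 4$ in terms of $\det M(k+5)$ and its minors via iterated cofactor expansion along the bottom row. The bottom row of $M(k+5) = \Chi(M(k+4))$ has only a few nonzero entries, concentrated in the special column introduced by $\Chi$ (with entries $-1,2,-1,0$ in its last four rows) and in the final column (ending in $2$). Because $\Chi$ preserves more of the structure of the previous matrix than $\Phi$ does---in particular, column $n-2$ of $M$ survives undistorted as column $n-1$ of $\Chi(M)$ in addition to being incorporated into column $n$---the cofactor expansions yield slightly more terms per step but the same basic shape of recurrence as in equations (\ref{eqn: Mkp4})--(\ref{eqn: Mkp0}). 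Substituting these into the expression above collapses $(-1)^{k+1}\Delta_4$ to a single signed sum of minors of $M(k+5)$.

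The argument then proceeds by a cascade of cofactor expansions precisely as in the proof of Lemma \ref{lem: Q is a polynomial of degree 4}. At each stage I would expand the remaining minors along their bottom rows and group by number of deletions. The sparse structure imposed by $\Chi$ ensures that many intermediate minors contain an all-zero column and therefore vanish, while the remaining minors cancel pairwise via elementary determinant identities. Because $\Chi$ encodes one more ``shifted'' column than $\Phi$, the cascade terminates one stage earlier, and this is exactly the combinatorial reason for the degree bound dropping from $4$ to $3$.

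The main obstacle is bookkeeping: the presence of the additional trailing zero in the pattern $-1,2,-1,0$ (compared to $-1,2,-1$ in $\Phi$) means that row expansions produce more minors at each stage, and a careful multi-index notation must be maintained so that each resulting minor can be identified as either zero (by virtue of containing a zero column) or as one of a cancelling pair. To manage this I would carry out the expansions in order of increasing number of deletions, recording at each stage a single signed sum of minors and verifying by direct inspection of the $\Chi$-structure that the combination vanishes. No new conceptual ingredient beyond what already appears in the proof of Lemma \ref{lem: Q is a polynomial of degree 4} should be required.
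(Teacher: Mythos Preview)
Your approach is correct and essentially identical to the paper's: show that the fourth finite difference $\Delta_4$ vanishes by a cascade of cofactor expansions along bottom rows, using the sparse structure that $\Chi$ imposes to identify vanishing minors and cancelling pairs. Two small factual slips in your description of $\Chi$ to correct before carrying out the bookkeeping: the bottom row of $\Chi(M)$ has exactly \emph{one} nonzero entry, the $2$ in the final column (the special column's pattern $-1,2,-1,0$ ends in $0$), so the very first expansion is simply $\det M(k+5) = 2\,\del_{k+5}^{k+5} M(k+5)$; and column $n-2$ of $M$ becomes column $n-1$ of $\Chi(M)$ but is \emph{not} folded into column $n$, which is the sum of columns $n-1$ and $n$ of $M$. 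Neither error affects your strategy, only the details of the expansion.
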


\begin{proof}
	We show that
	the following fourth-order difference equation vanishes for any integer $k \geq
	N_0$.
	\begin{equation} \label{eqn: Delta v1 chi}
		\chi(k) - 4\chi(k+1)+6\chi(k+2) -4\chi(k+3)+\chi(k+4) = 0.
	\end{equation}
	
	We define
	\[
	\Delta_4 := \chi(k) - 4\chi(k+1)+6\chi(k+2) -4\chi(k+3)+\chi(k+4) 
	\]
	Thus, our goal is to prove that $\Delta_4 = 0$.

	Substituting the definition of the function $\chi(N) = (-1)^{N+1} \det M(N+1) $
	yields the following expression.
	\begin{equation} \label{eqn: Delta v2 chi}
		(-1)^{k+1} \Delta_4 = \det M(k+1) + 4\det M(k+2)+6\det M(k+3) +4\det M(k+4)+\det
		M(k+5).
	\end{equation}
	
	Next, we find identities that will allow us to write each $\det M(k+i)$ in
	terms of the determinants of $M(k+5)$ and its minors.

	To begin, expanding $\det M(k+5)$ along its bottom row yields
	\begin{equation} \label{eqn: Mkp5 chi}
		\det M(k+5) = 2 \del_{k+5}^{k+5} M(k+5)
	\end{equation}
	
	Next, we relate $\det M(k+4)$ and determinants of minors of $M(k+5)$. Expanding $\det \aug(1,
	M(k+4))$ along its bottom row yields 
	\[
	\det \aug(1,M(k+4)) = -2\del_{k+4}^{k+3} \aug(1,M(k+4))  + 2 \del_{k+4}^{k+4} \aug(1,M(k+4)).
	\]
	But 
	\begin{align*}
		\Del_{k+4}^{k+3} \aug(1,M(k+4)) &= \Del_{k+4,k+5}^{k+2,k+4} M(k+5)
		\\
		\Del_{k+4}^{k+4} \aug(1,M(k+4)) &= \Del_{k+4,k+5}^{k+2,k+5} M(k+5) 
	\end{align*}
	so we obtain 
	\begin{equation} \label{eqn: Mkp4 chi}
		\det M(k+4) = -2 \del_{k+4,k+5}^{k+2,k+4} M(k+5)  + 2 \del_{k+4,k+5}^{k+2,k+5} M(k+5)
	\end{equation}

	In a similar fashion, we obtain the following identities.
	\begin{align}
		\det M(k+3) &= -4 \del_{k+3,k+4,k+5}^{k+1,k+2,k+4} M(k+5) + 2 \del_{k+3,k+4,k+5}^{k+1,k+2,k+5} M(k+5) \label{eqn: Mkp3 chi}\\
		\det M(k+2) &= -6 \del_{k+2,k+3,k+4,k+5}^{k,k+1,k+2,k+4} M(k+5)) + 2 \del_{k+2,k+3,k+4,k+5}^{k,k+1,k+2,k+5} M(k+5) \label{eqn: Mkp2 chi}\\
		\det M(k+1) &= -8 \del_{k+1,k+2,k+3,k+4,k+5}^{k-1,k,k+1,k+2,k+4} M(k+5)  + 2 \del_{k+1,k+2,k+3,k+4,k+5}^{k-1,k,k+1,k+2,k+5} M(k+5) \label{eqn: Mkp1 chi}
	\end{align}

	We substitute the identity (\ref{eqn: Mkp5 chi}) for $\det M(k+5)$ into the expression (\ref{eqn: Delta v2 chi}) for $(-1)^{k+1} \Delta_4$ to obtain the following. 
	\begin{equation} \label{eqn: Delta v3 chi}
		(-1)^{k+1} \Delta_4 =  \det M(k+1) + 4\det M(k+2)+6\det M(k+3) +4\det M(k+4)+2 \del_{k+5}^{k+5} M(k+5).
	\end{equation}
	
	We expand the term with one deletion. Expanding $\Del_{k+5}^{k+5} M(k+5)$ along tis bottom row yields
	\[
	\del_{k+5}^{k+5} M(k+5) = - \del_{k+4,k+5}^{k+2,k+5} M(k+5)+2\del_{k+4,k+5}^{k+4,k+5} M(k+5).
	\]
	We substitute this into the expression (\ref{eqn: Delta v3 chi}) for $(-1)^{k+1} \Delta_4$ and simplify.
	\begin{align} \label{eqn: Delta v4 chi}
		(-1)^{k+1} \Delta_4 &=  \det M(k+1) + 4\det M(k+2)+6\det M(k+3) +4\det M(k+4) \nonumber \\
		& - 2\del_{k+4,k+5}^{k+2,k+5} M(k+5)+4\del_{k+4,k+5}^{k+4,k+5} M(k+5).
	\end{align}
	
	Next, we substitute the identity (\ref{eqn: Mkp4 chi}) for $\det M(k+4)$ and simplify.
	\begin{align} \label{eqn: Delta v5 chi}
		(-1)^{k+1} \Delta_4 &=  \det M(k+1) + 4\det M(k+2)+6\det M(k+3) \nonumber \\
		&- 8\del_{k+4,k+5}^{k+2,k+4}+6\del_{k+4,k+5}^{k+2,k+5} M(k+5)+4\del_{k+4,k+5}^{k+4,k+5} M(k+5).
	\end{align}
	
	Next, we expand the terms with two deletions along their bottom rows.
	\begin{align*}
		\del_{k+4,k+5}^{k+2,k+4} M(k+5) &= -\del_{k+3,k+4,k+5}^{k+1,k+2,k+4} M(k+5)+2 \del_{k+3,k+4,k+5}^{k+2,k+4,k+5} M(k+5)  \\
		\del_{k+4,k+5}^{k+2,k+5} M(k+5) &= -\del_{k+3,k+4,k+5}^{k+1,k+2,k+5} M(k+5) +4 \del_{k+3,k+4,k+5}^{k+2,k+4,k+5} M(k+5) \\
		\del_{k+4,k+5}^{k+4,k+5} M(k+5) &=-\del_{k+3,k+4,k+5}^{k+1,k+4,k+5} M(k+5)-2 \del_{k+3,k+4,k+5}^{k+2,k+4,k+5} M(k+5)
	\end{align*}
	
	We substitute these identities into the expression (\ref{eqn: Delta v5 chi}) for $(-1)^{k+1} \Delta_4$ and simplify.
	\begin{align} \label{eqn: Delta v6 chi}
		(-1)^{k+1} \Delta_4 &=  \det M(k+1) + 4\det M(k+2)+6\det M(k+3) \nonumber \\
		&- 8 \del_{k+3,k+4,k+5}^{k+1,k+2,k+4} M(k+5)) -6\del_{k+3,k+4,k+5}^{k+1,k+2,k+5} M(k+5)-4 \del_{k+3,k+4,k+5}^{k+1,k+4,k+5} M(k+5).
	\end{align}
	
	We substitute the identity (\ref{eqn: Mkp3 chi}) for $\det M(k+3)$ and simplify: 
	\begin{align} \label{eqn: Delta v7 chi}
		(-1)^{k+1} \Delta_4 &=  \det M(k+1) + 4\det M(k+2)- 16 \del_{k+3,k+4,k+5}^{k+1,k+2,k+4} M(k+5)) \nonumber \\
		& +6\del_{k+3,k+4,k+5}^{k+1,k+2,k+5} M(k+5)-4 \del_{k+3,k+4,k+5}^{k+1,k+4,k+5} M(k+5).
	\end{align}
	
	Next, we expand the terms with three deletions along their bottom rows.
	\begin{align*}
		\del_{k+3,k+4,k+5}^{k+1,k+2,k+4} M(k+5)) &= -\del_{k+2,k+3,k+4,k+5}^{k,k+1,k+2,k+4} M(k+5)+2 \del_{k+2,k+3,k+4,k+5}^{k+1,k+2,k+4,k+5} M(k+5)
		\\
		\del_{k+3,k+4,k+5}^{k+1,k+2,k+5} M(k+5) &= -\del_{k+2,k+3,k+4,k+5}^{k,k+1,k+2,k+5} M(k+5)+6 \del_{k+2,k+3,k+4,k+5}^{k+1,k+2,k+4,k+5} M(k+5) 
		\\
		\del_{k+3,k+4,k+5}^{k+1,k+4,k+5} M(k+5)&=-\del_{k+2,k+3,k+4,k+5}^{k,k+1,k+4,k+5} M(k+5)+\del_{k+2,k+3,k+4,k+5}^{k+1,k+2,k+4,k+5} M(k+5).
	\end{align*}
	
	We substitute these identities into the expression (\ref{eqn: Delta v7 chi}) for $(-1)^{k+1} \Delta_4$ and simplify.
	\begin{align} \label{eqn: Delta v8 chi}
		(-1)^{k+1} \Delta_4 &=  \det M(k+1) + 4\det M(k+2)+16\del_{k+2,k+3,k+4,k+5}^{k,k+1,k+2,k+4} M(k+5) \nonumber \\
		& -6 \del_{k+2,k+3,k+4,k+5}^{k,k+1,k+2,k+5} M(k+5) +4\del_{k+2,k+3,k+4,k+5}^{k,k+1,k+4,k+5} M(k+5).
	\end{align}
	
	We substitute the identity (\ref{eqn: Mkp2 chi}) for $\det M(k+2)$ and simplify.
	\begin{align} \label{eqn: Delta v9 chi}
		(-1)^{k+1} \Delta_4 &=  \det M(k+1) -8\del_{k+2,k+3,k+4,k+5}^{k,k+1,k+2,k+4} M(k+5) \nonumber \\
		& +2 \del_{k+2,k+3,k+4,k+5}^{k,k+1,k+2,k+5} M(k+5) +4\del_{k+2,k+3,k+4,k+5}^{k,k+1,k+4,k+5} M(k+5).
	\end{align}
	
	We expand the terms with four deletions along their bottom rows.
	\begin{align*}
		\del_{k+2,k+3,k+4,k+5}^{k,k+1,k+2,k+4} M(k+5) &= (-1) \del_{k+1,k+2,k+3,k+4,k+5}^{k-1,k,k+1,k+2,k+4} M(k+5) +2 \del_{k+1,k+2,k+3,k+4,k+5}^{k,k+1,k+2,k+4,k+5} M(k+5) \\
		\del_{k+2,k+3,k+4,k+5}^{k,k+1,k+2,k+5} M(k+5) &= -\del_{k+1,k+2,k+3,k+4,k+5}^{k-1,k,k+1,k+2,k+5} M(k+5)+8 \del_{k+1,k+2,k+3,k+4,k+5}^{k,k+1,k+2,k+4,k+5} M(k+5)
	\end{align*}
	Note that $\Del_{k+2,k+3,k+4,k+5}^{k,k+1,k+4,k+5} M(k+5)$ has a column of zeroes, hence this minor has determinant 0.
	
	We substitute these identities into the expression (\ref{eqn: Delta v9 chi}) for $(-1)^{k+1} \Delta_4$ and simplify.
	\begin{align} \label{eqn: Delta v10 chi}
		(-1)^{k+1} \Delta_4 &=   \det M(k+1) +8\del_{k+1,k+2,k+3,k+4,k+5}^{k-1,k,k+1,k+2,k+4} M(k+5) \nonumber \\
		& -2 \del_{k+1,k+2,k+3,k+4,k+5}^{k-1,k,k+1,k+2,k+5} M(k+5).
	\end{align}
	
	Substituting the identity (\ref{eqn: Mkp4 chi}) for $\det M(k+1)$ yields $\Delta_4 = 0$. \end{proof}

\begin{lemma} \label{lem: chi3}
	The coefficient of the degree three term is
	\begin{equation} \label{eqn: chi3}
		\chi_3 =  \frac{2}{3}(-1)^{k+1} \left(\del_{k,k+1}^{k,k+1} M(k+1) + \del_{k-1,k+1}^{k,k+1} M(k+1) \right)
	\end{equation}
	for any $k\geq N_0$.
\end{lemma}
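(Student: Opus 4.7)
The plan is to extract $\chi_3$ by the same kind of finite-difference argument that was used to pin down the leading coefficient $h_4$ of $Q$ earlier in this appendix. Since $\chi$ is already known to be a polynomial of degree at most $3$ (by Lemma~\ref{lem: chi is a polynomial of degree 3}), the third forward difference
\[
\Delta^{3}\chi(l) \;=\; \chi(l+3) - 3\chi(l+2) + 3\chi(l+1) - \chi(l)
\]
is the constant $6\chi_{3}$ for every integer $l \geq N_{0}$. Choosing $l = k+1$ and substituting $\chi(N) = (-1)^{N+1}\det M(N+1)$, the identity $\Delta^{3}\chi(k+1) = 6\chi_{3}$ becomes
\[
(-1)^{k+1}\Bigl[\det M(k+5) + 3\det M(k+4) + 3\det M(k+3) + \det M(k+2)\Bigr] \;=\; 6\chi_{3}.
\]
This is exactly the type of expression already manipulated in the proof of Lemma~\ref{lem: chi is a polynomial of degree 3}, so I would immediately feed in the cofactor-expansion identities~(\ref{eqn: Mkp5 chi})--(\ref{eqn: Mkp2 chi}) that were established there.

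After that substitution, the left-hand side is a linear combination of minors of the single matrix $M(k+5)$ at various depths of deletion. I would then repeatedly expand along the bottom row, exactly as in the proof of Lemma~\ref{lem: Q is a polynomial of degree 4}: many of the resulting sub-minors have a column of zeros (coming from the pattern $(-1,2,-1,0)$ built into the recurrence $\Chi$) and drop out. I expect only two surviving families of minors at the end. The final step is to recognize each surviving minor of $M(k+5)$ as a minor of $M(k+1)$, using the fact that four applications of $\Chi$ append four columns whose structure is rigidly determined; deleting the corresponding four rows and columns from $M(k+5)$ returns the original matrix $M(k+1)$. Matching indices should produce precisely $\del_{k,k+1}^{k,k+1} M(k+1)$ and $\del_{k-1,k+1}^{k,k+1} M(k+1)$, with the scalar factor $\tfrac{2}{3}(-1)^{k+1}$ coming from the $\tfrac{1}{6}$ in $\Delta^{3}\chi = 6\chi_{3}$, the sign changes introduced by the successive bottom-row expansions, and the original $(-1)^{k+1}$.

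The main obstacle is bookkeeping: the recurrence $\Chi$ modifies the last \emph{four} columns in a nontrivial position-dependent way (unlike $\Phi$, which only touches the last three), so at every stage of the cofactor cascade there are more subminors to track and more opportunities for sign errors. In particular, when pushing the deletion indices inward from $\{k+2,\dots,k+5\}$ down to $\{k,k+1\}$, one must check carefully that the intermediate minors which do \emph{not} vanish really do shrink to minors of $M(k+1)$ with the claimed index pattern, rather than to some shifted pattern. Once the coefficients on the two surviving minors are verified to be $\tfrac{2}{3}$, the formula for $\chi_{3}$ follows; the analogous argument (with $\Delta^{2}\psi = 2\psi_{2}$ replacing $\Delta^{3}\chi = 6\chi_{3}$) will establish the leading coefficient of $\psi$ along the way.
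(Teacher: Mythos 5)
Your proposal is correct and follows essentially the same path as the paper's proof: use the third-order forward difference $\Delta^3\chi(k+1)=6\chi_3$, substitute $\chi(N)=(-1)^{N+1}\det M(N+1)$, recycle the cofactor-expansion identities from Lemma~\ref{lem: chi is a polynomial of degree 3} to reduce everything to minors of $M(k+5)$, and then peel those minors back to minors of $M(k+1)$. One small bookkeeping caution: the paper's final reduction step expands $\del_{k+1,k+2,k+3,k+4,k+5}^{k-1,k+1,k+2,k+4,k+5}M(k+5)$ and its companion along \emph{column} $k$ (resp.\ $k-1$) rather than along the bottom row, because after five row-and-column deletions the resulting minor is $k\times k$ while the target $2\times 2$-deleted minors of $M(k+1)$ are $(k-1)\times(k-1)$, so one more column expansion is needed before the identification with $\Del_{k,k+1}^{k,k+1}M(k+1)$ and $\Del_{k-1,k+1}^{k,k+1}M(k+1)$ lines up — but this is exactly the ``push the deletion indices inward'' step you flag as the main obstacle, and it goes through as you anticipate.
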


\begin{proof}
	When $\chi$ is a polynomial of degree at most three, we have
	\begin{equation} 
		\chi(l)-3 \chi(l+1)+3\chi(l+2)-\chi(l+3) = -6 \chi_3
	\end{equation}
	where $\chi_3$ is the coefficient of the degree three term.
	
	It is convenient to take $l = k+1$, as this allows us to use many of the identities
	established in the proof of the previous lemma. 
	\begin{equation} 
		\chi(k+1)-3 \chi(k+2)+3\chi(k+3)-\chi(k+4) = -6 \chi_3
	\end{equation}

	Substituting $\chi(N) = (-1)^{N+1} M(N+1)$ yields
	\begin{equation} \label{eqn: diff eq for lc chi}
		\det M(k+2)+3 \det M(k+3)+3 \det M(k+4)+\det M(k+5)= (-1)^{k+1} 6 \chi_3
	\end{equation}

	We apply the identities established in the proof of the previous lemma
	to the left hand side of (\ref{eqn: diff eq for lc chi}) and obtain
	\begin{align} \label{eqn: output from diff eq chi lc}
		\lefteqn{\det M(k+2)+3 \det M(k+3)+3 \det M(k+4)+\det M(k+5)} \nonumber \\
		& = -4 \del_{k+1,k+2,k+3,k+4,k+5}^{k-1,k+1,k+2,k+4,k+5} M(k+5)-4\del_{k+1,k+2,k+3,k+4,k+5}^{k,k+1,k+2,k+4,k+5} M(k+5) 
	\end{align}
	
	We examine each of the terms on the right hand side of (\ref{eqn:
		output from diff eq chi lc}).

	Expanding $\Del_{k+1,k+2,k+3,k+4,k+5}^{k-1,k+1,k+2,k+4,k+5} M(k+5)$ along column $k$ yields
	\[
	\del_{k+1,k+2,k+3,k+4,k+5}^{k-1,k+1,k+2,k+4,k+5} M(k+5) =\del_{k,k+1,k+2,k+3,k+4,k+5}^{k-1,k,k+1,k+2,k+4,k+5} M(k+5)
	\]
	and
	\[
	\Del_{k,k+1,k+2,k+3,k+4,k+5}^{k-1,k,k+1,k+2,k+4,k+5} M(k+5)=\Del_{k,k+1}^{k,k+1} M(k+1).
	\]
	Thus we have
	\begin{equation} \label{eqn: Mkp5 to Mkp1 1}
		\del_{k+1,k+2,k+3,k+4,k+5}^{k-1,k+1,k+2,k+4,k+5} M(k+5) = del({k,k+1},{k,k+1},M(k+1)).
	\end{equation}
	
	Expanding $\Del_{k+1,k+2,k+3,k+4,k+5}^{k,k+1,k+2,k+4,k+5} M(k+5)$ along column $k-1$ yields
	\[
	\del_{k+1,k+2,k+3,k+4,k+5}^{k,k+1,k+2,k+4,k+5} M(k+5) =-\del_{k-1,k+1,k+2,k+3,k+4,k+5}^{k-1,k,k+1,k+2,k+4,k+5} M(k+5)-2\del_{k,k+1,k+2,k+3,k+4,k+5}^{k-1,k,k+1,k+2,k+4,k+5} M(k+5)
	\]
	But 
	\begin{align*}
		\Del_{k-1,k+1,k+2,k+3,k+4,k+5}^{k-1,k,k+1,k+2,k+4,k+5} M(k+5) &= \Del_{k-1,k+1}^{k,k+1} M(k+1) \\
		\Del_{k,k+1,k+2,k+3,k+4,k+5}^{k-1,k,k+1,k+2,k+4,k+5} M(k+5) &= \Del_{k,k+1}^{k,k+1} M(k+1)
	\end{align*}
	Thus we have
	\begin{equation} \label{eqn: Mkp5 to Mkp1 2}
		\del_{k+1,k+2,k+3,k+4,k+5}^{k,k+1,k+2,k+4,k+5} M(k+5)=-\del_{k-1,k+1}^{k,k+1} M(k+1)-2 \del_{k,k+1}^{k,k+1} M(k+1).
	\end{equation}

	We combine equations (\ref{eqn: diff eq for lc chi}), (\ref{eqn: output from diff eq chi lc}), (\ref{eqn: Mkp5 to Mkp1 1}), and (\ref{eqn: Mkp5 to Mkp1 2}).
	\begin{align*}
		(-1)^{k+1} 6 \chi_3 &= \det M(k+2)+3 \det M(k+3)+3 \det M(k+4)+\det M(k+5)\\
		&= -4 \del_{k+1,k+2,k+3,k+4,k+5}^{k-1,k+1,k+2,k+4,k+5} M(k+5) -4\del_{k+1,k+2,k+3,k+4,k+5}^{k,k+1,k+2,k+4,k+5} M(k+5) \\
		&= 4\del_{k-1,k+1}^{k,k+1} M(k+1)+4 \del_{k,k+1}^{k,k+1} M(k+1).  
	\end{align*}
	
	This gives the desired result:
	\[
	\chi_3 = \frac{2}{3} (-1)^{k+1} \left( \del_{k-1,k+1}^{k,k+1} M(k+1)+ \del_{k,k+1}^{k,k+1} M(k+1)\right).  
	\]
	
\end{proof}

Fix a positive integer $k$. We use the following notation for the Taylor expansions of this polynomial centered at $k$.
\begin{eqnarray*}
	\chi(N) &=& \chi_3 (N-k)^3 + \chi_2 (N-k)^2 +  \chi_1 (N-k) + \chi_0 
\end{eqnarray*}

\begin{lemma} \label{lem: chi2}
	The coefficient of the degree two term in this Taylor expansion is
	\[
	\chi_2  = (-1)^{k+1} \left(2 \del_{k-1,k+1}^{k,k+1} M(k+1) +4 \del_{k,k+1}^{k,k+1} M(k+1) \right).
	\] 
\end{lemma}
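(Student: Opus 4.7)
The plan is to isolate $\chi_2$ via a second-order finite difference of $\chi$ centered at $k$, reduce everything to minors of the single matrix $M(k+1)$ using the recurrence $\Chi$, and then subtract off the $\chi_3$-contribution using Lemma~\ref{lem: chi3}.

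First I would observe that for $\chi(N) = \chi_3(N-k)^3 + \chi_2(N-k)^2 + \chi_1(N-k) + \chi_0$, a direct expansion gives the identity
\begin{equation*}
\chi(k+2) - 2\chi(k+1) + \chi(k) \;=\; 6\chi_3 + 2\chi_2,
\end{equation*}
so that after substituting $\chi(N) = (-1)^{N+1}\det M(N+1)$ we obtain
\begin{equation*}
2\chi_2 \;=\; (-1)^{k+1}\bigl(\det M(k+3) + 2\det M(k+2) + \det M(k+1)\bigr) \;-\; 6\chi_3.
\end{equation*}

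Next I would use the recurrence $\Chi$, exactly as in the proof of Lemma~\ref{lem: chi is a polynomial of degree 3}, to express $\det M(k+3)$ and $\det M(k+2)$ in terms of minors of $M(k+1)$. Since $M(k+2) = \Chi(M(k+1))$ and $M(k+3) = \Chi(M(k+2))$, the bottom row of each is $(0,\ldots,0,2)$, so expansion along the bottom row gives $\det M(k+i) = 2\,\del_{k+i}^{k+i}M(k+i)$ for $i = 2,3$. The resulting minors still have one or two columns of the special form produced by $\Chi$ (a column with last four entries $-1,2,-1,0$, and the ``summed'' column), so expanding those along their own bottom rows reduces each further. Iterating this process, analogous to the cascade in equations~\eqref{eqn: Mkp5 chi}--\eqref{eqn: Mkp1 chi}, eventually rewrites $\det M(k+3) + 2\det M(k+2) + \det M(k+1)$ as an integer combination of $\del_{k-1,k+1}^{k,k+1}M(k+1)$ and $\del_{k,k+1}^{k,k+1}M(k+1)$ together with a term proportional to the $\chi_3$-combination $\del_{k-1,k+1}^{k,k+1}M(k+1) + \del_{k,k+1}^{k,k+1}M(k+1)$.

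Finally, substituting the formula for $\chi_3$ from Lemma~\ref{lem: chi3} into the displayed expression for $2\chi_2$ should cancel the symmetric $(\del_{k-1,k+1}^{k,k+1} + \del_{k,k+1}^{k,k+1})$ contribution and leave precisely
\begin{equation*}
\chi_2 \;=\; (-1)^{k+1}\bigl(2\,\del_{k-1,k+1}^{k,k+1}M(k+1) + 4\,\del_{k,k+1}^{k,k+1}M(k+1)\bigr).
\end{equation*}

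The main obstacle will be the combinatorial bookkeeping in the cascade of row expansions: tracking signs, the shifting row/column index sets, and the identifications of minors of $M(k+2)$ and $M(k+3)$ with minors of $M(k+1)$ (via successive deletion of the rows and columns introduced by $\Chi$). As in the proof of Lemma~\ref{lem: chi is a polynomial of degree 3}, this is routine but delicate; I would verify each expansion by cross-checking with a direct symbolic computation for a small numerical semigroup before assembling the final identity.
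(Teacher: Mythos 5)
Your proposal follows essentially the same route as the paper: isolate $\chi_2$ via the second difference $\chi(k+2)-2\chi(k+1)+\chi(k)=6\chi_3+2\chi_2$, reduce $\det M(k+3)+2\det M(k+2)+\det M(k+1)$ to an integer combination of $\del_{k-1,k+1}^{k,k+1}M(k+1)$ and $\del_{k,k+1}^{k,k+1}M(k+1)$ by repeated bottom-row expansion using the recurrence $\Chi$, and then subtract the $6\chi_3$ contribution from Lemma \ref{lem: chi3}. The only cosmetic difference is that the paper reuses the identities it already established in terms of minors of $M(k+5)$ before translating down to $M(k+1)$, whereas you propose to cascade directly from $M(k+3)$ and $M(k+2)$; both arrive at $\det M(k+3)+2\det M(k+2)+\det M(k+1)=8\del_{k-1,k+1}^{k,k+1}M(k+1)+12\del_{k,k+1}^{k,k+1}M(k+1)$ and the stated formula.
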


\begin{proof}
	Evaluating the Taylor expansion for $N=k$, $N=k+1$, and $N=k+2$ yields
	\begin{align*}
		\chi(k) &= \chi_0 \\
		\chi(k+1) &= \chi_3 + \chi_2 + \chi_1 + \chi_0 \\
		\chi(k+2) &= 8 \chi_3 + 4 \chi_2 + 2 \chi_1 + \chi_0.
	\end{align*}
	
	Solving this system for $\chi_2$ yields
	\[
	2 \chi_2 = \chi(k+2) - 2 \chi(k+1) + \chi(k) - 6 \chi_3.
	\]
	Substituting $\chi(N) = (-1)^{N+1} M(N+1)$ yields
	\begin{equation} \label{eqn: diff eq for chi2}
		2 \chi_2 = (-1)^{k+1}( \det M(k+3) + 2 \det M(k+2) + M(k+1))  - 6 \chi_3.
	\end{equation}
	
	We use the identities established in the two previous proofs as well as one
	additional identity.
	\[
	\del_{k+2,k+3,k+4,k+5}^{k+1,k+2,k+4,k+5} M(k+5)
	=-\del_{k+1,k+2,k+3,k+4,k+5}^{k-1,k+1,k+2,k+4,k+5} M(k+5) -2
	\del_{k+1,k+2,k+3,k+4,k+5}^{k,k+1,k+2,k+4,k+5} M(k+5).
	\]
	
	This allows us to write 
	\begin{equation} \label{eqn: output from diff eq for chi2}
		\det M(k+3) + 2 \det M(k+2) + M(k+1) = 8\del_{k-1,k+1}^{k,k+1} M(k+1)+12 \del_{k,k+1}^{k,k+1} M(k+1).  
	\end{equation}
	
	Substituting (\ref{eqn: output from diff eq for chi2}) and the expression (\ref{eqn: chi3}) previously found for $\chi_3$ into (\ref{eqn: diff eq for chi2}) yields
	\begin{align*}
		2 \chi_2 &= (-1)^{k+1}( \det M(k+3) + 2 \det M(k+2) + M(k+1))  - 6 \chi_3 \\
		&=  (-1)^{k+1}( 8\del_{k-1,k+1}^{k,k+1} M(k+1)+12
		\del_{k,k+1}^{k,k+1} M(k+1)) \\
		& \mbox{} - 6 \frac{2}{3} (-1)^{k+1} \left( \del_{k-1,k+1}^{k,k+1} M(k+1)+ \del_{k,k+1}^{k,k+1} M(k+1)\right) \\  
		&= (-1)^{k+1} (4\del_{k-1,k+1}^{k,k+1} M(k+1) +8 \del_{k,k+1}^{k,k+1} M(k+1) ).
	\end{align*}

\end{proof}

\begin{lemma} \label{lem: chi1}
	The coefficient of the degree one term in this Taylor expansion is
	\[
	\chi_1 = \frac{1}{3}(-1)^{k+2} \left( 8 \del_{k-1,k+1}^{k,k+1}
	M(k+1)  + 14 \del_{k,k+1}^{k,k+1} M(k+1) + 6 \del_{k+2}^{k+1}M(k+2)  \right).
	\] 
\end{lemma}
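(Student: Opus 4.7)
The plan is to follow the same strategy as in the proofs of Lemmas~\ref{lem: chi3} and~\ref{lem: chi2}: apply a finite difference to $\chi$, correct for the already-known higher-order coefficients, and then simplify the remaining determinants by cofactor expansion. From the Taylor expansion at $k$ we have $\chi(k+1)-\chi(k)=\chi_3+\chi_2+\chi_1$, and substituting $\chi(N)=(-1)^{N+1}\det M(N+1)$ gives
\[
\chi_1 \;=\; (-1)^{k+2}\bigl[\det M(k+2)+\det M(k+1)\bigr] \;-\; \chi_2 \;-\; \chi_3.
\]
Inserting the formulas from Lemmas~\ref{lem: chi2} and~\ref{lem: chi3} and collecting coefficients gives
\[
-\chi_2-\chi_3 \;=\; \frac{1}{3}(-1)^{k+2}\bigl(8\,\del_{k-1,k+1}^{k,k+1}M(k+1)+14\,\del_{k,k+1}^{k,k+1}M(k+1)\bigr),
\]
which accounts for the first two summands in the target expression. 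The proof therefore reduces to the single identity
\[
\det M(k+2)+\det M(k+1) \;=\; 2\,\del_{k+2}^{k+1}M(k+2).
\]

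To prove this identity we would first expand both determinants on the left along their bottom rows. By the recurrence $M(k+j)=\Chi(M(k+j-1))$ the bottom row of each is $(0,\ldots,0,2)$, so $\det M(k+j)=2\,\del_{k+j}^{k+j}M(k+j)$ for $j=1,2$, and after dividing by $2$ the claim becomes
\[
\del_{k+1}^{k+1}M(k+1)\;=\;\del_{k+2}^{k+1}M(k+2)-\del_{k+2}^{k+2}M(k+2).
\]
By the definition of $\Chi$, the matrices $Y:=\Del_{k+2}^{k+1}M(k+2)$ and $Y':=\Del_{k+2}^{k+2}M(k+2)$ agree in their first $k$ columns; their last columns are column $k+1$ of $M(k+1)$ and the sum of columns $k$ and $k+1$ of $M(k+1)$, respectively. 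Multilinearity of the determinant in the last column then gives $\det Y' - \det Y = \det Z$, where $Z$ is the $(k+1)\times(k+1)$ matrix whose first $k$ columns are those of $Y$ and $Y'$ and whose last column is column $k$ of $M(k+1)$. Since the bottom row of $M(k+1)$ is $(0,\ldots,0,2)$, the bottom row of $Z$ has a single nonzero entry, namely $-1$ in position $k-1$, coming from the $(-1,2,-1)$ triple that $\Chi$ inserts in column $k-1$ of $M(k+2)$. Expanding $\det Z$ along that row identifies the surviving $k\times k$ minor with $\Del_{k+1}^{k+1}M(k+1)$, and tracking the cofactor sign yields $\det Z=-\del_{k+1}^{k+1}M(k+1)$, which is the identity.

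The main obstacle we expect is purely bookkeeping: tracking, through the definition of $\Chi$, exactly which column of $M(k+1)$ ends up in which column of $M(k+2)$ after the various deletions, and ensuring that all cofactor signs line up. Once that dictionary is set up, no new algebraic input is required beyond column multilinearity and a single cofactor expansion along the almost-zero bottom row of $Z$.
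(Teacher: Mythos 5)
Your proof is correct and follows essentially the same strategy as the paper's: write $\chi_1=(-1)^{k+2}\bigl(\det M(k+2)+\det M(k+1)\bigr)-\chi_2-\chi_3$, insert the formulas from Lemmas~\ref{lem: chi3} and~\ref{lem: chi2}, and reduce to the determinant identity $\det M(k+2)+\det M(k+1)=2\,\del_{k+2}^{k+1}M(k+2)$. The only (minor) divergence is in how that identity is verified: the paper routes through the auxiliary column operation $\red$ and several bottom-row cofactor expansions, whereas you compare the last columns of $\Del_{k+2}^{k+1}M(k+2)$ and $\Del_{k+2}^{k+2}M(k+2)$ directly via multilinearity and a single cofactor expansion, which is slightly more economical. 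Note also that the paper's displayed equation~(\ref{eqn: Mkp2 plus Mkp1}) omits the factor of~$2$ on the right-hand side, but both the paper's own derivation of that equation and the stated coefficient $6\,\del_{k+2}^{k+1}M(k+2)$ in the conclusion require it, so your version with the~$2$ is the correct one.
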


\begin{proof}
	Evaluating the Taylor expansion for $N=k+1$ yields
	\[
	\chi(k+1) = \chi_3 + \chi_2 + \chi_1 + \chi_0 
	\]
	Hence,
	\begin{align}
		\chi_1 &= \chi(k+1) - \chi_0 - \chi_3 -\chi_2 \nonumber \\
		&= (-1)^{k+2} \det M(k+2) - (-1)^{k+1} \det M(k+1) - \chi_3 - \chi_2
		\nonumber \\
		&= (-1)^{k+2} \left( \det M(k+2) + \det M(k+1) \right) - \chi_3 -
		\chi_2. \label{eqn: chi1 formula}
	\end{align}

	We prove the following identity:
	\begin{equation} \label{eqn: Mkp2 plus Mkp1}
		\det M(k+2) + \det M(k+1) =  \del_{k+2}^{k+1}M(k+2).
	\end{equation}
	
	We obtain this as follows. Start by expanding $\red M(k+2)$ along its
	bottom row.
	\begin{equation} \label{eqn: det red Mkp2 expansion}
		\det \red(M(k+2)) =2 \del_{k+2}^{k+1} \red M(k+2)+2 \del_{k+2}^{k+2}
		\red M(k+2)
	\end{equation} 
	We have
	\begin{align}
		\det \red M(k+2) &= \det M(k+2) \label{eqn: det red Mkp2 term 1}\\
		\del_{k+2}^{k+1} \red M(k+2) &= \del_{k+2}^{k+1} M(k+2) \label{eqn: det red Mkp2 term 2}
	\end{align}
	
	Expanding $\Del_{k+2}^{k+2} \red M(k+2)$ along its bottom row yields
	\[
	\del_{k+2}^{k+2} \red M(k+2) =-\del_{k+1,k+2}^{k-1,k+2} \red M(k+2).
	\]
	Expanding $\det M(k+1)$ along its bottom row yields
	\[
	\det M(k+1) = 2 \del_{k+1}^{k+1} M(k+1),
	\]
	and we have 
	\[
	\Del_{k+1,k+2}^{k-1,k+2} \red M(k+2) = \Del_{k+1}^{k+1} M(k+1).
	\]
	Thus
	\begin{equation} \label{eqn: det red Mkp2 term 3}
		2 \del_{k+2}^{k+2} \red M(k+2) = - \det M(k+1).
	\end{equation}
	Substituting (\ref{eqn: det red Mkp2 term 1}), (\ref{eqn: det red Mkp2
		term 2}), and (\ref{eqn: det red Mkp2 term 3}) into (\ref{eqn: det
		red Mkp2 expansion}) yields (\ref{eqn: Mkp2 plus Mkp1}).

	Substituting (\ref{eqn: Mkp2 plus Mkp1}) and the formulas for $\chi_3$
	and $\chi_2$ from the previous lemmas yields the result. 
	
\end{proof}

We summarize the results of this section with the following lemma. 
\begin{lemma} \label{lem: formulas for chi}
	Suppose that $k \geq \max \{ \condIndex(\Gamma)+2,\max(I) +2 \}$. Then
	\begin{align*}
		\chi_3 & = \frac{1}{3}(-1)^{k+1} \left(2 \del_{k,k+1}^{k,k+1} A'(k,k-1) + 2 \del_{k-1,k+1}^{k,k+1} A'(k,k-1) \right) \\
		\chi_2 & = (-1)^{k+1} \left(4 \del_{k,k+1}^{k,k+1} A'(k,k-1) + 2 \del_{k-1,k+1}^{k,k+1} A'(k,k-1) \right) \\
		\chi_1 &= \frac{1}{3}(-1)^{k+1} \left( -14 \del_{k,k+1}^{k,k+1} A'(k,k-1) -8 \del_{k-1,k+1}^{k,k+1} A'(k,k-1) - 6 \del_{k+2}^{k+1} A'(k+1,k) \right)\\
		\chi_0 &= (-1)^{k+1} \det A'(k,k-1).
	\end{align*}
	
\end{lemma}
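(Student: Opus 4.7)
The summary lemma aggregates four coefficient formulas that have already been derived individually in Lemmas \ref{lem: chi3}, \ref{lem: chi2}, and \ref{lem: chi1} immediately above, together with the trivial one for $\chi_0$. The strategy is to translate those formulas, which are stated in terms of an abstract matrix sequence $\{M(N)\}$ satisfying the recurrence $\Chi$, into the concrete KKT notation $A'(N,N-1)$.

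First I would handle $\chi_0$. By the Taylor convention, $\chi_0 = \chi(k)$, and the definition $\chi(N) = (-1)^{N+1}\det A(N,N-1)$ together with the column equivalence $\det A(N,j) = \det A'(N,j)$ (both matrices have $\underline{2}$ in column $N+1$ after the column op) immediately gives $\chi_0 = (-1)^{k+1}\det A'(k,k-1)$.

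For $\chi_3$, $\chi_2$, $\chi_1$, I would invoke the three preceding lemmas, which express each coefficient as a signed combination of minors of $M(k+1)$ and, for $\chi_1$, $M(k+2)$. The key is the earlier lemma establishing $A'(N+1,N) = \Chi(A'(N,N-1))$, which lets me set $M(N) := A'(N,N-1)$, and hence $M(k+1) = A'(k,k-1)$ and $M(k+2) = A'(k+1,k)$. Substituting these identifications into the abstract formulas and then pulling out or distributing the scalar factors to match the presentation in the summary (pulling $\frac{1}{3}$ outside $\chi_3$, distributing the $2$ inside $\chi_2$, and flipping $(-1)^{k+2} = -(-1)^{k+1}$ in $\chi_1$) reproduces the four stated formulas line by line.

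I do not anticipate any real obstacle, since all the analytic work—the vanishing of the fourth difference, the cofactor expansions used to reduce every $\det M(k+i)$ to minors of $M(k+5)$ and then to $M(k+1)$, $M(k+2)$—has been carried out in the preceding three lemmas. The remaining task is purely bookkeeping: tracking the index shift $M(N) \leftrightarrow A'(N,N-1)$ so that, for instance, the $\del_{k+2}^{k+1} M(k+2)$ factor appearing in $\chi_1$ is correctly identified with $\del_{k+2}^{k+1} A'(k+1,k)$ in the summary, and confirming that the sign conventions agree. Once this substitution is in place, the summary lemma follows immediately by collation of the three explicit formulas plus the definitional value of $\chi_0$.
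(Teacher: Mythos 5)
Your proposal is correct and takes essentially the same approach as the paper, whose proof is the single line "We apply Lemmas \ref{lem: chi3}, \ref{lem: chi2}, and \ref{lem: chi1} with $M(k+1) = A'(k,k-1)$." Your bookkeeping of the index shift (so that $M(k+2) = A'(k+1,k)$), the sign flip $(-1)^{k+2} = -(-1)^{k+1}$, and the derivation of $\chi_0$ from the Taylor convention together with $\det A(N,j) = \det A'(N,j)$ all match what the paper does implicitly.
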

\begin{proof}
	We apply Lemmas \ref{lem: chi3}, \ref{lem: chi2}, and \ref{lem: chi1}
	with $M(k+1) = A'(k,k-1)$.
\end{proof}

\begin{lemma}
	$\deg \omega = 4$.
\end{lemma}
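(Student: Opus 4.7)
The plan is to bypass a fresh recurrence argument and read off \(\omega\) directly from the last row of the KKT matrix equation \(A(N) x = 2a\). The key observation is that row \(N+1\) of \(A(N)\) is extremely sparse. By Proposition \ref{prop: KKT matrix entries}, the only entries in that row that can be nonzero are \(A_{N+1,N+1} = 2\) and, when \(N-1 \notin I\), the slope-column entry \(A_{N+1,N-1} = \gamma_{N-2} - \gamma_{N-1} = -1\); the value \(-1\) comes from the standing hypothesis \(N \geq \condIndex(\Gamma)+2\), which forces \(N-2 \geq \condIndex(\Gamma)\). All other entries of that row vanish: indeed \(A_{N+1,N} = 2(\gamma_N - \gamma_N) = 0\); and for \(j \in I\) with \(j \leq N-1\) the corner-column formula requires \(i \leq j\), which fails at \(i = N+1\).

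By Cramer's rule \(x_{N+1} = \omega/Q\) and \(x_{N-1} = \chi/Q\), so after multiplying through by \(Q\) the last KKT equation reduces to one of two polynomial identities according to the dichotomy on \(N-1\). When \(N-1 \in I\) the equation is simply \(2 x_{N+1} = 2 a_{N+1}\), which gives \(\omega = a_{N+1}\, Q\). When \(N-1 \notin I\) it reads \(-x_{N-1} + 2 x_{N+1} = 2 a_{N+1}\), which yields \(2 \omega = 2 a_{N+1}\, Q + \chi\). In either regime the constant \(a_{N+1}\) is nonzero (it is \(2\) in the Simplified Problem and \(1\) in the Unsimplified Problem).

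To conclude, I apply the previously established parts of Lemma \ref{lem: app polynomiality of Q Pk chi psi}: \(Q\) has degree exactly \(4\) with nonzero leading coefficient, and \(\deg \chi \leq 3\). In the first case \(\omega\) is a nonzero scalar multiple of \(Q\), so \(\deg \omega = 4\). In the second case the leading degree-\(4\) term of \(2\omega\) equals \(2 a_{N+1}\) times the leading term of \(Q\), which is nonzero and cannot be cancelled by the lower-degree polynomial \(\chi\); hence \(\deg \omega = 4\) again. The argument contains no substantial obstacle; the only subtlety is the sparsity verification for the last row of \(A(N)\), which is a direct consequence of Proposition \ref{prop: KKT matrix entries} and does not require any minor expansions or difference-equation calculations of the kind used earlier for \(Q\) and \(\chi\).
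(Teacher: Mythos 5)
Your proof is correct and takes essentially the same route as the paper: read off the last row of the KKT matrix equation, clear denominators to obtain $2\omega = 2a_{N+1}\,Q + \chi$, and conclude from $\deg Q = 4$ and $\deg\chi \leq 3$. One minor remark: under the standing hypothesis $N \geq \max(I)+2$ we have $\max(I) \leq N-2$, so the branch $N-1 \in I$ never occurs and only your second case is ever in force (though including the first is harmless).
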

\begin{proof}
	The last row of the KKT matrix equation says $-x_{N-1} +2 x_{N+1} =
	4$. Clearing denominators yields $-\chi + 2 \omega = 4Q$. Since $\deg
	\omega = 4$ and $\deg \chi \leq 3$, the result follows. 
\end{proof}

\subsection{$\psi$ is a polynomial}

\begin{definition}
	For each $n \geq 3$, define
	\[
	\Psi_n : \operatorname{Mat}_{n\times n} \rightarrow \operatorname{Mat}_{(n+1)\times (n+1)}
	\]
	as follows.
	\begin{itemize}
		\item Columns 1 through $(n-2)$ in $\Psi(M)$ are the same as in $M$, extended by 0 at the bottom.
		\item The last three entries in column $n-1$ in $\Psi(M)$ are $-1,2,-1$, and this column is 0 otherwise.    
		\item Column $n$ in $\Psi(M)$ is column $n-1$ in $M$, extended by  0 at the bottom.
		\item Column $n+1$ in $\Psi(M)$ is column $n$ from $M$, extended by 2 at the bottom.
	\end{itemize}
	
	We write $\Psi$ for the collection of maps $\{ \Psi_n\}$.    We refer to
	$\Psi$ as a recurrence, and frequently omit the subscript.
\end{definition}

The application to our problem is as follows.
\begin{lemma}
	Suppose    $N \geq \max \{ \condIndex(\Gamma)+2,\max(I) +2 \}$. Then the matrices $A'(N,N)$ satisfy the recurrence $\Psi$. That is,
	$A'(N+1,N+1) = \Psi(A'(N,N))$.
\end{lemma}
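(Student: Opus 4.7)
The plan is a direct, column-by-column verification of the matrix identity, using the explicit formulas in Proposition~\ref{prop: KKT matrix entries} for $A(N)$ and $A(N+1)$ together with the replacement rule that defines $A'$. The hypothesis $N \ge \max\{\condIndex(\Gamma)+2,\max(I)+2\}$ does two pieces of work throughout: it forces $N-1, N \notin I$, so these columns carry the uniform ``inequality'' pattern rather than an $F_{\gamma_j}$-pattern, and it forces $\gamma_{i+1}-\gamma_i = 1$ for every $i$ appearing past the conductor, collapsing the three-term differences in the formula for $A_{i,j}$ to $-1, 2, -1$.

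I would first handle columns $1 \le j \le N-1$. Column $j$ of $A'(N,N)$ coincides with column $j$ of $A(N)$, and the explicit formula makes column $j$ of $A(N+1)$ equal to that of $A(N)$ with a zero adjoined in the new last row: columns with $j \in I$ are supported in rows $1,\dots,j$, and columns with $j \notin I$ are supported only in the three rows $j, j+1, j+2$. This matches extending by $0$ at the bottom as in the first rule of $\Psi$. Next, column $N$ of $A'(N+1,N+1)$ is column $N$ of $A(N+1)$, which has entries $\gamma_N-\gamma_{N+1}$, $\gamma_{N+1}-\gamma_{N-1}$, $\gamma_{N-1}-\gamma_N$ in rows $N, N+1, N+2$; past the conductor these simplify to $-1, 2, -1$, matching the prescribed pattern for column $n-1$ of $\Psi_{N+1}$. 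The last column $N+2$ is the lineality column $\underline{2}$ on both sides, matching the instruction to extend by $2$.

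The substantive step is column $N+1$. On the right it is column $N$ of $A'(N,N)$, i.e.\ the replacement column $2a-\underline{2}$ of length $N+1$, extended at the bottom. On the left it is $2a'-\underline{2}$ of length $N+2$, where $a'$ denotes the $a$-vector associated to embedding dimension $N+1$. The two vectors agree in the first $N$ coordinates via the uniform formula $a_i = \gamma_i - \gamma_{i-2}$, and the boundary entry $a_{N+1}=2$ of the Simplified Problem matches $a'_{N+1}=\gamma_{N+1}-\gamma_{N-1}=2$, using once more that the relevant indices lie past the conductor. Reconciling the newly introduced entries at the bottom of this column is the principal obstacle; this is exactly the step foreshadowed by the remark after Lemma~\ref{lem: area is dotting with a} (``increasing the degree is merely adding an extra $2$''), and it is why the recurrence $\Psi$ is adapted to $A'(N,N)$ in the Simplified Problem specifically. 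Once those boundary entries are matched, the identity follows by direct substitution into Proposition~\ref{prop: KKT matrix entries}.
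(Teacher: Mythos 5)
Your column-by-column strategy works cleanly for columns $1 \le j \le N-1$, for column $N$, and for the last column, and you correctly isolate column $N+1$ as the substantive step. The gap is that you end by asserting ``once those boundary entries are matched, the identity follows by direct substitution,'' but you never match them, and in fact they do not match. The recipe for $\Psi$ says column $N+1$ of $\Psi(A'(N,N))$ is column $N$ of $A'(N,N)$ (the replacement column $2a-\underline{2}$) \emph{extended by $0$}, so its bottom entry is $0$. Meanwhile column $N+1$ of $A'(N+1,N+1)$ is $2a'-\underline{2}$, whose bottom entry is $2a'_{N+2}-2 = 2\cdot 2-2 = 2$, since the boundary entry of the Simplified vector is $a'_{N+2}=2$. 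The two matrices therefore disagree in entry $(N+2,N+1)$, and in a concrete instance (for example $\Gamma=\langle 2,3\rangle$, $I=\emptyset$, $N=4$) one checks that $\det A'(5,5)\neq \det\Psi(A'(4,4))$, so the discrepancy is not even erasable by a column operation.

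The step you flag as ``the principal obstacle'' is exactly where the argument must actually do work; it cannot be dispatched by gesturing at a heuristic remark. For the recurrence to hold literally one would need either the third bullet of the definition of $\Psi$ to read ``extended by $2$,'' or the replacement column defining $A'(N,j)$ to be $2a-2\underline{2}$ rather than $2a-\underline{2}$ (which zeroes out the offending bottom coordinate and still leaves $\det A(N,j)=\det A'(N,j)$ by column equivalence). A correct proof of the stated lemma must confront this coordinate, decide which of these readings is intended, and explain why the subsequent cofactor expansions along the bottom row --- which assume the $(N+2,N+1)$ entry is $0$ --- remain valid. As written, your concluding sentence carries all the weight of the lemma and has no content behind it.
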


\begin{lemma} \label{lem: psi is a polynomial of degree 2}
	Let $\{M(N)\}$ be a sequence of $N \times N$ matrices for $N \geq N_0$
	such that $M(N+1) = \Psi(M(N))$ for all $N \geq N_0$. Then $
	\psi(N) = (-1)^{N+1} \det M(N+1) $ is given by a polynomial of degree at
	most two for all $N \geq N_0$.
\end{lemma}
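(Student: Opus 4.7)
The plan is to follow the template established in the proofs of Lemmas \ref{lem: Q is a polynomial of degree 4} and \ref{lem: chi is a polynomial of degree 3}, adapting the degree of the difference equation to the expected degree of $\psi$. Since we aim to show $\deg \psi \leq 2$, the natural object of study is the third-order finite difference
\[
\Delta_3 := \psi(k+3) - 3\psi(k+2) + 3\psi(k+1) - \psi(k),
\]
and the goal is to prove $\Delta_3 = 0$ for every $k \geq N_0$. Substituting the definition $\psi(N) = (-1)^{N+1} \det M(N+1)$ converts this into
\[
(-1)^{k+1}\Delta_3 \;=\; \det M(k+4) + 3\det M(k+3) + 3\det M(k+2) + \det M(k+1),
\]
an identity entirely in terms of determinants of matrices in the sequence.

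The idea is to express each of $\det M(k+1)$, $\det M(k+2)$, $\det M(k+3)$ in terms of $\det M(k+4)$ and its minors. This is driven by the structure of the recurrence $\Psi$: the bottom row of $M(k+4)$ has only two nonzero entries (the $-1$ in column $k+2$ and the $2$ in column $k+4$), and the same sparse structure persists after successive row/column deletions. Cofactor expansion along the bottom row, combined with the observation that certain minors of $M(k+4)$ agree with (possibly augmented) earlier matrices in the sequence---specifically,
\[
\Del_{k+4}^{k+4} M(k+4) \;\text{is a minor related to } M(k+3),
\]
and analogous relations at each deeper level---yields identities of the form $\det M(k+i) = (\text{linear combination of minors of } M(k+4))$, in direct analogy with equations (\ref{eqn: Mkp4})--(\ref{eqn: Mkp0}) in the proof for $Q$ and equations (\ref{eqn: Mkp4 chi})--(\ref{eqn: Mkp1 chi}) in the proof for $\chi$.

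Substituting these four identities into the expression for $(-1)^{k+1}\Delta_3$ and then repeatedly expanding the resulting minors (with one, two, and three rows/columns deleted from the bottom) along their bottom rows will collapse the sum. At each stage, many of the resulting sub-minors will have a column of zeros---an immediate consequence of the fact that under $\Psi$ the column with the $-1,2,-1$ pattern is not recopied in subsequent steps---and hence vanish; the remaining terms will cancel pairwise. This proves $\Delta_3 = 0$, and hence that $\psi$ is a polynomial in $N$ of degree at most $2$.

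The main obstacle will be purely the bookkeeping: tracking the precise index sets of the deleted rows and columns through multiple rounds of cofactor expansion. The calculation is mechanical but error-prone. As a sanity check, once the polynomiality is established, one can solve for the Taylor coefficients at $N=k$ by evaluating $\psi(k), \psi(k+1), \psi(k+2)$, and these should match the explicit formulas for $\psi_2$, $\psi_1$, $\psi_0$ given in Lemma \ref{lem: formulas for chi and psi}---in particular $\psi_2 = (-1)^{k+1}(\del_{k+1}^{k+1} A'(k,k) + \del_k^{k+1} A'(k,k))$, which can be obtained directly from the second-order difference $\psi(k+2) - 2\psi(k+1) + \psi(k) = 2\psi_2$ using the same cofactor identities developed above.
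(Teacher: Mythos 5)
Your proposal follows essentially the same strategy as the paper's proof: study the third-order finite difference of $\psi$, rewrite it as a signed sum of $\det M(k+1), \dots, \det M(k+4)$, express each earlier determinant in terms of minors of $M(k+4)$ via cofactor expansion along the sparse bottom row, and collapse the sum by further expansions and the vanishing of minors with a zero column. Two small corrections worth noting: with your sign convention $\Delta_3 := \psi(k+3) - 3\psi(k+2) + 3\psi(k+1) - \psi(k)$, the substitution actually gives $(-1)^{k}\Delta_3 = \det M(k+4) + 3\det M(k+3) + 3\det M(k+2) + \det M(k+1)$ (not $(-1)^{k+1}$); and under the recurrence $\Psi$ the minor that recovers $M(k+3)$ from $M(k+4)$ is $\Del_{k+4}^{k+2} M(k+4)$, not $\Del_{k+4}^{k+4} M(k+4)$ --- the latter is a different minor that appears in the recursion for $\det M(k+3)$. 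Neither affects the viability of the approach, and your check via the second-order difference recovering $\psi_2$ is exactly what the paper does in the following lemma.
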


\begin{proof}
	We show that
	the following third-order difference equation vanishes for any integer $k \geq
	N_0+1$.
	\begin{equation} \label{eqn: Delta v1 psi}
		\psi(k) - 3\psi(k+1) +3\psi(k+2) -\psi(k+3) = 0.
	\end{equation}

	We define
	\[
	\Delta_3 := \psi(k) - 3\psi(k+1)+3\psi(k+2) -\psi(k+3).
	\]
	Thus, our goal is to prove that $\Delta_3 = 0$.

	Substituting the definition of the function $\psi(N) = (-1)^{N+1} \det M(N+1) $
	yields the following expression for $\Delta_3$.
	\begin{equation} \label{eqn: Delta v2 psi}
		(-1)^{k+1} \Delta_3 = \det M(k+1) + 3\det M(k+2)+3\det M(k+3) +\det M(k+4).
	\end{equation}
	
	Next, we find identities that will allow us to write each $\det M(k+i)$ in
	terms of the determinants of $M(k+4)$ and its minors.

	To begin, expanding $\det M(k+4)$ along its bottom row yields
	\[
	\det M(k+4) = -\del_{k+4}^{k+2} M(k+4) +2\del_{k+4}^{k+4} M(k+4).
	\]
	But
	\[
	\Del_{k+4}^{k+2} M(k+4) = M(k+3)
	\]
	so we obtain
	\begin{equation} \label{eqn: Mkp2 psi}
		\det M(k+3) = -\det M(k+4) + 2\del_{k+4}^{k+4} M(k+4).
	\end{equation}

	In a similar fashion, we obtain the following identities.
	\begin{align}
		\det M(k+2) &= -\det M(k+3) + 2\del_{k+3,k+4}^{k+2,k+4} M(k+4). \label{eqn: Mkp1 psi}\\
		\det M(k+1) &= -\det M(k+2)+ 2\del_{k+2,k+3,k+4}^{k+1,k+2,k+4} M(k+4)\label{eqn: Mk psi}
	\end{align}

	We substitute the identities (\ref{eqn: Mkp2 psi}), (\ref{eqn: Mkp1 psi}), and (\ref{eqn: Mkp2 psi}) into the expression (\ref{eqn: Delta v2 psi}) for $(-1)^{k+1}\Delta_3$ to obtain the following. 
	\begin{equation} \label{eqn: Delta v3 psi}
		(-1)^{k+1}\Delta_3 =  2 \del_{k+2,k+3,k+4}^{k+1,k+2,k+4} M(k+4) +4 \del_{k+3,k+4}^{k+2,k+4} M(k+4)+2 \del_{k+4}^{k+4} M(k+4).
	\end{equation}
	
	We expand the term with one deletion. Expanding $\Del_{k+4}^{k+4} M(k+4)$ along its bottom row yields
	\[
	\del_{k+4}^{k+4} M(k+4) = -\del_{k+3,k+4}^{k+1,k+4} M(k+4) -2 \del_{k+3,k+4}^{k+2,k+4} M(k+4).
	\]
	We substitute this into the expression (\ref{eqn: Delta v3 psi}) for $(-1)^{k+1}\Delta_3$ and simplify.
	\begin{equation} \label{eqn: Delta v4 psi}
		(-1)^{k+1}\Delta_3 =  \del_{k+2,k+3,k+4}^{k+1,k+2,k+4} M(k+4) -\del_{k+3,k+4}^{k+1,k+4} M(k+4).
	\end{equation}
	
	The minor $\Del_{k+3,k+4}^{k+1,k+4} M(k+4)$ has only one nonzero entry in column $k+2$. Expanding along this column yields
	\[
	\del_{k+3,k+4}^{k+1,k+4} M(k+4) = \del_{k+2,k+3,k+4}^{k+1,k+2,k+4} M(k+4).
	\]
	Thus, $(-1)^{k+1}\Delta_3 = 0$. This completes the proof that $\psi(N)$ is a polynomial of degree at most two.
\end{proof}

\begin{lemma}\label{lem: psi2}
	The coefficient of the degree two term is 
	\[
	\psi_2 = (-1)^{k+1} \left(\del_{k+1}^{k+1}  M(k+1) + \del_{k}^{k+1}
	M(k+1) \right)
	\]
	for any $k\geq N_0$.
\end{lemma}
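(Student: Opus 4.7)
The plan is to follow the template of Lemma \ref{lem: chi3}. Since Lemma \ref{lem: psi is a polynomial of degree 2} tells us $\psi$ has degree at most two, the second-order central difference $\psi(k+2) - 2\psi(k+1) + \psi(k)$ equals $2\psi_2$. Substituting $\psi(N) = (-1)^{N+1}\det M(N+1)$ yields
\[
2\psi_2 = (-1)^{k+1}\bigl[\det M(k+3) + 2\det M(k+2) + \det M(k+1)\bigr],
\]
and the remaining task is to simplify this to $2(-1)^{k+1}(\del_{k+1}^{k+1}M(k+1) + \del_k^{k+1}M(k+1))$.

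The first reduction will use a clean consequence of the recurrence $\Psi$. The bottom row of $M(k+i+1) = \Psi(M(k+i))$ has only two nonzero entries: a $-1$ in column $k+i-1$ and a $2$ in the last column. Expanding $\det M(k+i+1)$ along this row and observing that $\Del_{k+i+1}^{k+i-1} M(k+i+1) = M(k+i)$ (after the obvious column relabeling) yields the telescoping identity $\det M(k+i+1) + \det M(k+i) = 2\,\del_{k+i+1}^{k+i+1} M(k+i+1)$. Applying this for $i=1$ and $i=2$ and adding collapses the bracketed expression above to $2\bigl[\del_{k+3}^{k+3} M(k+3) + \del_{k+2}^{k+2} M(k+2)\bigr]$.

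The second step is to push these two minors down to minors of $M(k+1)$. For $\del_{k+2}^{k+2} M(k+2)$, the relevant submatrix looks like $M(k+1)$ with the special vector $-e_k + 2e_{k+1}$ inserted in column $k$ and the original column $k$ of $M(k+1)$ shifted to column $k+1$. Splitting the special column by multilinearity and identifying the two resulting determinants via standard-basis column expansions gives $\del_{k+2}^{k+2} M(k+2) = -\del_{k}^{k+1} M(k+1) - 2\,\del_{k+1}^{k+1} M(k+1)$. An entirely analogous computation produces $\del_{k+3}^{k+3} M(k+3) = -\del_{k+1}^{k+2} M(k+2) - 2\,\del_{k+2}^{k+2} M(k+2)$, and a further bottom-row expansion identifies $\del_{k+1}^{k+2} M(k+2) = \del_{k+1}^{k+1} M(k+1)$. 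Summing, the $\del_{k+2}^{k+2}M(k+2)$ contributions cancel and we are left with $\del_{k+1}^{k+1} M(k+1) + \del_{k}^{k+1} M(k+1)$, matching the claimed formula with the correct sign $(-1)^{k+1}$.

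The main obstacle will be careful index and sign bookkeeping in the multilinearity expansion of the special column and in the subsequent cofactor expansions. These manipulations are of the same flavor as those already carried out for $\chi_3$, $\chi_2$, and $\chi_1$, so no new techniques are required: once the telescoping identity above is established, the proof reduces to organized but elementary determinant manipulations.
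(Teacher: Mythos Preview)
Your proposal is correct and follows essentially the same approach as the paper: both start from the second difference $\psi(k)-2\psi(k+1)+\psi(k+2)=2\psi_2$, substitute $\psi(N)=(-1)^{N+1}\det M(N+1)$, and reduce $\det M(k+1)+2\det M(k+2)+\det M(k+3)$ to minors of $M(k+1)$ via repeated bottom-row expansions of matrices in the $\Psi$-recurrence. Your telescoping identity $\det M(k+i+1)+\det M(k+i)=2\,\del_{k+i+1}^{k+i+1}M(k+i+1)$ is exactly the paper's identity (\ref{eqn: Mkp2 psi}) shifted in index, and your subsequent reductions of $\del_{k+2}^{k+2}M(k+2)$ and $\del_{k+3}^{k+3}M(k+3)$ match the paper's computations; the only difference is that the paper routes everything through minors of $M(k+3)$ before descending to $M(k+1)$, whereas you descend step by step, which is a slightly cleaner bookkeeping choice but not a different argument.
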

\begin{proof}
	When $\psi$ is a polynomial of degree at most two, we have
	\begin{equation} 
		\psi(k+1)-2 \psi(k+2)+\psi(k+3)= 2 \psi_2
	\end{equation}
	where $\psi_2$ is the coefficient of the degree two term.
	
	Substituting $\psi(N) = (-1)^{N+1} M(N+1)$ yields
	\begin{equation} \label{eqn: diff eq for lc psi}
		\det M(k+1)+2 \det M(k+2)+ \det M(k+3) = (-1)^{k+1} 2 \psi_2
	\end{equation}

	We can argue as we did in the previous proof to obtain the following
	identities.
	\begin{align*}
		\det M(k+2) &= -\det M(k+3) +2\del_{k+3}^{k+3} M(k+3) \\
		\det M(k+1) &= -\det M(k+2) +2 \del_{k+2,k+3}^{k+1,k+3} M(k+3) \\
		\del_{k+3}^{k+3} M(k+3) &=-\del_{k+2,k+3}^{k,k+3} M(k+3)
		-2\del_{k+2,k+3}^{k+1,k+3} M(k+3).
	\end{align*}
	
	We use these identities to evaluate the left hand side of (\ref{eqn:
		diff eq for lc psi}) and obtain
	\begin{equation} \label{eqn: output from diff eq psi}
		\det M(k+1)+2 \det M(k+2)+ \det M(k+3) = -2
		\del_{k+2,k+3}^{k+1,k+3} M(k+3) -2 \del_{k+2,k+3}^{k,k+3} M(k+3)
	\end{equation}
	
	Expanding the first term on the right in (\ref{eqn: output from diff eq psi}) along its bottom row yields
	\begin{equation} \label{eqn: psi2 stuff}
		\del_{k+2,k+3}^{k+1,k+3} M(k+3) = -\del_{k+1,k+2,k+3}^{k-1,k+1,k+3}
		M(k+3)-2 \del_{k+1,k+2,k+3}^{k,k+1,k+3} M(k+3)
	\end{equation}
	Expanding the first term on the right in (\ref{eqn: psi2 stuff}) along its bottom row yields
	\[
	\del_{k+1,k+2,k+3}^{k-1,k+1,k+3} M(k+3) =
	\del_{k,k+1,k+2,k+3}^{k-1,k,k+1,k+3} M(k+3)
	\]
	and
	\[
	\Del_{k,k+1,k+2,k+3}^{k-1,k,k+1,k+3} M(k+3) =\del_{k,k+1}^{k-1,k+1} M(k+1).
	\]
	For the second term on the right in (\ref{eqn: psi2 stuff}), we have
	\[
	\Del_{k+1,k+2,k+3}^{k,k+1,k+3} M(k+3) = \Del_{k+1}^{k+1} M(k+1).
	\]
	Hence, we obtain 
	\begin{equation} \label{eqn: psi2 term 1}
		\del_{k+2,k+3}^{k+1,k+3} M(k+3) = -\del_{k,k+1}^{k-1,k+1} M(k+1) -2
		\del_{k+1}^{k+1} M(k+1)
	\end{equation}

	Expanding the second term on the right in (\ref{eqn: output from diff eq psi}) along its bottom row yields
	\begin{equation} \label{eqn: psi2 stuff 2}
		\del_{k+2,k+3}^{k,k+3} M(k+3) =-\del_{k+1,k+2,k+3}^{k-1,k,k+3} M(k+3)
		+\del_{k+1,k+2,k+3}^{k,k+1,k+3} M(k+3).
	\end{equation}
	The first term on the right in (\ref{eqn: psi2 stuff 2}) has a column
	of zeroes, hence its determinant is zero. For the second term on the
	right in  (\ref{eqn: psi2 stuff 2}), we have
	\[
	\Del_{k+1,k+2,k+3}^{k,k+1,k+3} M(k+3) = \Del_{k+1}^{k+1} M(k+1).
	\]
	Hence, we obtain 
	\begin{equation} \label{eqn: psi2 term 2}
		\del_{k+2,k+3}^{k,k+3} M(k+3) = \del_{k+1}^{k+1} M(k+1).
	\end{equation}
	
	Substituting (\ref{eqn: psi2 term 1}) and (\ref{eqn: psi2 term 2})
	into (\ref{eqn: output from diff eq psi})
	yields
	\[
	\det M(k+1)+2 \det M(k+2)+ \det M(k+3) = 2 \del_{k,k+1}^{k-1,k+1}
	M(k+1)+2 \del_{k+1}^{k+1} M(k+1).
	\]
	
	Finally, expanding along the bottom row yields
	\[
	\del_{k}^{k+1} M(k+1)) = \del_{k,k+1}^{k-1,k+1} M(k+1).
	\]

	Thus, we have
	\begin{align*}
		(-1)^{k+1} 2 \psi_2 &= \det M(k+1)+2 \det M(k+2)+ \det M(k+3)\\
		&= 2 \del_{k,k+1}^{k-1,k+1}
		M(k+1)+2 \del_{k+1}^{k+1} M(k+1).\\
		&= 2 \del_{k}^{k+1} M(k+1)) +2 \del_{k+1}^{k+1} M(k+1).
	\end{align*}
	
	This gives the desired result:
	\[
	\psi_2 = (-1)^{k+1} \left( \del_{k}^{k+1} M(k+1)) +  \del_{k+1}^{k+1} M(k+1)\right).  
	\]
\end{proof}

Fix a positive integer $k$. We use the following notation for the Taylor expansions of this polynomial centered at $k$.
\begin{eqnarray*}
	\psi(N) &=& \psi_2 (N-k)^2 +\psi_1 (N-k)+ \psi_0
\end{eqnarray*}

\begin{lemma} \label{lem: psi1}
	The coefficient of the degree 1 term in this Taylor expansion is
	\[
	\psi_1  = (-1)^{k+1} \left(3\del_{k+1}^{k+1}  M(k+1) + \del_{k}^{k+1}  M(k+1) \right).
	\]
\end{lemma}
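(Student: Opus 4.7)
The plan is to follow the same scheme as the proof of the analogous Lemma \ref{lem: chi1}, adapted to the recurrence $\Psi$. Since $\psi$ has degree at most two, evaluating the Taylor expansion at $N=k+1$ gives $\psi(k+1) = \psi_2 + \psi_1 + \psi_0$, which rearranges to
\[
\psi_1 \;=\; \psi(k+1) - \psi_0 - \psi_2 \;=\; (-1)^{k+2}\bigl[\det M(k+2) + \det M(k+1)\bigr] \;-\; \psi_2.
\]
So the task reduces to finding a clean formula for $\det M(k+2) + \det M(k+1)$ in terms of minors of $M(k+1)$ and then combining with the already-known formula for $\psi_2$ from Lemma \ref{lem: psi2}.

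First I would derive the $\Psi$-analog of the collapsing identity used in the $\chi_1$ argument. Expanding $\det M(k+2)$ along its bottom row, the only nonzero entries come from the "triangle" column (column $k$, contributing $-1$) and the last column (column $k+2$, contributing $2$). Since the definition of $\Psi$ gives $\Del_{k+2}^{k} M(k+2) = M(k+1)$, this expansion collapses to
\[
\det M(k+2) \;=\; -\det M(k+1) \;+\; 2\,\del_{k+2}^{k+2} M(k+2),
\]
and hence $\det M(k+2) + \det M(k+1) = 2\,\del_{k+2}^{k+2} M(k+2)$.

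Next I would convert $\del_{k+2}^{k+2} M(k+2)$ into minors of $M(k+1)$. The matrix $\Del_{k+2}^{k+2} M(k+2)$ still contains the triangle column, now truncated to entries $-1, 2$ in rows $k, k+1$ (the original $-1$ in row $k+2$ is gone). Cofactor-expanding along this column, the two surviving contributions come from deleting either row $k$ or row $k+1$; in each case the remaining matrix is precisely $M(k+1)$ with its last column and the corresponding row removed, yielding
\[
\del_{k+2}^{k+2} M(k+2) \;=\; -\del_{k}^{k+1} M(k+1) \;-\; 2\,\del_{k+1}^{k+1} M(k+1).
\]

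Finally I would substitute this back into the expression for $\psi_1$ and cancel using the formula $\psi_2 = (-1)^{k+1}\bigl(\del_{k}^{k+1} M(k+1) + \del_{k+1}^{k+1} M(k+1)\bigr)$. Routine arithmetic on the coefficients produces the claimed identity. There is no serious obstacle here; the argument is in fact slightly simpler than the one for $\chi_1$, because $\psi$ has lower degree and the $\Psi$ recurrence inserts a pure triangle column rather than the additional "sum" column that complicates the $\Phi$ and $\Chi$ arguments. The only place requiring care is tracking the cofactor signs $(-1)^{i+j}$ in the expansion along column $k$ and making sure the overall sign $(-1)^{k+1}$ vs $(-1)^{k+2}$ is correctly collected after combining with $\psi_2$.
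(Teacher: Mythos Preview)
Your proposal is correct and follows essentially the same route as the paper's own proof: derive $\psi_1 = (-1)^{k+2}\bigl[\det M(k+2)+\det M(k+1)\bigr]-\psi_2$, collapse the bracket to $2\,\del_{k+2}^{k+2}M(k+2)$ via the bottom-row expansion and $\Del_{k+2}^{k}M(k+2)=M(k+1)$, then expand $\del_{k+2}^{k+2}M(k+2)$ along the triangle column to get $-\del_{k}^{k+1}M(k+1)-2\,\del_{k+1}^{k+1}M(k+1)$, and finally combine with the known $\psi_2$. The only cosmetic difference is that the paper writes the intermediate minors as $\del_{k,k+2}^{k,k+2}M(k+2)$ and $\del_{k+1,k+2}^{k,k+2}M(k+2)$ before identifying them with minors of $M(k+1)$, whereas you make that identification in one step.
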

\begin{proof}
	
	We have
	\[
	\psi = \psi_2 (N-k)^2 + \psi_1 (N-k) + \psi_0.
	\]
	When $N = k$, we have $\psi(k) = \psi_0$, and when $N = k+1$ we have
	\[
	\psi(k+1) = \psi_2 + \psi_1 + \psi_0,
	\]
	so
	\begin{align}
		\psi_1 &= \psi(k+1) - \psi_2 - \psi_0 \nonumber \\
		&= (-1)^{k+2} \det M(k+2) - \psi_2 - (-1)^{k+1} \det M(k+1) \nonumber\\
		&= (-1)^{k+2} (\det M(k+2) + \det M(k+1)) - \psi_2.
	\end{align}
	
	Expanding $\det M(k+2)$ along its bottom row yields
	\[
	\det M(k+2) = -\del_{k+2}^{k} M(k+2) +2\del_{k+2}^{k+2} M(k+2).
	\]
	But
	\[
	\Del_{k+2}^{k} M(k+2) = M(k+1)
	\]
	so we obtain
	\begin{equation} 
		\det M(k+2) + \det M(k+1) = 2\del_{k+2}^{k+2} M(k+2).
	\end{equation}
	Substituting this and the expression for $\psi_2$ yields
	\begin{equation} \label{eqn: psi1 M(k+2)}
		\psi_1 = (-1)^{k+2} 2\del_{k+2}^{k+2} M(k+2) - (-1)^{k+1} \left( \del_{k}^{k+1} M(k+1)) +  \del_{k+1}^{k+1} M(k+1)\right). 
	\end{equation}
	
	We study the first term on the right in (\ref{eqn: psi1 M(k+2)}). Expanding along column $k$ yields
	\[
	\del_{k+2}^{k+2} M(k+2) =-\del_{k,k+2}^{k,k+2} M(k+2)-2\del_{k+1,k+2}^{k,k+2} M(k+2).
	\]
	But
	\begin{align*}
		\Del_{k,k+2}^{k,k+2} M(k+2) &= \Del_{k}^{k+1} M(k+1) \\
		\Del_{k+1,k+2}^{k,k+2} M(k+2) &= \Del_{k+1}^{k+1} M(k+1)
	\end{align*}
	so
	\[
	\del_{k+2}^{k+2} M(k+2) = -\del_{k}^{k+1} M(k+1) -2\del_{k+1}^{k+1} M(k+1).
	\]
	Substituting this into (\ref{eqn: psi1 M(k+2)}) yields
	\begin{align*}
		\psi_1 &= (-1)^{k+2} (-2\del_{k}^{k+1} M(k+1) -4\del_{k+1}^{k+1} M(k+1)- (-1)^{k+1} \left( \del_{k}^{k+1} M(k+1)) +  \del_{k+1}^{k+1} M(k+1)\right). \\
		&= (-1)^{k+1} \left( 3\del_{k+1}^{k+1} M(k+1) + \del_{k}^{k+1} M(k+1) \right),
	\end{align*}
	as desired.
\end{proof}

We summarize the results of this section with the following lemma. 
\begin{lemma} \label{lem: formulas for psi}
	Suppose that $k \geq \max \{ \condIndex(\Gamma)+2,\max(I) +2 \}$. Then
	\  \begin{align*}
		\psi_2 & = (-1)^{k+1} \left(\del_{k+1}^{k+1}  A'(k,k) + \del_{k}^{k+1}  A'(k,k) \right) \\
		\psi_1 & = (-1)^{k+1} \left(3\del_{k+1}^{k+1}  A'(k,k) + \del_{k}^{k+1}  A'(k,k) \right) \\
		\psi_0 &= (-1)^{k+1} \det A'(k,k).
	\end{align*}
	
\end{lemma}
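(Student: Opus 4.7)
The plan is to specialize the abstract polynomial identities already established in Lemma \ref{lem: psi2} and Lemma \ref{lem: psi1} to the concrete sequence of KKT minor matrices $\{A'(N,N)\}$. Those abstract lemmas were proved for an arbitrary matrix sequence $\{M(N)\}$ satisfying the recurrence $M(N+1)=\Psi(M(N))$, and the computation of $\psi_2$ and $\psi_1$ was carried out entirely in terms of minors of $M(k+1)$. Consequently, once we identify the correct $M(k+1)$, the formulas in the statement are immediate substitutions.

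First I would invoke the lemma recorded just before Lemma \ref{lem: psi is a polynomial of degree 2}, which asserts that the matrices $A'(N,N)$ satisfy the recurrence $\Psi$; that is, $A'(N+1,N+1)=\Psi(A'(N,N))$. Thus we may apply the abstract machinery with the identification $M(N+1)=A'(N,N)$, or equivalently $M(k+1)=A'(k,k)$. Under this identification the abstract function $(-1)^{N+1}\det M(N+1)$ coincides with the original $\psi(N)=(-1)^{N+1}\det A'(N,N)$, so the Taylor coefficients centred at $N=k$ also agree.

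Next I would read off the three formulas. The formula $\psi_0=(-1)^{k+1}\det A'(k,k)$ is immediate from $\psi_0=\psi(k)$ and the definition of $\psi$. The formula for $\psi_2$ is obtained by substituting $M(k+1)=A'(k,k)$ into the conclusion of Lemma \ref{lem: psi2}, and the formula for $\psi_1$ is obtained similarly from Lemma \ref{lem: psi1}. The hypothesis $k\geq\max\{\condIndex(\Gamma)+2,\max(I)+2\}$ guarantees that the recurrence $\Psi$ applies on the relevant range, matching the $N\geq N_0$ hypothesis required by the abstract lemmas.

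There is essentially no obstacle here, since all the combinatorial work (expanding along cofactor rows, collapsing minors, and verifying the vanishing of the difference equations) has already been completed in the proofs of Lemmas \ref{lem: psi is a polynomial of degree 2}, \ref{lem: psi2}, and \ref{lem: psi1}. The only bookkeeping point is to ensure the index shift $M(k+1)\leftrightarrow A'(k,k)$ is applied consistently across all three formulas, so that the minors $\del_{k+1}^{k+1}M(k+1)$ and $\del_{k}^{k+1}M(k+1)$ appearing in the abstract statements become $\del_{k+1}^{k+1}A'(k,k)$ and $\del_{k}^{k+1}A'(k,k)$ in the final expressions.
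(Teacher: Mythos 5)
Your proposal is correct and follows the paper's argument exactly: after noting that $A'(N,N)$ satisfies the recurrence $\Psi$, the paper simply applies Lemmas \ref{lem: psi2} and \ref{lem: psi1} with $M(k+1)=A'(k,k)$ (and reads off $\psi_0$ from $\psi(k)$). Your write-up is just a more detailed unpacking of that same one-line substitution.
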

\begin{proof}
	We apply Lemmas \ref{lem: psi2} and \ref{lem: psi1}
	with $M(k+1) = A'(k,k)$.
\end{proof}

\section{Proof of Proposition \ref{prop: worst 1ps for
		cusps}}

Here we give a proof of Propositon \ref{prop: worst 1ps for
	cusps}, concerning worst 1-PS's for higher order cusps.

\subsection{Statement of the main result for cusps}
\begin{definition}
	For any integer $r \geq 1$, we define the polynomial
	\begin{equation}
		f(r,x):= (4r-2)x^3+(6r-6)x^2-(12r^3+6r^2+4r+4)x-(30r^3+18r^2).   
	\end{equation}
\end{definition}

\begin{lemma}
	For any fixed value of $r\geq 1$, the polynomial $f(r,x)$ has exactly one
	positive real root.
\end{lemma}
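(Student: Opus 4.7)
The plan is to combine Descartes' Rule of Signs (for an upper bound on the number of positive real roots) with the Intermediate Value Theorem (for a lower bound), which together force the count to be exactly one.

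First I would handle $r=1$ separately, since in that case the $x^2$ coefficient $6r-6$ vanishes and the sign pattern needs to be read off carefully. One can just check directly: $f(1,x) = 2x^3 - 26x - 48$ has coefficient sign sequence $(+,-,-)$ (ignoring the zero), yielding exactly one sign change, so at most one positive real root; and $f(1,0) = -48 < 0$ while $f(1,x) \to +\infty$ as $x \to +\infty$, so there is at least one positive real root by the Intermediate Value Theorem.

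For $r \geq 2$, the four coefficients of $f(r,x)$ are $4r-2$, $6r-6$, $-(12r^3+6r^2+4r+4)$, and $-(30r^3+18r^2)$. Since $r \geq 2$, the first two are strictly positive and the last two are strictly negative, so the sign sequence is $(+,+,-,-)$, which has exactly one sign change. By Descartes' Rule of Signs, $f(r,x)$ has at most one positive real root. On the other hand, $f(r,0) = -(30r^3+18r^2) < 0$, and the leading coefficient $4r-2 > 0$ implies $f(r,x) \to +\infty$ as $x \to +\infty$; the Intermediate Value Theorem then guarantees at least one positive real root.

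Combining the two cases gives exactly one positive real root for every $r \geq 1$. I do not anticipate any real obstacle here: the only minor subtlety is the $r=1$ case where one coefficient vanishes, but this is resolved by the elementary computation above. No more delicate analysis (such as examining the discriminant or locating the root) is needed for this existence-and-uniqueness statement.
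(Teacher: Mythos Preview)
Your proof is correct and follows essentially the same approach as the paper: separate out $r=1$ (where the $x^2$ coefficient vanishes), then for $r>1$ use Descartes' Rule of Signs on the pattern $(+,+,-,-)$ together with $f(r,0)<0$ and $\lim_{x\to\infty} f(r,x)=+\infty$. The paper merely says ``we can check this directly for $r=1$'' without detail, whereas you spell it out via the reduced sign sequence $(+,-,-)$ and the same IVT argument.
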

\begin{proof}
	We can check this directly for $r=1$. When $r>1$, the first two coefficients are positive and the last two
	coefficients are negative, so by Descartes' Rule of Signs, $f(r,x)$ has
	at most one positive real root. Since $f(r,0)<0$ and $\lim_{x
		\rightarrow \infty} f(r,x) > 0$, the polynomial $f(r,x)$ has
	exactly one positive real root.
\end{proof}

\begin{definition}
	We define $\alpha(r)$ to be the positive real root of $f(r,x)$.
\end{definition}

\begin{proposition} \label{worst 1ps for cusps}
	Let $r$ be a positive integer. Let $j = \lceil \alpha(r) \rceil$. 
	Then for all $N \geq j+2$, the persistent corner set for
	a cusp of order $r$ and the simplified problem is $I^{\simp} = \{j,j+1\}$.
\end{proposition}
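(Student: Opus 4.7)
The plan is to solve the KKT matrix equation for the Simplified Problem at the face $I = \{j, j+1\}$ under the persistent ansatz $x_N = 0$, $x_{N+1} = 2$, and then identify the smallest $j$ for which all KKT multipliers are nonnegative. The cusp semigroup $\Gamma = \langle 2, 2r+1\rangle$ has conductor index $r$, and for indices $i \geq r$ all differences $\gamma_{i+1}-\gamma_i$ equal $1$, so below row $r$ the KKT matrix has the clean tridiagonal pattern $(-1,2,-1)$. Substituting the persistent ansatz into rows $j+2,\ldots,N+1$ makes each of those equations trivially true (consistent with $\psi \equiv 0$ and $\chi \equiv 0$ on this face, as in Proposition \ref{prop: factoring chi and psi}), leaving a finite $(j+1)\times(j+1)$ linear system independent of $N$.

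I would then solve this system by back-substitution. Starting from row $j+1$ and propagating upward through the tridiagonal block in rows $r+1,\ldots,j+1$, one obtains each entry $x_i$ in closed form as a linear combination of $x_{j-1}, x_j, x_{j+1}$. Continuing through rows $1,\ldots,r$, whose entries encode the gap pattern near the conductor, yields $x_j$ as a rational function $x_j = f(r,j)/D(r,j)$ for some polynomial $D(r,j)$ that is positive when $j > r$, with numerator $f(r,j)$ precisely the cubic in the statement. The cubic dependence on $j$ is natural because the tridiagonal summation across $j-r$ rows contributes $O(j^3)$ terms, while the coefficients involving $r$, $r^2$, and $r^3$ come from iterating through the conductor block.

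Once this formula is established, Proposition \ref{prop: verify proximum} reduces the key question to whether $x_j > 0$; the remaining conditions $x_{j+1}>0$ and $x_i \geq 0$ for $i \notin I$ should follow by monotonicity of the recursion. Since $f(r,x)$ has positive leading coefficient and a unique positive real root $\alpha(r)$, we have $f(r,j) > 0$ iff $j \geq \lceil \alpha(r) \rceil$, so $\lceil\alpha(r)\rceil$ is the smallest $j$ at which the persistent ansatz on $I=\{j,j+1\}$ is valid. Combined with the uniqueness of the persistent global optimum (Theorem \ref{thm: persistence for the simplified problem}), this identifies $I^{\simp} = \{j,j+1\}$.

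The main obstacle is the explicit back-substitution through rows $1,\ldots,r$, where the non-tridiagonal entries arising from the gaps of $\Gamma$ must be tracked carefully to recover the exact coefficients $(4r-2, 6r-6, -(12r^3+6r^2+4r+4), -(30r^3+18r^2))$ of $f(r,x)$. A secondary issue is confirming that no face of different shape (a single corner $\{j'\}$, or three or more corners) gives a better persistent solution; since the KKT objective is strongly convex and hence has a unique global minimum, it suffices to exhibit the candidate on $\{j,j+1\}$ and verify its KKT conditions, which is precisely what the explicit computation delivers.
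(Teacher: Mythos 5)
Your overall strategy --- exhibit an explicit solution to the KKT matrix equation under the persistent ansatz $x_N = 0$, $x_{N+1} = 2$ on the face $I = \{j, j+1\}$, then verify nonnegativity and invoke Proposition \ref{prop: verify proximum} and strong convexity --- is the same as the paper's. The paper also recovers the cubic $f(r,j)$ as the numerator of $x_j$ (and $-f(r,j-1)$ as the numerator of $x_{j+1}$), so your identification of $j = \lceil \alpha(r) \rceil$ as the threshold is the right idea. The derivation route differs (the paper obtains the closed forms by splitting the objective into two least-squares sub-problems across the consecutive corners $j, j+1$, rather than by back-substitution), but this is cosmetic: the proof consists of verifying the formulas row by row, not of how one guessed them.

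The genuine gap is the sentence ``the remaining conditions $x_{j+1}>0$ and $x_i \geq 0$ for $i \notin I$ should follow by monotonicity of the recursion.'' This is not true as stated, and it is where the paper does its real work. For $x_{j+1}$ you need the explicit form $x_{j+1} = -f(r,j-1)/h$, and positivity then follows because $j = \lceil\alpha(r)\rceil$ forces $f(r,j-1) < 0$ while $f(r,j) > 0$; this is a sign analysis, not a monotonicity statement. More seriously, for $1 \le k \le r-1$ (the part of the graph \emph{before} the conductor, where the KKT matrix is not the clean $(-1,2,-1)$ tridiagonal), the sequence $x_k$ is \emph{not} monotone. The paper has to introduce the normalized auxiliary sequence $\overline{x}_k = x_k / \tfrac{k(k+1)}{2}$, show that $\overline{x}_k$ is decreasing in $k$ (this uses the sign $m_1 < 0$ nontrivially), and then deduce $x_k > 0$ from $\overline{x}_r > 0$. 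For $r \le k \le j-1$ the paper instead uses that those $x_k$ are explicit nonnegative combinations of $x_j$ and $x_{j+1}$. Your proposal leaves both of these arguments unaddressed, and neither follows from a generic monotonicity claim about the recursion.
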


In the proof, we will exhibit explicit formulas for a nonnegative solution $x$ to the KKT matrix
equation for this face. This will prove the claim.

First, we give a hint about how we obtained these quantities.

\subsection{How we obtained some formulas appearing in the
	proof} \label{subsec: how we obtained some formulas}

When $I = \{j,j+1\}$,  the piecewise linear graph through the points $\{
(\gamma_i,w_i)\}$ consists of three line segments. Let $y=m_k x +b_k$ be the equations
of these three line segments for $k=1,2,3$.

The KKT matrix equation variables $x_j$ and
$x_{j+1}$ represent parameters of $\face_{\gamma(I)}$, and we have
\begin{align}
	x_{j} & = -m_1 + m_2  \label{xj in terms of slopes} \\
	x_{j+1} &= -m_2 + m_3. \label{xjp1 in terms of slopes}
\end{align}

For the Simplified Problem, we have $m_3 = 0$ and $b_3 = 2$.

The middle line segment joins $(\gamma_{j},w_{j})$ and
$(\gamma_{j+1},w_{j+1})$. Since $j > \cond(\Gamma)$, we have
$\gamma_{j+1}-\gamma_j = 1$, and hence
\[
m_2 = (w_{j+1} -
w_{j})/(\gamma_{j+1}-\gamma_j)= (2-w_j)/1 = 2-w_j.
\]

We can use the equation for the first line segment to compute
$w_j$. This yields $w_j = m_1(j+r) + b_1$, and hence $m_2 =
2-m_1(j+r)-b_1$.

Substituting these expressions for $m_2$ and $m_3$ into (\ref{xj in terms
	of slopes}) and \ref{xjp1 in terms of slopes}) yields
\begin{align}
	x_{j} & = 2-m_1(j+r+1) -b_1 \label{eqn: xj in terms of m1 and b1} \\
	x_{j+1} &= -2 + m_1(j+r) +b_1. \label{eqn: xjpq in terms of m1 and b1} 
\end{align}

The simple structure of the corner sets $I = \{j,j+1\}$ helps us in
another way. Whenever we have two
consecutive corners, the optimisation problem breaks up: we can
optimize $\sum_{i=0}^{j} (w_i-a_i)^2$ and $\sum_{i=j+1}^{N}
(w_i-a_i)^2$ separately. When there are exactly two corners, as in $I
= \{ j, j+1\}$, the solutions to each of these problems is given by least squares
regression. We thus obtain formulas for $m_1$ and $b_1$ as follows.

\begin{lemma} \label{lem: m1 and b1}
	Let $\Gamma = \langle 2,r\rangle$. 
	\begin{enumerate}
		\item Applying least squares linear regression to $(\gamma_0,a_0),\ldots,(\gamma_j,a_j)$ yields the following formulas.
		\begin{align*}
			n &= j+1\\
			\sum \gamma_i w_i &= (r+j)(r+j+1) \\
			\sum \gamma_i &= \frac{1}{2}(j^2+2jr+j-r^2+r) \\
			\sum w_i &= 2j+2r+1\\
			\sum \gamma_i^2 &= -r^3+r^2j+rj^2+\frac{1}{3}j^3+\frac{1}{2}r^2+rj+\frac{1}{2}j^2+\frac{1}{2}r+\frac{1}{6}j
		\end{align*}
		\item Recall that 
		\begin{align*}
			m_1 &= \frac{n \left(\sum \gamma_i w_i\right) - \left(\sum \gamma_i\right) \left(\sum w_i\right)}{n\left(\sum \gamma_i^2\right) - \left(\sum \gamma_i\right)^2}\\
			b_1 &= \frac{1}{n}\left(\sum w_i\right) - m_1 \frac{1}{n}\left(\sum \gamma_i\right)
		\end{align*}
		\item The denominator of $m_1$ is
		\begin{align*}
			n\left(\sum \gamma_i^2\right) - \left(\sum \gamma_i\right)^2
			= \frac{1}{12}\left(j^4+4 j^3+(6 r^2+6 r+5) j^2+(12 r^2+12 r+2) j-3 r^4-6 r^3+3 r^2+6 r\right)
		\end{align*}
		We scale this polynomial by 12 so that the $j^4$ term is monic, and call this polynomial $h$. Then 
		\begin{align*}
			m_1 &= \frac{6((-2r+1)j^2+j+2r^3+r^2+r)}{h} \\
			b_1 &= \frac{2(j^4+(4 r+2) j^3+(12 r^2+6 r+2) j^2+(12 r^2+8 r+1) j-9 r^4+6 r^2+3 r)}{h}
		\end{align*}
	\end{enumerate}
\end{lemma}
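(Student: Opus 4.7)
The plan is to prove the three parts by direct computation, exploiting the explicit structure of the cusp semigroup. (Note that the hypothesis ``$\Gamma = \langle 2, r\rangle$'' is evidently shorthand for the cusp of order $r$, i.e.\ $\Gamma = \langle 2, 2r+1\rangle$, since the formulas in the lemma reproduce the correct values for $r=1$, the simple cusp; similarly the ``$w_i$'' appearing in the sums should read $a_i$, the data being fit.)

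For Part 1, I would first make the relevant values explicit. The semigroup elements satisfy $\gamma_i = 2i$ for $0 \leq i \leq r$ and $\gamma_i = i+r$ for $i \geq r$. From the formula $a_i = \gamma_{i+1}-\gamma_{i-1}$ in Proposition \ref{prop: KKT matrix entries} (under the shift by one implicit in writing $(\gamma_0,a_0),\ldots$), a case analysis at the conductor yields $a_0 = 2$, $a_i = 4$ for $1 \leq i \leq r-1$, $a_r = 3$, and $a_i = 2$ for $r+1 \leq i \leq j$. Each of the four sums then splits naturally as $\sum_{i=0}^{r} + \sum_{i=r+1}^{j}$; on each piece one substitutes the explicit forms of $\gamma_i$ and $a_i$ and applies the closed expressions for $\sum i$ and $\sum i^2$. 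For instance $\sum_{i=0}^{j} a_i = 2 + 4(r-1) + 3 + 2(j-r) = 2r+2j+1$, and a similar split-and-sum for $\sum \gamma_i a_i$ collapses to $(r+j)(r+j+1)$. I would verify each identity at small values (e.g.\ $r=1$, $j=3$, which gives sums $9$, $9$, $20$, $29$ respectively) as a defense against sign and bookkeeping errors.

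Part 2 is just the standard closed form for simple linear regression applied to the data points $(\gamma_0,a_0),\ldots,(\gamma_j,a_j)$; no further proof is required.

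For Part 3, I would substitute the sums from Part 1 into the expressions of Part 2 and simplify. The denominator $n(\sum \gamma_i^2) - (\sum \gamma_i)^2$ expands to a polynomial in $r$ and $j$ which, after scaling by $12$, becomes the quartic $h(j) = j^4 + 4j^3 + (6r^2+6r+5)j^2 + (12r^2+12r+2)j - 3r^4 - 6r^3 + 3r^2 + 6r$. The numerator of $m_1$ simplifies after cancellation of the leading cubic-in-$r$ terms to $6((-2r+1)j^2 + j + 2r^3 + r^2 + r)$. Finally, plugging the resulting $m_1$ into $b_1 = \tfrac{1}{n}(\sum a_i) - \tfrac{m_1}{n}(\sum \gamma_i)$ and clearing to the common denominator $h$ produces the displayed closed form for $b_1$. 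The main obstacle is purely algebraic bookkeeping: each simplification involves polynomials in two variables $(r,j)$ of total degree up to four with many terms, so I would cross-check the final polynomial identities both by symbolic expansion and by numerical evaluation at several small $(r,j)$ pairs rather than trusting a single by-hand pass.
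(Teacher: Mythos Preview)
Your proposal is correct and matches the paper's (implicit) approach: the paper states this lemma without proof, treating it as a routine least-squares computation, and your plan of making $\gamma_i$ and $a_i$ explicit, splitting each sum at the conductor, and then substituting into the standard regression formulas is exactly the intended verification. Your identification of the two typos in the statement (that $\langle 2,r\rangle$ means $\langle 2,2r+1\rangle$, and that the $w_i$ in the sums are the data $a_i$) is also correct and consistent with how these quantities are used in the surrounding Appendix~B.
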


Substituting the formulas for $m_1$ and $b_1$ from  Lemma \ref{lem: m1
	and b1} into the expression (\ref{eqn: xj in terms of m1 and b1}) yields
\begin{equation}
	x_{j}  = \frac{f}{h},
\end{equation}
where 
\begin{align*}
	f(r,j) &= (4 r-2) j^3+(6 r-6) j^2+(-12 r^3-6 r^2-4 r-4) j-30 r^3-18 r^2\\
	h(r,j) &= j^4+4 j^3+(6 r^2+6 r+5) j^2+(12 r^2+12 r+2) j-3 r^4-6 r^3+3 r^2+6 r
\end{align*}

We also find that
\[
x_{j+1} = -\frac{f(r,j-1)}{h}.
\]

\subsection{Proof of the main result for cusps}

\begin{proof}[Proof of Proposition \ref{worst 1ps for cusps}]
	
	We exhibit explicit formulas for a nonnegative solution $x$ to the KKT matrix
	equation for this face. This proves the claim.
	
	We begin by defining the following quantities. See Section
	\ref{subsec: how we obtained some formulas} for a discussion of how we
	obtained these formulas.
	
	\begin{align}
		h(r,j) &= j^4+4 j^3+(6 r^2+6 r+5) j^2+(12 r^2+12 r+2) j-3 r^4-6
		r^3+3 r^2+6 r \\
		x_{j}  & = \frac{f}{h}\\
		x_{j+1} &= -\frac{f(r,j-1)}{h} \\
		m_1 &= -x_j - x_{j+1}\\
		b_1 &= (j+r) (x_j+x_{j+1})+2+x_{j+1}      
	\end{align}

	We then define $x_k$ for $1 \leq k \leq j-1$ in terms of the
	quantities above.
	
	\begin{equation} \label{eqn: xk def}
		x_k = \left\{
		\begin{array}{ll}
			\mbox{} \rule{0pt}{14pt} \frac{1}{3}(k-1)k(k+1)m_1 + \frac{1}{2}k(k+1)b_1- 2k^2  & \text{ if } 1 \leq k \leq r-1 \\
			\mbox{}  \rule{0pt}{14pt} \frac{1}{6} (j-r) (j-r+1) ( (j-r-1) x_j + (j-r+2) x_{j+1}) & \text{ if } k=r \\
			\mbox{}  \rule{0pt}{14pt} \frac{1}{3} (j-k) (j-k+1) ( (j-k-1) x_j + (j-k+2) x_{j+1}) & \text{ if } r+1 \leq k \leq j-1 \\
			\mbox{}  \rule{0pt}{14pt} 0 & \text{ if } j+2 \leq k \leq N \\
			\mbox{}  \rule{0pt}{14pt} 2 & \text{ if } k=N+1 \\
		\end{array}
		\right.
	\end{equation}
	
	To finish the proof, we need to show that the following three conditions are satisfied.
	
	\begin{enumerate}
		\item[(i).] $x$ satisfies the KKT matrix equation for the Simplified Problem
		\item[(ii).] $x_i > 0$ for $i \in I$
		\item[(iii).] $x_i \geq 0$ for $i \not\in I$ and $i\leq N-1$. 
	\end{enumerate}

	To establish $(i)$, we need to verify several identities of rational functions.
	
	Recall the formulas for the KKT matrix equation:
	
	\[
	A_{i,j} = \left\{
	\begin{array}{ll}
		\gamma_j - \gamma_{j+1} & \text{if $j\leq N-1$, $j \not\in I$, and $i=j$} \\
		\gamma_{j+1} - \gamma_{j-1} & \text{if $j\leq N-1$, $j \not\in I$, and $i=j+1$} \\
		\gamma_{j-1}- \gamma_{j} & \text{if $j\leq N-1$, $j \not\in I$, and $i=j+2$} \\
		2(\gamma_j - \gamma_{i-1}) & \text{if $j\leq N-1$, $j \in I$, and $i \leq j$} \\
		2(\gamma_N - \gamma_{i-1}) & \text{if $j= N$} \\
		2 & \text{if $j= N+1$} \\
		0 & \text{otherwise}
	\end{array}
	\right.
	\]
	The vector $a$ on the right hand side of the KKT matrix equation is given by the following formula.
	\[
	a_i = \left\{
	\begin{array}{ll}
		\gamma_1 & \text{if $i=1$} \\
		\gamma_{i} - \gamma_{i-2} & \text{if $2 \leq i \leq N$} \\
		2 & \text{if $i=N+1$ (Simplified Problem)} \\
	\end{array}
	\right.
	\]
	
	For the semigroup $\Gamma = \langle 2, 2r+1 \rangle$, we have
	\[
	\gamma_i = \left\{
	\begin{array}{ll}
		2i & \text{if $0 \leq i \leq r$} \\
		i+r  & \text{if $r+1 \leq i \leq N$} 
	\end{array}
	\right.
	\]
	Thus
	\[
	a_i = \left\{
	\begin{array}{ll}
		2 & \text{if $i=1$} \\
		4 & \text{if $2 \leq i \leq r$} \\
		3 & \text{if $i=r+1$} \\
		2 & \text{if $r+2 \leq i$} 
	\end{array}
	\right.
	\]
	
	Now we check each row of the KKT matrix equation.    
	
	In row 1, we have $A_{1,k} = 0$ when $k \not\in \{ 1,j,j+1,N,N+1\}$, and $x_N = 0$. Then
	\begin{align*}
		\lefteqn{A_{1,1} x_1 + A_{1,j} x_j + A_{1,j+1} x_{j+1} +A_{1,N+1} x_{N+1} } \\
		&= (-2) (b_1-2) + (2(j+r)) x_j + (2(j+r+1))x_{j+1} + (2)( 2) \\
		&= 4 \\
		&= 2 a_1.
	\end{align*}
	
	In row 2, we have $A_{2,k} = 0$ when $k \not\in \{ 1,2,j,j+1,N,N+1\}$, and $x_N = 0$. Then
	\begin{align*}
		\lefteqn{A_{2,1} x_1 + A_{2,2} x_2+ A_{2,j} x_j + A_{2,j+1} x_{j+1} +A_{2,N+1} x_{N+1} } \\
		&= (4) (b_1-2)+ (-2) (2m_1+3b_1-8)+ (2(j+r-2)) x_j + (2(j+r-1))x_{j+1} + (2)( 2) \\
		&= 8 \\
		&= 2 a_2.
	\end{align*}
	
	Let $3 \leq i \leq r-1$. In row $i$, we have $A_{i,k} = 0$ when $k \not\in \{ i-2,i-1,i,j,j+1,N,N+1\}$, and $x_N = 0$. Then
	\begin{align*}
		\lefteqn{A_{i,i-2} x_{i-2} + A_{i,i-1} x_{i-1} + A_{i,i} x_{i} + A_{i,j} x_j + A_{i,j+1} x_{j+1} +A_{i,N+1} x_{N+1} } \\
		&= (-2) (\frac{1}{3}(i-3)(i-2)(i-1) m_1 + \frac{1}{2}(i-2)(i-1) b_1 -2(i-2)^2) \\
		&\mbox{} \qquad +(4) (\frac{1}{3}(i-2)(i-1)i m_1 + \frac{1}{2}(i-1)(i) b_1 -2(i-1)^2) \\
		&\mbox{} \qquad +(-2) (\frac{1}{3}(i-1)i(i+1) m_1 + \frac{1}{2}(i)(i+1) b_1 -2i^2)\\
		& \mbox{} \qquad+ (2(j+r-2(i-1))) x_j + (2(j+r+1-2(i-1)))x_{j+1} + (2)( 2) \\
		&= 8 \\
		&= 2 a_i.
	\end{align*}

	In row $r$, we have $A_{r,k} = 0$ when $k \not\in \{ r-2,r-1,r,j,j+1,N,N+1\}$, and $x_N = 0$. Then
	\begin{align*}
		\lefteqn{A_{r,r-2} x_{r-2} + A_{r,r-1} x_{r-1} + A_{r,r} x_{r} + A_{r,j} x_j + A_{r,j+1} x_{j+1} +A_{r,N+1} x_{N+1} } \\
		&= (-2) (\frac{1}{3}(r-3)(r-2)(r-1) m_1 + \frac{1}{2}(r-2)(r-1) b_1 -2(r-2)^2) \\
		&\mbox{} \qquad +(4) (\frac{1}{3}(r-2)(r-1)r m_1 + \frac{1}{2}(r-1)(r) b_1 -2(r-1)^2) \\
		&\mbox{} \qquad+ (-1) (  \frac{1}{6} (j-r) (j-r+1) ( (j-r-1) x_j + (j-r+2) x_{j+1}) )\\
		& \mbox{} \qquad+ (2(j-r+2)) x_j + (j-r+3)x_{j+1} + (2)( 2) \\
		&= 8 \\
		&= 2 a_r.
	\end{align*}

	
	In row $r+1$, we have $A_{r+1,k} = 0$ when $k \not\in \{ r-1,r,r+1,j,j+1,N,N+1\}$, and $x_N = 0$. Then
	\begin{align*}
		\lefteqn{A_{r+1,r-1} x_{r-1} + A_{r+1,r} x_{r} + A_{r+1,r+1} x_{r+1} + A_{r+1,j} x_j + A_{r+1,j+1} x_{j+1} +A_{r+1,N+1} x_{N+1} } \\
		&= (-2) (\frac{1}{3}(r-2)(r-1)r m_1 + \frac{1}{2}(r-1)(r) b_1 -2(r-1)^2) \\
		&\mbox{} \qquad +(3)   \frac{1}{6} (j-r) (j-r+1) ( (j-r-1) x_j + (j-r+2) x_{j+1}) ) \\
		&\mbox{} \qquad+ (-1) ( \frac{1}{3} (j-(r+1)) (j-(r+1)+1) ( (j-(r+1)-1) x_j + (j-(r+1)+2) x_{j+1}) )\\
		& \mbox{} \qquad+ (2(j-r)) x_j + (2(j-r+1))x_{j+1} + (2)( 2) \\
		&= 6 \\
		&= 2 a_{r+1}.
	\end{align*}

	In row $r+2$, we have $A_{r+2,k} = 0$ when $k \not\in \{r,r+1,r+2,j,j+1,N,N+1\}$, and $x_N = 0$. Then
	\begin{align*}
		\lefteqn{A_{r+2,r} x_{r} + A_{r+2,r+1} x_{r+1} + A_{r+2,r+2} x_{r+2} + A_{r+2,j} x_j + A_{r+2,j+1} x_{j+1} +A_{r+2,N+1} x_{N+1} } \\
		&= (-2) \frac{1}{6} (j-r) (j-r+1) ( (j-r-1) x_j + (j-r+2) x_{j+1}) ) \\
		&\mbox{} \qquad +(2)  ( \frac{1}{3} (j-(r+1)) (j-(r+1)+1) ( (j-(r+1)-1) x_j + (j-(r+1)+2) x_{j+1}) )\\
		&\mbox{} \qquad+ (-1) ( \frac{1}{3} (j-(r+2)) (j-(r+2)+1) ( (j-(r+2)-1) x_j + (j-(r+2)+2) x_{j+1}) )\\
		& \mbox{} \qquad+ (2(j-r-1)) x_j + (2(j-r))x_{j+1} + (2)( 2) \\
		&= 4 \\
		&= 2 a_{r+2}.
	\end{align*}
	
	Let $r+3 \leq i \leq j-1$. In row $i$, we have $A_{i,k} = 0$ when $k \not\in \{i-2,i-1,i,j,j+1,N,N+1\}$, and $x_N = 0$. Then
	\begin{align*}
		\lefteqn{A_{i,i-2} x_{i-2} + A_{i,i-1} x_{i-1} + A_{i,i} x_{i} + A_{i,j} x_j + A_{i,j+1} x_{j+1} +A_{i,N+1} x_{N+1} } \\
		&= (-1) (\frac{1}{3} (j-(i-2)) (j-(i-2)+1) ( (j-(i-2)-1) x_j + (j-(i-2)+2) x_{j+1}))\\
		&\mbox{} \qquad +(2)  (\frac{1}{3} (j-(i-1)) (j-(i-1)+1) ( (j-(i-1)-1) x_j + (j-(i-1)+2) x_{j+1}))\\
		&\mbox{} \qquad +(-1)  (\frac{1}{3} (j-k) (j-i+1) ( (j-i-1) x_j + (j-i+2) x_{j+1}))\\
		& \mbox{} \qquad+ (2(j-i+1)) x_j + (2(j-i+2))x_{j+1} + (2)( 2)  \\
		&= 4 \\
		&= 2 a_i.
	\end{align*}
	
	In row $j$, we have $A_{j,k} = 0$ when $k \not\in \{j-2,j-1,j,j+1,N,N+1\}$, and $x_N = 0$. Then
	\begin{align*}
		\lefteqn{A_{j,j-2} x_{j-2} + A_{j,j-1} x_{j-1} + A_{j,j} x_j + A_{j,j+1} x_{j+1} +A_{j,N+1} x_{N+1} } \\
		&= (-1) (2 ( x_j + 4 x_{j+1}))+(2)  (2 x_{j+1})+ (2) x_j + (4)x_{j+1} + (2)( 2)  \\
		&= 4 \\
		&= 2 a_j.
	\end{align*}

	In row $j+1$, we have $A_{j+1,k} = 0$ when $k \not\in \{j-1,j+1,N,N+1\}$, and $x_N = 0$. Then
	\begin{align*}
		\lefteqn{A_{j+1,j-1} x_{j-1}  + A_{j+1,j+1} x_{j+1} +A_{j+1,N+1} x_{N+1} } \\
		&= (-1) (2 x_{j+1})+(2)  (x_{j+1})+ (2)( 2)  \\
		&= 4 \\
		&= 2 a_{j+1}.
	\end{align*}

	Let $j+2 \leq i \leq N+1$. We have   $A_{i,k} = 0$ when $k \not\in \{i-2,i-1,i,N,N+1\}$. But $x_{i-2}=x_{i-1}=x_{i}=x_N=0$, so this row of the KKT matrix equation encodes the equation $A_{i,N+1} x_{N+1} = 2 \cdot 2 = 2 a_i$.
	
	Thus, $x$ is a solution to the KKT matrix equation for the Simplified Problem for this face.

	Next, we establish $(ii)$. We have $I = \{j,j+1\}$, so we need to analyze $x_j = f(r,j)/h$ and $x_{j+1}= -f(r,j-1)/h $. First, we argue that the denominator is positive. For this, fix $r>0$, and consider $h(r,j)$ as a polynomial in $j$. We have $\frac{dh}{dj}>0$ and $h(r,r) >0$, so $h(r,j)>0$ for all $j>r$.

	Now consider the numerators. The hypothesis that $j = \lceil \alpha(r) \rceil$ is equivalent to the statement that $j$ is the smallest integer value of $x$ for which $f(r,x)$ is positive. Equivalently, $j$ is the unique positive integer such that $f(r,j-1) < 0$ and $f(r,j)>0$.  It follows that $x_j > 0$ and $x_{j+1} > 0$.
	
	Next, we establish $(iii)$. For $r \leq k \leq j-1$, the expressions used to define $x_k$ are nonnegative linear combinations of $x_j$ and $x_{j+1}$, so $x_k \geq 0$ when $k$ is in this range.
	
	To establish the result when $k \leq r-1$, we define an auxiliary sequence $(\overline{x})$ as follows.
	
	Define
	\[
	\overline{x}_k := 
	\frac{x_k}{\frac{k(k+1)}{2}} 
	\]
	for $k \leq r-1$.

	Combining these definitions with the formulas in (\ref{eqn: xk def}) yields
	\[
	\overline{x}_k = \left\{ \begin{array}{ll}
		\frac{2(k-1)}{3} m_1 + b_1 - \frac{4k}{k+1} & \text{ if } 1 \leq k \leq r-1 \\
		\frac{2(r-1)}{3} m_1  + b_1 - \frac{4r}{r+1} & \text{ if } k=r 
	\end{array} \right.
	\]
	As $k$ increases, the fractions $\frac{2(k-1)}{3}$ and $\frac{4k}{k+1}$ increase. But these terms appear in the formula for $\overline{x}_k$ with negative coefficients (recall that $m_1<0$). So the sequence $\overline{x}_k $ decreases as $k$ increases. The denominators used to define $\overline{x}_k $ from $x_k$ are positive. Since $x_r$ is positive, $\overline{x}_r$ is positive, so $\overline{x}_1,\ldots,\overline{x}_{r-1}$ are positive, and hence $x_1,\ldots,x_{r-1}$ are positive.

\end{proof}

\section*{References}
\bibliographystyle{amsplain}
\begin{biblist}

\bib{BFMV}{book}{
   author={Bini, Gilberto},
   author={Felici, Fabio},
   author={Melo, Margarida},
   author={Viviani, Filippo},
   title={Geometric invariant theory for polarized curves},
   series={Lecture Notes in Mathematics},
   volume={2122},
   publisher={Springer, Cham},
   date={2014},
   pages={x+211},
   isbn={978-3-319-11336-4},
   isbn={978-3-319-11337-1},
   review={\MR{3288138}},
   doi={10.1007/978-3-319-11337-1},
}

\bib{BoydVandenberghe}{book}{
   author={Boyd, Stephen},
   author={Vandenberghe, Lieven},
   title={Convex optimization},
   publisher={Cambridge University Press, Cambridge},
   date={2004},
   pages={xiv+716},
   isbn={0-521-83378-7},
   review={\MR{2061575}},
   doi={10.1017/CBO9780511804441},
 }

\bib{DS}{article}{
   author={Dalbec, John},
   author={Sturmfels, Bernd},
   title={Introduction to Chow forms},
   conference={
      title={Invariant methods in discrete and computational geometry},
      address={Curacao},
      date={1994},
   },
   book={
      publisher={Kluwer Acad. Publ., Dordrecht},
   },
   isbn={0-7923-3548-1},
   date={1995},
   pages={37--58},
   review={\MR{1368005}},
}

\bib{GAP}{manual}{
  author={The GAP~Group},
  title={GAP -- Groups, Algorithms, and Programming, Version 4.12.2},
  year = {2022},
  note={Available at \url{https://www.gap-system.org}},
}

\bib{gfan}{manual}{
     author = {Jensen, Anders N.},
     title = {{G}fan, a software system for {G}r{\"o}bner fans and tropical varieties},
     note = {Available at \url{http://home.imf.au.dk/jensen/software/gfan/gfan.html}}
}

\bib{GKZ}{book}{
   author={Gelfand, I. M.},
   author={Kapranov, M. M.},
   author={Zelevinsky, A. V.},
   title={Discriminants, resultants and multidimensional determinants},
   series={Modern Birkh\"auser Classics},
   note={Reprint of the 1994 edition},
   publisher={Birkh\"auser Boston, Inc., Boston, MA},
   date={2008},
   pages={x+523},
   isbn={978-0-8176-4770-4},
   review={\MR{2394437}},
}


\bib{Gieseker}{book}{
   author={Gieseker, D.},
   title={Lectures on moduli of curves},
   series={Tata Institute of Fundamental Research Lectures on Mathematics
   and Physics},
   volume={69},
   publisher={Published for the Tata Institute of Fundamental Research,
   Bombay by Springer-Verlag, Berlin-New York},
   date={1982},
   pages={iii+99},
   isbn={3-540-11953-1},
   review={\MR{691308}},
 }

 \bib{Halpern-Leistner}{article}{
   author={Halpern-Leistner, Daniel},
   title={On the structure of instability in moduli theory},
   note={\url{https://arxiv.org/abs/1411.0627}},
}

\bib{HarrisMorrison}{book}{
   author={Harris, Joe},
   author={Morrison, Ian},
   title={Moduli of curves},
   series={Graduate Texts in Mathematics},
   volume={187},
   publisher={Springer-Verlag, New York},
   date={1998},
   pages={xiv+366},
   isbn={0-387-98438-0},
   isbn={0-387-98429-1},
   review={\MR{1631825}},
}

\bib{HassettHyeon}{article}{
   author={Hassett, Brendan},
   author={Hyeon, Donghoon},
   title={Log minimal model program for the moduli space of stable curves:
   the first flip},
   journal={Ann. of Math. (2)},
   volume={177},
   date={2013},
   number={3},
   pages={911--968},
   issn={0003-486X},
   review={\MR{3034291}},
   doi={10.4007/annals.2013.177.3.3},
 }

\bib{Hesselink}{article}{
   author={Hesselink, Wim H.},
   title={Uniform instability in reductive groups},
   journal={J. Reine Angew. Math.},
   volume={303/304},
   date={1978},
   pages={74--96},
   issn={0075-4102},
   review={\MR{0514673}},
   doi={10.1515/crll.1978.303-304.74},
}

\bib{HoskinsJackson}{article}{
   author={Hoskins, Victoria},
   author={Jackson, Joshua},
   title={Quotients by parabolic groups and moduli spaces of unstable
   objects},
   journal={J. Algebra},
   volume={657},
   date={2024},
   pages={207--285},
   issn={0021-8693},
   review={\MR{4755006}},
   doi={10.1016/j.jalgebra.2024.05.021},
}

\bib{HyeonMorrison}{article}{
   author={Hyeon, Donghoon},
   author={Morrison, Ian},
   title={Stability of tails and 4-canonical models},
   journal={Math. Res. Lett.},
   volume={17},
   date={2010},
   number={4},
   pages={721--729},
   issn={1073-2780},
   review={\MR{2661175}},
   doi={10.4310/MRL.2010.v17.n4.a11},
 }

 \bib{HP}{article}{
   author={Hyeon, Donghoon},
   author={Park, Junyoung},
   title={Generic states and stability},
   note={\url{https://arxiv.org/abs/1703.02697}},
}

 \bib{Code}{article}{
   author={Jackson, Joshua},
   author={Swinarski, David},
   title={Computer code for ``The worst 1-PS for rational curves with a
  unibranch singularity"},
   note={\url{https://faculty.fordham.edu/dswinarski/Worst1PS/v1/}},
}

 \bib{Jones}{article}{
   author={Jones, Trevor},
   title={Canonical filtrations of finite-dimensional algebras},
   note={\url{https://arxiv.org/abs/2406.11765}},
}

\bib{Kempf}{article}{
   author={Kempf, George R.},
   title={Instability in invariant theory},
   journal={Ann. of Math. (2)},
   volume={108},
   date={1978},
   number={2},
   pages={299--316},
   issn={0003-486X},
   review={\MR{506989}},
   doi={10.2307/1971168},
}

\bib{KSZ}{article}{
   author={Kapranov, M. M.},
   author={Sturmfels, B.},
   author={Zelevinsky, A. V.},
   title={Chow polytopes and general resultants},
   journal={Duke Math. J.},
   volume={67},
   date={1992},
   number={1},
   pages={189--218},
   issn={0012-7094},
   review={\MR{1174606}},
   doi={10.1215/S0012-7094-92-06707-X},
}

\bib{Kirwan}{book}{
   author={Kirwan, Frances Clare},
   title={Cohomology of quotients in symplectic and algebraic geometry},
   series={Mathematical Notes},
   volume={31},
   publisher={Princeton University Press, Princeton, NJ},
   date={1984},
   pages={i+211},
   isbn={0-691-08370-3},
   review={\MR{0766741}},
   doi={10.2307/j.ctv10vm2m8},
}

\bib{Macaulay}{manual}{
  author={Grayson, Daniel R.},
  author={Stillman, Michael E..},
  title={Macaulay2, a software system for research in algebraic geometry},
  note={Available at \url{https://math.uiuc.edu/Macaulay2/}},
}

\bib{MATLAB}{manual}{
  author={The MathWorks Inc.},
  title={MATLAB},
  publisher = {The MathWorks Inc., Natick, Massachusetts, United States},
  note = {https://www.mathworks.com}
}

\bib{MS}{article}{
   author={Morrison, Ian},
   author={Swinarski, David},
   title={Gr\"{o}bner techniques for low-degree Hilbert stability},
   journal={Exp. Math.},
   volume={20},
   date={2011},
   number={1},
   pages={34--56},
   issn={1058-6458},
   review={\MR{2802723}},
   doi={10.1080/10586458.2011.544577},
 }

\bib{Mumford}{book}{
   author={Mumford, David},
   title={Stability of projective varieties},
   series={Monographies de L'Enseignement Math\'{e}matique [Monographs of
   L'Enseignement Math\'{e}matique], No. 24},
   note={Lectures given at the ``Institut des Hautes \'{E}tudes Scientifiques'',
   Bures-sur-Yvette, March-April 1976},
   publisher={L'Enseignement Math\'{e}matique, Geneva},
   date={1977},
   pages={74},
   review={\MR{0450273}},
 }

\bib{MFK}{book}{
   author={Mumford, D.},
   author={Fogarty, J.},
   author={Kirwan, F.},
   title={Geometric invariant theory},
   series={Ergebnisse der Mathematik und ihrer Grenzgebiete (2) [Results in
   Mathematics and Related Areas (2)]},
   volume={34},
   edition={3},
   publisher={Springer-Verlag, Berlin},
   date={1994},
   pages={xiv+292},
   isbn={3-540-56963-4},
   review={\MR{1304906}},
 }

\bib{Ness}{article}{
   author={Ness, Linda},
   title={A stratification of the null cone via the moment map},
   note={With an appendix by David Mumford},
   journal={Amer. J. Math.},
   volume={106},
   date={1984},
   number={6},
   pages={1281--1329},
   issn={0002-9327},
   review={\MR{0765581}},
   doi={10.2307/2374395},
}

\bib{NumericalSgps}{manual}{ 
    title     = {NumericalSgps - a GAP package},
    author    = {Manuel Delgado, Pedro A. Garcia-Sanchez, Jose Morais},
    version     = {1.3.1},
    date = {2022},
    url       = {https://www.gap-system.org/Packages/numericalsgps.html},
  }

\bib{Octave}{manual}{ 
    title     = { GNU Octave version 8.2.0 manual: a high-level interactive language for numerical computations},
    author    = {John W. Eaton and David Bateman and Soren Hauberg and Rik Wehbring},
    year      = {2023},
    url       = {https://www.gnu.org/software/octave/doc/v8.2.0/},
}

\bib{QEPCAD}{manual}{
    title     = {QEPCAD B: a program for computing with semi-algebraic sets using CADs},
    author    = {Christopher Brown},
    year      = {2003},
   journal={ACM SIGSAM Bulletin},
   volume={37},
   date={2003},
   number={4},
   pages={97--108},
   doi={https://doi.org/10.1145/968708.968710},
}

\bib{Schubert}{article}{
   author={Schubert, David},
   title={A new compactification of the moduli space of curves},
   journal={Compositio Math.},
   volume={78},
   date={1991},
   number={3},
   pages={297--313},
   issn={0010-437X},
   review={\MR{1106299}},
}

\bib{SageMath}{manual}{
      author={Developers, The~Sage},
       title={{S}agemath, the {S}age {M}athematics {S}oftware {S}ystem
  ({V}ersion 10.0)},
        date={2023},
        note={{\tt https://www.sagemath.org}},
}


\end{biblist}
  
\end{document}